\newtheorem{theorem}{Theorem}[section]
\numberwithin{equation}{section}
\newtheorem{remark}[theorem]{Remark}
\newtheorem{corollary}[theorem]{Corollary}
\newtheorem{lemma}[theorem]{Lemma}
\theoremstyle{definition}
\newtheorem{assumption}{Assumption}
\newtheorem{definition}{Definition}[section]
\newcommand\sometext
\newcommand{\T}[1]{\textup{#1}}
\newcommand{\jump}[1]{\left\llbracket{#1}\right\rrbracket}
\newcommand{\norm}[1]{\left\lVert#1\right\rVert}
\DeclareMathOperator{\sech}{sech}
\title[Stability of Solitary waves in two-layer water] {Existence and stability of interfacial capillary-gravity solitary waves with constant vorticity}
\author[D. Sinambela]{Daniel Sinambela}
\address{Department of Mathematics, University of Missouri, Columbia, MO 65211} 
\email{dsf25@mail.missouri.edu}
\begin{document}

\begin{abstract}
In this paper, we consider 
capillary-gravity waves propagating on the interface separating two fluids of finite depth and constant density. The flow in each layer is assumed to be incompressible and of constant vorticity.  We prove the existence of small-amplitude solitary wave solutions to this system in the strong surface tension regime via a spatial dynamics approach. We then use a variant of the classical Grillakis--Shatah--Strauss (GSS) method to study the orbital stability/instability of these waves.  We find an explicit function of the parameters (Froude number, Bond number, and the depth and density ratios) that characterizes the stability properties. In particular, conditionally orbitally stable and unstable waves are shown to be possible.
\end{abstract}
\maketitle
\setcounter{tocdepth}{1}

\tableofcontents

\section{Introduction}\label{Introduction}
\allowdisplaybreaks
The main object of study in the present paper is internal water waves. The formation of these waves is primarily due to variations in salinity and temperature in the water bulk.  Internal waves can carry enormous amount of energy, while maintaining coherence over long distances.  They are known to play a crucial role in ocean dynamics as a means of transporting and mixing nutrients inside the water.

 In recent years, internal waves have gained a great deal of attention among mathematicians. A plethora of works has been devoted to proving existence of various types of internal traveling waves, see for instance \cite{amick1986global, Amick1989, SunShen1993, Nilsson2017} or the survey in \cite{haziot2022traveling}. Largely for reasons of mathematical convenience, many of these results assume the flow in each layer is irrotational; that is, the vorticity is identically zero. The literature pertaining to rotational flows is unsurpisingly far more limited due to the significant increase in mathematical complexity created by the presence of vorticity. Nonetheless,  rotational effects play a significant part in many physical situations, such as waves propagating over a background current; see, for example, \cite{Calin2022}. 
 
The results of the present paper come in two parts.  First, we prove the existence of a family of small-amplitude internal solitary waves with constant vorticity.  This is done under the assumption of strong surface tension regime, in a sense to be explained shortly.  Second, as our primary contribution, we  investigate the stability properties of these waves as solutions to the dynamical problem.  In particular, we exhibit an explicit function of the physical parameters whose sign determines whether a sufficiently small-amplitude wave is (conditionally) orbitally or stable or orbitally unstable.  We also consider several specific parameter regimes, and find that both stable and unstable waves exist.  These are among a very small number of analytical results concerning the nonlinear stability or instability of internal waves.  In particular, Chen and Walsh \cite{Ming--WalshOrbital2022} proved the orbital stability of a class of internal waves in the irrotational setting.  We will adopt their basic strategy, but incorporating constant vorticity introduces numerous complications.  

Mathematically, the problem is formulated as follows. Let $(x,y)\in \mathbb{R}^2$ be a point in the standard Cartesian coordinates system, with $x$ being the direction of wave propagation and gravity acting in the negative $y$ direction. For time $t \geq 0$, we assume that the fluid is confined to a channel and organized into two superposed layers: 
\[
\Omega(t):=\Omega_+(t) \cup \Omega_-(t),
\]
 bounded above and below by infinitely-long rigid walls, $\{y=d_+\}$ and $\{y=-d_-\}$.  Here and throughout the paper, we will use subscripts of $\pm$ to indicate the restriction of a quantity to $\Omega_\pm$.  Both layers share a common boundary $\mathscr{S}=\mathscr{S}(t)$ that is free.  We call this the internal interface and assume it is has can be parameterized as the graph of an unknown function $y=\eta(t,x)$.  Our focus will be on solitary waves, meaning $\eta$ is spatially localized in that  $\eta\to 0$ as $|x|\to \infty$. We take the density to be constant in each layer with $0<\rho_+\leq \rho_-$. More precisely, the upper and lower regions of the fluid can be written as
\begin{equation}
    \Omega_+(t)=\{(x,y)\in \mathbb{R}^2: \eta(t,x) < y< d_+\}
\end{equation}
and 
\begin{equation}
    \Omega_-(t)=\{(x,y)\in \mathbb{R}^2: -d_-< y< \eta(t,x) \}.
\end{equation}
\begin{figure}
\centering
\begin{tikzpicture}[xscale=1.25,yscale=0.65]
\draw [fill=gray,thin,gray] (-4,-1.7) rectangle (4,-1.5);				
\draw [thin,-] (-4, -1.5) -- (4, -1.5); 

\draw [domain=-4:4,fill=gray!10] plot[smooth] (\x, {0.25+2.25/(exp(1.25*\x)+exp(-1.25*\x))})-|(4,2.8)-|(-4,2.8)--cycle;
\draw [domain=-4:4,fill=gray!45] plot[smooth] (\x, {0.25+2.25/(exp(1.25*\x)+exp(-1.25*\x))})-|(4,-1.5)-|(-4,-1.5)--cycle;	
\draw [fill=gray,thin,gray] (-4,3.0) rectangle (4,2.8);				
\draw [thin,-] (-4, 2.8) -- (4, 2.8); 

\node [below  ] at (1.25,2) {\footnotesize $y = \eta(x)$};
\node [below  ] at (-1.35,1.4) {\footnotesize $\mathscr{S}$};

\node [right] at (3.6,1.35) {\footnotesize $d_+$};
\draw [thick,stealth-stealth]  (3.6,0.35) -- (3.6, 2.7) ; 

\node [right] at (3.6,-0.75) {\footnotesize $d_-$};
\draw [thick,stealth-stealth]  (3.6,0.15) -- (3.6, -1.45) ; 

\node  at (0,2.2) {\footnotesize $\Omega_+$};
\node  at (0,0) {\footnotesize $\Omega_-$};

\end{tikzpicture}
    \caption{Configuration of the fluid domain. Two fluids of different densities are confined in an infinitely long channel.}
    \label{fluid domain figure}
\end{figure}
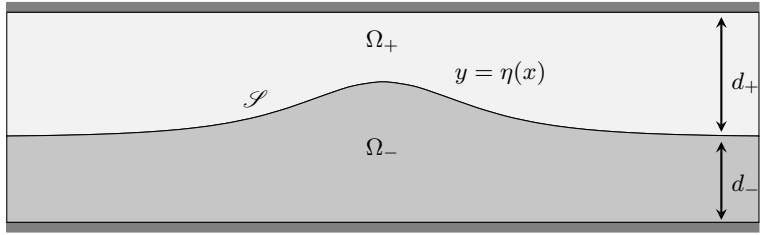

We will assume that the flow in each layer is incompressible and governed by the incompressible Euler equations.  Due to incompressibility, the velocity field $(u_\pm,v_\pm)$ in $\Omega_\pm$ can be expressed in terms of a stream function $\psi_\pm = \psi_\pm(t,x,y)$ by
\[ u_\pm=(\psi_{y})_{\pm},\qquad v_\pm=-(\psi_{x})_{\pm} .\]
Suppose that the vorticity in $\Omega_\pm$ is the constant $\omega_\pm := v_{\pm x}-u_{\pm y} \in \mathbb{R}$.  Note that in two-dimensions the vorticity is transported, so this assumption is valid even for the dynamical problem.  Taking the curl of the velocity field, we then find that the stream function satisfies the Poisson equations
\[\Delta \psi_\pm= -\omega_\pm \quad \T{in}\; \Omega_\pm(t).\]

As is common with constant vorticity waves, we wish to subtract out the background shear from $\psi$ to obtain a harmonic function.  That is, define $\Tilde{\psi}_\pm :=\psi_\pm + \frac{\omega_\pm y^2}{2}$, which will satisfy $\Delta \Tilde{\psi}_\pm = 0$ in $\Omega_\pm(t)$.   Let $\phi_\pm$ be a harmonic conjugate:
\[ \left(\phi_x\right)_\pm=\left(\psi_y\right)_\pm+\omega_\pm y, \T{ and } \left(\phi_y\right)_\pm=-\left(\psi_x\right)_\pm.\]
Then, the rotational incompressible Euler equations can be recast as follows. In the interior we have 
\begin{subequations}\label{Euler-Incompressible in terms of Harmonic Conj}
\begin{equation}\label{Euler-Incompressible in terms of Harmonic Conj-a}
    \Delta \phi_\pm=0\quad \T{ in}\; \Omega_\pm(t).
\end{equation}
On the internal interface and both rigid walls, the kinematic conditions read
\begin{equation}\label{Euler-Incompressible in terms of Harmonic Conj-b}
\left\{\begin{aligned}
    \eta_t&=\left(\phi_\pm\right)_y-\left((\phi_\pm)_x-\omega_\pm \eta \right) \eta_x & \T{ on}\; &y=\eta(t,x),\\
(\phi_\pm)_y&=0 &\T{ on}\; &y=\pm d_\pm. 
\end{aligned}\right.
\end{equation}
Physically, \eqref{Euler-Incompressible in terms of Harmonic Conj-b} states that the velocity field is tangential along the boundaries.  Finally, via the dynamic condition and Young--Laplace law, we can infer that the pressure jump across the internal interface is proportional to the signed curvature.  Using Bernoulli's principle in each layer, it can therefore be stated as
\begin{equation}\label{Euler-Incompressible in terms of Harmonic Conj-c}
\jump{\rho \phi_t}= -\jump{\frac{1}{2} \rho |\nabla \psi|^2 +g \rho \eta +\rho\omega \psi}-\sigma\left(\dfrac{\eta_x}{\sqrt{1+(n_x)^2}}\right)_x  \T{ on}\qquad y=\eta(t,x),
\end{equation}
\end{subequations}
The notation $\jump{\cdot}:=(\cdot)_+-(\cdot)_-$ denotes the difference in trace between two quantities on the internal interface in the upper and lower layer, $g>0$ is the gravitational constant, and $\sigma>0$ is the coefficient of surface tension. 

Observe that the functions $\phi_\pm$ are defined on a moving spatial domain, which complicates the task of finding an appropriate functional analytic setting for the problem.  We therefore prefer to work with the unknown 
\begin{equation}\label{phi to xi}
\xi_{\pm}(t,x)=\phi_{\pm}(t,x,\eta(t,x))
\end{equation}
which corresponds to trace on the surface.  Using this new variable allows us to push the entire problem to the free boundary, rendering it nonlocal but more tractable for analysis.

\subsection{Statement of results}

Now, we are ready to state our main results. We record them in Theorem~\ref{existence theorem} and Theorem~\ref{stability criterion theorem}.
We begin by introducing some important terminology and physical parameters that describe the system. 

A \textit{steady} or \textit{traveling} wave is a solution to the Euler equations \eqref{Euler-Incompressible in terms of Harmonic Conj} that translate in the $x$-direction at a fixed wave speed $c\in \mathbb{R}$ without altering its shape. Thus, in a moving frame of reference, it appears stationary. Concretely, this means the unknowns can be written as
\[\xi_\pm = \eta(t,x)=\eta_{c}(x-ct), \qquad \xi_\pm(x-ct)=\xi_{{c}_\pm}(x-ct),\]
for some steady profiles $\eta_c$ and $\xi_{{c}_\pm}$. Recall that we will focus on solitary waves, for which $\eta_c$ is localized.

There is an extensive body of work devoted to establishing the existence of traveling internal waves in various parameter regimes. Most well studied is the pure gravity case ($\sigma=0,g >0$), where both solitary waves  \cite{amick1986global, BonaBoseTurner1983, Mielke1995} and periodic waves \cite{amick1986global, Amick1989} have been constructed. It is, however, important to note that in the absence of surface tension, the water wave problem \eqref{Euler-Incompressible in terms of Harmonic Conj} becomes ill-posed dynamically; see for example \cite{Lannes2013}. Hence, when looking at questions pertaining to stability/instability, one has to assume $\sigma>0$, which we will do throughout this work.


The existence of small-amplitude internal waves in the presence of surface tension was obtained previously by many authors, for instance \cite{Kirrmann2022,SunShen1993, Nilsson2017}. However, none of those results allows for vorticity. Since understanding the effects of rotation on the stability is our objective, we spend the first part of our analysis developing an existence theory for small-amplitude internal waves with layer-wise constant vorticity.  This is accomplished using a spatial dynamics method: we view the $x$-coordinate as a time-like variable, and then use a center manifold reduction approach. The process is closely inspired by the work of Nilsson \cite{Nilsson2017}.

Internal solitary waves can be described using four dimensionless parameters.  The first two of them are the Bond number $\beta$ and the inverse square of the Froude number $\alpha$ defined by
\begin{equation}\label{Definition of beta and alpha}
    \beta:=\dfrac{\sigma}{d_+\rho_-c^2}, \qquad \alpha:=\dfrac{-g \jump{\rho}d_+}{\rho_-c^2}. 
\end{equation}
From its definition, we see that the Bond number $\beta$ measures the strength of the surface tension. In view of  \eqref{Definition of beta and alpha}, the Froude number $1/\sqrt{\alpha}$ can be thought of as the non-dimensionalized wave speed.

Upon linearizing \eqref{Euler-Incompressible in terms of Harmonic Conj} at the trivial solution and inserting the plane-wave ansatz $\eta= \T{exp}(ik(x-ct))$, we arrive at the following dispersion relation
\begin{equation}\label{dispersion relation}
        \alpha + \beta k^2 = \sum_{\pm}\dfrac{\rho_{\pm}}{\rho_-}k \coth \left(\dfrac{d_{\pm}}{d_+}k\right)+\left(\dfrac{ \omega_+d_+\rho_+}{c\rho_-}-\dfrac{\omega_-d_+}{c}\right).
\end{equation}It can be checked easily that $k=0$ is a root of \eqref{dispersion relation} exactly when
\begin{equation}\label{definition of Alphanot and Betanot}
    \beta=\beta_0:=\dfrac{1}{3}\left(\dfrac{\rho_+}{\rho_-}+\dfrac{d_-}{d_+}\right), \qquad \alpha=\alpha_0:=\dfrac{\rho_+}{\rho_-}+\dfrac{d_+}{d_-}+\dfrac{ \omega_+d_+\rho_+}{c\rho_-}-\dfrac{\omega_-d_+}{c}.
\end{equation}
We will regard $\beta_0$ as the critical Bond number: the range $\beta > \beta_0$ corresponds to the strong surface regime, and $\beta < \beta_0$ is the weak surface tension regime.   Heuristically, one expects that solitary waves will bifurcate from the trivial solutions at $\alpha_0$.  We will specifically be concerned with the strong surface tension case $\beta > \beta_0$.  

Apart from $\beta$ and $\alpha$, there are two  other physical parameters that have to be considered when studying interfacial waves: the density ratio $\varrho$ and the asymptotic height ratio $d$ given as follows
\begin{equation}\label{defintion of varrho and d}
\varrho:=\dfrac{\rho_+}{\rho_-}, \qquad d:=\dfrac{d_-}{d_+}.
\end{equation}
Unlike $\alpha$ and $\beta$, these are specific to the two-layer case.  Nilsson \cite{Nilsson2017} proved that for irrotational flow ($\omega_\pm=0$), when $\varrho-1/d^2<0$ and $O(1)$ as $\alpha \searrow \alpha_0$, there exist waves of depression $(\eta<0)$. On the other hand, if $\varrho-1/d^2>0$ and $O(1)$ as $\alpha \searrow \alpha_0$, then waves of elevation exist $(\eta>0)$. 

That said, our main result on existence of solitary waves is as follows.  
\begin{theorem}[Existence]\label{existence theorem}
Let $\Pi = \{(\rho_{\pm\epsilon},d_{\pm\epsilon},\omega_{\pm \epsilon},\sigma_\epsilon,c_\epsilon):0<\epsilon \ll 1\}$ be a smooth curve in the physical parameter space such that along $\Pi$, the corresponding Bond number is supercritical $\beta>\beta_0$, and the inverse-square Froude number is $\alpha=\alpha_0+\epsilon^2$.
Suppose that 
\begin{equation}
    \varrho-\dfrac{1}{d^2}+\dfrac{\omega_+d_+\varrho}{c}+\dfrac{\omega_-d_+}{cd}+\dfrac{\omega_+^2d_+^2\varrho}{3c^2}-\dfrac{\omega_-^2d_+^2}{3c^2}=O(1) \T{ as } \epsilon \searrow 0,
\end{equation}
where we have suppressed the $\epsilon$ dependence of the quantities on the left-hand side.  Then for any $k>1/2$ there exists a smooth curve of internal wave solutions
\begin{equation}
\mathcal{C}=\{(\eta_{\epsilon;\beta}, \xi_{+\epsilon;\beta}, \xi_{-\epsilon;\beta}):0<\epsilon \ll 1\}\subset H^{k+\frac{1}{2}}(\mathbb{R}) \times \left(\dot{H}^k(\mathbb{R}) \cap \dot{H}^{1/2}(\mathbb{R})\right)^2.
\end{equation} For every solution on the curve $\mathcal{C}$, the surface profile exhibits the following asymptotics:
\begin{equation}
\label{intro eta asymptotics}
    \eta_{\epsilon;\beta}(x)=\dfrac{d_+\epsilon^2 \sech^2{\left(\dfrac{\epsilon x}{2d_+\sqrt{\beta-\beta_0}}\right)}}{\varrho-\dfrac{1}{d^2}+\dfrac{\omega_+d_+\varrho}{c}+\dfrac{\omega_-d_+}{cd}+\dfrac{\omega_+^2d_+^2\varrho}{3c^2}-\dfrac{\omega_-^2d_+^2}{3c^2}}+O(\epsilon^3)
\end{equation}
in $H^{k+\frac{1}{2}}(\mathbb{R})$ as $\epsilon \searrow 0$.
\end{theorem}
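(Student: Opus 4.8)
The plan is to follow the spatial dynamics / center manifold reduction strategy of Nilsson~\cite{Nilsson2017}, treating the horizontal coordinate $x$ as the evolution variable. First I would recast the traveling-wave version of~\eqref{Euler-Incompressible in terms of Harmonic Conj} (with $\partial_t \to -c\,\partial_x$) as an autonomous evolution equation $\partial_x U = \mathcal{L}U + \mathcal{N}(U)$ on a suitable infinite-dimensional phase space, where $U$ packages the interface height $\eta$, the traces $\xi_\pm$, and the conjugate momenta needed to make the system first order. The vorticity terms $\omega_\pm$ enter both $\mathcal L$ (through the linear shear contribution visible in the dispersion relation~\eqref{dispersion relation}) and $\mathcal N$; the key structural point is that the reformulated system is still reversible and has a Hamiltonian/symplectic structure, since the underlying water-wave problem does, so that a center manifold reduction theorem (in the style of Mielke or Vanderbauwhede--Iooss) applies.

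The heart of the argument is the spectral analysis of $\mathcal L$ at the bifurcation point $\alpha = \alpha_0$ (equivalently $c = c_0$) with $\beta > \beta_0$ fixed. The condition~\eqref{definition of Alphanot and Betanot} guarantees that $k = 0$ is a root of the dispersion relation; I would show that in the supercritical regime $\beta > \beta_0$ this root has algebraic multiplicity exactly two (a Jordan block), and that all other eigenvalues of $\mathcal L$ stay a uniform distance off the imaginary axis. This spectral gap is precisely why $\beta > \beta_0$ is needed: it rules out the extra imaginary eigenvalues present when $\beta < \beta_0$. Introducing the small parameter $\epsilon$ via $\alpha = \alpha_0 + \epsilon^2$, the center manifold is two-dimensional, and the reduced flow on it is a planar reversible ODE. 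I would compute its Taylor expansion to second order; the normal form is the familiar
\begin{equation}
    A'' = \epsilon^2 \, a \, A + b \, A^2 + O(|A|^3 + \epsilon^2|A|^2 + \epsilon^4|A|),
\end{equation}
where $a = 1/(4 d_+^2(\beta - \beta_0))$ (extracted from $\partial_\alpha$ and $\partial_k^2$ of the dispersion relation at the origin) and the coefficient $b$ is the genuinely new computation: it is the quadratic coefficient of the reduced vector field, and tracking the $\omega_\pm$-dependent nonlinear terms in~\eqref{Euler-Incompressible in terms of Harmonic Conj-b}--\eqref{Euler-Incompressible in terms of Harmonic Conj-c} through the reduction yields exactly the denominator
\begin{equation}
    \varrho - \frac{1}{d^2} + \frac{\omega_+ d_+ \varrho}{c} + \frac{\omega_- d_+}{cd} + \frac{\omega_+^2 d_+^2 \varrho}{3c^2} - \frac{\omega_-^2 d_+^2}{3c^2}
\end{equation}
appearing in~\eqref{intro eta asymptotics}. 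The hypothesis that this quantity is $O(1)$ as $\epsilon \searrow 0$ is the nondegeneracy condition $b \neq 0$ uniformly, which is what makes the homoclinic orbit of the normal form survive.

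With $a > 0$ and $b \neq 0$ fixed in sign, the truncated normal form $A'' = \epsilon^2 a A + b A^2$ has an explicit homoclinic (solitary-wave) solution $A(x) = -(3\epsilon^2 a / 2b)\,\sech^2(\tfrac12 \epsilon \sqrt{a}\, x)$, symmetric under $x \mapsto -x$. A persistence argument — using reversibility to confine attention to the symmetric section and a standard perturbation/implicit-function theorem estimate on the full reduced equation — produces a genuine homoclinic orbit $O(\epsilon^3)$-close to this leading profile, giving the curve $\mathcal C$. Undoing the change of variables (the center manifold parametrization is smooth and tangent to the center subspace, so $\eta$ equals a constant multiple of $A$ to leading order, with the constant $d_+$ fixed by the normalization of the eigenvector) yields the asymptotic formula~\eqref{intro eta asymptotics}; reading off $\sqrt a = 1/(2d_+\sqrt{\beta-\beta_0})$ gives the stated argument of $\sech^2$, and the ratio $-3a/(2b)$ multiplied by the eigenvector normalization gives the stated amplitude $d_+\epsilon^2$ over the denominator above. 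Finally, elliptic regularity for $\phi_\pm$ on the (now known, smooth, localized) domain, together with trace estimates, places $\eta_{\epsilon;\beta} \in H^{k+1/2}(\mathbb R)$ and $\xi_{\pm\epsilon;\beta} \in \dot H^k \cap \dot H^{1/2}(\mathbb R)$ for any $k > 1/2$, matching the claimed function spaces.

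I expect the main obstacle to be the explicit computation of the quadratic coefficient $b$ of the reduced equation. Unlike the irrotational case treated in~\cite{Nilsson2017}, the constant-vorticity terms contribute at both linear and quadratic order — through the $\omega_\pm\eta\,\eta_x$ term in the kinematic condition~\eqref{Euler-Incompressible in terms of Harmonic Conj-b} and the $\rho\omega\psi$ and $\tfrac12\rho|\nabla\psi|^2$ terms in the dynamic condition~\eqref{Euler-Incompressible in terms of Harmonic Conj-c} — and these must be carefully expanded in the boundary-trace variable $\xi_\pm$ (requiring expansion of the Dirichlet--Neumann-type operators and the relation between $\phi$ and $\xi$) and then projected onto the two-dimensional center subspace. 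Organizing this bookkeeping so that the $\omega_\pm^2$ terms (which come from composing two first-order quadratic contributions through the reduction) assemble into the clean $\pm\omega_\pm^2 d_+^2/(3c^2)$ form is the delicate part; everything else is a routine, if lengthy, adaptation of known machinery.
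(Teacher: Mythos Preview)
Your overall strategy is exactly the paper's: spatial dynamics in the style of Nilsson, a $0^2$ resonance at $\alpha=\alpha_0$ with $\beta>\beta_0$, a two-dimensional center manifold, a quadratic normal form with explicit $\sech^2$ homoclinic, and persistence. You also correctly identify the computation of the quadratic coefficient as the real work, and the vorticity contributions you single out are precisely the ones that produce the denominator in~\eqref{intro eta asymptotics}.

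The one point that deserves attention is your choice of phase-space variables. Taking the boundary traces $\xi_\pm$ as unknowns makes the $x$-evolution nonlocal (the Dirichlet--Neumann operators are pseudodifferential in $x$), which does not fit the hypotheses of the Mielke center-manifold theorem used here. The paper avoids this by flattening each layer to the strip $z\in(0,1)$ and taking the full potentials $\phi_\pm(\cdot,z)$ (together with their Legendre-conjugate momenta $\Phi_\pm$) as the unknowns, so that the phase space is a product of Sobolev spaces on the cross-section and the system is a genuine local ODE in $x$; the spatial Hamiltonian is obtained from a Lagrangian (the flow force) by Legendre transform. This formulation introduces nonlinear boundary conditions at $z=1$, and a further explicit change of variables (called $\mathfrak{G}$) is needed to linearize them before the center-manifold theorem applies. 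After reduction the paper computes the reduced Hamiltonian directly, reads off the homoclinic of the truncated Hamilton equations, and recovers $\eta$ via the first component of the generalized eigenvector $e_1$. So your outline is right in spirit, but to carry it out you should switch to the flattened-strip variables and account for the boundary-condition linearization step.
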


We note that, the denominator in \eqref{intro eta asymptotics} determines whether the solution is a wave of elevation or depression for $\epsilon$ sufficiently small.  In contrast to the irrotational case, this will depend not only on the relative sizes of $\varrho$ and $d$, but also the strength of the vorticity in each layer.  It is also important to observe that, while $\epsilon$ is the appropriate parameter for proving existence, stability is best studied by fixing the physical parameters $\rho_\pm, d_\pm, \omega_\pm, \sigma$, and varying $c$.  Because $\epsilon = \sqrt{\alpha-\alpha_0}$, we can solve \eqref{Definition of beta and alpha} in terms of the wave speed and write $\alpha = \alpha_c$ and $\beta = \beta_c$.


The main result of the paper characterizes the conditional stability of these solutions in the orbital sense.  More precisely, we say a solitary wave $(\eta_c, \xi_{c+}, \xi_{c-})$ is \textit{conditionally orbitally stable} provided that, for all $R > 0$ and $r > 0$, there exists $r_0 > 0$ such that if $(\eta, \xi_+, \xi_-)$ is a solution to the internal wave problem on the time interval $[0,t_0)$ that obeys the a priori bound
\begin{equation}\label{well-posedness}
    \sup_{t\in[0,t_0)} \left(\norm{\eta(t)}_{H^{3+}}+\norm{\xi_+(t)}_{\dot{H}^{\frac{5}{2}+}\cap \dot{H}^{\frac{1}{2}+}}+\norm{\xi_-(t)}_{\dot{H}^{\frac{5}{2}+}\cap \dot{H}^{\frac{1}{2}+}}\right)<R,
\end{equation}
and whose initial data satisfies 
\begin{equation}\label{energy regularity}
    \norm{\eta(0)-\eta_c}_{H^1}+\norm{\xi_+(0)-\xi_{{c}_+}}_{\dot{H}^{\frac{1}{2}}}+\norm{\xi_-(0)-\xi_{{c}_-}}_{\dot{H}^{\frac{1}{2}}}<r_0,
\end{equation}
then
\begin{equation}\label{translation norm}
   \sup_{t\in [0,t_0)} \inf_{s \in \mathbb{R}}\left(\norm{\eta(t,\cdot-s)-\eta_c}_{H^1}+\norm{\xi_+(t,\cdot-s)-\xi_{{c}_+}}_{\dot{H}^{\frac{1}{2}}}+\norm{\xi_-(t,\cdot-s)-\xi_{{c}_-}}_{\dot{H}^{\frac{1}{2}}}\right)<r.
\end{equation}
The inequality \eqref{translation norm} measures the distance between the translated solutions $(\eta, \xi_+,\xi_-)$ and the family of traveling waves. The norms in \eqref{well-posedness} represents the lowest regularity required for local well-posedness of the Cauchy problem that is currently available. The meaning of the superscript (+) on the regularity will be made clear later in Section~\ref{hamiltonian formulation }. Furthermore, as we will see shortly, the regularity in \eqref{energy regularity} and \eqref{translation norm} matches the regularity of the energy space. 

Notice that this result is conditional in that we must assume a priori that the solution exists on a give time interval, since global well-posedness for the system is not known.   However, as $r_0$ is independent of the life span $t_0$, the bound in \eqref{translation norm} is substantially stronger result than merely continuity of the data-to-solution map.  In particular, if global existence is known, then we obtain orbital stability in the classical sense.

Conversely, we say a steady solution $(\eta_c, \xi_{c+}, \xi_{c-})$ is \textit{orbitally unstable} provided there exists $r > 0$ such that, for all $r_0 > 0$, there exists initial data 
\[
	(\eta(0), \xi_+(0), \xi_-(0)) \in H^1(\mathbb{R})) \times \left( \dot H^{\frac{1}{2}}(\mathbb{R}) \cap \dot H^{\frac{5}{2}}(\mathbb{R}))  \right)^2
\]
satisfying \eqref{energy regularity} and for which the corresponding solution $(\eta(t), \xi_+(t), \xi_-(t))$ to the Cauchy problem exits the tubular neighborhood of the solitary wave in finite time:
\[
	\inf_{s \in \mathbb{R}}\left(\norm{\eta(t,\cdot-s)-\eta_c}_{H^1}+\norm{\xi_+(t,\cdot-s)-\xi_{{c}_+}}_{\dot{H}^{\frac{1}{2}}}+\norm{\xi_-(t,\cdot-s)-\xi_{{c}_-}}_{\dot{H}^{\frac{1}{2}}}\right) > r
\]
for some time $t < \infty$.

We can now state our result on (conditional) orbital stability/instability. 
\begin{theorem}[Stability/instability]\label{stability criterion theorem}
Fix the physical parameters: $\rho_{\pm},d_{\pm},\omega_{\pm},\sigma$. There exists an explicit smooth function $m = m(c)$ such that, any sufficiently small-amplitude solitary internal wave $(\eta_c,\xi_{{c}_+},\xi_{{c}_-})$ with $\beta_c>\beta_0$ and $\alpha_c=\alpha_0+\epsilon^2$ is conditionally orbitally stable if $m'(c) > 0$ and orbitally unstable if $m'(c) < 0$. 
\end{theorem}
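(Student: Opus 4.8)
The plan is to follow the variational framework of Grillakis--Shatah--Strauss (GSS), adapted to the quasilinear, nonlocal setting of the two-layer problem as in Chen--Walsh~\cite{Ming--WalshOrbital2022}. First I would recast the dynamical problem in Hamiltonian form on the energy space $H^1 \times (\dot H^{1/2})^2$ (or its symplectic subspace), writing the equations of motion as $\partial_t u = J \, \mathrm{D}\mathcal{E}(u)$ for the energy functional $\mathcal{E}$ and a skew-adjoint operator $J$, and identifying the momentum functional $\mathcal{P}$ associated with the translation invariance $x \mapsto x - s$. Every solitary wave of speed $c$ is then a critical point of the augmented functional $\mathcal{E} - c\,\mathcal{P}$, i.e. $\mathrm{D}\mathcal{E}(u_c) = c \, \mathrm{D}\mathcal{P}(u_c)$. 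The scalar function governing stability should be the ``moment of instability'' $m(c) := \mathcal{E}(u_c) - c\,\mathcal{P}(u_c)$, whose derivative obeys $m'(c) = -\mathcal{P}(u_c)$ by the critical-point relation; the convexity/concavity of $m$, equivalently the sign of $m'(c) = -\mathcal{P}(u_c)$ as $c$ varies (or more precisely the sign dictating the Vakhitov--Kolokolov-type condition), is what must be computed. The conclusion ``stable if $m'(c)>0$, unstable if $m'(c)<0$'' is the standard GSS dichotomy once the spectral hypotheses below are verified.

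The key steps, in order, are: (i) verify the abstract GSS hypotheses for the Hamiltonian $(\mathcal{E},\mathcal{P},J)$ --- in particular that the Hessian $\mathcal{L}_c := \mathrm{D}^2(\mathcal{E} - c\mathcal{P})(u_c)$, restricted to the energy space, has exactly one negative eigenvalue, a one-dimensional kernel spanned by $\partial_x u_c$ (coming from translation), and the rest of its spectrum bounded away from zero; (ii) establish the spectral/coercivity estimate showing $\mathcal{L}_c$ is positive definite on the codimension-two subspace orthogonal to $\partial_x u_c$ and to $\mathrm{D}\mathcal{P}(u_c)$, \emph{provided} $m''(c) \neq 0$, with the sign of $m''(c)$ selecting stability versus instability; (iii) for the small-amplitude waves constructed in Theorem~\ref{existence theorem}, compute $\mathcal{P}(u_c)$ (hence $m'(c)$ and $m''(c)$) explicitly using the $\sech^2$ leading-order profile from \eqref{intro eta asymptotics}, thereby producing the explicit function $m = m(c)$ of the physical parameters; and (iv) run the GSS Lyapunov argument: for the stable case, construct a Lyapunov functional from $\mathcal{E} - c\mathcal{P}$ plus a correction controlling the translation mode, and use the conserved quantities together with the a priori bound \eqref{well-posedness} to close the estimate at the low regularity \eqref{energy regularity}; for the unstable case, construct the GSS escape-time functional and show orbits leave the tube in finite time.

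The main obstacle I expect is step (i)--(ii): the spectral analysis of the Hessian $\mathcal{L}_c$ in the presence of constant vorticity. Unlike the irrotational case, the vorticity terms $\omega_\pm$ enter the Bernoulli condition \eqref{Euler-Incompressible in terms of Harmonic Conj-c} through $\rho\omega\psi$ and the kinematic condition through $\omega_\pm \eta$, so the symplectic operator $J$ and the quadratic form $\mathrm{D}^2\mathcal{E}$ acquire non-symmetric, $\eta$-dependent cross terms coupling $\eta$ and $\xi_\pm$; verifying that $\mathcal{L}_c$ still has exactly one negative direction and a clean kernel requires a careful perturbative spectral argument anchored at $\epsilon = 0$, where the Hessian degenerates as $\beta \to \beta_0$. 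One must track how the $O(\epsilon^2)$ rescaling in \eqref{intro eta asymptotics} interacts with the spectrum --- essentially a long-wave/KdV-type scaling limit --- and show the negative eigenvalue and spectral gap persist uniformly in $\epsilon$ after appropriate renormalization. A secondary difficulty is the quasilinear, nonlocal nature of the flow: the Dirichlet--Neumann-type operators relating $\xi_\pm$ to the interior are $\eta$-dependent and only bounded between the fractional spaces of \eqref{well-posedness}, so closing the Lyapunov estimate in step (iv) at the energy regularity \eqref{energy regularity} --- rather than the higher regularity where well-posedness lives --- demands the a priori bound \eqref{well-posedness} be used precisely to control the error terms, exactly as in the ``conditional'' formulation of the theorem.
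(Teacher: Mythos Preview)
Your overall strategy is correct and coincides with the paper's: Hamiltonian reformulation, GSS-type variational framework, spectral analysis showing the Hessian of the augmented energy has Morse index one with simple kernel generated by translation, and explicit computation of the moment-of-instability derivative from the leading-order $\sech^2$ profile. Two points, however, need correction.

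First, you have misidentified the function $m$. You set $m(c) = \mathcal{E}(u_c) - c\,\mathcal{P}(u_c)$, which is what the paper calls $d(c)$, and then state the dichotomy as ``stable if $m'(c)>0$''. But GSS gives stability when $d''(c)>0$, not $d'(c)>0$; you yourself write $m''(c)$ correctly in step~(ii), so your exposition is internally inconsistent. In the paper, $m(c)$ is \emph{not} the moment of instability itself but the explicit leading-order part of $d'(c) = -P(U_c)$, extracted from the $\sech^2$ asymptotics (equation~\eqref{d'}); hence $m'(c)$ is a proxy for $d''(c)$, and the theorem's criterion ``stable if $m'(c)>0$'' is the standard convexity condition in disguise. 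Second, you do not address the structural obstruction to classical GSS: the Poisson map $J$ here is injective but not surjective (only dense range), and local well-posedness is available only at regularity strictly above the energy space. The paper resolves both issues by invoking the relaxed abstract theory of Varholm--Wahl\'en--Walsh~\cite{VarholWahlenWalsh2020}; this is precisely why the stability is only \emph{conditional}. Your step~(iv) alludes to the regularity mismatch but does not explain how the Lyapunov/instability arguments survive when $J$ fails to be an isomorphism --- without that machinery, the argument as written does not close.
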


The formula for $m$ is given in \eqref{d'}.  Because it is rather complicated, it is instructive to look at a few special cases.  If the Bond number  is sufficiently close to critical  $0 < \beta_c - \beta_0 \ll 1$, then the waves given by Theorem~\ref{stability criterion theorem} are always orbitally stable.   We can also obtain more definitive statements by assuming the vorticity in one layer is $0$.  Parameter regime for which the waves are stable are given in Figure~\ref{table1}, and unstable in Figure~\ref{table2}.   Observe that both involve the relative size of the density ratio and a non-dimensionalized measure of the vorticity strength in the rotational layer.  The criterion also changes depending on whether we are considering a wave of elevation or depression, which is determined by the sign of the denominator in \eqref{intro eta asymptotics}.  Note that when $\omega_+ = \omega_-=0$, we recover the result in \cite{Ming--WalshOrbital2022} that all sufficiently small-amplitude waves are orbitally stable.

%

\begin{figure}
\begin{tabular}{|c|c|}
\hline
 $\eta>0$ & $ \eta < 0$  \\
\hline
 $ \makecell{\omega_+=0,\; 2(\varrho-1)<\dfrac{c\omega_-}{g} \leq 0}$ & $\makecell{\omega_-=0,\; 2\dfrac{(1-\varrho)}{\varrho} >\dfrac{c\omega_+}{g}\geq 0}$ \\[2ex]

\hline
\end{tabular}
\caption{Stability}
\label{table1}
\end{figure}
%

\begin{figure}
\begin{tabular}{|c|c|}
\hline
 $\eta > 0$ & $\eta < 0$  \\
\hline
$\omega_-=0,\; c\omega_+> 0,\; 2\dfrac{(\varrho-1)}{\varrho} < \dfrac{c\omega_+}{g}$ &   $ \omega_+=0,\; c\omega_-< 0,\; 2(\varrho-1)>\dfrac{c\omega_-}{g}$\\[2ex]
\hline
\end{tabular}
\caption{Instability}
\label{table2}
\end{figure}

\subsection{Idea of the proof}
In Section~\ref{Existence}, we prove Theorem~\ref{existence theorem} on the existence of the small-amplitude internal wave solutions. Following the strategy of Nilsson \cite{Nilsson2017}, we write the corresponding steady water wave problem as a spatial dynamical  Hamiltonian system. This is obtained by identifying a Lagrangian (the flow force), then applying a Legendre transform to arrive at the desired Hamiltonian. In the strong surface tension regime, we find that the linearized operator at the trivial solution has a $0$ eigenvalue of multiplicity $2$, corresponding to a Hamiltonian $0^2$ resonance.  Performing a center manifold reduction, we show that solitary waves of elevation or depression exist when
\begin{equation}
    \varrho-\dfrac{1}{d^2}+\dfrac{\omega_+d_+\varrho}{c}+\dfrac{\omega_-d_+}{cd}+\dfrac{\omega_+^2d_+^2\varrho}{3c^2}-\dfrac{\omega_-^2d_+^2}{3c^2},
\end{equation}
is positive or negative, respectively.  Note that when $\omega_\pm=0$, this analysis coincides with \cite[Section 3.3]{Nilsson2017}.

Next, we consider the stability or instability of these waves. Similar to approach of the existence theory, we again exploit the Hamiltonian structure of problem \eqref{Euler-Incompressible in terms of Harmonic Conj}. This time, however, it is the Hamiltonian for the time-dependent problem rather than spatial dynamical. In Section~\ref{hamiltonian formulation }, we show that \eqref{Euler-Incompressible in terms of Harmonic Conj} can be written as
\begin{equation}\label{abstract hamiltonian}
    \partial_t u= J\T{D}E(u),
\end{equation}
where $u=u(t,x)$ is an unknown represented by $(\eta,\xi_+,\xi_-)$, $J$ is a skew-adjoint operator called the Poisson map, and $E$ is an energy functional. It is well known that the translation-invariant nature of the problem gives rise to another conserved quantity known as the momentum $P$. By construction, it is clear that a traveling (steady) wave solution is a critical point of the augmented Hamiltonian given by $E_c:=E-cP$.

The main technique used to prove Theorem~\ref{stability criterion theorem} is based on the seminal works of Grillakis, Shatah, and Strauss  \cite{GSS11990,GSS21990}.  Their approach, known as the GSS method, provides a systematic way to prove nonlinear stability/instability for Hamiltonian systems that are  invariant under a continuous symmetry group. Although GSS has been successfully used to treat many model equations for water waves, the full free boundary Euler system exhibits a number of features that have made it resistant to this machinery.  For instance, GSS requires the Poisson map $J$ to be an isomorphism, which is not satisfied in the present setup as we will see shortly. Further, they require the Cauchy problem to be globally well-posed in the energy space.  Currently, only local well-posedness of \eqref{Euler-Incompressible in terms of Harmonic Conj} is known, and this assumes considerably higher regularity.  

In recent work, Varholm, Wahlén, and Walsh [VWW20] obtained a variant of the GSS method with hypotheses sufficiently relaxed so that it can be applied directly to the water wave problem. Their framework allows for the Poisson map $J$ to only have a dense range. It also permits the mismatch between the local well-posedness space and the energy space that the water wave problem possess. This theory was also the basis for the paper of Chen and Walsh \cite{Ming--WalshOrbital2022} on irrotational internal waves.  For the benefit of the reader, an abbreviated statement of the abstract result is given in Section~\ref{Stability}.

To apply this machinery to our problem, the main difficulty is to characterize the spectrum of the linearized augmented Hamiltonian at a traveling wave.  Introducing vorticity increases the complexity of the calculations substantially.  However, after a series of nontrivial computations, we find that the linearized operator at a sufficiently small-amplitude wave has Morse index $1$, as required by the abstract theory.  This analysis is carried out in Section~\ref{Spectral Analysis}. Then, by the general theory, stability or instability of the wave is determined by the convexity or concavity of the moment of instability, a scalar-valued function of the wave speed. In Section~\ref{proof of theorem}, we then prove the statement regarding conditional stability/instability in Theorem~\ref{stability criterion theorem}.  We emphasize that these are considerably more involved than the irrotational regime, and in particular both stable and unstable waves exist, which points toward the importance of including vortical effects in the model.   

%
%

\section{Existence theory} \label{Existence}
We start this section by proving the existence of small amplitude solitary water waves.  In doing that, we pattern the approach presented in \cite{Nilsson2017}. In comparison to \cite{Nilsson2017}, due to the rotational assumption, the equations that we have to deal with are several order more complex. Seeking traveling wave solutions, we impose a change of variables  $(t,x,y)\mapsto(x-ct,y)$. The main governing equation \eqref{Euler-Incompressible in terms of Harmonic Conj} can then be recast in terms of the relative streamfunctions $\psi_\pm$ and posed in a frame of reference moving with the waves as follows

\begin{equation}\label{govening equation in terms of the relative streamfunction}
 \begin{aligned}
    \Delta \psi_\pm&=-\omega_{\pm} & \T{in}\; &\Omega_\pm, \\
    \psi_\pm&=\mp m_\pm &\T{on}\; &y=\pm d_{\pm},
    \\
    \psi_\pm&=0 &\T{on}\; &y=\eta(x),\\
    \jump{\frac{1}{2} \rho |\nabla \psi|^2 +g \rho \eta} &=- \sigma\left(\dfrac{\eta_x}{\langle\eta_x\rangle}\right)_x + \mathcal{Q} & \T{on}\;& y=\eta(x),
\end{aligned} 
\end{equation}
for some constants $m_{\pm}$ together with the following asymptotic condition
\[\eta \rightarrow 0 \quad \text{as}\; x\rightarrow \infty.\] The variable $\mathcal{Q}$ in \eqref{govening equation in terms of the relative streamfunction} is the hydraulic head constant.

Next, we introduce the following non-dimensionalized variables 
\begin{equation}
    (x',y')=\frac{1}{d_+}(x,y),\qquad \eta'(x')=\frac{\eta(x)}{d_+},\qquad {\psi}'_\pm(x',y')=\frac{{\psi}_\pm(x,y)}{d_+c}.
\end{equation}
 Under these variables, the problem now reads
\begin{equation}
\left\{ \begin{aligned}
    \Delta \psi_+&=-\dfrac{\omega_{+}d_+}{c} &\T{for}\; &\eta(x)<y<1,\\
    \Delta \psi_-&=-\dfrac{\omega_{-}d_+}{c} &\T{for}\; &d<y<\eta(x), \\
    \psi_+&=\dfrac{- m_+}{cd_+} &\T{on}\; &y=1,
    \\
    \psi_-&=\dfrac{ m_-}{cd_+} &\T{on}\; &y=-d,
    \\
    \psi_\pm&=0 &\T{on}\; &y=\eta(x),
    \\
    \jump{\frac{1}{2} \rho c^2|\nabla \psi|^2 +g \rho d_+ \eta} + \dfrac{\sigma}{d_+}\left(\dfrac{\eta_x}{\langle\eta_x\rangle}\right)_x &= \mathcal{Q} &\T{on}\; &y=\eta(x),
\end{aligned} \right.
\end{equation}
where we have dropped the $'$ for notational convenience.

Next, to obtain the harmonic function $\tilde{\psi}$, we subtract the shear flow from $\psi$:
\[\tilde{\psi}_\pm:=\psi_\pm +\dfrac{\omega_\pm d_+ y^2}{2c}.
\]
Clearly, $\tilde{\psi}_\pm$ is harmonic in both layers, that is 
\begin{equation}\label{nabla tilde psi}
    \Delta \tilde{\psi}=0\quad \T{in}\; \Omega_\pm.
\end{equation}
Moreover, the boundary conditions on the rigid walls and internal interface, respectively, become  
\begin{equation}\label{boundary conditions on walls}
\begin{aligned}
\tilde{\psi}_+&=0  & \T{on}\; &y=1,
\\
\tilde{\psi}_-&=0 & \T{on}\;& y=-d,
\end{aligned}
\end{equation}
and 
\begin{equation}\label{boundary conditions on eta}
\begin{aligned}
    \tilde{\psi}_\pm&=\dfrac{\omega_\pm d_+\eta^2}{2c} & \T{on}\; &y=\eta(x),\\
    \jump{\frac{1}{2} \rho c^2 |\nabla \tilde{\psi}|^2-c\rho\omega d_+ \eta \partial_y\tilde{\psi} + \dfrac{1}{2}\rho \omega^2 d_+^2 \eta^2 +g \rho d_+ \eta} &=- \dfrac{\sigma}{d_+}\left(\dfrac{\eta_x}{\langle\eta_x\rangle}\right)_x + \mathcal{Q} & \T{on}\; &y=\eta(x).
\end{aligned} 
\end{equation}

Rather than work with $\tilde{\psi}$, we will use its harmonic conjugate $\phi$ to reformulate  equations \eqref{nabla tilde psi}, \eqref{boundary conditions on walls}, and \eqref{boundary conditions on eta}. One can view $\phi$ as the velocity potential. Using the fact that $\phi_{\pm_{x}}=\tilde{\psi}_{\pm_{y}}$ and $\phi_{\pm_{y}}=-\tilde{\psi}_{\pm_{x}}$, the equations now become
\begin{equation}
\left\{ \begin{aligned}
    \Delta \phi_\pm&=0&\T{in}\; &\Omega_\pm, \\
    \phi_{+_{y}}&=0 &\T{on}\; &y=1,
    \\
   \phi_{-_{y}}&=0 &\T{on}\; &y=-d,
    \\
    \phi_{\pm_{y}}&=\phi_{\pm_{x}}\eta_x-\dfrac{\omega_\pm d_+\eta\eta_x}{c}-\eta_x & \T{on}\; &y=\eta(x),\\
    \Bigl\llbracket\frac{1}{2} \rho c^2 |\nabla \phi|^2-c\rho\omega d_+ \eta \partial_x\phi \Bigr. \\ \Bigl. \qquad + \dfrac{1}{2}\rho \omega^2 d_+^2 \eta^2 +g \rho d_+ \eta\Bigr \rrbracket &=- \dfrac{\sigma}{d_+}\left(\dfrac{\eta_x}{\langle\eta_x\rangle}\right)_x + \mathcal{Q} & \T{on}\; &y=\eta(x).
\end{aligned} \right.
\end{equation}

Consider the following rescaling of the domain via the mapping $(x,y) \mapsto (x,z)$, where
\begin{equation}
    z(x,y):=\begin{cases}
    \dfrac{y-1}{\eta(x)-1} \qquad \text{for } \eta(x)<y<1,\\ \\
    \dfrac{y+d}{\eta(x)+d} \qquad \text{for } -d<y<\eta(x).
    \end{cases}
\end{equation}
As a result, we have the following change of variables formulas
\begin{equation}\label{change of variables derivative formulas}
\begin{aligned}
     \partial_y&=\dfrac{1}{\eta+d}\partial_z,\qquad
    \partial_y=\dfrac{1}{\eta-1}\partial_z,\\
    \partial_x&=\partial_X -\dfrac{z\eta_x}{\eta-1}\partial_z, \qquad \partial_x=\partial_X -\dfrac{z\eta_x}{\eta+d}\partial_z.
\end{aligned}
\end{equation}
Recycling notations, let us define $\phi_\pm(x,z):=\phi_\pm(x,y)$. 
Under the derivative formulas \eqref{change of variables derivative formulas}, equation \eqref{nabla tilde psi} now read
\begin{equation}
    \phi_{+_{xx}}-\dfrac{2z\eta_x}{\eta-1} \phi_{+_{xz}} -\dfrac{z\eta_{xx}}{\eta- 1}\phi_{+_{z}}+\dfrac{2z\eta_x^2}{(\eta-1)^2} \phi
_{+_{z}}\\+\dfrac{z^2\eta_x^2}{(\eta- 1)^2}\phi_{+_{zz}}+\dfrac{1}{(\eta- 1)^2}\phi_{+_{zz}}=0 \qquad 0<z<1,
\end{equation}
\begin{equation}
    \phi_{-_{xx}}-\dfrac{2z\eta_x}{\eta+ d} \phi_{-_{xz}} -\dfrac{z\eta_{xx}}{\eta + d}\phi_{-_{z}}+\dfrac{2z\eta_x^2}{(\eta + d)^2} \phi
_{-_{z}}+\dfrac{z^2\eta_x^2}{(\eta + d)^2}\phi_{-_{zz}}+\dfrac{1}{(\eta+ d)^2}\phi_{-_{zz}}=0 \qquad 0<z<1.
\end{equation}
The boundary conditions on the rigid walls \eqref{boundary conditions on walls} and the internal interface \eqref{boundary conditions on eta} translate to 
\begin{equation}
    \phi_{\pm_{z}}=0 \qquad \text{ on }z=0,
\end{equation}
and 
\begin{equation}
\begin{aligned}
    \phi_{+_{z}} &=(\eta-1)\left(\dfrac{-\omega_+d_+\eta\eta_x}{c}+\phi_{+_{x}}\eta_x -\dfrac{\eta_x^2\phi_z}{\eta-1}-\eta_x\right),\qquad \T{on } z=\eta,
    \\\phi_{-_{z}} &=(\eta+d)\left(\dfrac{-\omega_-d_+\eta\eta_x}{c}+\phi_{-_{x}}\eta_x -\dfrac{\eta_x^2\phi_z}{\eta+d}-\eta_x\right),\qquad \T{on } z=\eta,
    \\\varrho &\Bigg[\frac{1}{2} \left(\phi_{+_{x}}-\dfrac{ \eta_x\phi_{+_{z}}}{\eta- 1}\right)^2+\frac{1}{2}\left(\dfrac{\phi_{+_{z}}}{(\eta-1)}
         \right)^2-\dfrac{\omega_+ d_+ \eta}{c}\left(\phi_{+_{x}}-\dfrac{z\eta_x}{\eta-1} \phi_{+_{z}} \right) +\dfrac{1}{2}\dfrac{ \omega_+^2 d_+^2 \eta^2}{c^2}\Bigg]\\&-\Bigg[\frac{1}{2} \left(\phi_{-_{x}}-\dfrac{ \eta_x\phi_{-_{z}}}{\eta+ d}\right)^2+\frac{1}{2} \left(\dfrac{\phi_{-_{z}}}{(\eta+ d)}
         \right)^2-\dfrac{\omega_- d_+ \eta}{c}\left(\phi_{-_{x}}-\dfrac{z\eta_x}{\eta+h} \phi_{-_{z}} \right)+\dfrac{1}{2}\dfrac{\omega_-^2  d_+^2 \eta^2}{c^2}\Bigg] 
        \\&= \alpha \eta - \beta \left(\dfrac{\eta_x}{\langle\eta_x\rangle}\right)_x + \mathcal{Q},\qquad \T{on } z=\eta,
\end{aligned}
\end{equation}
where $\alpha, \beta$, and $\varrho$ are variables defined earlier in \eqref{Definition of beta and alpha} and \eqref{defintion of varrho and d}. The energy can be formulated as
\begin{equation*}
\begin{aligned}
        E&=K+V\\&= \dfrac{c^2 d_+^2\rho_+}{2}\int_{\mathbb{R}} \int_{0}^{1}\left(\left(\phi_{+_{x}}-\dfrac{ z\eta_x\phi_{+_{z}}}{\eta- 1}-\dfrac{\omega_+ d_+(z(\eta-1)+1)}{c}\right)^2+\left(\dfrac{\phi_{+_{z}}}{(\eta-1)}\right)^2\right)
        (1-\eta)\; dz \; dx
        \\&\quad \dfrac{c^2 d_-^2\rho_-}{2}\int_{\mathbb{R}} \int_{0}^{1}\left(\left(\phi_{-_{x}}-\dfrac{z \eta_x\phi_{-_{z}}}{\eta+d}-\dfrac{\omega_- d_+(z(\eta+d)-d)}{c}\right)^2+ \left(\dfrac{\phi_{-_{z}}}{(\eta+d)}\right)^2\right)(d+\eta)\; dz\;dx \\&\quad-\dfrac{1}{2}g d_+^3 \jump{\rho}\int_{\mathbb{R}} \eta^2\;dx +\sigma d_+ \int_{\mathbb{R}}\left( \sqrt{1+\eta_x^2}-1\right)\; dx.
\end{aligned}
\end{equation*}
Further, the momentum $P$ is given by
\begin{equation*}
\begin{aligned}
    P&=d_+^2 c \int_{\mathbb{R}}\int_{0}^1 \rho_+\left(\phi_{+_{x}}-\dfrac{ z\eta_x\phi_{+_{z}}}{\eta- 1}-\dfrac{\omega_+ d_+(z(\eta-1)+1)}{c}\right)
        (1-\eta) \;dz\; dx \\&\quad+d_+^2c\int_{\mathbb{R}} \int_{0}^1 \rho_-\left(\phi_{-_{x}}-\dfrac{z \eta_x\phi_{-_{z}}}{\eta+d}-\dfrac{\omega_- d_+(z(\eta+d)-d)}{c}\right)(d+\eta) \;dz\;dx.
\end{aligned}
\end{equation*}

From many literature, it is known that solitary waves can be detected by looking at the critical points of the functional $E-cP$. For this reason, we will study the Hamiltonian that arises from taking the Lagrangian of $E-cP$. Using the expressions for $E$ and $P$, we state the functional
\[
\begin{aligned}
        &E-cP= \\&d_+^2c^2\rho_-\Biggl[\dfrac{\varrho}{2} \int_{\mathbb{R}}\int_{0}^{1}\left(\left(\phi_{+_{x}}-\dfrac{ z\eta_x\phi_{+_{z}}}{\eta- 1}-\dfrac{\omega_+ d_+(z(\eta-1)+1)}{c}-1\right)^2\right.\\&\left. 
        \phantom{\int_{\mathbb{R}}\int_{0}^{1}\left(\phi_{+_{x}}-\dfrac{ z\eta_x\phi_{+_{z}}}{\eta- 1}-\dfrac{\omega_+)}{c}-1\right)^2}
        +\left(\dfrac{\phi_{+_{z}}}{(\eta-1)}\right)^2 \right)(1-\eta)\; dz\;dx\Biggr. \\& \Biggl.
        \phantom{d_+^2c^2\rho_-}
        \quad+ \dfrac{1}{2} \int_{\mathbb{R}}\int_{0}^{1}\left(\left(\phi_{-_{x}}-\dfrac{z \eta_x\phi_{-_{z}}}{\eta+d}-\dfrac{\omega_- d_+(z(\eta+d)-d)}{c}-1\right)^2\right. \\& \left.
        \phantom{\int_{\mathbb{R}}\int_{0}^{1}\left(\phi_{-_{x}}-\dfrac{z \eta_x\phi_{-_{z}}}{\eta+d}-\dfrac{\omega_-}{c}-1\right)^2}
         \;+\left(\dfrac{\phi_{-_{z}}}{(\eta+d)}\right)^2\right)(d+\eta)\; dz\;dx \Biggr. \\& \Biggl. 
         \phantom{d_+^2c^2\rho_-}
         -\int_{\mathbb{R}} \left(\dfrac{1}{2}\alpha  \eta^2 -\beta \left( \sqrt{1+\eta_x^2}-1\right)+\dfrac{\varrho}{2}(1-\eta)+\dfrac{1}{2}(\eta+d)\right)\;dx \Biggr].
\end{aligned}
\]
From that, we derive the corresponding Lagrangian
\begin{equation}\label{lagrangian}
\begin{split}
        L&=\dfrac{\varrho}{2} \int_{0}^{1}\left(\left(\phi_{+_{x}}-\dfrac{ z\eta_x\phi_{+_{z}}}{\eta- 1}-\dfrac{\omega_+ d_+(z(\eta-1)+1)}{c}-1\right)^2+\left(\dfrac{\phi_{+_{z}}}{(\eta-1)}\right)^2 \right)(1-\eta)\; dz\\&
        + \dfrac{1}{2} \int_{0}^{1}\left(\left(\phi_{-_{x}}-\dfrac{z \eta_x\phi_{-_{z}}}{\eta+d}-\dfrac{\omega_- d_+(z(\eta+d)-d)}{c}-1\right)^2 +\left(\dfrac{\phi_{-_{z}}}{(\eta+d)}\right)^2\right)(d+\eta)\; dz\\& 
        -\dfrac{1}{2}\alpha \eta^2 +\beta \left( \sqrt{1+\eta_x^2}-1\right)-\dfrac{\varrho}{2}(1-\eta)-\dfrac{1}{2}(\eta+d).
\end{split}
\end{equation}
In order to formulate the correct Hamiltonian, we need to know variational derivatives of $L$ with respect to the individual variable $\phi_{+_{x}}, \phi_{-_{x}}$, and $\eta_x$,
\begin{equation}\label{variational derivatives}
    \begin{split}
        \Phi_+:=\dfrac{\delta L}{\delta \phi_{+_{x}}}&=\varrho(1-\eta)\left(\phi_{+_{x}}-\dfrac{ z\eta_x\phi_{+_{z}}}{\eta- 1}-\dfrac{\omega_+ d_+(z(\eta-1)+1)}{c}-1\right),\\
        \Phi_-:=\dfrac{\delta L}{\delta \phi_{-_{x}}}&=(\eta+d)\left(\phi_{-_{x}}-\dfrac{z \eta_x\phi_{-_{z}}}{\eta+d}-\dfrac{\omega_- d_+(z(\eta+d)-d)}{c}-1\right),\\ 
        \gamma:=\dfrac{\delta L}{\delta \eta_x}&=\int_{0}^1 \varrho\left(\phi_{+_{x}}-\dfrac{ z\eta_x\phi_{+_{z}}}{\eta- 1}-\dfrac{\omega_+ d_+(z(\eta-1)+1)}{c}-1\right)z \phi_{+_{z}}\;dz \\& \quad-\int_{0}^1  \left(\phi_{-_{x}}-\dfrac{z \eta_x\phi_{-_{z}}}{\eta+d}-\dfrac{\omega_- d_+(z(\eta+d)-d)}{c}-1\right)z\phi_{-_{z}}\;dz+\beta \dfrac{\eta_x}{\langle\eta_x\rangle} \\&=-\int_{0}^1 \dfrac{\Phi_+z \phi_{+_{z}}}{\eta-1}\;dz-\int_{0}^1 \dfrac{\Phi_-z \phi_{-_{z}}}{\eta+d}\;dz+\beta \dfrac{\eta_x}{\langle\eta_x\rangle}.
    \end{split}
\end{equation}
Having the information above, the Hamiltonian can , therefore, be expressed in terms of $u=(\eta,\gamma,\phi_+,\Phi_+,\phi_-,\Phi_-)$ as follows

\begin{equation}\label{original hamiltonian}
\begin{split}
      H(u)&=\int_{0}^1 \Phi_+ \phi_{+_{x}}\;dz +\int_{0}^1 \Phi_- \phi_{-_{x}} \;dz +\gamma \eta_x -L\\&=\int_{0}^1\dfrac{1}{2\varrho(1-\eta)}\left(\left(\Phi_++\varrho(1-\eta)\right)^2-\varrho^2\phi_{+_{z}}^2\right)\;dz\\&
      \quad+\int_{0}^1\dfrac{1}{2(d+\eta)}\left(\left(\Phi_-+(d+\eta)\right)^2-\phi_{-_{z}}^2\right)\;dz\\& 
      \quad+\int_{0}^1\dfrac{\Phi_+\omega_+ d_+(z(\eta-1)+1)}{c}\;dz+\int_{0}^1\dfrac{\Phi_-\omega_- d_+(z(\eta+d)-d)}{c}\;dz \\&
      \quad-\sqrt{\beta^2-\bar{\gamma}^2}+\beta-\dfrac{\alpha\eta^2}{2},
\end{split}
\end{equation}where,
\[\bar{\gamma}=\gamma +\int_{0}^1 \dfrac{\Phi_+z \phi_{+_{z}}}{\eta-1}\;dz+\int_{0}^1 \dfrac{\Phi_-z \phi_{-_{z}}}{\eta+d}\;dz.\]

To formalize this, for $s\geq 0$, we define the following product spaces
\begin{equation*}
    \mathscr{X}_s=\mathbb{R} \times \mathbb{R} \times H^{s+1}(0,1) \times H^{s+1}(0,1)\times H^{s+1}(0,1)\times H^{s+1}(0,1).
\end{equation*} We would like to point out that the symbol $H^{s+1}$ refers to a Sobolev space of order $s+1$, not the Hamiltonian $H$.
Further, we let $\widehat{M}=\mathscr{X}_0$ be a manifold with $m\in \widehat{M}$ and let $v=(\eta,\gamma, \phi_+, \Phi_+,\phi_-,\Phi_-) \in T_m\widehat{M}$. On $T_m\widehat{M} \times T_m\widehat{M}$, consider the position independent symplectic form 
\begin{equation}\label{symplectic form}
    \widehat{\Omega}(v,v^*)=\gamma^*\eta-\eta^*\gamma+ \int_{0}^1 \left(\Phi_+^*\phi_+-\phi_+^*\Phi_+\right)\;dz+ \int_{0}^1 \left(\Phi_-^*\phi_--\phi_-^*\Phi_-\right)\;dz.
\end{equation} One may observe that $(\widehat{M},\widehat{\Omega})$ is a symplectic manifold. The corresponding set
\[
\widehat{N}=\{m \in \widehat{M}:|\bar{\gamma}|<\beta, -d<\eta<1\}
\] is a manifold domain of $\widehat{M}$ where the Hamiltonian $H$ is a smooth functional on it (i.e., $H\in C^{\infty}(\widehat{N},\mathbb{R})$). Hence, the tuple $(\widehat{M},H,\widehat{\Omega})$
forms a Hamiltonian system.
Via the symplectic form \eqref{symplectic form} and standard computations,  the associated Hamilton's equations read
\begin{equation}\label{Hamilton's equation}
    \begin{aligned}
          \dot{\eta}&=\dfrac{\bar{\gamma}}{\sqrt{\beta^2-\bar{\gamma}^2}},\\\dot{\gamma}&= \int_{0}^1\bigg[-\dfrac{\varrho}{2(1-\eta)^2}\left(\dfrac{\Phi_+^2}{\varrho^2}-\phi_{+_{z}}^2\right)+\dfrac{\varrho}{2}\bigg]\;dz\\ &\quad +\int_{0}^1\bigg[\dfrac{1}{2(d+\eta)^2}\left(\Phi_-^2-\phi_{-_{z}}^2\right)-\dfrac{1}{2}\bigg]\;dz-\int_{0}^1\dfrac{\Phi_+\omega_+ d_+z}{c}\;dz-\int_{0}^1\dfrac{\Phi_-\omega_- d_+z}{c}\; dz\\&\quad +\dfrac{\bar{\gamma}}{\sqrt{\beta^2-\bar{\gamma}^2}}\left(\int_{0}^1\dfrac{z\phi_{+_{z}}\Phi_+}{(1-\eta)^2}\;dz+\int_{0}^1\dfrac{z\phi_{-_{z}}\Phi_-}{(d+\eta)^2}\;dz\right)+\alpha\eta,\\ \dot{\phi}_+&=\dfrac{1}{\eta-1}\left(\dfrac{-\Phi_+}{\varrho}+(\eta-1)+\dfrac{\bar{\gamma}z\phi_{+_{z}}}{\sqrt{\beta^2-\bar{\gamma}^2}} \right)+\dfrac{\omega_+ d_+(z(\eta-1)+1)}{c},\\
          \dot{\Phi}_+&=\dfrac{1}{\eta-1}\left(\dfrac{\bar{\gamma}(z\Phi_+)_z}{\sqrt{\beta^2-\bar{\gamma}^2}}+\varrho\phi_{+_{zz}}\right),\\ \dot{\phi}_-&=\dfrac{1}{\eta+d}\left(\Phi_-+(\eta+d)+\dfrac{\bar{\gamma}z\phi_{-_{z}}}{\sqrt{\beta^2-\bar{\gamma}^2}} \right)+\dfrac{\omega_- d_+(z(\eta+d)-d)}{c},\\ \dot{\Phi}_-&=\dfrac{1}{\eta+d}\left(\dfrac{\bar{\gamma}(z\Phi_-)_z}{\sqrt{\beta^2-\bar{\gamma}^2}}-\phi_{-_{zz}}\right),
    \end{aligned}
\end{equation} where the Hamiltonian vector field also satisfies the corresponding boundary conditions
\begin{equation}
    \begin{aligned}
       &\varrho\phi_{+_{z}}(1)=-\dfrac{\bar{\gamma}\Phi_+(1)}{\sqrt{\beta^2 -\bar{\gamma}^2}},\\& \phi_{-_{z}}(1)=\dfrac{\bar{\gamma}\Phi_-(1)}{\sqrt{\beta^2 -\bar{\gamma}^2}},\\& \phi_{\pm_{z}}(0)=0.
    \end{aligned}
\end{equation}

In order to set a firmer ground for the latter analysis, we define the product space 
\begin{equation}
    Y_s=\mathbb{R} \times \mathbb{R} \times H^{s+1}(0,1) \times H^{s+1}_0(0,1) \times H^{s+1}(0,1) \times H^{s+1}_0(0,1),
\end{equation}
where $H^{s+1}_0(0,1)=\{f \in H^{s+1}(0,1) : f(0)=f(1)=0\}$. Additionally, let us also define these spaces
\begin{equation}
    \begin{split}
    &M=\{m\in \mathbb{R}^2 \times H^{1}(0,1)^4: \Gamma_+(0)=\Gamma_-(0)=\int_{0}^{1} \bar{\phi}_+\; dz=\int_{0}^1 \bar{\phi}_- \;dz=0\},\\&
    \tilde{M}=\{m \in Y_0: \int_{0}^{1} \bar{\phi}_+\; dz=\int_{0}^1 \bar{\phi}_- \;dz=0\},\\&
    \tilde{N}=\{m \in \tilde{M}: |\bar{\gamma}|<\beta, -d<\eta<1\},
\end{split}
\end{equation} where $m=(\eta, \gamma, \bar{\phi}_+, \Gamma_+, \bar{\phi}_-,\Gamma_-)$.
Going back to the Hamilton's equations \eqref{Hamilton's equation}, we can see that it has an equilibrium point 
\[
\begin{pmatrix}\eta\\ \gamma\\ \phi_+\\ \Phi_+\\ \phi_-\\ \Phi_-\end{pmatrix}=\begin{pmatrix}0\\0\\0\\ \dfrac{\omega_+d_+\varrho(z-1)}{c}-\varrho\\ 0\\ \dfrac{w_-d_+d^2(1-z)}{c}-d
\end{pmatrix}
.\] 

However, in preparation for the Hamiltonian reduction process on a center manifold, we need to shift the equilibrium point obtained before to the origin $(0,0,0,0,0,0)$. To achieve that, we impose another change of variables for the unknowns $\Phi_{\pm}, \bar{\phi}_{\pm}$, and $\chi_{\pm}$
\begin{equation}\label{new variable}
\begin{aligned}
&\Gamma_+:=\int_{0}^z(\Phi_+ + \varrho-\dfrac{\omega_+d_+\varrho(s-1)}{c})\;ds,
\\&\Gamma_-:=\int_{0}^z(\Phi_- +d-\dfrac{w_-d_+d^2(1-s)}{c})\;ds,\\
&\bar{\phi}_+:=\phi_+-\chi_+,
\\&\bar{\phi}_-:=\phi_--\chi_-,
\\&\chi_+:=\int_{0}^1\phi_+\;dz,
\\&\chi_-:=\int_{0}^1\phi_-\;dz.
\end{aligned}
\end{equation}

\pagebreak
This gives rise to a new formulation of the Hamiltonian equation.
 Precisely, one may think of this change of variables as a mapping that sends $(\eta, \gamma, \phi_+,\Phi_+,\phi_-,\Phi_-) \in \widehat{M}$ to $(\eta, \gamma, \bar{\phi}_+,\Gamma_+,\bar{\phi}_-,\Gamma_-,\chi_+, \chi_-)\in M \times \mathbb{R}^2$.
Thus, in the new variables the new symplectic form $\tilde{\Omega}$ becomes
\begin{equation}
\begin{split}
    \tilde{\Omega}(v,v^*)&=\omega^*\eta-\eta^*\omega + \int_{0}^1 \left(\Gamma_{+z}^*\bar{\phi}_+-\bar{\phi}_+^*\Gamma_{+z}\right)\;dz+ \int_{0}^1 \left(\Gamma_{-z}^*\bar{\phi}_--\bar{\phi}_-^*\Gamma_{-z}\right)\;dz\\& \quad +\Gamma_{+}^*(1)\chi_+ -\chi_+^*\Gamma_+(1)+\Gamma_{-}^*(1)\chi_- -\chi_-^*\Gamma_-(1).
\end{split}
\end{equation}

Further, using variables in \eqref{new variable}, the Hamiltonian now reads
\begin{equation}\label{second Hamiltonian}
\begin{aligned}
      H&=\int_{0}^1 \dfrac{1}{2\varrho(1-\eta)}\left(\left(\Gamma_{+_{z}} +\dfrac{\omega_+d_+\varrho(z-1)}{c}-\eta\varrho\right)^2-\varrho^2\bar{\phi}_{+_{z}}^2\right) dz\\& \quad +\int_{0}^1 \dfrac{1}{2(d+\eta)}\left(\left(\Gamma_{-_{z}} +\dfrac{w_-d_+d^2(1-z)}{c}+\eta\right)^2-\bar{\phi}_{-_{z}}^2\right) dz\\&\quad+\dfrac{1}{c}\int_{0}^1\left(\Gamma_{+_z} -\varrho + \dfrac{\omega_+d_+\varrho(z-1)}{c}\right)\omega_+ d_+(z(\eta-1)+1)\;dz\\&\quad+\dfrac{1}{c}\int_{0}^1 \left(\Gamma_{-_z} -d + \dfrac{\omega_-d_+d^2(1-z)}{c}\right)\omega_- d_+(z(\eta+d)-d)\;dz\\&\quad-\sqrt{\beta^2-\Bar{\gamma}^2} +\beta - \dfrac{\alpha}{2}\eta^2+\dfrac{\omega_+^2d_+^2\varrho}{6c^2}+\dfrac{\omega_-^2d_+^2d^3}{6c^2}+\dfrac{\omega_+d_+\varrho}{2c}-\dfrac{\omega_-d_+d^2}{2c},
\end{aligned}
\end{equation}
where
\[\bar{\gamma}=\gamma +\int_{0}^1 \dfrac{z\bar{\phi}_{+_{z}}}{\eta-1}\left(\Gamma_{+_{z}}+\dfrac{\omega_+d_+\varrho(z-1)}{c}-\varrho\right)\;dz + \int_{0}^1 \dfrac{z\bar{\phi}_{-_{z}}}{\eta+d}\left(\Gamma_{-_{z}}+\dfrac{w_-d_+d^2(1-z)}{c}-d\right)\; dz.\]
Notice that we have added the constants in the definition of the Hamiltonian so that $H(0)=0$.

The new Hamiltonian stucture  \eqref{second Hamiltonian} gives rise to the new Hamilton's equations which are given by
\begin{equation}\label{Final Hamilton's equations}
\begin{aligned}
    \dot{\eta}&= \bar{\gamma} (\beta^2-\bar{\gamma}^2)^{-1/2},\\
          \dot{\gamma}&= \int_{0}^1\bigg[-\dfrac{\varrho}{2(1-\eta)^2}\left(\dfrac{(\Gamma_{+_{z}}+\dfrac{\omega_+d_+\varrho(z-1)}{c}-\varrho)^2}{\varrho^2}-\bar{\phi}_{+_{z}}^2\right)+\dfrac{\varrho}{2}\bigg]\;dz\\ &\quad +\int_{0}^1\bigg[\dfrac{1}{2(d+\eta)^2}\left((\Gamma_{-_{z}}+\dfrac{\omega_-d_+d^2(1-z)}{c}-d)^2-\bar{\phi}_{-_{z}}^2\right)-\dfrac{1}{2}\bigg]\;dz\\ &\quad-\int_{0}^1\dfrac{\left(\Gamma_{+_{z}}+\dfrac{\omega_+d_+\varrho(z-1)}{c}-\varrho\right)\omega_+ d_+z}{c}\;dz\\&\quad-\int_{0}^1\dfrac{\left(\Gamma_{-_{z}}+\dfrac{\omega_-d_+d^2(1-z)}{c}-d\right)\omega_- d_+z}{c}\; dz\\&\quad +\dfrac{\bar{\gamma}}{\sqrt{\beta^2-\bar{\gamma}^2}}\Biggl(\int_{0}^1\dfrac{z\bar{\phi}_{+_{z}}(\Gamma_{+_{z}}+\dfrac{\omega_+d_+\varrho(z-1)}{c}-\varrho)}{(1-\eta)^2}\;dz \Biggr. \\& \phantom{+\dfrac{\bar{\gamma}}{\sqrt{\beta^2-\bar{\gamma}^2}}}\qquad \qquad \Biggl.+\int_{0}^1\dfrac{z\bar{\phi}_{-_{z}}(\Gamma_{-_{z}}+\dfrac{\omega_-d_+d^2(1-z)}{c}-d)}{(d+\eta)^2}\;dz\Biggr)+\alpha\eta,
\\ 
\dot{\bar{\phi}}_+&=\dfrac{1}{\eta-1}\left(\dfrac{-\Gamma_{+_{z}}}{\varrho}+\dfrac{\Gamma_+(1)}{\varrho}-\dfrac{\omega_+d_+(2z-1)}{2c}+\dfrac{\bar{\gamma}\left(z\bar{\phi}_{+_{z}}-\bar{\phi}_+(1)\right)}{\sqrt{\beta^2-\bar{\gamma}^2}} \right)
\\&\quad +\dfrac{\omega_+d_+(\eta-1)(2z-1)}{2c},
\\
\dot{\Gamma}_+&=\dfrac{1}{\eta-1}\left(\dfrac{\bar{\gamma}z}{\sqrt{\beta^2-\bar{\gamma}^2}}(\Gamma_{+_{z}}-\varrho+\dfrac{\omega_+d_+\varrho(z-1)}{c})+\varrho\bar{\phi}_{+_{z}}\right),
\\ 
\dot{\bar{\phi}}_-&=\dfrac{1}{\eta+d}\left(\Gamma_{-_{z}}-\Gamma_-(1)+\dfrac{\omega_-d_+d^2(1-2z)}{2c}+\dfrac{\bar{\gamma}\left(z\bar{\phi}_{-_{z}}-\bar{\phi}_-(1)\right)}{\sqrt{\beta^2-\bar{\gamma}^2}} \right)
\\&\quad+\dfrac{\omega_-d_+(\eta+d)(2z-1)}{2c},\\ \dot{\Gamma}_-&=\dfrac{1}{\eta+d}\left(\dfrac{\bar{\gamma}z}{\sqrt{\beta^2-\bar{\gamma}^2}}(\Gamma_{-_{z}}-d+\dfrac{\omega_-d_+d^2(1-z)}{c})-\bar{\phi}_{-_{z}}\right),\\
\end{aligned}
\end{equation}
along with 
\begin{equation}\label{equation for chi}
\begin{split}
     \dot{\chi}_+&=\dfrac{1}{\eta-1}\left(\dfrac{-\Gamma_+(1)}{\varrho}+\eta+\dfrac{\bar{\gamma}\bar{\phi}_{+}(1)}{\sqrt{\beta^2-\bar{\gamma}^2}}\right),\\
           \dot{\chi}_-&=\dfrac{1}{\eta+d}\left(\Gamma_-(1)+\eta+\dfrac{\bar{\gamma}\bar{\phi}_{-}(1)}{\sqrt{\beta^2-\bar{\gamma}^2}}\right),
\end{split}
\end{equation}
    and the boundary conditions
\begin{equation} \label{boundary conditions}
    \begin{aligned}
        &\bar{\phi}_{+_{z}}(1)=-\dfrac{\bar{\gamma}\left(\Gamma_{+_{z}}(1)-\varrho\right)}{\varrho\sqrt{\beta^2 -\bar{\gamma}^2}},\\& \bar{\phi}_{-_{z}}(1)=\dfrac{\bar{\gamma}\left(\Gamma_{-_{z}}(1)-d\right)}{\sqrt{\beta^2 -\bar{\gamma}^2}},\\& \bar{\phi}_{\pm_{z}}(0)=0.
    \end{aligned}
\end{equation}
Note that, the two equations in \eqref{equation for chi} can be neglected since they can be recovered from the rest of the equations in \eqref{Final Hamilton's equations}.

We now proceed with linearizing \eqref{Final Hamilton's equations} around the equilibrium point $(0,0,0,0,0,0)$. This leads us to the linearized problem stated in terms of the operator $\mathcal{L}$ 
\begin{equation}\label{lineraized operator}
\begin{aligned}
    \mathcal{L}\begin{bmatrix} \eta \\
           \gamma \\
           \bar{\phi}_+ \\
           \Gamma_+\\
           \bar{\phi}_-\\
           \Gamma_-
           \end{bmatrix}= \begin{bmatrix}
           \dfrac{1}{\beta}\left(\gamma+\varrho\bar{\phi}_+(1)+\int_0^1\dfrac{2z\bar{\phi}_+\omega_+d_+\varrho}{c}\;dz+\int_0^1\dfrac{2z\bar{\phi}_-\omega_-d_+d}{c}\;dz -\bar{\phi}_-(1)\right)\\2\int_0^1 \Gamma_+\dfrac{\omega_+d_+}{c}\;dz+2\int_0^1 \Gamma_-\dfrac{\omega_-d_+}{c}\;dz \\+\left[-\dfrac{1}{\varrho}\left(\dfrac{\omega_+^2d_+^2\varrho^2}{3c^2}+\dfrac{\omega_+ d_+ \varrho^2}{c}+\varrho^2\right)-\dfrac{1}{d^3}\left(\dfrac{\omega_-^2d_+^2d^4}{3c^2}-\dfrac{\omega_-d_+d^3}{c}+d^2\right)+\alpha \right]\eta
\\
           \dfrac{\Gamma_{+_{z}}}{\varrho}+\dfrac{\omega_+d_+\eta(2z-1)}{c}
\\
           \dfrac{z(\varrho-\dfrac{\omega_+d_+\varrho(z-1)}{c})}{\beta}\Biggl(\gamma+\varrho\bar{\phi}_+(1)+\int_0^1\dfrac{2z\bar{\phi}_+\omega_+d_+\varrho}{c}\;dz\Biggr.\\
           \Biggl.\phantom{\dfrac{z(\varrho-\dfrac{\omega_+d_+\varrho(z-1)}{c})}{\beta}}\qquad \qquad+\int_0^1\dfrac{2z\bar{\phi}_-\omega_-d_+d}{c}\;dz -\bar{\phi}_-(1)\Biggr)-\varrho\bar{\phi}_{+_{z}}\\\dfrac{\Gamma_{-_{z}}}{d}+\dfrac{\omega_-d_+\eta(2z-1)}{c}
\\
           \dfrac{z(-d+\dfrac{\omega_-d_+d^2(1-z)}{c})}{d\beta}\Biggl(\gamma+\varrho\bar{\phi}_+(1)+\int_0^1\dfrac{2z\bar{\phi}_+\omega_+d_+\varrho}{c}\;dz\Biggr.\\\Biggl.\phantom{\dfrac{z(-d+\dfrac{\omega_-d_+d^2(1-z)}{c})}{d\beta}} \qquad \qquad +\int_0^1\dfrac{2z\bar{\phi}_-\omega_-d_+d}{c}\;dz -\bar{\phi}_-(1)\Biggr)-\dfrac{\bar{\phi}_{-_{z}}}{d}
           \end{bmatrix},
\end{aligned}
\end{equation}
coupled with the linearized boundary conditions
\begin{equation}\label{linearized boundary conditions}
    \begin{aligned}
        \bar{\phi}_{+_{z}}(1)&=\dfrac{1}{\beta}\left(\gamma+\varrho\bar{\phi}_+(1)+\int_0^1\left(\dfrac{2z\bar{\phi}_+\omega_+d_+\varrho}{c}+\dfrac{2z\bar{\phi}_-\omega_-d_+d}{c}\right)\;dz -\bar{\phi}_-(1)\right) ,\\\bar{\phi}_{-_{z}}(1)&= \dfrac{-d}{\beta}\left(\gamma+\varrho\bar{\phi}_+(1)+ \int_0^1\left(\dfrac{2z\bar{\phi}_+\omega_+d_+\varrho}{c}+\dfrac{2z\bar{\phi}_-\omega_-d_+d}{c}\right)\;dz  -\bar{\phi}_-(1)\right),\\ \bar{\phi}_{\pm_{z}}(0)&=0.
    \end{aligned}
    \end{equation}
    
    Consider the eigenvalue problem $\mathcal{L}u=\lambda u$, together with the boundary conditions \eqref{linearized boundary conditions}. Upon setting $\lambda=ik$, we obtain the dispersion relation
    \begin{equation}
        \alpha + \beta k^2 = \dfrac{k\varrho}{\tanh{(k)}}+\dfrac{k}{\tanh{(kd)}}+\left( \dfrac{\omega_+d_+\varrho}{c}-\dfrac{ \omega_-d_+}{c}\right).
    \end{equation}We would like to mention that this dispersion relation is equivalent and consistent to the dispersion relation obtained in \eqref{dispersion relation}

Our next objective in the construction of small-amplitude solutions is to apply the center manifold approach due to Mielke \cite{mielke1988} to the system \eqref{lineraized operator} and \eqref{linearized boundary conditions}. For convenience, the main approach used for this is outlined in the theorem stated in Appendix~\ref{quoted results appendixa}, which is a version used, for instance, in \cite[Section 3]{Nilsson2017}. Due to nonlinear boundary conditions \eqref{linearized boundary conditions}, however, we are not able to crudely implement the theorem right away. As an intermediate step, we do change of variables via the operator $\mathfrak{G}$.  This linearizes the boundary conditions at the cost of complication of the problem in the bulk. Explicitly, the operator $\mathfrak{G}$ takes the form

\begin{equation}\label{operator G}
\mathfrak{G}(\eta,\gamma,\bar{\phi}_+, \Gamma_+,\bar{\phi}_-,\Gamma_-)=(\eta,\nu,\varphi_+,\Gamma_+,\varphi_-,\Gamma_-),
\end{equation}
where,
\begin{equation}\label{variables for boundary linearization}
\left\{\begin{aligned}
    \nu&=\varrho \bar{\phi}_+(1) -\bar{\phi}_-(1),\\
    \varphi_+&=\varrho \bar{\phi}_+ +W \left(A[\Gamma_+](z)-\dfrac{\varrho}{2}(z^2-\frac{1}{3}) \right),\qquad 
     \varphi_-=\bar{\phi}_- -W \left(A[\Gamma_-](z)-\dfrac{d}{2}(z^2-\frac{1}{3}) \right),
\end{aligned}\right.
\end{equation} and

\[
\left\{\begin{aligned}
W&=\dfrac{\bar{\gamma}}{\sqrt{\beta^2-\bar{\gamma}^2}},\\
A[f](s)&=\int_0^z sf_s\;ds-\int_0^1\int_0^z s f_s(s)\; ds\; dz.
\end{aligned}\right.
\]
One may check easily that $\varphi_{\pm_z} (1)=\varphi_{\pm_z}(0)=0$. Further, via the definition of $\varphi_{\pm}$ in \eqref{variables for boundary linearization}, one can check that
\[\int_0^1 \varphi_\pm\; dz=0
\quad \text{provided} \quad \int_0^1 \bar{\phi}_\pm\; dz =0
.\]

It is also worth noting that the operator $\mathfrak{G}$ is invertible in some neighborhood of the origin and its inverse is explicitly given by
\[
\mathfrak{G^{-1}} \begin{bmatrix}
\eta\\ \nu\\ \varphi_+\\ \Gamma_+\\ \varphi_-\\\Gamma_-)
\end{bmatrix}= 
\begin{bmatrix} 
\eta\\
\frac{\beta R}{\sqrt{1+R^2}}-\text{I}-\text{II}\\
\frac{\varphi_+}{\varrho}-\frac{R}{\varrho}\left(A[\Gamma_+](z)-\frac{\varrho}{2}(z^2-\frac{1}{3})\right)\\\Gamma_+\\
\varphi_-+R\left(A[\Gamma_-](z)-\frac{d_+}{2}(z^2-\frac{1}{3})\right)\\
\Gamma_-
\end{bmatrix} 
,\]
where
\[
\begin{aligned}
\text{I}&=\displaystyle \int_0^1 \dfrac{z}{\eta-1} \left( \frac{\varphi_{+_z}}{\varrho} - Rz(\Gamma_{+_z} -\varrho +\dfrac{\omega_+d_+\varrho(z-1)}{c})\right)\left(\Gamma_{+_z} -\varrho + \frac{w_+d_+\varrho}{c}(z-1)\right)\;dz,\\
\text{II}&=\displaystyle \int_0^1 \dfrac{z}{\eta+d} \left(\varphi_{-_z} - Rz(\Gamma_{-_z} -d+\dfrac{\omega_-d_+d^2(1-z)}{c})\right)\left(\Gamma_{-_z} -d + \frac{w_-d_+d^2}{c}(1-z)\right)\;dz,\\
R&=\dfrac{\varphi_+(1)-\varphi_-(1)-\nu}{A[\Gamma_+](1)+A[\Gamma_-](1)-\frac{\varrho+d}{3}}.
\end{aligned}
\]
In this new coordinate system, the Hamiltonian exhibits a new expression:
\begin{equation}
\begin{aligned}
      H&=\int_{0}^1 \dfrac{1}{2\varrho(1-\eta)}\left(\left(\Gamma_{+_{z}} +\dfrac{\omega_+d_+\varrho(z-1)}{c}-\eta\varrho\right)^2-\left(\varphi_{+_z}-Rz(\Gamma_{+_z}-\varrho)\right)^2\right) dz\\& \quad +\int_{0}^1 \dfrac{1}{2(d+\eta)}\left(\left(\Gamma_{-_{z}} +\dfrac{w_-d_+d^2(1-z)}{c}+\eta\right)^2-\left(\varphi_{-_z}+Rz\left(\Gamma_{-_z}-d\right)\right)^2\right) dz\\&\quad+\dfrac{1}{c}\int_{0}^1\left(\Gamma_{+_z} -\varrho + \dfrac{\omega_+d_+\varrho(z-1)}{c}\right)\omega_+ d_+(z(\eta-1)+1)\;dz\\&\quad+\dfrac{1}{c}\int_{0}^1 \left(\Gamma_{-_z} -d_+ + \dfrac{\omega_-d_+d^2(1-z)}{c}\right)\omega_- d_+(z(\eta+d)-d)\;dz\\&\quad-\dfrac{\beta}{\sqrt{1+R^2}} +\beta + \dfrac{\alpha}{2}\eta^2 +\dfrac{\omega_+^2d_+^2\varrho}{6c^2}+\dfrac{\omega_-^2d_+^2d^3}{6c^2}+\dfrac{\omega_+d_+\varrho}{2c}-\dfrac{\omega_-d_+d^2}{2c}.
\end{aligned}.
\end{equation}
Consequently, the Hamilton's equations now become
\begin{equation}
    \begin{aligned}
          \dot{\eta}&=R,\\\dot{\nu}&= \frac{1}{\eta-1}\left(-\Gamma_{+_z}(1) + \frac{\omega_+d_+}{2c}+R\left[-R(\Gamma_{+_z}-\varrho)-\varphi_+(1)+R\left(A[\Gamma_+](1)-\frac{\varrho}{3}\right)\right]\right)\\&\quad-\frac{1}{\eta+d}\left(-\Gamma_{-_z}(1) + \frac{\omega_-d_+d^2}{2c}+R\left[R(\Gamma_{-_z}-d)-\varphi_-(1)-R\left(A[\Gamma_-](1)-\frac{d}{3}\right)\right]\right),\\
          \dot{\varphi}_+&=\frac{1}{\eta-1}\Bigl(-\Gamma_{+_z}+\frac{\omega_+d_+(2z-1)}{2c}+R\Bigl[z\varphi_{+_z}-Rz^2\left(\Gamma_{+_z}-\varrho\right) \Bigr.\Bigr.
          \\ &\Bigl.\Bigl.\quad-\varphi_+(1) +R\left(A[\Gamma_+](1)-\frac{\varrho}{3}\right)\Bigr]\Bigr)\\&\quad+\frac{\dot{\bar{\gamma}}(1+R^2)^{3/2}}{\beta} \left(A[\Gamma_+](z)-\frac{\varrho}{2}(z^2-1/3)\right)+\frac{RA[\varphi_{+_z}](z)}{\eta-1},\\
          \dot{\Gamma}_+&=\frac{1}{\eta-1} \varphi_{+_z},\\
          \dot{\varphi}_-&=\frac{1}{\eta+d}\Bigl(-\Gamma_{-_z}+\frac{\omega_-d_+d^2(1-2z)}{2c}+R\Bigl[z\varphi_{-_z}+Rz^2(\Gamma_{-_z}-d)  \Bigr.\Bigr.
          \\ &\Bigl.\Bigl.\qquad-\varphi_-(1) -R(A[\Gamma_-](1)-\frac{d}{3})\Bigr]\Bigr)\\&\qquad+\frac{\dot{\bar{\gamma}}(1+R^2)^{3/2}}{\beta} \left(A[\Gamma_-](z)-\frac{d}{2}(z^2-1/3)\right)+\frac{RA[\varphi_{-_z}](z)}{\eta+d},\\
           \dot{\Gamma}_-&=\frac{-1}{\eta+d} \varphi_{-_z},
    \end{aligned}
    \end{equation}
    where
\[
\dot{\bar{\gamma}}=(1+R^2)\left(\dfrac{(\Gamma_{+_z}+\dfrac{\omega_+d_+\varrho(z-1)}{c}-\varrho)^2}{2\varrho(\eta-1)^2}+\dfrac{(\Gamma_{-_z}+\dfrac{\omega_-d_+d^2(1-z)}{c}-d)^2}{2(\eta+d)^2}\right)+\dfrac{\varrho-1}{2}+\alpha\eta.
\]

Recall that we are interested in the solutions that satisfy the condition $(\beta,\alpha)=(\beta,\alpha_0)+(0,\epsilon^2)$ with $\beta >\beta_0$ where these parameters are defined in \eqref{Definition of beta and alpha}. It can be shown that the imaginary part of the spectrum of the linearized operator $\mathcal{L}$ consists of zero, which is an eigenvalue of (algebraic) multiplicity 2 when $ \alpha=\alpha_0$ and $\beta=\beta_0$, as given by \eqref{definition of Alphanot and Betanot}. The associated eigenvector and the generalized eigenvector, namely $e_1$ and $e_2$, of the zero eigenvalue are then computed. Explicitly, they take the form
\begin{equation}\label{eigenvectors}
    e_1=\begin{pmatrix} 1 \\ 0 \\0 \\ \dfrac{1}{c} \omega_+ d_+ \varrho (z-z^2) \\ 0 \\ \dfrac{1}{c}\omega_-d_+d(z-z^2) \end{pmatrix},\
 e_2=\begin{pmatrix} 0 \\ \beta -\dfrac{\varrho+d}{3}-(\dfrac{\omega_+d_+\varrho-\omega_-d_+d^2}{12c}) \\ \dfrac{z^2-1/3}{2} \\ 0 \\ \dfrac{-d(z^2-1/3)}{2} \\ 0 \end{pmatrix}.
\end{equation}
 It is straightforward to check that $\mathcal{L}e_1=0$ and $\mathcal{L}e_2=e_1$ with $\tilde{\Omega}(e_1,e_2)=\beta -\dfrac{\varrho +d}{3}=:\beta_*$, provided $\alpha=\varrho+\dfrac{1}{d}+\dfrac{\omega_+d_+\varrho}{c}-\dfrac{\omega_-d_+}{c}$.
 
 Let 
 \[v_1=\dfrac{e_1}{\sqrt{\beta_*}} \; \text{and} \; v_2=\dfrac{e_2}{\sqrt{\beta_*}}
 .\] 
 They, indeed, form a symplectic basis of  the vector space spanned by the eigenvectors $e_1$ and $e_2$. Let us define $f_i:=d\mathfrak{G}(0)(v_i)$.
 Upon applying the center manifold theorem along with Darboux's theorem, we obtain a Hamiltonian system $(X^{\mu}_C, \Psi,\tilde{H}^{\mu})$,
\[X^{\mu}_C=\{u_1+r(u_1,\mu):u_1 \in \tilde{U_1}\}\]
 and $\tilde{U}_1$ is a neighborhood of 0 as stated in Appendix~\ref{quoted results appendixa}. We would like to note here that all hyphothesis $H1-H4$ in the center manifold theorem are satisfied. As a conclusion, we obtain small bounded solutions on the two-dimensional center manifold. Precisely, every solution $u_1$ can be respresented as
 \[
 u_1=(q,p)=qf_1+pf_2,
 \]
 where
\begin{equation}
     f_1=\dfrac{1}{\sqrt{\beta_*}} \begin{pmatrix} 1 \\ 0 \\0 \\ \dfrac{1}{c} \omega_+ d_+ \varrho (z-z^2) \\ 0 \\ \dfrac{1}{c}\omega_-d_+d(z-z^2) \end{pmatrix},\
 f_2= \dfrac{1}{\sqrt{\beta_*}} \begin{pmatrix} 0 \\ \dfrac{\varrho+d}{3}\\ 0 \\ 0 \\ 0 \\ 0 \end{pmatrix}.
\end{equation} 
Upon completing a number of nontrivial computations, the Taylor expansion of the reduced Hamiltonian is derived and explicitly given by

\begin{equation}
\begin{split}
        \tilde{H}^{\mu}(q,p)&=\dfrac{1}{2}p^2-\dfrac{1}{2\beta_*}\epsilon^2 q^2+\dfrac{\varrho-\dfrac{1}{d^2}+\dfrac{\omega_+d_+\varrho}{c}+\dfrac{\omega_-d_+}{cd}+\dfrac{\omega_+^2d_+^2\varrho}{3c^2}-\dfrac{\omega_-^2d_+^2}{3c^2}}{2\beta_*^{2}}q^3\\& \quad +O(|p||(p,q)||\epsilon^2,p,q|)+O(|(p,q)|^2|(\epsilon^2,p,q)|^2) .
\end{split}
\end{equation}
From there, we obtain the corresponding Hamilton's equations
\begin{equation}
    \begin{aligned}\label{reduced hamilton's eqs}
    q_x&=p+O(|(p,q)||\epsilon^2,p,q|), \\
    p_x&=\dfrac{\epsilon^2 q}{\beta_*}+\dfrac{-\varrho+\dfrac{1}{d^2}-\dfrac{\omega_+d_+\varrho}{c}-\dfrac{\omega_-d_+}{cd}-\dfrac{\omega_+^2d_+^2\varrho}{3c^2}+\dfrac{\omega_-^2d_+^2}{3c^2}}{2\beta_*^{3/2}}q^2 &\\&\quad +O(|p||\epsilon^2,p,q|)+O(|(p,q)|^2|(\epsilon^2,p,q)|^2).
    \end{aligned}
\end{equation}
Consider the following rescaling
\begin{equation}
    X=\dfrac{\epsilon}{\sqrt{\beta_*}}x, \quad q(x)=\beta_*^2\epsilon^2 Q(X), \quad p(x)=\epsilon^{3}\beta_*^{3/2}P(X).
\end{equation}
Under these rescaling, the Hamilton's equations in \eqref{reduced hamilton's eqs} read

\begin{equation}
    \begin{aligned}\label{rescalled hamilton's eq}
        Q_X&= P + O(\epsilon),\\
        P_X&= Q+\dfrac{3K(\varrho,d,\omega_+,\omega_-,c)}{2}Q^2 +O(\epsilon),
    \end{aligned}
\end{equation} 
where 
\begin{equation}
    K(\varrho,d,\omega_+,\omega_-,c)=\beta_*^{3/2}\left(-\varrho+\dfrac{1}{d^2}-\dfrac{\omega_+d_+\varrho}{c}-\dfrac{\omega_-d_+}{cd}-\dfrac{\omega_+^2d_+^2\varrho}{3c^2}+\dfrac{\omega_-^2d_+^2}{3c^2}\right).
\end{equation}
Upon truncating the rescaled Hamilton's equations in \eqref{rescalled hamilton's eq}, we obtain 
\begin{equation}
        Q_X= P,\qquad
        P_X= Q+\dfrac{3K(\varrho,d,\omega_+,\omega_-,c)}{2}Q^2,
\end{equation} 
which has solutions
\begin{equation}
    \begin{aligned}
        Q(X)&= \dfrac{-\sech^2(X/2)}{K(\varrho,d,\omega_+,\omega_-,c)},\\
        P(X)&= \dfrac{\sech^2(X/2)\tanh(X/2)}{K(\varrho,d,\omega_+,\omega_-,c)}.
    \end{aligned}
\end{equation} 

Thanks to the structure of the symplectic basis in \eqref{eigenvectors}, we then obtain the profile of $\eta$ in the original variables
\[
\eta(x)=\dfrac{d_+\epsilon^2 \sech^2(\dfrac{\epsilon x}{2d_+\sqrt{\beta_*}})}{\varrho-\dfrac{1}{d^2}+\dfrac{\omega_+d_+\varrho}{c}+\dfrac{\omega_-d_+}{cd}+\dfrac{\omega_+^2d_+^2\varrho}{3c^2}-\dfrac{\omega_-^2d_+^2}{3c^2}}+O(\epsilon^3).
\]
Observe that, depending on the sign of the denominator in the expression above, we obtain a wave of depression or elevation. Hence, the proof of Theorem~\ref{existence theorem} is now complete.

\section{Stability}\label{Stability}
\subsection{General theory}\label{general theory}
Having proved the existence of small-amplitude waves, in the remaining part of the work, we will investigate the aspect concerning their orbital stability/instability. For that, we are using the general theory stated in \cite{VarholWahlenWalsh2020}, which is a variant of the well known GSS machinery introduced in \cite{GSS11990, GSS21990}. As it is to any mathematical approach, there are a number of preliminary assumptions that first have to hold before applying the theory. For the water wave problem, however, there are some conditions that obstruct a direct use of the classical GSS approach. The variant in \cite{VarholWahlenWalsh2020} essentially solves these issues by weakening some of the requirements in GSS which then permits its application to the water wave problem. Although, the assumptions are weakened, the final conclusions of the both approaches in \cite{VarholWahlenWalsh2020} and \cite{GSS11990, GSS21990} remain the same. We have outlined all the required hypothesis below and we will refer to them again later in the paper. For more in-depth and detailed explanations on the general theory, see \cite[Section 2]{VarholWahlenWalsh2020} and the references therein.
\begin{assumption}[Spaces]\label{assumption on spaces}
Let $\mathbb{X}, \mathbb{V}, \mathbb{W}$ be spaces defined by \eqref{space X},\eqref{space V}, and \eqref{space W}. Assume there exists a  constant $C>0$ and $\theta \in (0,1]$, so that the following inequality holds
\begin{equation}
    \norm{u}^{3}_{\mathbb{V}}\leq C \norm{u}^{2 +\theta}_{\mathbb{X}}\norm{u}^{1-\theta}_{\mathbb{W}}, \text{ for any } u\in \mathbb{W}.
\end{equation}

Let $\mathcal{O} \subset \mathbb{X}$ be an open set where solutions live. Assume that $J: \mathcal{D}(J) \subset \mathbb{X}^* \to \mathbb{X}$ is a closed linear operator and for any $u\in \mathcal{O}\cap \mathbb{V}$.
\end{assumption}
\begin{assumption}[Poisson map] \label{assumption of poisson map}
\
\begin{enumerate}
    \item The domain $\mathcal{D}(J)$ is dense in $\mathbb{X}^*$.
    \item $J$ is injective.
    \item For each $u \in \mathcal{O}\cap \mathbb{V}$, $J(u)$ is skew-adjoint, that is
    \begin{equation}
    \langle J(u)v,w \rangle=-\langle v, J(u)w \rangle,
    \end{equation}
    for all $v,w \in \mathcal{D}(J).$
\end{enumerate}

\end{assumption}
\begin{assumption}[Derivative extension]\label{assumption on Derivative extension}
Assume that there exist (extension) mappings  $\nabla E, \nabla P \in C^0(\mathcal{O}\cap \mathbb{V};\mathbb{X}^*)$ of $\T{D}E(u)$ and $\T{D}P(u)$ respectively for all $u \in \mathcal{O} \cap \mathbb{V}.$ 
\end{assumption}

Suppose also that there exists a family of affine maps $T(s):\mathbb{X} \to \mathbb{X}$ parameterized by $s$ where the linear part $dT(s):=T(s)u-T(s)0$ enjoys a number of properties.
\begin{assumption}[Symmetry group]\label{assumption on symmetry group}
The symmetry group $T(\cdot)$ satisfies
\begin{enumerate}
    \item (Invariance) The neighborhood $\mathcal{O}$, and the subspaces $\mathbb{V}$ and $\mathbb{W}$, are all invariant under the symmetry group. Moreover $I^{-1} \mathcal{D}(J)$ is invariant under the linear symmetry group i.e. $\mathcal{D}(J)$ is invariant under the adjoint $dT^*(s):\mathbb{X}^* \to \mathbb{X}^*$.
    \item (Flow property) Assume $T(0)=dT(0)=\text{Id}_{X}$, for any $r,s \in \mathbb{R}$, we have
    \begin{equation}
        T(s+r)=T(s)T(r), \qquad dT(s+r)=dT(s)dT(r).
    \end{equation}
    \item (Unitary) The linear part $dT(s)$ is a unitary operator on $\mathbb{X}$ and an isometry on $\mathbb{V}$ and $\mathbb{W}$ for each $s\in \mathbb{R}$.
    \item (Strong continuity) The symmetry group is strongly continuous on $\mathbb{X},\mathbb{V}$, and $\mathbb{W}$.
    \item (Affine part) The function $T(\cdot)0$ belongs to $C^3(\mathbb{R}; \mathbb{W})$ and there exists an increasing function $\iota:[0,\infty) \to [0,\infty)$ such that 
    \begin{equation}
        \lVert T(s)0\rVert_{\mathbb{W}} \neq \iota (\lVert T(s)0\rVert_{\mathbb{X}}), \quad \T{for all } s\in \mathbb{R}.
    \end{equation}
     \item (Commutativity with  $J$) For all $s \in \mathbb{R},$
     \begin{equation}
         JI T(s)= T(s) JI.
     \end{equation}
     \item (Infinitesimal generator) The infinitesimal generator of $T$ is the affine mapping
     \begin{equation}
     T'(0)u=\lim_{s\to 0}\dfrac{T(s)u-u}{s}=dT'(0) +T'(0)0,
     \end{equation}
     with dense domain $\mathcal{D}(T'(0)) \subset \mathbb{X}$ consisting of all $u \in\mathbb{X}$ such that the limit above exists in $\mathbb{X}$ (similarly for the spaces $\mathbb{V}$ and $\mathbb{W}$). Assume also that $\nabla P(u) \in \mathcal{D} (J)$ and that 
     \begin{equation}
         T'(0)u=J(u)\nabla P(u),
     \end{equation}
     for all $u \in \mathcal{D}(T'(0)|_{\mathbb{V}}) \cap \mathcal{O}.$
     \item (Density) The subspace
     \begin{equation}
     \mathcal{D}(T'(0))|_{\mathbb{W}} \cap \text{Rng} J
     \end{equation}
     is dense in $\mathbb{X}$.
     \item (Conservation) For all $u \in \mathcal{O} \cap \mathbb{V}$, the energy is conserved by the flow of the symmetry group, meaning
     \begin{equation}
         E(u)=E(T(s)u), \qquad \T{for all } s\in \mathbb{R}.
     \end{equation}
\end{enumerate}
We call $u\in C^1(\mathbb{R}; \mathcal{O}\cap \mathbb{W})$ to be a \textit{bound state} of the abstract Hamiltonian \eqref{abstract hamiltonian}, if $u$ is a solution of the form
\begin{equation}
    u(t)=T(ct)U,
\end{equation}
for some $c\in \mathbb{R}$ and $U\in \mathcal{O} \cap \mathbb{W}$.
\end{assumption}
We would like to point out that the water wave problem in the present setting satisfies Assumption \ref{assumption on symmetry group}. In order to see this, one has to go through a series of lengthy, yet elementary, computations. Therefore, we will avoid checking the majority of  the requirements in Assumption \ref{assumption on symmetry group} here. Instead, we will focus more on showing that Assumption \ref{assumption on symmetry group}(8) holds: this is precisely one of requirements from the general theory in \cite{GSS11990} that has been weakened and modified in  \cite{VarholWahlenWalsh2020}. 
\begin{assumption}[Bound states]\label{assumption on bound states} There exists a one-parameter family of bound state solutions $\{U_c: c \in \mathcal{I}\}$ to the Hamiltonian system \eqref{abstract hamiltonian}.
\begin{enumerate}
    \item The mapping $c\in \mathcal{I} \mapsto U_c \in \mathcal{O} \cap\mathbb{W}$ is of class $C^1$.
    \item The non-degeneracy condition $T'(0)U_c \neq 0$ holds for every $c \in \mathcal{I}$. Equivalently, $U_c$ is never a critical point of the momentum.
    \item For all $c\in \mathcal{I}$,
    \begin{equation}
        U_c \in \mathcal{D}(T'''(0)) \cap \mathcal{D}(JIT'(0)),
    \end{equation}
    and 
    \begin{equation}
        JIT'(0)U_c \in \mathcal{D}(T'(0)|_{\mathbb{W}}).
    \end{equation}
    \item It holds that 
    $\lim \inf_{|s|\to \infty} \lVert T(s) U_c-U_c\rVert_{\mathbb{X}}>0$.
\end{enumerate}

\end{assumption}
\begin{assumption}[Spectrum]\label{assumption on spectrum}
The operator $\T{D}^2E_c(U_c) \in \T{Lin}(\mathbb{V},\mathbb{V}^*)$ extends uniquely to a bounded linear operator $H_c:\mathbb{X}\to \mathbb{X}^*$ such that:
\begin{enumerate}
    \item $I^{-1}H_c$ is a self-adjoint operator on $\mathbb{X}$.
    \item The spectrum of $I^{-1} H_c$ satisfies
    \begin{equation}
        \text{spec}(I^{-1} H_c)=\{-\mu_c^2\} \cup\{0\} \cup \Sigma_c,
    \end{equation}
    where $-\mu_c^2<0$ is a simple eigenvalue that correspond to a unit eigenvector $\chi_c$, $0$ is a simple eigenvalue generated by $T$, and $\Sigma_C \subset (0,\infty)$ is bounded away from 0.
\end{enumerate}
\end{assumption}
\subsection{Notion on stability/instability}

After outlining required assumptions for the general theory, we now proceed to define the notion on stability/instability concerned here. At this point, the functions spaces that we are working with are still abstract and will be specified soon in the next subsection. Fix a bound state $U_c$ and radius $r>0$, we define the following sets
\begin{equation}
\begin{split}
    &\mathcal{U}^{\mathbb{X}}_r:=\{u \in \mathcal{O}: \inf_{s \in \mathbb{R}} 
    \norm{u-T(s)U_c}_{\mathbb{X}}<r\},\\&
    \mathcal{U}^{\mathbb{W}}_r:=\{u \in\mathcal{O} \cap \mathbb{W}: \inf_{s \in \mathbb{R}} 
    \norm{u-T(s)U_c}_{\mathbb{W}}<r\}.
\end{split}
\end{equation}
Fix $R>0$, let $\mathcal{B}^{\mathbb{W}}_{R}$ denote the intersection between the ball of radius $R$ centered at the origin in $\mathbb{W}$ and the set $\mathcal{O}$. 

\begin{definition}
The bound state $U_C$ is conditionally orbitally stable provided that for any $r>0$ and $R>0$, there exists $r_0>0$ such that if $u:[0,t_0] \to \mathcal{B}^{\mathbb{W}}_{R}$ is a solution to \eqref{Euler-Incompressible in terms of Harmonic Conj} where $u(0) \in \mathcal{U}^{\mathbb{X}}_{r_0}$ then $u(t) \in \mathcal{U}^{\mathbb{X}}_r$ for all $t\in[0,t_0)$.
\end{definition}

Note that the definition of orbital stability above is the same as the definition of stability introduced earlier in Section~\ref{Introduction}. Here, we are presenting it in a more general and abstract manner.

From the general theory \cite{GSS11990,GSS21990,VarholWahlenWalsh2020} , the conclusion on stability/instability can be determined by looking at the sign of the second derivative of a scalar-valued function known as \textit{moment instability} 
\begin{equation}
    d(c):=E_c(U_c)=E(U_c)-cP(U_c). 
\end{equation}
This leads us to state the following theorem.

\begin{theorem}[Stability/Instability]\label{stability theorem} Let all Assumptions \ref{assumption on spaces}-\ref{assumption on spectrum} be satisfied. The bound state $U_c$ under consideration is conditionally orbitally stable (unstable), provided $d''(c)>0\;(<0)$. 
\end{theorem}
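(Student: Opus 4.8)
\emph{Proof proposal.} The statement is, essentially verbatim, the conclusion of the abstract orbital stability/instability theorem of Varholm, Wahl\'en, and Walsh \cite{VarholWahlenWalsh2020}, itself a relaxed version of the Grillakis--Shatah--Strauss machinery \cite{GSS11990,GSS21990}. Accordingly, the plan is to deduce Theorem~\ref{stability theorem} directly from that abstract result once Assumptions~\ref{assumption on spaces}--\ref{assumption on spectrum} have been verified for the internal wave problem; the genuine work is in the verification, which is carried out in the later sections, and here I would only recall why the sign of $d''$ is the decisive quantity and indicate the mechanism.

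First I would record the two elementary identities that tie $d$ to the bound-state family. Since $U_c$ is a critical point of $E_c = E - cP$, differentiating $d(c) = E_c(U_c)$ in $c$ and using $\T{D}E_c(U_c) = 0$ gives $d'(c) = -P(U_c)$; differentiating the Euler--Lagrange relation $\T{D}E(U_c) = c\,\T{D}P(U_c)$ in $c$ gives $H_c\,\partial_c U_c = \T{D}P(U_c)$, where $H_c$ is the bounded extension of $\T{D}^2E_c(U_c)$ from Assumption~\ref{assumption on spectrum}. Combining these, $d''(c) = -\langle \T{D}P(U_c),\partial_c U_c\rangle = -\langle H_c\,\partial_c U_c,\partial_c U_c\rangle$, so $d''(c) > 0$ holds exactly when the curve $c \mapsto U_c$ moves in a ``negative-energy'' direction for $H_c$, i.e.\ has nonzero component along the unique negative eigenvector $\chi_c$ supplied by Assumption~\ref{assumption on spectrum}.

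In the stable case $d''(c) > 0$, the plan is the standard constrained-variational argument, adapted to the mismatched spaces: one shows that $H_c$ is positive definite, with a uniform spectral gap, on the subspace of $\mathbb{X}$ annihilated by $\T{D}P(U_c)$ after quotienting out the kernel direction $\T{span}\{T'(0)U_c\}$ --- the Hessian-signature lemma, whose hypotheses are precisely the spectral picture of Assumption~\ref{assumption on spectrum} together with $d''(c) > 0$. One then combines this coercivity with conservation of $E$ and $P$ along the $\mathbb{W}$-valued flow, a modulation argument choosing the translation parameter $s(t)$ so that $\norm{u(t,\cdot-s(t))-U_c}_{\mathbb{X}}$ is realized with the error orthogonal to $T'(0)U_c$, and the interpolation inequality of Assumption~\ref{assumption on spaces} to dominate the energy defect in $\mathbb{X}$ by data that is only small in $\mathbb{X}$ but bounded in $\mathbb{W}$; this yields the radius $r_0$ independently of the life span $t_0$. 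In the unstable case $d''(c) < 0$, one instead builds the GSS escape functional: since $\T{D}P(U_c)$ now lies in a negative cone of $H_c$, one constructs an auxiliary functional, bounded on the tube around the orbit, whose time derivative along the flow is bounded below by a positive multiple of the distance to the orbit, forcing every nontrivial nearby solution out of the tube in finite time.

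The main obstacle --- and the reason the framework of \cite{VarholWahlenWalsh2020} rather than classical GSS is needed --- is the gap between the energy space $\mathbb{X}$, in which coercivity lives, and the space $\mathbb{W}$, in which the Cauchy problem is solved, together with the fact that the Poisson map $J$ is only densely defined with dense range rather than an isomorphism. All of this is already handled abstractly in \cite{VarholWahlenWalsh2020}, so for the present paper the real difficulty is pushed into Assumption~\ref{assumption on spectrum}: establishing that the linearized augmented Hamiltonian $H_c$ at a small-amplitude wave has Morse index exactly one, a one-dimensional kernel generated by translation, and the remainder of its spectrum bounded away from zero. With constant vorticity this requires the lengthy spectral computation carried out in Section~\ref{Spectral Analysis}, and it is there, not in the abstract deduction above, that I expect the bulk of the effort to lie.
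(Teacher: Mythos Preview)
Your proposal is correct and matches the paper's approach: the paper does not prove Theorem~\ref{stability theorem} at all but simply cites it as the abstract result from \cite{VarholWahlenWalsh2020} (with \cite{GSS11990,GSS21990} as antecedents), deferring all content to the verification of Assumptions~\ref{assumption on spaces}--\ref{assumption on spectrum} carried out in Sections~\ref{hamiltonian formulation } and~\ref{Spectral Analysis}. Your sketch of the mechanism behind the $d''$ criterion is accurate and in fact goes further than the paper, which offers no such explanation.
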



\section{Hamiltonian Formulation}\label{hamiltonian formulation }
\subsection{Nonlocal operators}
 We begin this section by reformulating the governing equations \eqref{Euler-Incompressible in terms of Harmonic Conj} in terms of variables restricted to the interface $\eta(x,t)$ in the spirit of Zakharov--Craig--Sulem. Although this way of formulating the problem forces us to work with some complicated non-local operators (pseudo-differential operators), it simplifies the problem by pushing all the unknowns to the boundary, in this case, the internal interface. The idea was then adopted by a number of authors studying internal waves, for instance, \cite{BenjaminBridges1997}, \cite{CraigGroves2000}.
 
 Let $\xi_\pm$ be defined as the trace of $\phi_\pm$ on the interface $y=\eta(x,t)$ for the upper and lower regions of the fluid.
 It is clear that
 \begin{equation}
     \xi_\pm'=(\partial_x\phi_\pm)|_{y=\eta}+\eta'(\partial_y\phi_\pm)|_{y=\eta}.
 \end{equation}
 Additionally, we  denote $\mathscr{H}_\pm$ as the Hilbert transform acting on $\xi_\pm$:
\begin{equation}\label{Hilbert Transform}
    \mathscr{H}_\pm(\eta) \xi_\pm= \tilde{\psi}_\pm(t,x,\eta)=\psi_\pm(t,x,\eta) + \dfrac{\omega_\pm }{2}\eta^2.
\end{equation} 
For later use, let us also introduce the Dirichlet--Neumann operator in $\Omega_+$ and $\Omega_-$ (for a fixed $\eta$):
\[\mathcal{G}_\pm(\eta)\xi_\pm:=\langle\eta'\rangle\big(N_\pm \cdot \nabla \mathcal{H}_\pm(\eta)\xi_\pm\big),\]
where $N_\pm$ is the outward unit normal relative to the domain $\Omega_\pm$ along the internal interface $\mathscr{S}$ and the Japanese bracket $\langle \cdot \rangle:=\sqrt{1+|\cdot|^2}$. Further, the notation $\mathcal{H}_\pm(\eta) \xi_\pm$ denotes the harmonic extension of $\xi_\pm$ to $\Omega_\pm$ and uniquely solves
\begin{equation}\label{harmonic extension}
\left\{ \begin{aligned}
    \Delta \mathcal{H}_\pm(\eta)\xi_\pm&=0  &\T{in}\; \Omega_\pm, \\
    \mathcal{H}_\pm(\eta)\xi_\pm&=\xi_\pm \qquad& \T{on}\; y=\eta(x,t), \\\partial_y \mathcal{H}_\pm(\eta)\xi_\pm&=0\qquad& \T{on}\;y=\pm d_\pm.
\end{aligned} \right. 
\end{equation}

It is known that for $\eta \in H^{k_0+1/2}(\mathbb{R})$, the Dirichlet--Neumann operator $\mathcal{G}_{\pm}(\eta)$ is an isomorphism $\dot{H}^k(\mathbb{R}) \to \dot{H}^{k-1}(\mathbb{R})$, where $\dot{H}^k$ denotes the usual homogeneous Sobolev space of order $k$ and $k\in[1/2-k_0,1/2+k_0]$ for any real number $k_0>1/2$. The operator $\mathcal{H}_{\pm}(\eta)$ is a bounded mapping from $H^k(\mathbb{R})$ to $H^{k+1/2}(\Omega_{\pm})$ and from $\dot{H}^{k}(\mathbb{R})$ to $\dot{H}^{k+1/2}(\Omega_{\pm})$.

\begin{remark}
The spaces $H^k$ and $\dot{H}^k$ are the Sobolev and homogeneous Sobolev spaces, respectively.
\end{remark}

Using the operators $\mathcal{G}_\pm(\eta)$, $\mathcal{H}_\pm(\eta)$, and the new unknown $\xi_\pm$, the water wave problem can be pushed to the boundary $y=\eta(x,t)$. Now, it reads

\begin{equation}
\left\{ \begin{aligned}\label{kinematic dynamic}
    \eta_t&=\mp \mathcal{G}_\pm(\eta)\xi_\pm + \omega_\pm \eta \eta_x \qquad &\T{on}\; y=\eta(x,t), \\\jump{\rho \xi_t}&=-g \eta \jump{\rho}-\sigma \left(\dfrac{\eta_x}{\sqrt{1+\eta_x^2}}\right)_x&\\&\quad-\sum_{\pm}\pm\dfrac{\rho_\pm}{2}\Gamma_\pm(\eta,\xi_\pm) \mp \rho_\pm \omega_\pm \eta \xi_{x_\pm} \pm \rho \omega_\pm \mathscr{H}_\pm(\eta)\xi_\pm \qquad &\T{on}\; y=\eta(x,t),
\end{aligned} \right. 
\end{equation} where
\begin{equation}\label{definition of Gamma}
    \Gamma_\pm(\eta,\xi_\pm)=\left(\xi^2_{x_\pm}-(\mathcal{G}_\pm(\eta)\xi_\pm)^2\pm2\eta_x\xi_{x_\pm}\mathcal{G}_\pm(\eta)\xi_\pm\right)/(1+\eta_x^2).
\end{equation}
Next, in order to reformulate the problem in a Hamiltonian language, we introduce the variable $\tilde{\xi}:=-\jump{\rho \xi}$. Recall, from the kinematic boundary condition in \eqref{kinematic dynamic} we have 
\begin{equation}\label{G_- and G_+}
    \mathcal{G}_-(\eta)\xi_-+\mathcal{G}_+(\eta)\xi_+=\jump{\omega}\eta \eta_x.
\end{equation} This, together with the definition of $\tilde{\xi}$, yields
\begin{equation}\label{Operator G and B}
    \pm\mathcal{G}_\pm \tilde{\xi}=\mathcal{B}(\eta)\xi_\mp-\rho_\pm\jump{\omega}\eta \eta_x,
\end{equation} where $\mathcal{B}(\eta):=\rho_+\mathcal{G}_-(\eta)+\rho_-\mathcal{G}_+(\eta).$ Following the property of $\mathcal{G}_{\pm}(\eta)$, the operator $\mathcal{B}(\eta)$ is also bounded and linear  
from $H^k(\mathbb{R})$ to $H^{k-1}(\mathbb{R})$ and from $\dot{H}^k(\mathbb{R}) \to \dot{H}^{k-1}(\mathbb{R}) $ for any $\eta \in H^{k_0+1/2}(\mathbb{R})$ with $k_0,k$ given as before. Moreover, $\mathcal{B}(\eta)$ is an isomorphism from $\dot{H}^k(\mathbb{R})$ to $\dot{H}^{k-1}(\mathbb{R})$.
Therefore, using \eqref{Operator G and B} and solving for $\xi_\pm$ we obtain
\begin{equation}\label{Equation for Xi}
    \xi_\pm=\mp \mathcal{B}(\eta)^{-1} \mathcal{G}_\mp(\eta)\tilde{\xi}+\rho_\mp \mathcal{B}(\eta)^{-1}\jump{\omega}\eta \eta_x.
\end{equation} The kinematic and dynamic conditions now read
\begin{equation}\label{Kinematic and dynamic condition in terms of Gamma and xi}
\left\{ \begin{aligned}
    \eta_t&=\mathcal{A}(\eta)\tilde{\xi} \mp\rho_\mp\mathcal{G}_\pm(\eta)\mathcal{B}(\eta)^{-1}\jump{\omega}\eta \eta_x+\omega_\pm \eta \eta_x\qquad &\T{on}\; y=\eta(x,t), \\\tilde{\xi}_t&=g \eta \jump{\rho}+\sigma \left(\dfrac{\eta_x}{\sqrt{1+\eta_x^2}}\right)_x&\\&\quad+\sum_{\pm}\left[\pm\dfrac{\rho_\pm}{2}\Gamma_\pm(\eta,\xi_\pm) \mp \rho_\pm \omega_\pm \eta \xi_{x_\pm} \pm \rho_\pm \omega_\pm \mathscr{H}_\pm(\eta)\xi_\pm\right] \qquad &\T{on}\; y=\eta(x,t),
\end{aligned} \right. 
\end{equation}
where $\mathcal{A}(\eta):=\mathcal{G}_{\pm}(\eta) \mathcal{B}(\eta)^{-1}\mathcal{G}_{\mp}(\eta)$.

The above equations can alternatively be written as 
\begin{equation}\label{kinematic and bernoulli}
\left\{ \begin{aligned}
    \eta_t&=\mathcal{A}(\eta)\tilde{\xi} + \mathcal{G}_-(\eta)\mathcal{B}(\eta)^{-1}\rho_+\jump{\omega}\eta \eta_x+\omega_- \eta \eta_x\qquad &\T{on}\; y=\eta(x,t), \\\tilde{\xi}_t&=g \eta \jump{\rho}+\sigma \left(\dfrac{\eta_x}{\sqrt{1+\eta_x^2}}\right)_x+ \dfrac{\jump{\rho|\nabla \psi|^2}}{2} -\eta_t \jump{\rho \phi_y} + \jump{\rho \omega \psi}
\qquad &\T{on}\; y=\eta(x,t).
\end{aligned} \right. 
\end{equation}
We would like to mention that similar formulation involving constant (non-vanishing) vorticity for a one-fluid and two-fluid case can be found, for instance, in \cite{Wahlen2007} and \cite{Compelli2016, Constantin2015} respectively
\begin{remark}
The spaces $H^k$ and $\dot{H}^k$ are the Sobolev and homogeneous Sobolev spaces, respectively.
\end{remark}
\subsection{Function spaces}
The formulation stated in \eqref{kinematic and bernoulli} is crucial in helping us exploit the Hamiltonian structure of the problem. In light of that, let us informally introduce the required function spaces where the internal water wave problem will be posed. Fix $k\geq1/2$, we define a product space for $u$:
\begin{equation}
    \mathbb{X}^k:=\mathbb{X}_1^k \times \mathbb{X}_2^k:=H^{k+1/2}(\mathbb{R}) \times \left(\dot{H}^k(\mathbb{R}) \cap \dot{H}^{1/2}(\mathbb{R})\right).
\end{equation}
For future reference, we will denote $\mathbb{X}^{k+}$ to mean $\mathbb{X}^{k+\epsilon}$ for any $0<\epsilon \ll 1$, similarly for $H^{k+}$.
\begin{remark}\label{Dense space: intersectiom Hp amd dotHq}
Note that the space $H^p(\mathbb{R}) \cap \dot{H}^q(\mathbb{R})$ is dense in $H^p(\mathbb{R})$ and $\dot{H}^q(\mathbb{R})$ for all $p,q \in \mathbb{R}$. We will use this fact to verify Assumption \ref{assumption on spaces} in the general theory.
\end{remark}

Consider the following sequence of (continuously) embedded spaces
\begin{equation}
    \mathbb{W}\hookrightarrow\mathbb{V}\hookrightarrow\mathbb{X},
\end{equation}
where $\mathbb{X}$ is a Hilbert space, while $\mathbb{W}$ and $\mathbb{V}$ are reflexive Banach spaces. In practice, $\mathbb{W}$ will be the local well-posedness space of the internal water wave problem. Its regularity follows from the available results on local well-posedness. The space $\mathbb{X}$ is the natural energy space of the problem with $\mathbb{X}^*$ being its continuous dual:
\begin{equation} \label{space X}
    \mathbb{X}:=H^1(\mathbb{R}) \times \dot{H}^{1/2}(\mathbb{R}),\qquad \mathbb{X}^{*}:=H^{-1}(\mathbb{R}) \times \dot{H}^{-1/2}(\mathbb{R}).
\end{equation} Indeed, if $u\in \mathbb{X}$, then $\nabla \phi_{\pm} \in L^{2}(\Omega_\pm)$. Furthermore, it also informs us that $\eta \in H^1(\mathbb{R})$ which ensures the finiteness of the potential energy. Both combined confirms that the energy is finite on $\mathbb{X}$. The space $\mathbb{V}$ is an intermediate space that lies between the spaces $\mathbb{W}$ and $\mathbb{X}$ where all the conserved quantities are smooth there. 


The regularity of $\mathbb{X}$ turns out to be slightly insufficient for our analysis. This is because the map $u \mapsto \mathcal{G}_\pm(\eta)$ is not smooth with $\mathbb{X}$ being its domain. To fix this, the profile $\eta$ has to be, at least, Lipschitz continuous and bounded away from the rigid walls $\{y=\pm d_\pm\}$. In order to satisfy this level of smoothness condition, we define an intermediate space
\begin{equation} \label{space V}
    \mathbb{V}:=\mathbb{X}^{1+}=H^{3/2+}(\mathbb{R}) \times\left(\dot{H}^{1+}(\mathbb{R}) \cap \dot{H}^{1/2}(\mathbb{R})\right)
\end{equation} along with a neighborhood
\begin{equation}
    \mathcal{O}:=\{(\eta,\tilde{\xi})\in \mathbb{X}:-d_-<\eta<d_+\},
\end{equation}
which makes sure that $\eta$ is bounded away from the rigid walls.
Moreover, observe that $H^{3/2+}(\mathbb{R})\hookrightarrow W^{1,\infty}(\mathbb{R})$, meaning
if $(\eta,\tilde{\xi})\in \mathbb{V}$, then $\eta$ is Lipschitz continuous.

Finally, since the current result on the Cauchy problem for water wave is not yet available with the level of regularity in $\mathbb{V}$, thus we define a smoother space
\begin{equation}\label{space W}
    \mathbb{W}:=\mathbb{X}^{5/2+}=H^{3+}(\mathbb{R}) \times \left(\dot{H}^{5/2+}(\mathbb{R}) \cap \dot{H}^{1/2}(\mathbb{R})\right).
\end{equation} It is worth mentioning that the work of Shatah and Zeng \cite{ShatahZheng2011} proves the local-wellposedness of the water wave problem at the same regularity as in $\mathbb{W}$. Additionally, we would like to point out that the regularity of these spaces is the same and largely follows from the recent work of Chen and Walsh \cite{Ming--WalshOrbital2022}.

Having specified the function spaces, we express the relationship between the trio function spaces $\mathbb{X}, \mathbb{V},$ and $\mathbb{W}$ via an inequality recorded in the next lemma. Moreover, the content of the lemma shows that Assumption \ref{assumption on spaces} in the general theory is satisfied. 
\begin{lemma}[Interpolation] Consider the following spaces: $\mathbb{X}, \mathbb{V}$, and $\mathbb{W}$ defined by \eqref{space X},\eqref{space V}, and \eqref{space W}, respectively. Then there exists a constant $C>0$ and $\theta \in (0,\frac{1}{4})$ such that 
\[
\norm{u}_{\mathbb{V}}^3 \leq C \norm{u}_{\mathbb{X}}^{2+\theta}\norm{u}_{\mathbb{W}}^{1-\theta}, 
\]
for all $u\in \mathbb{W}$.
\end{lemma}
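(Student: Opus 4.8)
The plan is to prove the inequality componentwise via standard (homogeneous and inhomogeneous) Sobolev interpolation, combine the two resulting estimates, and, if necessary, redistribute the exponents using the continuous embedding $\mathbb{W}\hookrightarrow\mathbb{X}$. First I would record the two elementary interpolation facts: writing the norms on the Fourier side and applying H\"older's inequality to $\langle\zeta\rangle^{s}$ (respectively $|\zeta|^{s}$), one gets, for $s_0<s<s_1$ and $\vartheta\in(0,1)$ with $s=(1-\vartheta)s_0+\vartheta s_1$, the bounds $\norm{f}_{H^{s}}\le \norm{f}_{H^{s_0}}^{1-\vartheta}\norm{f}_{H^{s_1}}^{\vartheta}$ and $\norm{f}_{\dot H^{s}}\le \norm{f}_{\dot H^{s_0}}^{1-\vartheta}\norm{f}_{\dot H^{s_1}}^{\vartheta}$.

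Let $\epsilon_1,\epsilon_2>0$ be the (small) regularity offsets, so that $\mathbb{V}=H^{3/2+\epsilon_1}\times(\dot H^{1+\epsilon_1}\cap\dot H^{1/2})$ and $\mathbb{W}=H^{3+\epsilon_2}\times(\dot H^{5/2+\epsilon_2}\cap\dot H^{1/2})$. For $u=(\eta,\tilde\xi)$ I would apply the first inequality to the $\eta$-slot, interpolating $H^{3/2+\epsilon_1}$ between $H^1$ and $H^{3+\epsilon_2}$, and the second to the $\dot H^{1+\epsilon_1}$-part of the $\tilde\xi$-slot, interpolating between $\dot H^{1/2}$ and $\dot H^{5/2+\epsilon_2}$. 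The key observation is that in both slots the regularity gap from $\mathbb{X}$ to $\mathbb{V}$ equals $1/2+\epsilon_1$, while the gap from $\mathbb{X}$ to $\mathbb{W}$ equals $2+\epsilon_2$, so the interpolation parameter is the \emph{same} in both, namely $\vartheta=\tfrac{1/2+\epsilon_1}{2+\epsilon_2}$. The leftover $\dot H^{1/2}$-part of $\tilde\xi$ is handled trivially, since $\norm{\tilde\xi}_{\dot H^{1/2}}=\norm{\tilde\xi}_{\dot H^{1/2}}^{1-\vartheta}\norm{\tilde\xi}_{\dot H^{1/2}}^{\vartheta}\le C\norm{u}_{\mathbb{X}}^{1-\vartheta}\norm{u}_{\mathbb{W}}^{\vartheta}$ after using $\mathbb{W}\hookrightarrow\mathbb{X}$.

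Adding the component bounds (and estimating each component norm by the full product norm) gives $\norm{u}_{\mathbb{V}}\le C\norm{u}_{\mathbb{X}}^{1-\vartheta}\norm{u}_{\mathbb{W}}^{\vartheta}$, hence $\norm{u}_{\mathbb{V}}^{3}\le C\norm{u}_{\mathbb{X}}^{3(1-\vartheta)}\norm{u}_{\mathbb{W}}^{3\vartheta}$ on cubing. To put this in the asserted form I set $\theta:=1-3\vartheta$, so that $3\vartheta=1-\theta$ and $3(1-\vartheta)=2+\theta$. Choosing the offsets with $\epsilon_2<4\epsilon_1$ (for instance $\epsilon_1=\epsilon_2$, which is permitted since both are arbitrary small positive numbers) forces $\vartheta\in(1/4,1/3)$, hence $\theta\in(0,1/4)$. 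Alternatively, if one leaves $\epsilon_1,\epsilon_2$ uncoordinated and $\vartheta$ happens to be $\le 1/4$, one rebalances powers via $\norm{u}_{\mathbb{X}}\le C\norm{u}_{\mathbb{W}}$, writing $\norm{u}_{\mathbb{X}}^{3(1-\vartheta)}\norm{u}_{\mathbb{W}}^{3\vartheta}\le C\norm{u}_{\mathbb{X}}^{3(1-\vartheta)-s}\norm{u}_{\mathbb{W}}^{3\vartheta+s}$ and taking $s=1-\theta-3\vartheta\ge 0$ for a suitable $\theta\in(0,1/4)$; this is possible precisely because $\vartheta<1/3$ whenever the offsets are small.

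I do not expect a genuine obstacle, since this is a soft interpolation statement. The only two points needing mild care are: (i) the intersection structure $\dot H^{k}\cap\dot H^{1/2}$ in the second slot, which is dispatched by splitting the norm and handling the $\dot H^{1/2}$ piece by the trivial identity above; and (ii) checking the \emph{quantitative} range $\theta\in(0,1/4)$ rather than merely $\theta\in(0,1)$, which is what pins down the harmless relation between the offsets defining $\mathbb{V}$ and $\mathbb{W}$ (or, equivalently, the use of $\mathbb{W}\hookrightarrow\mathbb{X}$ to redistribute the exponents).
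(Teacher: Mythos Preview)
Your argument is correct and is essentially the same as the paper's, which simply cites the Gagliardo--Nirenberg interpolation theorem; you have merely written out the componentwise interpolation and tracked the exponent $\vartheta=(1/2+\epsilon_1)/(2+\epsilon_2)$ explicitly. The only minor remark is that the $\dot H^{1/2}$ piece does not actually require the embedding $\mathbb{W}\hookrightarrow\mathbb{X}$, since $\lVert\tilde\xi\rVert_{\dot H^{1/2}}$ is already part of both the $\mathbb{X}$- and $\mathbb{W}$-norms.
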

\begin{proof}
This follows from the Gagliardo--Nirenberg interpolation theorem.
\end{proof}
The above lemma shows that a small cubic term in $\mathbb{V}$ norm can be bounded using a quadratic term in $\mathbb{X}$. This fact is needed in the general theory when bounding some of the terms resulted from Taylor expanding  functionals whose domain is $\mathbb{V} \cap \mathcal{O}$.

\subsection{Hamiltonian Structure}
In \cite{BenjaminBridges1997} Benjamin and Bridges formulated the internal water wave problem as a Hamiltonian system in the style of Zakharov--Craig--Sulem. Inspired by the aforementioned paper, we show that the water wave problem also exhibits a Hamiltonian structure (with a non-canonical Poisson map) that can be exploited for the stability analysis. Following the same idea as in \cite[Section 3.3]{Ming--WalshOrbital2022}, we derive the energy functional of \eqref{kinematic and bernoulli}
\begin{equation}\label{Hamiltonian}
  \begin{aligned}
E(\eta,\tilde{\xi})&=\dfrac{1}{2}\int_{\mathbb{R}}\tilde{\xi}\mathcal{A}(\eta) \tilde{\xi}+2\rho_+\jump{\omega}\eta \eta_x\mathcal{G}_-(\eta)\mathcal{B}^{-1} \tilde{\xi}-\rho_+\rho_-\jump{\omega}\eta \eta_x \mathcal{B}^{-1}\jump{\omega}\eta \eta_x&\\&\quad -g \jump{\rho}\eta^2+2\tilde{\xi}\omega_-\eta \eta_x -\dfrac{\eta^3 \jump{\rho \omega^2}}{3}+2\sigma(\sqrt{1+\eta_x^2}-1)\;dx.
\end{aligned}  
\end{equation}
Observe that $E \in C^{\infty}(\mathcal{O} \cap \mathbb{V};\mathbb{R}).$ Moreover, we will show that there is an extension mapping $\nabla E(u)$ of $\T{D}E(u)$ defined on the dual space $\mathbb{X}^*$ which is the content of the lemma below.

\begin{lemma}[Energy Extension]\label{Energy Extension}
There exists a mapping $\nabla E \in C^{\infty}(O \cap \mathbb{V}; \mathbb{X}^*)$ such that
\begin{equation}
    \langle\nabla E(u),v \rangle_{\mathbb{X}^* \times \mathbb{X}}=\T{D}E(u)v,\; \text{for all } u\in \mathcal{O} \cap \mathbb{V} ,\; v\in \mathbb{V}.
\end{equation}
\end{lemma}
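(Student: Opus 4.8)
The plan is to differentiate the energy functional \eqref{Hamiltonian} term by term, rearrange the result — via integration by parts and the self-adjointness of the nonlocal operators — into a duality pairing against the test vector, and then verify that the resulting covector belongs to $\mathbb{X}^{*}=H^{-1}(\mathbb{R})\times\dot H^{-1/2}(\mathbb{R})$ with smooth dependence on $u$. Fix $u=(\eta,\tilde\xi)\in\mathcal{O}\cap\mathbb{V}$ and a test vector $v=(\zeta,\tilde\zeta)\in\mathbb{V}$, and compute $\T{D}E(u)v=\frac{d}{ds}\big|_{s=0}E(u+sv)$. The polynomial-in-$\eta$ terms, the bilinear term $2\tilde\xi\omega_-\eta\eta_x$, and the capillary term $2\sigma(\langle\eta_x\rangle-1)$ are differentiated directly; the genuinely nonlocal pieces — those built from $\mathcal{A}(\eta)$, $\mathcal{G}_-(\eta)$ and $\mathcal{B}(\eta)^{-1}$ — are handled using the classical shape-derivative calculus for Dirichlet--Neumann operators, in particular the real-analyticity of $\eta\mapsto\mathcal{G}_\pm(\eta)$ on $\{\eta\in H^{3/2+}:-d_-<\eta<d_+\}$ together with the identity
\[
\T{d}\mathcal{G}_\pm(\eta)[\zeta]\,\xi_\pm=-\mathcal{G}_\pm(\eta)(\zeta\,w_\pm)-\partial_x(\zeta\,V_\pm),
\]
where $w_\pm$ and $V_\pm$ are the normal and tangential velocity traces at the interface determined by $\xi_\pm$, the formula $\T{d}\mathcal{B}(\eta)^{-1}[\zeta]=-\mathcal{B}(\eta)^{-1}\big(\T{d}\mathcal{B}(\eta)[\zeta]\big)\mathcal{B}(\eta)^{-1}$, and the product rule applied to $\mathcal{A}(\eta)=\mathcal{G}_\pm(\eta)\mathcal{B}(\eta)^{-1}\mathcal{G}_\mp(\eta)$. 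Throughout, the embedding $\mathbb{V}\hookrightarrow W^{1,\infty}$ and the definition of $\mathcal{O}$ keep $\eta$ Lipschitz and bounded away from the rigid lids, so that (taking $k_0>1/2$ close to $1$ in the mapping results recorded earlier in this section) the operators $\mathcal{G}_\pm(\eta),\mathcal{B}(\eta),\mathcal{B}(\eta)^{-1},\mathcal{A}(\eta)$ act with their usual one-derivative shift on every Sobolev order in $[-1/2,3/2]$, which covers all orders appearing below.

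Rearranged, $\T{D}E(u)v$ takes the form $\langle\mathcal{E}_\eta(u),\zeta\rangle+\langle\mathcal{E}_{\tilde\xi}(u),\tilde\zeta\rangle$, and I set $\nabla E(u):=(\mathcal{E}_\eta(u),\mathcal{E}_{\tilde\xi}(u))$; the point is to show $\mathcal{E}_\eta(u)\in H^{-1}$ and $\mathcal{E}_{\tilde\xi}(u)\in\dot H^{-1/2}$, rather than merely that they lie in the dual of $\mathbb{V}$. For $\mathcal{E}_{\tilde\xi}(u)$, the leading contribution $\mathcal{A}(\eta)\tilde\xi$ lies in $\dot H^{-1/2}$ since $\mathcal{A}(\eta):\dot H^{1/2}\to\dot H^{-1/2}$ and $\tilde\xi\in\dot H^{1/2}$; the remaining contributions are combinations such as $\mathcal{B}(\eta)^{-1}\mathcal{G}_-(\eta)(\eta\eta_x)$ and $\omega_-\eta\eta_x$, which lie in $\dot H^{-1/2}$ because $\eta\eta_x=\tfrac12(\eta^2)_x$ obeys $\lVert\eta\eta_x\rVert_{\dot H^{-1/2}}=\tfrac12\lVert\eta^2\rVert_{\dot H^{1/2}}\lesssim\lVert\eta\rVert_{H^{1/2+}}^{2}$ ($H^{1/2+}$ being an algebra) and $\mathcal{B}(\eta)^{-1}\mathcal{G}_-(\eta)$ is bounded on $\dot H^{-1/2}$. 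For $\mathcal{E}_\eta(u)$, the capillary contribution $-2\sigma\big(\eta_x/\langle\eta_x\rangle\big)_x$ lies in $H^{-1}$ because $\eta_x/\langle\eta_x\rangle\in H^{1/2+}$ (composition with a smooth bounded nonlinearity vanishing at $0$); the polynomial contributions are plainly in $H^{-1}$; and the shape-derivative contributions are bounded in $H^{-1}$ by integrating by parts the derivative that $\T{d}\mathcal{G}_\pm(\eta)[\zeta]$ throws onto $\zeta$ and then pairing against $\zeta\in H^1$, using the mapping properties above and the boundedness of multiplication by an $H^{3/2+}$ function on $H^{-1}$.

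Smoothness of $u\mapsto\nabla E(u)$ on $\mathcal{O}\cap\mathbb{V}$ then follows because $\nabla E$ is assembled from the $C^\infty$ (indeed analytic) operator-valued maps $\eta\mapsto\mathcal{G}_\pm(\eta),\mathcal{B}(\eta),\mathcal{B}(\eta)^{-1},\mathcal{A}(\eta)$, from smooth Nemytskii-type nonlinearities in $\eta$ and $\eta_x$, and from bounded bilinear products and compositions arranged so that the products and compositions are continuous between the relevant spaces; compositions and products of $C^\infty$ maps are $C^\infty$. Uniqueness of the extension is automatic since $\mathbb{V}$ is dense in $\mathbb{X}$ (Remark~\ref{Dense space: intersectiom Hp amd dotHq}). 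I expect the main obstacle to be the bookkeeping for the two vorticity-generated cross terms $2\rho_+\jump{\omega}\,\eta\eta_x\,\mathcal{G}_-(\eta)\mathcal{B}(\eta)^{-1}\tilde\xi$ and $-\rho_+\rho_-\jump{\omega}\,\eta\eta_x\,\mathcal{B}(\eta)^{-1}\jump{\omega}\eta\eta_x$: differentiating the Dirichlet--Neumann operators there produces terms carrying a derivative on $\zeta$ multiplied by factors of only Sobolev (not $C^\infty$) regularity, and one must check carefully — exploiting the smoothing of $\mathcal{B}(\eta)^{-1}$ and repeated integration by parts — that after the cancellations the $\eta$-component still lands in $H^{-1}$. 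This is precisely where the estimates are heavier than in the irrotational computation of \cite[Section~3.3]{Ming--WalshOrbital2022}.
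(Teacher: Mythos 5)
Your proposal follows essentially the same route as the paper's proof: compute $\T{D}E(u)$ term by term using the shape-derivative formulas for $\mathcal{G}_\pm(\eta)$, $\mathcal{B}(\eta)^{-1}$, and $\mathcal{A}(\eta)$, rearrange into a pairing against the test vector, and check that the resulting covector lies in $\mathbb{X}^*$ (the paper phrases this as exhibiting an explicit $L^2$-gradient $E'(u)$, while you target $H^{-1}\times\dot H^{-1/2}$ directly and are somewhat more careful about the low-frequency behaviour of terms like $\eta\eta_x=\tfrac12(\eta^2)_x$). This is a faithful, if slightly more quantitative, version of the argument in the paper.
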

\begin{proof}
Fix  $u=(\eta,\tilde{\xi})\in O \cap \mathbb{V}$ and let $\dot{u}=(\dot{\eta},\dot{\tilde{\xi}})\in \mathbb{V}$ be given. Using the definition of the energy \eqref{Hamiltonian} together with the self-adjointness properties of $\mathcal{A}(\eta), \mathcal{G}_\pm(\eta)$, and $\mathcal{B}^{-1}(\eta)$, one can show that
\begin{equation}\label{D eta E}
 \begin{split}
     &\T{D}_{\eta}E(u)\dot{u}=
     \\&\; \dfrac{1}{2}\int_{\mathbb{R}} \tilde{\xi}\langle \T{D} \mathcal{A}(\eta)\dot{\eta},\tilde{\xi} \rangle\; dx -\int_{\mathbb{R}}\left(g \jump{\rho}\eta +\sigma \left(\dfrac{\eta'}{\langle\eta'\rangle}\right)'\right) \dot{\eta}\; dx \\&  \; +\int_{\mathbb{R}}\rho_+ \tilde{\xi} \langle \T{D} \mathcal{G}_-(\eta)\dot{\eta}, \mathcal{B}^{-1}(\eta) \jump{\omega} \eta \eta_x \rangle +\rho_+\mathcal{G}_-(\eta) \tilde{\xi} \langle \T{D}\mathcal{B}^{-1}(\eta)\dot{\eta},\jump{\omega}\eta\eta_x\rangle \;dx \\&\; +\int_{\mathbb{R}}\rho_+ \tilde{\xi} \mathcal{G}_-(\eta)\mathcal{B}^{-1}(\eta) \jump{\omega}(\dot{\eta}\eta_x+\eta\dot{\eta}_x)-\rho_+\rho_- \jump{\omega}(\dot{\eta}\eta_x+\eta\dot{\eta}_x)\mathcal{B}^{-1}(\eta) \jump{\omega}(\eta\eta_x)\; dx\\& \; - \dfrac{1}{2}\int_{\mathbb{R}}\rho_+ \rho_- \jump{\omega}\eta \eta_x \langle \T{D}\mathcal{B}^{-1}(\eta)\dot{\eta},\jump{\omega}\eta \eta_x \rangle + \left(\tilde{\xi} \omega_-(\dot{\eta}\eta_x +\eta \dot{\eta}_x)-\dfrac{\eta^2 \dot{\eta} \jump{\rho \omega^2}}{2}\right)\; dx,
 \end{split}
\end{equation}
and 
\begin{equation}\label{D xi E}
\T{D}_{\tilde{\xi}}E(u)\dot{u}=\int_{\mathbb{R}} \left( \mathcal{A}(\eta)\tilde{\xi}+ \omega_-\eta \eta_x \right)\dot{\tilde{\xi}}\; dx + \int_{\mathbb{R}} \rho_+ \dot{\tilde{\xi}} \mathcal{G}_-(\eta)\mathcal{B}^{-1}(\eta) \jump{\omega} \eta \eta_x \;dx.
\end{equation}
First of all, let us look at the expression in \eqref{D xi E}, it is easy to see that 
\begin{equation*}
    \mathcal{A}(\eta)\tilde{\xi},\; \omega_-\eta \eta_x,\; \mathcal{G}_-(\eta)\mathcal{B}^{-1}(\eta) \jump{\omega} \eta \eta_x \in L^2 (\mathbb{R}).
\end{equation*} Moreover, it is clear that the last integral in \eqref{D xi E} is an element of the dual space $\mathbb{X}^*$ acting on $\dot{u}$.

In \eqref{D eta E}, the first integral can be written as
\begin{equation*}
    \int_{\mathbb{R}} \tilde{\xi}\langle \T{D} \mathcal{A}(\eta)\dot{\eta},\tilde{\xi} \rangle\; dx=\sum_{\pm}\rho_{\pm} \int_{\mathbb{R}} a_1^{\pm}(\eta, \theta_{\pm}) \theta_{\pm}' \dot{\eta}\; dx +\sum_{\pm} \rho_{\pm} \int_{\mathbb{R}} (a_2^{\pm}(\eta_{\pm},\theta_{\pm}) \mathcal{A}(\eta) \tilde{\xi})\dot{\eta}\; dx.
\end{equation*}
Since $u\in \mathcal{O}\cap \mathbb{V}$, we obtain 
\begin{equation*}
    a_1^{\pm}(\eta, \theta_{\pm}),\; a_2^{\pm}(\eta_{\pm},\theta_{\pm}) \in L^2(\mathbb{R}), \qquad \theta_{\pm}\in H^1(\mathbb{R}),
\end{equation*}
where $\theta_{\pm}=\mathcal{A}(\eta) \mathcal{G}_{\pm}(\eta)^{-1} \tilde{\xi}$. Further, one can directly see that the second and last integral in \eqref{D eta E} is an element of the dual space $\mathbb{X}^*$ acting on $\dot{u}$.

Hence the extension $\nabla E(u)$ can be thought to have an $L^2$ gradient structure, that is, 
\begin{equation}
    \langle \nabla E(u), v \rangle_{{\mathbb{X}}^* \times \mathbb{X}}= (E'(u),v)_{L^2},
\end{equation}
where the $L^2$ gradient $E'(u)=(E_{\eta}'(u), E_{\tilde{\xi}}'(u))$ takes the form
\begin{equation}
    \begin{aligned}
        &E_{\eta}'(u):=\\&\dfrac{1}{2}\sum_{\pm}\rho_{\pm} a_1^{\pm}(\eta, \theta_{\pm}) \theta_{\pm}' +\sum_{\pm} \rho_{\pm}  (a_2^{\pm}(\eta_{\pm},\theta_{\pm}) \mathcal{A}(\eta) \tilde{\xi})
        -\left(g \jump{\rho}\eta +\sigma \left(\dfrac{\eta'}{\langle\eta'\rangle}\right)'\right) \\&
        +\rho_+\left(a_1^{-}(\eta, \mathcal{B}^{-1}(\eta) \jump{\omega} \eta \eta_x) \tilde{\xi}' +a_2^{-}(\eta,\mathcal{B}^{-1}(\eta) \jump{\omega} \eta \eta_x) \mathcal{G}_{-}(\eta) \tilde{\xi} \right)\\&
        - \rho_+ \sum_{\pm}\rho_{\pm}\left(a_1^{\mp}(\eta, \mathcal{B}^{-1}(\eta)\jump{\omega}\eta \eta_x)(\mathcal{B}^{-1}(\eta)\mathcal{G}_-(\eta)\tilde{\xi})_x \right. \\& \qquad  \left.
        \phantom{a_1^{\mp}(\eta, \mathcal{B}^{-1}(\eta)\jump{\omega}\eta \eta_x)(\mathcal{B}^{-1})}
         +\mathcal{B}^{-1}(\eta)\mathcal{G}_{\mp}(\eta)(a_2^{\mp}(\eta, \mathcal{B}^{-1}(\eta)\jump{\omega}\eta \eta_x))\mathcal{G}_-(\eta) \tilde{\xi} \right)\\& 
        -\rho_+\jump{\omega}\eta_x \left(\mathcal{B}^{-1}(\eta)\mathcal{G}_-(\eta) \tilde{\xi}\right)_x +\rho_+ \rho_-\jump{\omega} \eta \left(\mathcal{B}^{-1}(\eta) \jump{\omega} \eta \eta_x \right)_x \\&
        +\dfrac{1}{2}\rho_+\rho_-\sum_{\pm} \rho_{\pm} \left(a_1^{\mp}\left(\eta,\mathcal{B}^{-1}(\eta) \jump{\omega} \eta \eta_x\right) \left(\mathcal{B}^{-1}(\eta)\jump{\omega} \eta \eta_x\right)_x \right.\\&  \qquad \left.
     \phantom{a_1^{\mp}(\eta, \mathcal{B}^{-1}(\eta)\jump{\omega}\eta \eta_x)(\mathcal{B}^{-1})}
         + a_2^{\pm}\left(\eta,\mathcal{B}^{-1}(\eta) \jump{\omega} \eta \eta_x\right) \mathcal{B}^{-1}(\eta) \mathcal{G}_{\mp}(\eta) \jump{\omega} \eta \eta_x \right)\\& 
        -\omega_-\tilde{\xi}'\eta -\dfrac{\jump{\rho \omega^2} \eta^2}{2},\\ \\&
        E_{\tilde{\xi}}'(u):= \mathcal{A}(\eta) \tilde{\xi} +\omega_- \eta \eta_x +\rho_+ \mathcal{G}_-(\eta)\mathcal{B}^{-1}(\eta)\jump{\omega} \eta \eta_x. 
    \end{aligned}
\end{equation}
Hence, the proof is now complete.
\end{proof}

The function space $\mathbb{X}$ (energy space) will be equipped with a symplectic form via a Poisson map
\begin{equation}\label{Poisson Map}
    J= \begin{pmatrix}
    0 & 1 \\ -1 & -\sum_{\pm}\pm\rho_{\pm}\omega_{\pm}\partial_x^{-1}
    \end{pmatrix} : \T{Dom } J \subset\mathbb{X}^* \to \mathbb{X},
\end{equation}
 where
\begin{equation}\label{Domain J}
    \T{Dom } J:=(H^{-1}(\mathbb{R}) \cap \dot{H}^{1/2}(\mathbb{R})) \times (H^1(\mathbb{R}) \cap \dot{\mathbb{H}}^{-1/2}(\mathbb{R})).
\end{equation}

Observe that from the structure of $J$ in \eqref{Poisson Map}, the difference in regularity, and homogeneity of the the spaces in \eqref{Domain J}, one can conclude that $J$  is not a bijection. This fact clearly violates one of the assumption in the classical GSS approach \cite{GSS1987}. This is one of the reasons why we instead use the relaxed GSS \cite{VarholWahlenWalsh2020} as it only requires that $J$ to be an injection, as stated in Assumption \ref{assumption of poisson map}. Due to the presence of vorticity, the Poisson map is not canonical anymore. In other words, it is different with the Poisson map obtained in \cite{Ming--WalshOrbital2022}. When $\omega_+=0$ (one-layer fluid), we recover the map presented in \cite{Wahlen2007}.

\begin{lemma}[Poisson map ]
The Poisson map J \eqref{Poisson Map} satisfies Assumption \ref{assumption of poisson map} in the general theory.
\end{lemma}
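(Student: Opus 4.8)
The plan is to verify the three requirements of Assumption~\ref{assumption of poisson map} one at a time, noting at the outset that the Poisson map in \eqref{Poisson Map} does not depend on $u$, so it suffices to establish them for $J$ itself. Throughout, write $b := \sum_{\pm}\pm\rho_\pm\omega_\pm \in \mathbb{R}$, so that the lower-right entry of $J$ is $-b\,\partial_x^{-1}$.

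First I would dispatch density of the domain (Assumption~\ref{assumption of poisson map}(1)). By \eqref{Domain J} we have $\mathcal{D}(J) = (H^{-1}(\mathbb{R}) \cap \dot{H}^{1/2}(\mathbb{R})) \times (H^1(\mathbb{R}) \cap \dot{H}^{-1/2}(\mathbb{R}))$, and by Remark~\ref{Dense space: intersectiom Hp amd dotHq} the space $H^p(\mathbb{R}) \cap \dot{H}^q(\mathbb{R})$ is dense in both $H^p(\mathbb{R})$ and $\dot{H}^q(\mathbb{R})$; taking $p=-1,\ q=1/2$ for the first factor and $p=1,\ q=-1/2$ for the second shows that $\mathcal{D}(J)$ is dense in $\mathbb{X}^* = H^{-1}(\mathbb{R}) \times \dot{H}^{-1/2}(\mathbb{R})$ in the product topology. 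Injectivity (Assumption~\ref{assumption of poisson map}(2)) is then immediate from the triangular shape of \eqref{Poisson Map}: if $J(v_1,v_2)=0$, the first component forces $v_2 = 0$, and then the second collapses to $-v_1 = 0$, so the kernel is trivial; note that the delicate operator $\partial_x^{-1}$ never has to be inverted here, since it is only ever applied to $v_2$.

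The substantive point is skew-adjointness (Assumption~\ref{assumption of poisson map}(3)). Here I would split $J = J_0 + J_1$ with $J_0 = \begin{pmatrix} 0 & 1 \\ -1 & 0\end{pmatrix}$ the canonical symplectic operator and $J_1 = \begin{pmatrix} 0 & 0 \\ 0 & -b\,\partial_x^{-1}\end{pmatrix}$. Pairing $J_0(v_1,v_2) = (v_2,-v_1)$ against $(w_1,w_2)$ and using only symmetry of the duality pairing yields $\langle J_0 v, w\rangle = \langle v_2, w_1\rangle - \langle v_1, w_2\rangle = -\langle v, J_0 w\rangle$, so $J_0$ is skew-adjoint. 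For $J_1$ it reduces to skew-adjointness of $\partial_x^{-1}$, which I would check on the Fourier side: $\partial_x^{-1}$ is the multiplier with purely imaginary symbol $(i\xi)^{-1}$, whence $\int_{\mathbb{R}} (\partial_x^{-1}f)\,g\,dx = \int_{\mathbb{R}} (i\xi)^{-1}\widehat f\,\overline{\widehat g}\,d\xi = -\int_{\mathbb{R}} \widehat f\,\overline{(i\xi)^{-1}\widehat g}\,d\xi = -\int_{\mathbb{R}} f\,(\partial_x^{-1}g)\,dx$ for real-valued Schwartz $f,g$, and one extends by density to $v_2, w_2 \in H^1(\mathbb{R}) \cap \dot{H}^{-1/2}(\mathbb{R})$. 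Summing the two contributions gives $\langle Jv, w\rangle = -\langle v, Jw\rangle$ for all $v,w\in\mathcal{D}(J)$, as required.

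The main obstacle I anticipate is not any single computation but the bookkeeping of the homogeneous versus inhomogeneous duality pairings entering the skew-adjointness step: one must confirm that each of $\langle v_2, w_1\rangle$, $\langle v_1, w_2\rangle$, and $\langle \partial_x^{-1}v_2, w_2\rangle$ is a finite, well-defined pairing between the precise spaces appearing in $\mathcal{D}(J)$ and $\mathbb{X}$ — the $\dot{H}^{-1/2}$ constraint on the second factor of $\mathcal{D}(J)$ is exactly what makes $\partial_x^{-1}v_2 \in \dot{H}^{1/2}(\mathbb{R})$, so this is consistent — and that on Schwartz functions all of these agree with the $L^2$ pairing, which legitimizes the Fourier manipulation above. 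This mirrors the corresponding verification in \cite{Ming--WalshOrbital2022}; the only genuinely new ingredient is the extra term $-b\,\partial_x^{-1}$ coming from the vorticity, whose skew-adjointness has been isolated above, and indeed the whole statement collapses to the one in \cite{Ming--WalshOrbital2022} when $\omega_\pm = 0$ (then $b=0$ and $J=J_0$).
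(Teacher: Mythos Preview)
Your proposal is correct and takes essentially the same approach as the paper, which simply invokes Remark~\ref{Dense space: intersectiom Hp amd dotHq} for density and dismisses injectivity and skew-adjointness as following ``easily from the definition of $J$.'' You have merely supplied the details the paper omits---the triangular-structure argument for injectivity and the Fourier-side skew-adjointness of $\partial_x^{-1}$---so there is nothing to correct or contrast.
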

\begin{proof}
The density of Domain $J$ \eqref{Domain J} in $\mathbb{X}^*$ is a direct consequence of Remark \ref{Dense space: intersectiom Hp amd dotHq}. Further, the injectiveness and skew-adjointness of $J$  follows easily from the definition of $J$ in \eqref{Poisson Map}.
\end{proof}
\begin{theorem}[Hamiltonian formulation]
Let $u=(\eta,\tilde{\xi}) \in  \mathcal{O} \cap \mathbb{W}$. Consider the following Hamiltonian system given by 
\begin{equation}\label{abstract Hamiltonian}
    \partial_tu= J \T{D}E(u), \quad u|_{t=0}=u_0,
\end{equation}
with $u_0 \in \mathcal{O} \cap \mathbb{W}$, where $E$ is given by \eqref{Hamiltonian} and $J$ is the Poisson map in \eqref{Poisson Map}. Then $u\in C^0([0,t_0); \mathcal{O}\cap\mathbb{W})$ is a (weak) solution to \eqref{abstract Hamiltonian} if and only if the profiles $(\eta, \phi_\pm)$ solve the governing equations \eqref{Euler-Incompressible in terms of Harmonic Conj}.
\end{theorem}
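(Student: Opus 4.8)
We outline the argument, which chains together two reversible reformulations. The only substantial step is to show that \eqref{abstract Hamiltonian} is equivalent to the Zakharov--Craig--Sulem system \eqref{kinematic and bernoulli} for $u=(\eta,\tilde\xi)$; the remaining two links — that \eqref{kinematic and bernoulli} is equivalent to \eqref{kinematic dynamic}, and that \eqref{kinematic dynamic} is equivalent to the free-boundary system \eqref{Euler-Incompressible in terms of Harmonic Conj} — are the classical equivalences underlying the ZCS formulation. As a preliminary, observe that for $u\in\mathcal{O}\cap\mathbb{W}$ the elevated regularity of $\mathbb{W}$ ensures $\nabla E(u)\in\T{Dom}\,J$ (cf.\ \eqref{Domain J}): for instance $E_{\tilde\xi}'(u)=\mathcal{A}(\eta)\tilde\xi+\omega_-\eta\eta_x+\rho_+\mathcal{G}_-(\eta)\mathcal{B}(\eta)^{-1}\jump{\omega}\eta\eta_x$ lies in $H^1(\mathbb{R})\cap\dot H^{-1/2}(\mathbb{R})$ by the mapping properties of $\mathcal{A}(\eta),\mathcal{G}_\pm(\eta),\mathcal{B}(\eta)^{-1}$ recorded in Section~\ref{hamiltonian formulation }, and $E_\eta'(u)\in H^{-1}(\mathbb{R})\cap\dot H^{1/2}(\mathbb{R})$ similarly. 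Hence $J\,\nabla E(u)$ is a well-defined element of $\mathbb{X}$, and \eqref{abstract Hamiltonian} is read componentwise as an identity in $\mathbb{X}$ holding in the distributional sense in $t$ — this is what ``weak'' means here.

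The equation for the first component is immediate: the first row of the Poisson map \eqref{Poisson Map} is $(0,\,1)$, so by Lemma~\ref{Energy Extension},
\[
\eta_t \;=\; \big(J\,\nabla E(u)\big)_1 \;=\; E_{\tilde\xi}'(u) \;=\; \mathcal{A}(\eta)\tilde\xi + \omega_-\eta\eta_x + \rho_+\,\mathcal{G}_-(\eta)\mathcal{B}(\eta)^{-1}\jump{\omega}\eta\eta_x,
\]
which is precisely the kinematic equation in \eqref{kinematic and bernoulli}.

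For the second component, the Poisson map gives
\[
\tilde\xi_t \;=\; \big(J\,\nabla E(u)\big)_2 \;=\; -\,E_\eta'(u) \;-\; \jump{\rho\omega}\,\partial_x^{-1}\,E_{\tilde\xi}'(u),
\]
and one must verify that this equals $g\eta\jump{\rho}+\sigma\big(\eta_x/\langle\eta_x\rangle\big)_x+\tfrac12\jump{\rho|\nabla\psi|^2}-\eta_t\jump{\rho\phi_y}+\jump{\rho\omega\psi}$. The plan is to run, in reverse, the computation by which \eqref{kinematic and bernoulli} was derived from the Hamiltonian \eqref{Hamiltonian}, in the spirit of \cite[Section~3.3]{Ming--WalshOrbital2022}. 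In detail: (i) insert the shape-derivative identities for the Dirichlet--Neumann operators, of the schematic form $\T{D}\mathcal{G}_\pm(\eta)\dot\eta\,f=-\mathcal{G}_\pm(\eta)(w_\pm\dot\eta)-\partial_x(V_\pm\dot\eta)$, with $w_\pm,V_\pm$ the appropriately normalized boundary traces of the normal and tangential parts of $\nabla\mathcal{H}_\pm(\eta)f$, and use the self-adjointness of $\mathcal{A}(\eta),\mathcal{G}_\pm(\eta),\mathcal{B}(\eta)^{-1}$ to convert every term of $E_\eta'(u)$ carrying a $\T{D}\mathcal{A}$, $\T{D}\mathcal{G}_-$ or $\T{D}\mathcal{B}^{-1}$ into a local expression; (ii) reconstruct $\xi_\pm$ from $\tilde\xi$ via \eqref{Equation for Xi} and reassemble, recognising the quadratic forms $\Gamma_\pm(\eta,\xi_\pm)$ of \eqref{definition of Gamma}, hence $\tfrac12\jump{\rho|\nabla\psi|^2}$ and the term $-\eta_t\jump{\rho\phi_y}$; (iii) account for the rotational terms — the contributions of $\omega_-\eta\eta_x$ and of the cubic $\jump{\rho\omega^2}\eta^3$ term in \eqref{Hamiltonian}, together with the nonlocal operator $\jump{\rho\omega}\partial_x^{-1}$ in $J$, which, after pairing $\partial_x^{-1}$ against the $\partial_x$-derivative pieces generated in step (i) and invoking $\mathscr{H}_\pm(\eta)\xi_\pm=\psi_\pm|_{y=\eta}+\tfrac{\omega_\pm}{2}\eta^2$ from \eqref{Hilbert Transform}, reproduce $\pm\rho_\pm\omega_\pm\mathscr{H}_\pm(\eta)\xi_\pm\mp\rho_\pm\omega_\pm\eta\xi_{x_\pm}$ and hence the Bernoulli term $\jump{\rho\omega\psi}$. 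Every step is an equality between well-defined quantities in the spaces above, so the derivation is reversible, giving \eqref{abstract Hamiltonian} $\iff$ \eqref{kinematic and bernoulli}.

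It remains to close the chain. That \eqref{kinematic and bernoulli} is equivalent to \eqref{kinematic dynamic} was already observed in Section~\ref{hamiltonian formulation }, via the substitution $\tilde\xi=-\jump{\rho\xi}$ and the identities \eqref{G_- and G_+}--\eqref{Equation for Xi}. Finally, \eqref{kinematic dynamic} is equivalent to \eqref{Euler-Incompressible in terms of Harmonic Conj}: if $(\eta,\phi_\pm)$ solves the latter, then the traces $\xi_\pm:=\phi_\pm|_{y=\eta}$ satisfy \eqref{kinematic dynamic} by the very definitions of $\mathcal{G}_\pm(\eta),\mathcal{H}_\pm(\eta),\mathscr{H}_\pm(\eta)$ and the chain rule $\xi_{\pm}'=(\partial_x\phi_\pm)|_{y=\eta}+\eta'(\partial_y\phi_\pm)|_{y=\eta}$ (the same chain rule that produces the term $-\eta_t\jump{\rho\phi_y}$ when passing from $\jump{\rho\phi_t}$ to $\jump{\rho\xi_t}$); conversely, given $(\eta,\xi_\pm)$ solving \eqref{kinematic dynamic}, the harmonic extensions $\phi_\pm:=\mathcal{H}_\pm(\eta)\xi_\pm$ from \eqref{harmonic extension} are harmonic in $\Omega_\pm$ with $\partial_y\phi_\pm=0$ on $y=\pm d_\pm$ by construction, and rewriting \eqref{kinematic dynamic} in terms of $\phi_\pm$ recovers the interface conditions in \eqref{Euler-Incompressible in terms of Harmonic Conj}. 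Composing the three equivalences yields the theorem. I expect the main obstacle to be the bookkeeping in step (iii): in the rotational setting the nonlocal term $\jump{\rho\omega}\partial_x^{-1}$ introduced by the non-canonical Poisson map must be matched, after several integrations by parts, against precisely the right local pieces of $E_\eta'(u)$ in order to produce $\jump{\rho\omega\psi}$ and the associated shift $-\eta_t\jump{\rho\phi_y}$ — structure entirely absent in the irrotational case \cite{Ming--WalshOrbital2022}, so that the verification is genuinely more involved than a routine transcription.
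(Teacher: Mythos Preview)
Your proposal is correct and follows essentially the same architecture as the paper: verify the kinematic equation from the first row of $J$ and $E_{\tilde\xi}'$, then verify the dynamic equation from the second row, and close via the standard ZCS-to-Euler equivalences. The one genuine difference is tactical. You plan to compute $E_\eta'(u)$ in the $\tilde\xi$ variable --- expanding the shape derivatives of $\mathcal{A}(\eta)$, $\mathcal{G}_-(\eta)$, and $\mathcal{B}(\eta)^{-1}$ directly --- and only afterwards substitute \eqref{Equation for Xi} to recognise $\Gamma_\pm$ and the rotational pieces. The paper instead performs the change of variables \emph{first}: it rewrites the energy density in terms of $\xi_\pm$ to obtain the alternative form \eqref{alternative energy}, whose $\eta$-variation involves only $\langle \T{D}\mathcal{G}_\pm(\eta)\dot\eta,\xi_\pm\rangle$, and then a single application of the Fr\'echet derivative identity \eqref{Frechet Deritavative G} delivers $\mp\Gamma_\pm(\eta,\xi_\pm)$ directly. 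This bypasses the expansion of $\T{D}\mathcal{A}$ and $\T{D}\mathcal{B}^{-1}$ entirely and makes your step~(iii) --- the matching of the $\jump{\rho\omega}\partial_x^{-1}$ term against the rotational residues --- considerably shorter, since in the $\xi_\pm$ variables the terms $-2\jump{\rho\xi\omega}\eta\eta_x$ and $-\tfrac13\jump{\rho\omega^2}\eta^3$ are already local and their $\eta$-variation is immediate. Your route works but carries more bookkeeping; the paper's reordering is the cleaner way to handle the rotational case.
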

\begin{proof}
Suppose that $u(t)=(\eta(t),\tilde{\xi}(t))\in C^0([0,t_0); \mathcal{O} \cap \mathbb{W})$ is a weak solution to the abstract Hamiltonian \eqref{abstract hamiltonian}. Using \eqref{Equation for Xi}, we define 
\[\phi_\pm:= \mathcal{H}(\eta)\left(\mp \mathcal{B}(\eta)^{-1} \mathcal{G}_\mp(\eta)\tilde{\xi}+\rho_\mp \mathcal{B}(\eta)^{-1}\jump{\omega}\eta \eta_x\right).
\] Clearly, this satisfies the Laplace equation in \eqref{Euler-Incompressible in terms of Harmonic Conj-a} and the boundary conditions on the walls $\{y=\pm d_\pm\}$ by definition of $\mathcal{H}_{\pm}$ in \eqref{harmonic extension}. Further, recalling the expression for $E'(u)$ in Lemma \ref{Energy Extension}, we have
\begin{equation*}
    \eta_t=E_{\tilde{\xi}}'(u)=\mathcal{A}(\eta) \tilde{\xi} +\rho_+ \mathcal{G}_-(\eta)\mathcal{B}^{-1}(\eta)\jump{\omega} \eta \eta_x +\omega_- \eta \eta_x,
\end{equation*}
which holds in the distributional sense and it is equivalent to the kinematic boundary conditions in \eqref{kinematic and bernoulli}, therefore leads to the kinematic condition in \eqref{Euler-Incompressible in terms of Harmonic Conj-b}. 
Next, we claim that the Bernoulli condition is equivalent to
\begin{equation*}
   \tilde{\xi}_t= - E_{\eta}'(u)-\sum_{\pm}\pm\rho_{\pm}\omega_{\pm}\partial_x^{-1}E_{\tilde{\xi}}'(u).
\end{equation*}
We begin by writing the integrand in \eqref{Hamiltonian} in terms of $\xi_\pm$ instead of $\tilde{\xi}$:

\[
\begin{aligned}
&\dfrac{1}{2}\bigg[\tilde{\xi}\mathcal{A}(\eta) \tilde{\xi}+2\rho_+\jump{\omega}\eta \eta_x\mathcal{G}_-(\eta)\mathcal{B}^{-1} \tilde{\xi}-\rho_+\rho_-\jump{\omega}\eta \eta_x \mathcal{B}^{-1}\jump{\omega}\eta \eta_x\quad\\& -g \jump{\rho}\eta^2+2\tilde{\xi}\omega_-\eta \eta_x -\dfrac{\eta^3 \jump{\rho \omega^2}}{3}+2\sigma(\sqrt{1+\eta_x^2}-1)\bigg]\\&=\dfrac{1}{2}\bigg[\rho_-\xi_-\mathcal{G}_-(\eta)\xi_-+\rho_+\xi_+\mathcal{G}_+(\eta)\xi_+ -2\xi_+\rho_+\jump{\omega}\eta\eta_x\\&\qquad -g \jump{\rho}\eta^2+2\tilde{\xi}\omega_-\eta \eta_x -\dfrac{\eta^3 \jump{\rho \omega^2}}{3}+2\sigma(\sqrt{1+\eta_x^2}-1)\bigg]\\&=\dfrac{1}{2}\bigg[\rho_-\xi_-\mathcal{G}_-(\eta)\xi_-+\rho_+\xi_+\mathcal{G}_+(\eta)\xi_+ -2\jump{\rho \xi\omega}\eta\eta_x\\&\qquad-g \jump{\rho}\eta^2-\dfrac{\eta^3 \jump{\rho \omega^2}}{3}+2\sigma(\sqrt{1+\eta_x^2}-1)\bigg]
\end{aligned}.\]
Therefore, equivalently, the Hamiltonian \eqref{Hamiltonian} can be written as

\begin{equation}\label{alternative energy}
  \begin{aligned}
E(\xi_\pm,\eta)=\dfrac{1}{2}\int_{\mathbb{R}}&\rho_-\xi_-\mathcal{G}_-(\eta)\xi_-+\rho_+\xi_+\mathcal{G}_+(\eta)\xi_+ -2\jump{\rho \xi\omega}\eta\eta_x\\&-g \jump{\rho}\eta^2-\dfrac{\eta^3 \jump{\rho \omega^2}}{3}+2\sigma(\sqrt{1+\eta_x^2}-1)\; dx.
\end{aligned}  
\end{equation} 

Thanks to the derivative formula in the  Appendix~\ref{formulas appendix} for the operator $\mathcal{G}_\pm(\eta)$ with respect to $\eta$:
\begin{equation}\label{Frechet Deritavative G}
    \int_{\mathbb{R}}\xi_\pm\langle D\mathcal{G}_\pm(\eta)\dot{\eta}, \xi_\pm\rangle\; dx= \int_{\mathbb{R}}\dot{\eta}(\mp \Gamma_\pm(\eta,\xi_\pm))\;dx,
\end{equation} where $\Gamma_\pm$ is defined in \eqref{definition of Gamma}.
 Due to formula \eqref{Frechet Deritavative G}, it follows that the Bernoulli equation is satisfied.
\end{proof}

\subsection{The symmetry group and momentum}\label{symmetry group and momentum}
It is well known that the internal water wave problem is invariant under the horizontal translations. For this reason, we define a one-parameter symmetry group:
\[
T(s)u:=u(\cdot-s) \qquad \T{for } u\in \mathbb{X}.
\]
In addition to that, this invariance also gives rise to another conserved quantity known as the momentum, $P_{\pm}$ in each layer:
\[
P_{\pm}=\pm \int_{\mathbb{R}}\left(\rho_\pm \eta_x \xi_{\pm} + \dfrac{1}{2}\rho_{\pm}\omega_{\pm}\eta^2\right)\; dx.
\]

Summing both momentum in each layers leads to the total momentum equation given by
\begin{equation}\label{Total Momentum}
    P(\eta, \tilde{\xi})=-\int_{\mathbb{R}}\left(\eta_x \tilde{\xi} -\dfrac{1}{2}\jump{\rho \omega}\eta^2\right) \; dx.
\end{equation}
Observe that $P$ is a smooth functional in $\mathcal{O}\cap \mathbb{V}$.

The following lemma shows that $T$ and $P$ satisfy a number of properties as required by Assumption \ref{assumption on symmetry group}.
\begin{lemma}[Conserved quantities and symmetry] \label{conserved quantity}
The energy E, momentum P, and the translation symmetry group T given above satisfy Assumptions \ref{assumption on Derivative extension} and \ref{assumption on symmetry group}. Specifically, the infinitesimal generator of $T_{\mathbb{X}^k}$ is the unbounded linear operator
\begin{equation}\label{definition of T'(0)}
\begin{split}
    T'(0)|_{\mathbb{X}^k}&:\T{Dom} T'(0) \subset\mathbb{X}^k \to \mathbb{X}^k\\& u\mapsto -\partial_{x}u
\end{split}
\end{equation}
with dense domain $\T{Dom}T'(0)|_{\mathbb{X}^k}:=\mathbb{X}^{k+1}$, and 
\begin{equation}\label{T'(0)=J nabla P}
    T'(0)u=J\nabla P(u) \qquad \T{for all } u\in O \cap\T{Dom}T'(0).
\end{equation}
\end{lemma}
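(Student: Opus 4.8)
The statement has two halves---Assumption~\ref{assumption on Derivative extension} and Assumption~\ref{assumption on symmetry group}, the latter including the explicit description of the generator in \eqref{definition of T'(0)} and the identity \eqref{T'(0)=J nabla P}---and I would organize the proof so that the genuinely routine verifications are grouped together and dispatched in a sentence or two apiece, isolating the generator identity as the one step worth spelling out in detail.

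Assumption~\ref{assumption on Derivative extension} is quick: the extension $\nabla E$ is precisely the one constructed in Lemma~\ref{Energy Extension}, and since $P$ in \eqref{Total Momentum} is an explicit quadratic polynomial in $u=(\eta,\tilde\xi)$, one differentiates and integrates by parts once to obtain $\nabla P(u)=(\tilde\xi_x+\jump{\rho\omega}\eta,\ -\eta_x)$. I would then check that this pair lies in $\mathbb{X}^*=H^{-1}(\mathbb{R})\times\dot H^{-1/2}(\mathbb{R})$ for every $u\in\mathcal{O}\cap\mathbb{V}$; the only mildly delicate point is the low-frequency end, which is handled because the Fourier symbols of $\tilde\xi_x$ and $\eta_x$ vanish at the origin. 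Continuity of $\nabla P$ (indeed smoothness) is automatic from its affine dependence on $u$.

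For Assumption~\ref{assumption on symmetry group}, the main simplification is that $T(s)u=u(\cdot-s)$ is \emph{linear}, so $T(s)0=0$ and $dT(s)=T(s)$: this makes the affine-part condition and the flow property trivial, unitarity and strong continuity are the standard facts that translations act isometrically and strongly continuously on every $L^2$-based (homogeneous or inhomogeneous) Sobolev space, the invariance statements and the commutation $JIT(s)=T(s)JI$ hold because $\mathbb{V}$, $\mathbb{W}$, $\mathrm{Dom}\,J$ and the maps $J$, $I$ are all built from Fourier multipliers (which commute with translations) while $\mathcal{O}$ is cut out by a pointwise constraint on $\eta$, and energy conservation is just shift-invariance of $\int_{\mathbb{R}}$ applied to the translation-covariant integrand of \eqref{Hamiltonian}. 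The substance is the generator: writing $\tfrac1s(T(s)u-u)=-\tfrac1s\int_0^s\partial_x u(\cdot-\tau)\,d\tau$ and using the fundamental theorem of calculus in $\mathbb{X}^k$ shows the limit exists and equals $-\partial_x u$ exactly when $\partial_x u\in\mathbb{X}^k$, giving $\mathrm{Dom}\,T'(0)|_{\mathbb{X}^k}=\mathbb{X}^{k+1}$, which is dense, so \eqref{definition of T'(0)} holds. For \eqref{T'(0)=J nabla P} I would first verify, using the mapping properties recorded after \eqref{Domain J}, that $\nabla P(u)\in\mathrm{Dom}\,J$ on the domain of $T'(0)|_{\mathbb{V}}$, and then simply apply $J$ from \eqref{Poisson Map}: the first component of $J\nabla P(u)$ is $-\eta_x$, while the second is $-\tilde\xi_x-\jump{\rho\omega}\eta+\bigl(\sum_\pm\pm\rho_\pm\omega_\pm\bigr)\eta=-\tilde\xi_x$, using $\sum_\pm\pm\rho_\pm\omega_\pm=\rho_+\omega_+-\rho_-\omega_-=\jump{\rho\omega}$ and $\partial_x^{-1}\partial_x=\mathrm{Id}$; hence $J\nabla P(u)=-\partial_x u=T'(0)u$. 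The density hypothesis is likewise clear: Schwartz functions whose Fourier transforms are supported away from the origin lie in $\mathrm{Dom}\,T'(0)|_{\mathbb{W}}\cap\mathrm{Rng}\,J$ (invert $J$ as a Fourier multiplier on this class) and are dense in $\mathbb{X}$.

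I expect the only real obstacle to be bookkeeping rather than ideas: one has to keep the homogeneous and inhomogeneous Sobolev scales separate throughout, so that each of $\nabla P(u)\in\mathbb{X}^*$, $\nabla P(u)\in\mathrm{Dom}\,J$, and $J\nabla P(u)\in\mathbb{X}$ is checked on the correct space, and one must be slightly careful at low frequencies, where $\partial_x^{-1}$ acts. None of these steps is deep, but because the water-wave energy space mixes $H^s$ and $\dot H^s$ the regularity accounting has to be carried out with care.
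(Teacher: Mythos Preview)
Your proposal is correct and follows essentially the same approach as the paper: compute $\nabla P(u)=(\tilde\xi_x+\jump{\rho\omega}\eta,\,-\eta_x)$, note it lies in $\mathrm{Dom}\,J$, and apply $J$ directly to recover $-\partial_x u$. The paper's proof is considerably terser---it simply records $\nabla P$ and states that \eqref{T'(0)=J nabla P} ``follows easily'' from the definitions of $J$ and $T'(0)$---while you spell out the cancellation $-\jump{\rho\omega}\eta+\jump{\rho\omega}\partial_x^{-1}\eta_x=0$ explicitly and give a concrete argument for the density hypothesis (the paper handles the latter in the paragraph following the proof, invoking Remark~\ref{Dense space: intersectiom Hp amd dotHq} rather than your Schwartz-function construction, but the two are equivalent).
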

\begin{proof}
In light of Assumption \ref{assumption on Derivative extension}, we have shown that the energy has an extension as stated by Lemma \ref{Energy Extension}. Here, we will show that the momentum can also be extended. Let $u=(\eta,\tilde{\xi})\in \mathcal{O}\cap \mathbb{V}$ and $\dot{u}=(\dot{\eta},\dot{\tilde{\xi}}) \in \mathbb{V}$. Recalling the definition of $P$ in \eqref{Total Momentum} and computing its first variation yield
\begin{equation*}
    \T{D}P(u)\dot{u}=\int_{\mathbb{R}}\left(\tilde{\xi}' +\jump{\rho \omega}\eta\right) \dot{\eta} \;dx-\int_{\mathbb{R}}\eta_x \dot{\tilde{\xi}}\;dx=:\langle \nabla P(u),\dot{u}\rangle_{\mathbb{X}^* \times \mathbb{X}}.
\end{equation*}
The above expression has an $L^2$ gradient
\[
P'(u)=(P_\eta'(u),P_{\tilde{\xi}}'(u))=(\tilde{\xi}+\jump{\rho \omega}\eta, -\eta').
\]
Further, it is obvious that $\nabla P(u)\in \T{Dom } J$ for $u \in \mathcal{O}\cap \mathbb{V}.$ Observe that $\T{Dom } T'(0) =\mathbb{X}^{3/2} \subset \mathbb{X}$. The expression \eqref{T'(0)=J nabla P} follows easily from the definition of $J$ in \eqref{Poisson Map} and $T'(0)$ in \eqref{definition of T'(0)}.
\end{proof}
As it is mentioned previously, most of the requirements in Assumption \ref{assumption on symmetry group} can be shown to hold in a straightforward manner. Hence, we will omit the details here. However, we want to focus more on Assumption \ref{assumption on symmetry group}(8). First, recall that
\[
\T{Rng } J =\left(H^1(\mathbb{R}) \cap \dot{H}^{-1/2}(\mathbb{R})\right) \times \left(H^{-1} (\mathbb{R} \cap \dot{H}^{1/2}(\mathbb{R})\right).
\]
Therefore, by Lemma \ref{conserved quantity}, we obtain
\[
\T{Dom } T'(0)|_{\mathbb{W}} \cap \T{Rng }J=\left(H^{4+}(\mathbb{R}) \cap \dot{H}^{-1/2}(\mathbb{R}\right) \times \left(H^{-1}(\mathbb{R})\cap \dot{H}^{1/2}(\mathbb{R})\cap \dot{H}^{7/2+}(\mathbb{R})\right).
\]
Hence, by Remark \ref{Dense space: intersectiom Hp amd dotHq}, $\T{Dom } T'(0)|_{\mathbb{W}} \cap \T{Rng }J$ is dense in $\mathbb{X}$.

\subsection{Bound states}
The existence result in Theorem~\ref{existence theorem} was obtained by fixing the value of $\beta$ and letting the rest of the parameters vary. Unfortunately, when using the general theory, this choice is not ideal: essentially, one might study the stability of two waves that solve two different internal water waves problem. To avoid such degenerate and nonphysical scenario, instead, we require a family of solutions parameterized only by the variable $c$, known as bound states, while fixing the rest of the physical parameters. 

Let the parameters $\left(\rho_{\pm*},d_{\pm*}, \omega_{\pm *}, \sigma_*,c_*\right)$ be fixed. We define 
\begin{equation}\label{definition of betac,alphac,epsilonc}
    (\beta_c,\alpha_c):=\left(\dfrac{\sigma_*}{d_{+*}\rho_{-*}c^2}, -\dfrac{g\jump{\rho_*}d_{+*}}{\rho_{{-}_*}c^2}\right), \qquad \epsilon_c:=\sqrt{\alpha_c-\alpha_0} \quad \T{for } |c-c_*| \ll 1. 
\end{equation}
The pair $(\beta_c,\alpha_c)$ parameterizes a line segment joining the fixed point $(\beta_*,\alpha_*)$ to the origin in the $(\beta,\alpha)$ plane. Meanwhile, $\epsilon_c$ plays a role as a bifurcation parameter in terms of $c$. Throughout this week, $\epsilon_c$ will be kept sufficiently small.

\begin{corollary}[Bound states] \label{Bound States Corollary}
Fix $\left(\rho_{\pm*}\,d_{\pm*},\omega_{\pm *}, \sigma_*,c_*\right)$ such that 
\[\varrho_*-\dfrac{1}{d_*^2}+\dfrac{\omega_{+*}d_{+*}\varrho_*}{c_*}+\dfrac{\omega_{-*}d_{+*}}{c_*d_*}+\dfrac{\omega_{+*}^2d_{+*}^2\varrho_*}{3c_{*}^2}-\dfrac{\omega_{-*}^2d_{+*}^2}{3c_{*}^2}\neq 0\] and the fixed non-dimensional parameters $(\beta_*,\alpha_*)$ satisfies the condition $\beta_*>\beta_0$ and $\alpha_* =\alpha_0 +\epsilon^2.$ Then there exists an open interval $\mathcal{I}\ni c_*$ and a family of bound states $\{U_c\}_{c \in \mathcal{I}} \subset \mathcal{O}\cap \mathbb{W}$ having the non-dimensional parameter values $(\beta_c,\alpha_c)$. The free surface of the bound states is 
.\[
\eta_c:=\eta_{\epsilon_c, \beta_c} \quad \T{for } c\in \mathcal{I}. 
\]
Moreover, the family of bound states $\{U_c\}$ satisfies Assumption \ref{assumption on bound states}.
\end{corollary}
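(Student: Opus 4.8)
This corollary is, in essence, a reformulation of Theorem~\ref{existence theorem} in the language required by the abstract theory: one specializes the bifurcation curve to the physically fixed parameters, re-parametrizes the bifurcation by the wave speed, and then verifies the clauses of Assumption~\ref{assumption on bound states} one at a time. The plan is as follows.

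I would first take the parameter curve $\Pi$ of Theorem~\ref{existence theorem} to be $\{(\rho_{\pm*},d_{\pm*},\omega_{\pm*},\sigma_*,c):|c-c_*|\ll1\}$, so that along $\Pi$ the nondimensional parameters are $(\beta,\alpha)=(\beta_c,\alpha_c)$ from \eqref{definition of betac,alphac,epsilonc}. Since $c_*\neq0$, the map $c\mapsto\alpha_c=-g\jump{\rho_*}d_{+*}/(\rho_{-*}c^2)$ is smooth and strictly monotone near $c_*$, so $\alpha_c=\alpha_0+\epsilon_c^2$ determines $\epsilon_c$ as a smooth, positive, small function of $c$ on an open interval $\mathcal{I}\ni c_*$, and $\beta_c>\beta_0$ is preserved on $\mathcal{I}$. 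The hypothesis that the denominator in \eqref{intro eta asymptotics} is nonzero at $c_*$ gives, after shrinking $\mathcal{I}$, exactly the $O(1)$ condition needed to invoke Theorem~\ref{existence theorem}. Applying that theorem with $k$ large enough that $H^{k+1/2}(\mathbb{R})\hookrightarrow H^{3+}(\mathbb{R})$ and $\dot H^{k}(\mathbb{R})\hookrightarrow\dot H^{5/2+}(\mathbb{R})$ yields a $C^1$ curve $\mathcal{C}$ of solitary-wave profiles; setting $\eta_c:=\eta_{\epsilon_c;\beta_c}$, $\tilde\xi_c:=-\jump{\rho_*\,\xi_{\epsilon_c;\beta_c}}$, and $U_c:=(\eta_c,\tilde\xi_c)$ then produces a family in $\mathcal{O}\cap\mathbb{W}$ with the claimed free surface.

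Next I would verify that $U_c$ is a bound state, i.e.\ that $u(t)=T(ct)U_c$ solves $\partial_tu=J\T{D}E(u)$. By the Hamiltonian formulation theorem a traveling wave of speed $c$ solves this equation; differentiating $T(ct)U_c$ in $t$ and using that $J\T{D}E$ commutes with $T(s)$ together with $T'(0)=J\nabla P$ from Lemma~\ref{conserved quantity}, one finds this is equivalent to $J\bigl(\T{D}E(U_c)-c\,\nabla P(U_c)\bigr)=0$, hence by injectivity of $J$ (Assumption~\ref{assumption of poisson map}) to $\T{D}E(U_c)=c\,\nabla P(U_c)$; since the profiles of $\mathcal{C}$ are critical points of the flow force, equivalently of $E-cP$, this identity holds. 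It then remains to check Assumption~\ref{assumption on bound states}(1)--(4). Item (1) follows by composing the $C^1$ curve $\mathcal{C}$ with the smooth map $c\mapsto(\epsilon_c,\beta_c)$ and the bounded linear map $\xi\mapsto-\jump{\rho_*\xi}$. Item (2) holds because $T'(0)U_c=-\partial_xU_c$ and the leading term of $\eta_c$ in \eqref{intro eta asymptotics} is a nonconstant $\sech^2$-profile with nonzero coefficient, so $\partial_x\eta_c\not\equiv0$; equivalently $U_c$ is not a critical point of $P$. Item (4) follows from the spatial localization of $U_c$: $\|T(s)U_c-U_c\|_{\mathbb{X}}^2\to2\|U_c\|_{\mathbb{X}}^2>0$ as $|s|\to\infty$.

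The one substantive point, and the step I expect to be the main obstacle, is Assumption~\ref{assumption on bound states}(3): one must know that the bifurcating waves are smooth and decay rapidly enough to lie in $\mathcal{D}(T'''(0))\cap\mathcal{D}(JIT'(0))$ with $JIT'(0)U_c\in\mathcal{D}(T'(0)|_{\mathbb{W}})$. I would establish this by a bootstrap: elliptic regularity for the harmonic extensions of the layer traces upgrades the finite Sobolev regularity given by Theorem~\ref{existence theorem} to $U_c\in\bigcap_s\mathbb{X}^s$, while the exponential spatial decay is inherited from the saddle structure of the truncated reduced system $Q_X=P$, $P_X=Q+\tfrac{3}{2}KQ^2$ and the persistence of its homoclinic orbit under the center-manifold correction. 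Granting smoothness and decay, the homogeneous-Sobolev constraints defining $\T{Dom}\,J$ in \eqref{Domain J} are automatic, since $\partial_x$ of a Schwartz function and its images under $\mathcal{B}(\eta_c)^{-1}$ and $\mathcal{G}_\pm(\eta_c)$ remain in the required $\dot H^{\pm1/2}$ classes. Everything else reduces to Theorem~\ref{existence theorem}, Lemma~\ref{conserved quantity}, and the injectivity and skew-adjointness of $J$.
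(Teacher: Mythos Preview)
Your proposal is correct and follows essentially the same route as the paper: specialize the existence theorem to the curve of physical parameters with only $c$ varying, re-parametrize by $c$ via $\epsilon_c=\sqrt{\alpha_c-\alpha_0}$, and then check the four clauses of Assumption~\ref{assumption on bound states} using the explicit $\sech^2$ leading-order profile, its exponential localization, and the smoothness of the steady profiles. The paper's own argument is considerably terser (it dispatches (3) in a single sentence by appealing to the regularity and exponential localization already obtained), whereas you spell out the bootstrap and the $\dot H^{\pm1/2}$ membership more carefully and also make explicit the verification that $U_c$ is a critical point of $E_c$; but the underlying strategy is the same.
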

\begin{proof}
Let $\left(\rho_{\pm*}\,d_{\pm*},\omega_{\pm *}, \sigma_{\star},c_{\star}\right)$ be given. Suppose that $\beta_*>\beta_0$ and $0<\alpha_*-\alpha_0 \ll 1$. For any $c$ such that $0<c-c_*\ll 1$, then by Theorem~\ref{existence theorem}, the bound states can be taken to be $U_c:=u_{\epsilon_c,\beta_c}$ where the surface profile depends on the parameter $\beta_c,\alpha_c$. Observe that from the explicit expression of the profile in Theorem~\ref{existence theorem}, it is exponentially localized. Further, due to the translation invariance of the problem, the profile $\eta_c$ is of class $C^{\infty}$. Thus, $\eta_c \in \mathbb{X}_1^k$ for all $k\geq 1/2$. Similarly, via the kinematic condition, $\tilde{\xi}_c$ is also smooth and belongs to $\mathbb{X}_2^{k}$ for all $k\geq 1/2$. The first part of Assumption \ref{assumption on bound states} now follows. Assumption \ref{assumption on bound states}(3) also follows from the regularity considered here. From the knowledge on the expression of $P$  and the profile $\eta_c$, Assumption \ref{assumption on bound states}(2),(4) clearly hold.
\end{proof}

\section{Spectral Analysis} \label{Spectral Analysis}
If $u(t)=T(ct)U$ is a traveling wave solutions for any bound state solution $U \in \mathcal{O} \cap \mathbb{W}$ with a wave speed $c\in \mathbb{R}$, then by the Hamiltonian structure, Lemma \ref{Energy Extension}, and Assumption \ref{assumption on bound states}(6), we have
\begin{equation}\label{cT'(0)U=JDE(U)}
    \dfrac{du}{dt}=cT'(0)U=J\T{D}E(U).
\end{equation}
Furthermore, recall that via \eqref{T'(0)=J nabla P},
the infinetesimal generator of $T$ satisfies the following relation
\begin{equation*}
    T'(0)(u)=J\nabla P(u), 
\end{equation*}
where the operator $T'(0)$ maps $u\mapsto-\partial_x u$. In concert with \eqref{cT'(0)U=JDE(U)}, they yield
\begin{equation*}
    \dfrac{du}{dt}=cT'(0)U=\T{D}E(U)=c\T{D}P(U)=\T{D}E(U).
\end{equation*}
The above equation leads us to define the following functional known as the \textit{augmented Hamiltonian} for a fixed speed $c$:
\begin{equation*}
E_c(u)=E(u)-cP(u).
\end{equation*}
Let $u_*=(\eta_*, \tilde{\xi}_*)$ be the critical point of the functional $E_c$, then $\T{D}_{\tilde{\xi}}E(u_*)=\T{D}_{\tilde{\xi}}P(u_*)$. From the Kinematic conditions in \eqref{kinematic dynamic} and \eqref{Kinematic and dynamic condition in terms of Gamma and xi}, for traveling waves, we have useful expressions for $\xi_{{\pm}_*}$ and $\tilde{\xi}_*$

\[
\xi_{{\pm}_*}=\pm c \mathcal{G}_{\pm}^{-1}(\eta)\eta_x \pm \omega_{\pm} \mathcal{G}_{\pm}^{-1}(\eta)\eta\eta_x=\pm\mathcal{G}_{\pm}^{-1}(\eta)(c\eta_x+\omega_{\pm}\eta\eta_x),
\]
\[
\tilde{\xi}_*=-c\mathcal{A}^{-1}(\eta)\eta_x-\rho_+\jump{\omega}\mathcal{A}^{-1}(\eta)\mathcal{G}_-(\eta)\mathcal{B}^{-1}(\eta)\eta \eta_x-\omega_-\mathcal{A}^{-1}(\eta)\eta\eta_x.
\]

Inspired by the notation used in \cite{Ming--WalshOrbital2022}, we define the $a_{1}^{\pm}(\eta,\xi)$ and $a_{2}^{\pm}(\eta,\xi)$ as follows:
\[a_{1}^{\pm}(\eta,\phi):=\mp(\partial_x\mathcal{H}_{\pm}(\eta)\phi)|_{y=\eta}\qquad a_{2}^{\pm}(\eta,\phi):=-(\partial_y\mathcal{H}_{\pm}(\eta)\phi)|_{y=\eta}.\] Note that these two quantities represent the horizontal and vertical velocities respectively when $\phi$ is being replaced with $\xi_{\pm}$.
Further, in relations to $a_{1}^{\pm}$ and $a_{2}^{\pm}$, we define the following functions which represent the relative velocities in horizontal and vertical directions:
\begin{equation}\label{ b_1 and b_2}
    b_{1}^{\pm}:=\mp a_{1}^{\pm}(\eta,\xi_{\pm})-c-\omega_{\pm} \eta, \quad b_{2}^{\pm}:= - a_{2}^{\pm}(\eta,\xi_{\pm}).
\end{equation}
For traveling waves solutions, the Kinematic condition can now be recast as 
\[b_2^{\pm}=\left(b_1^{\pm} \right)\eta_x. \]
Via some standard computation (in the notes), we obtain
\begin{equation*}
     \T{D}\xi_{\pm}(\eta)\dot{\eta}
  =\mp \mathcal{G}_{\pm}(\eta)^{-1}(b_{1}^{\pm} \dot{\eta})_x + b_{2}^{\pm} \dot{\eta} 
\end{equation*}
Using the definition of $\tilde{\xi}$ and the formula for $\T{D}\xi_{\pm}(\eta)$, we can infer
\begin{equation}\label{S and T}
   \T{D}\tilde{\xi}_*(\eta)\dot{\eta}=\sum_{\pm}\rho_{\pm}\mathcal{G}_{\pm}^{-1}(\eta)\left(b_{1}^{\pm}\dot{\eta}\right)_x -\sum_{\pm} \pm \rho_{\pm}b_{2}^{\pm} \dot{\eta}=:\mathcal{S}\dot{\eta}-\mathcal{T}\dot{\eta}. 
\end{equation}

Let us now define a smooth functional known as the \textit{augmented potential} $\mathcal{V}_c^{\text{aug}}$ as follows:
\[
\mathcal{V}_c^{\text{aug}}:=E_c(\eta,\tilde{\xi}_*(\eta))=\min_{\tilde{\xi}}E_c(\eta, \tilde{\xi}).
\]
In the rest of this article, we shall compute the spectrum of $\T{D}^2\mathcal{V}_c^{\text{aug}}$, which will determine the spectrum of $\T{D}^2E_c$.

\begin{lemma}[Second derivative of $\mathcal{V}_c^{\text{aug}}$]\label{second derivative of Aug Potential}
For all $(\eta, \tilde{\xi}_*(\eta)) \in \mathcal{O} \cap \mathbb{V}$ and $\dot{\eta}\in H^{3/2 +}$, we have the following formula
\begin{equation}\label{second derivative of augmented potential}
    \T{D}^2\mathcal{V}_c^{\textnormal{aug}}(\eta)[\dot{\eta},\dot{\eta}]=\T{D}^2_{\eta}E_c(\eta,\tilde{\xi}_*)[\dot{\eta},\dot{\eta}]-\int_{\mathbb{R}} (\mathcal{S}-\mathcal{T})\dot{\eta}\mathcal{A}(\eta)(\mathcal{S}-\mathcal{T})\dot{\eta}\; dx.
\end{equation}
\end{lemma}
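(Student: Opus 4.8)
The plan is to compute $\T{D}^2 \mathcal{V}_c^{\textnormal{aug}}(\eta)$ by differentiating the identity $\mathcal{V}_c^{\textnormal{aug}}(\eta) = E_c(\eta, \tilde{\xi}_*(\eta))$ twice in $\eta$, treating $\tilde{\xi}_*(\eta)$ as a smooth map from profiles to the second component. First I would record the first variation: by the chain rule,
\[
\T{D}\mathcal{V}_c^{\textnormal{aug}}(\eta)\dot{\eta} = \T{D}_\eta E_c(\eta,\tilde{\xi}_*)\dot{\eta} + \T{D}_{\tilde{\xi}} E_c(\eta,\tilde{\xi}_*)\big[\T{D}\tilde{\xi}_*(\eta)\dot{\eta}\big].
\]
Because $\tilde{\xi}_*(\eta)$ is by construction the minimizer of $\tilde{\xi}\mapsto E_c(\eta,\tilde{\xi})$ — equivalently $\T{D}_{\tilde{\xi}}E_c(\eta,\tilde{\xi}_*) = 0$ as established just above the statement of the lemma — the second term vanishes, so $\T{D}\mathcal{V}_c^{\textnormal{aug}}(\eta)\dot{\eta} = \T{D}_\eta E_c(\eta,\tilde{\xi}_*)\dot{\eta}$. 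This is the usual envelope-type identity and is the crux of why the formula is clean.

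Next I would differentiate once more in $\eta$ in the direction $\dot{\eta}$. Differentiating $\eta \mapsto \T{D}_\eta E_c(\eta,\tilde{\xi}_*(\eta))\dot{\eta}$ produces two contributions: the partial second derivative $\T{D}_\eta^2 E_c(\eta,\tilde{\xi}_*)[\dot{\eta},\dot{\eta}]$, and the mixed term $\T{D}_{\tilde{\xi}}\T{D}_\eta E_c(\eta,\tilde{\xi}_*)\big[\dot{\eta}, \T{D}\tilde{\xi}_*(\eta)\dot{\eta}\big]$. To evaluate the mixed second derivative I would use the block structure of $E_c$: from the energy \eqref{Hamiltonian} and $P$ in \eqref{Total Momentum}, the only term coupling $\tilde{\xi}$ quadratically is $\tfrac12\int \tilde{\xi}\mathcal{A}(\eta)\tilde{\xi}$, and more relevantly the $\tilde{\xi}$-linear-in-$\eta$ cross terms, so that $\T{D}_{\tilde{\xi}}\T{D}_\eta E_c(\eta,\tilde{\xi}_*)[\dot\eta, \dot{\tilde\xi}]$ reduces — after using the self-adjointness of $\mathcal{A}(\eta)$ and the identity $\T{D}_{\tilde\xi} E_c(\eta,\tilde\xi_*)=0$ again — to $\int_{\mathbb{R}} \dot{\tilde\xi}\,\mathcal{A}(\eta)\big(\T{D}\tilde{\xi}_*(\eta)\dot\eta\big)\,dx$ up to the terms already absorbed into $\T{D}_\eta^2 E_c$. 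Substituting $\dot{\tilde\xi} = \T{D}\tilde{\xi}_*(\eta)\dot\eta$ and invoking \eqref{S and T}, which gives $\T{D}\tilde{\xi}_*(\eta)\dot\eta = (\mathcal{S}-\mathcal{T})\dot\eta$, yields the claimed $-\int_{\mathbb{R}} (\mathcal{S}-\mathcal{T})\dot\eta\,\mathcal{A}(\eta)(\mathcal{S}-\mathcal{T})\dot\eta\,dx$. The overall sign comes from carefully tracking that the minimization constraint forces the second-order Taylor coefficient of the quadratic-in-$\tilde\xi$ part to enter with a negative sign when restricted to the graph.

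The routine part is justifying that $\eta \mapsto \tilde{\xi}_*(\eta)$ is indeed $C^2$ into $\mathbb{X}_2$ and that all the pairings make sense on $\mathcal{O}\cap\mathbb{V}$ with $\dot\eta \in H^{3/2+}$; this follows from the mapping properties of $\mathcal{G}_\pm(\eta)$, $\mathcal{B}(\eta)^{-1}$, and $\mathcal{A}(\eta) = \mathcal{G}_\pm \mathcal{B}^{-1}\mathcal{G}_\mp$ recorded earlier, together with the explicit formula for $\tilde\xi_*$ and for $\T{D}\tilde\xi_*(\eta)$ in terms of $b_1^\pm, b_2^\pm$. I expect the main obstacle to be bookkeeping: organizing the cross terms in $\T{D}_{\tilde\xi}\T{D}_\eta E_c$ so that exactly the piece $\int \dot{\tilde\xi}\,\mathcal{A}(\eta)\dot{\tilde\xi}$ (with $\dot{\tilde\xi} = (\mathcal{S}-\mathcal{T})\dot\eta$) is split off and the remaining cross contributions are correctly identified as belonging to $\T{D}_\eta^2 E_c(\eta,\tilde\xi_*)$ rather than being double-counted — i.e. being precise about what "$\T{D}_\eta^2 E_c(\eta,\tilde\xi_*)$" means (partial derivative holding $\tilde\xi$ fixed at the value $\tilde\xi_*(\eta)$, then differentiated only through the explicit $\eta$-dependence). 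Once that convention is pinned down, the computation is a direct application of the chain rule and the envelope identity.
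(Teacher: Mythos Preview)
Your approach is essentially the same as the paper's: differentiate $\mathcal{V}_c^{\textnormal{aug}}(\eta)=E_c(\eta,\tilde{\xi}_*(\eta))$ twice via the chain rule, use the envelope identity $\T{D}_{\tilde{\xi}}E_c(\eta,\tilde{\xi}_*)=0$ to kill the second term at first order, and then identify the mixed contribution with the $\mathcal{A}(\eta)$-quadratic form via \eqref{S and T}.

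The one place where the paper is sharper than your outline is the handling of the mixed term, and it removes exactly the ``bookkeeping'' worry you flag. Rather than computing $\T{D}_{\tilde{\xi}}\T{D}_\eta E_c$ directly from the block structure of $E$, the paper differentiates the critical-point relation $\T{D}_{\tilde{\xi}}E_c(\eta,\tilde{\xi}_*(\eta))=0$ in $\eta$ to obtain
\[
\T{D}_\eta\T{D}_{\tilde{\xi}}E_c(u_*)[\dot{\eta},\,\cdot\,] = -\,\T{D}^2_{\tilde{\xi}}E_c(u_*)[\T{D}\tilde{\xi}_*(\eta)\dot{\eta},\,\cdot\,],
\]
and then uses that $P$ is linear in $\tilde{\xi}$ and the only quadratic-in-$\tilde{\xi}$ term in $E$ is $\tfrac{1}{2}\int \tilde{\xi}\mathcal{A}(\eta)\tilde{\xi}$, so $\T{D}^2_{\tilde{\xi}}E_c(u_*)[\dot{\tilde{\xi}},\dot{\tilde{\xi}}]=\int_{\mathbb{R}}\dot{\tilde{\xi}}\,\mathcal{A}(\eta)\dot{\tilde{\xi}}\,dx$. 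Substituting $\dot{\tilde{\xi}}=(\mathcal{S}-\mathcal{T})\dot{\eta}$ gives the result with the correct sign and with no risk of double-counting against $\T{D}_\eta^2 E_c$; there are no leftover cross terms to ``absorb.''
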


\begin{proof}
We begin by differentiating in the direction of $\dot{\eta}$,
\begin{equation}
    \T{D}\mathcal{V}_c^{\text{aug}}(\eta)\dot{\eta}=\T{D}_\eta E_c(u_*)\dot{\eta}+ \T{D}_{\tilde{\xi}}E_c(u_*)\T{D}{\tilde{\xi}_*}(\eta)\dot{\eta}=D_\eta E_c(u_*)\dot{\eta},
\end{equation}
where $u_*=(\eta, \tilde{\xi_*})$. Observe that the second term in the summation above has vanished due to its evaluation at $u_*=(\eta,\tilde{\xi}_*(\eta))$ which is a critical point of $E_c$. Differentiating again in the direction of $\dot{\eta}$ gives us
\begin{align*}
\T{D}^2\mathcal{V}_c^{\text{aug}}(\eta)[\dot{\eta},\dot{\eta}]&=\T{D}^2_{\eta}E_c(\eta)[\dot{\eta},\dot{\eta}]+\T{D}_{\tilde{\xi}}\T{D}_{\eta}E_c(u_*)[\T{D}\tilde{\xi}_*(\eta)\dot{\eta},\dot{\eta}]\\&=\T{D}^2_{\eta}E_c(\eta)[\dot{\eta},\dot{\eta}]-\T{D}^2_{\tilde{\xi}}E_c(\eta)[\T{D}\tilde{\xi}_*(\eta)\dot{\eta},\T{D}\tilde{\xi}_*(\eta)\dot{\eta}].
\end{align*}
From the definition of $E_c$ and the fact that the momentum is linear in $\tilde{\xi}$, we obtain
\begin{equation}\label{second derivative of Hamiltonian in the direction of D}
    \T{D}^2_{\tilde{\xi}}E_c(\eta)[\T{D}\tilde{\xi}_*(\eta)\dot{\eta},\T{D}\tilde{\xi}_*(\eta)\dot{\eta}]=\int_{\mathbb{R}} \T{D}\tilde{\xi}_*(\eta)\dot{\eta}\mathcal{A}(\eta)\T{D}\tilde{\xi}_*(\eta)\dot{\eta}\; dx.
\end{equation}
Combining \eqref{second derivative of Hamiltonian in the direction of D} and \eqref{S and T} leads us to the formula \eqref{second derivative of augmented potential}.
\end{proof}

\begin{lemma}[Quadratic form]\label{quadratic form}
For all $(\eta, \tilde{\xi}_*(\eta))\in \mathcal{O} \cap \mathbb{V}$ and $c\in \mathbb{R}$, there exists a self-adjoint operator $\mathcal{Q}_c(\eta) \in \text{Lin}(\mathbb{X}_1; \mathbb{X}_1^*)$ such that
\begin{equation}
    \T{D}^2\mathcal{V}_c^{\textnormal{aug}}(\eta)[\dot{\eta},\dot{\zeta}]=\langle\mathcal{Q}_c(\eta)\dot{\eta},\dot{\zeta}\rangle_{\mathbb{X}_1^* \times \mathbb{X}_1},
\end{equation}
for all $\dot{\eta}, \dot{\zeta} \in \mathbb{V}_1$
and 
\begin{equation}\label{Operator Q_c}
    \begin{split}
        \mathcal{Q}_c(\eta)\dot{\eta}&=-\left(\sigma \dfrac{\dot{\eta}_x}{\langle \eta_x \rangle^3}\right)'-\left(g\jump{\rho} + \sum_{\pm}\rho_{\pm} \left(\pm b_1^{\pm}(b_2^{\pm})'\mp \omega_{\pm}\eta (b_2^{\pm})'-a_1^{\pm}(\eta,\Upsilon_{\mp})(b_2^{\pm})'\right)\right)\dot{\eta} \\&\qquad - \left(\jump{\rho \xi_x \omega} - \eta \jump{\rho \omega^2}\right)\dot{\eta}\\&\qquad  - c \jump{\rho \omega} \dot{\eta} +\sum_{\pm} \rho_{\pm} b_1^{\pm}\left(\mathcal{G}_{\pm}(\eta)^{-1}\left(\left(b_1^{\pm} \right)\dot{\eta}\right)'\right)'+2 \rho_+\rho_-\jump{\omega}\dot{\eta}'\mathcal{B}(\eta)^{-1} \jump{\omega} \eta \eta_x \\&\qquad +2 \rho_+ \rho_- \jump{\omega} \eta \partial_x\left(\mathcal{B}(\eta)^{-1} \jump{\omega} (\dot{\eta} \eta)_x\right) \\&\qquad- 4 \rho_-\rho_+ \sum_{\pm} \left(a_1^{\mp}(\eta, \Upsilon_\pm)\mathcal{B}^{-1}\jump{\omega}(\eta \dot{\eta})_x\right) \\& \qquad - 2 \rho_+ \rho_-\sum_{\pm} a_1^{\mp}(\eta,\Upsilon_{\pm}) \partial_x\left(\mathcal{B}^{-1}(\eta) \left(\sum_{\pm} a_1^{\mp}(\eta,\Upsilon_{\pm})\dot{\eta}\right)'\right).
    \end{split}
\end{equation}
\end{lemma}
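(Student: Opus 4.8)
The starting point is the identity of Lemma~\ref{second derivative of Aug Potential}, which splits $\T{D}^2\mathcal{V}_c^{\text{aug}}(\eta)[\dot\eta,\dot\eta]$ into the partial Hessian $\T{D}^2_\eta E_c(\eta,\tilde\xi_*)[\dot\eta,\dot\eta]$ and the correction $-\int_{\mathbb R}(\mathcal S-\mathcal T)\dot\eta\,\mathcal A(\eta)(\mathcal S-\mathcal T)\dot\eta\,dx$. The plan is to compute both pieces explicitly, add them, integrate by parts to symmetrize, polarize from $[\dot\eta,\dot\eta]$ to $[\dot\eta,\dot\zeta]$, read off the operator $\mathcal Q_c(\eta)$, and finally check its boundedness $\mathbb X_1\to\mathbb X_1^*$ term by term. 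For the partial Hessian I would work from the form \eqref{alternative energy} of the energy, in which the Dirichlet integrals appear as $\rho_\pm\xi_\pm\mathcal G_\pm(\eta)\xi_\pm$, with $\xi_\pm$ itself a function of $\eta$ through the traveling-wave relations obtained from the kinematic condition. Differentiating twice in $\eta$ requires the shape-derivative formula \eqref{Frechet Deritavative G} for $\mathcal G_\pm(\eta)$ and the analogous formulas for $\mathcal B(\eta)$, $\mathcal B^{-1}(\eta)$ and $\mathcal A(\eta)=\mathcal G_\pm(\eta)\mathcal B^{-1}(\eta)\mathcal G_\mp(\eta)$ from Appendix~\ref{formulas appendix}, together with the identity $\T{D}\xi_{\pm*}(\eta)\dot\eta=\mp\mathcal G_\pm^{-1}(\eta)(b_1^\pm\dot\eta)_x+b_2^\pm\dot\eta$ and the kinematic relation $b_2^\pm=b_1^\pm\eta_x$. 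It is this last substitution that turns the bare derivatives of the nonlocal operators into the relative-velocity coefficients $b_1^\pm,b_2^\pm$ appearing in \eqref{Operator Q_c}.

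In order, the steps are: (i) record the first variation $\T{D}_\eta E_c(\eta,\tilde\xi_*)\dot\eta$, which is a Bernoulli-type expression because $\tilde\xi_*$ is a critical point of $E_c$ in $\tilde\xi$; (ii) differentiate once more in $\eta$, organizing the output into three families — the local terms, whose second variation produces $-(\sigma\dot\eta_x/\langle\eta_x\rangle^3)'$ and the $g\jump\rho$, $\eta\jump{\rho\omega^2}$, $\jump{\rho\xi_x\omega}$ and $c\jump{\rho\omega}$ contributions; the Dirichlet integrals $\rho_\pm\xi_\pm\mathcal G_\pm(\eta)\xi_\pm$, whose second variation yields the quadratic-in-$b_1^\pm$ term together with the zeroth-order pieces $\pm\rho_\pm b_1^\pm(b_2^\pm)'$ and the $a_1^\pm(\eta,\Upsilon_\mp)(b_2^\pm)'$ cross terms; and the current/vorticity coupling terms built from $\jump\omega\eta\eta_x$ paired against $\mathcal B^{-1}$ and $\mathcal G_-\mathcal B^{-1}$, whose second variation gives the remaining $\rho_+\rho_-\jump\omega$ contributions, including the two $\mathcal B^{-1}$-bracketed terms and the four $a_1^\mp(\eta,\Upsilon_\pm)$-terms in the last two lines of \eqref{Operator Q_c}; (iii) expand $(\mathcal S-\mathcal T)\dot\eta$ from \eqref{S and T} and use the operator identities $\mathcal A=\mathcal G_+\mathcal B^{-1}\mathcal G_-=\mathcal G_-\mathcal B^{-1}\mathcal G_+$ and $\mathcal B=\rho_+\mathcal G_-+\rho_-\mathcal G_+$ to show that the $\mathcal A$-weighted quadratic term from the correction combines with part of the second variation of the Dirichlet integrals so that the surviving $\mathcal G_\pm^{-1}$-bilinear contribution collapses exactly to $\sum_\pm\rho_\pm b_1^\pm(\mathcal G_\pm^{-1}((b_1^\pm)\dot\eta)')'$; (iv) integrate by parts to move $x$-derivatives off the second slot and use the self-adjointness of $\mathcal G_\pm(\eta)$, $\mathcal B^{-1}(\eta)$ and $\mathcal A(\eta)$ to render the bilinear form manifestly symmetric, then polarize to obtain $\T{D}^2\mathcal V_c^{\text{aug}}(\eta)[\dot\eta,\dot\zeta]=\langle\mathcal Q_c(\eta)\dot\eta,\dot\zeta\rangle$ with $\mathcal Q_c(\eta)$ as stated; (v) verify $\mathcal Q_c(\eta)\in\T{Lin}(\mathbb X_1;\mathbb X_1^*)$: the surface-tension term is bounded $H^1\to H^{-1}$ since $\langle\eta_x\rangle^{-3}\in L^\infty$; using $\mathbb V\hookrightarrow W^{1,\infty}$ the coefficients $\eta$, $b_1^\pm$, $b_2^\pm$, $a_1^\pm(\eta,\cdot)$, $\Upsilon_\pm$ and $\xi_{\pm x}$ are bounded, and each remaining term is either a bounded coefficient times at most one derivative of $\dot\eta$, or of the schematic shape $(\text{coeff})\,(\mathcal G_\pm^{-1}((\text{coeff})\dot\eta)')'$ or $(\text{coeff})\,\partial_x(\mathcal B^{-1}((\text{coeff})\dot\eta)_x)$, which maps $H^1\to H^{-1}$ because $\mathcal G_\pm^{-1}$ and $\mathcal B^{-1}$ gain one derivative. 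Since $\mathbb V_1$ is dense in $\mathbb X_1$, the bilinear identity extends to all of $\mathbb X_1$, and self-adjointness of $\mathcal Q_c(\eta)$ follows from the symmetry of the Hessian and is visible in the symmetrized formula.

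The main obstacle is steps (ii)--(iii): the double Fréchet derivative of the nonlocal Dirichlet and current-coupling terms generates a large number of cross terms, and one must verify that, after inserting the traveling-wave expressions for $\xi_{\pm*}$ and $\tilde\xi_*$ and invoking the operator identities for $\mathcal A$, $\mathcal B$ and $\mathcal G_\pm$, exactly the cancellations occur that reduce everything to the compact form \eqref{Operator Q_c}. Tracking the $\Upsilon_\pm$-dependent terms through the second derivative of $\mathcal B^{-1}(\eta)\mathcal G_\mp(\eta)$, and keeping the signs and $\rho_\pm$-weights straight, is the delicate part; by contrast the integration by parts, the polarization, and the mapping estimate in (iv)--(v) are routine once the mapping properties of $\mathcal G_\pm(\eta)$ and $\mathcal B(\eta)$ and the embedding $\mathbb V\hookrightarrow W^{1,\infty}$ are in hand.
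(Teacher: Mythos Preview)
Your proposal is essentially the same strategy as the paper's: start from Lemma~\ref{second derivative of Aug Potential}, expand the partial Hessian $\T{D}^2_\eta E_c(\eta,\tilde\xi_*)$ using the shape-derivative formulas for the nonlocal operators in Appendix~\ref{formulas appendix}, combine with the correction term, and read off $\mathcal Q_c(\eta)$. The paper carries this out starting from the $\tilde\xi$-form \eqref{new expression of Hamiltonian} of the energy and invokes the second-derivative formula for $\mathcal A(\eta)$ (via the auxiliary operators $\mathcal L_\pm$, $\mathcal M$, $\mathcal N$ and the identification $\theta_\pm=\mp\xi_\pm\pm\Upsilon_\mp$), together with the derivative formulas for $\mathcal B^{-1}(\eta)$; you instead propose to start from the $\xi_\pm$-form \eqref{alternative energy} and differentiate $\mathcal G_\pm(\eta)$ directly. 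These are two bookkeepings of the same computation and lead to the same cancellations.

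One point of care in your version: in the partial Hessian $\T{D}^2_\eta E_c(\eta,\tilde\xi_*)$ the variable $\tilde\xi$ is held fixed, so if you work in the $\xi_\pm$-form you must use the dependence $\xi_\pm=\xi_\pm(\eta,\tilde\xi)$ from \eqref{Equation for Xi}, not the traveling-wave map $\eta\mapsto\xi_{\pm*}(\eta)$. The formula $\T{D}\xi_{\pm*}(\eta)\dot\eta=\mp\mathcal G_\pm^{-1}(b_1^\pm\dot\eta)_x+b_2^\pm\dot\eta$ is the \emph{total} derivative along the traveling-wave family, and plugging it into the partial Hessian would double count precisely what the $-\int(\mathcal S-\mathcal T)\dot\eta\,\mathcal A(\mathcal S-\mathcal T)\dot\eta$ correction subtracts. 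The paper sidesteps this by keeping $\tilde\xi$ as the independent variable throughout the partial differentiation and only substituting $\tilde\xi=\tilde\xi_*$ (hence $\theta_\pm$, $b_i^\pm$) at the end; if you adopt the $\xi_\pm$-form, make the same distinction explicit. Apart from this, your outline, including the boundedness check in step~(v), matches the paper.
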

\begin{proof}
First, it is clear that the Hamiltonian can alternatively be written in the following way
\begin{equation} \label{new expression of Hamiltonian}
\begin{split}
     E(\eta, \tilde{\xi})&=\dfrac{1}{2}\int_{\mathbb{R}}\tilde{\xi}\mathcal{A}(\eta) \tilde{\xi}+\rho_-\rho_+ \jump{\omega} \eta \eta_x \mathcal{B}^{-1}(\eta)\jump{\omega} \eta \eta_x-2\rho_+\xi_+ \jump{\omega}\eta \eta_x+2\tilde{\xi}\omega_-\eta \eta_x\\&\quad -g \jump{\rho}\eta^2 -\dfrac{\eta^3 \jump{\rho \omega^2}}{3}+2\sigma(\sqrt{1+\eta_x^2}-1) \; dx.
\end{split}
\end{equation}
From \ref{second derivative of augmented potential}, it is therefore useful to start doing an expansion on $\T{D}^2_{\eta}E_c(\eta,\tilde{\xi}_*)[\dot{\eta},\dot{\eta}]$. Using the expression of the Hamiltonian \eqref{new expression of Hamiltonian}, we can see that 
\begin{equation}\label{second derivative of hamiltonian}
\begin{split}
    \T{D}^2_{\eta}E_c(\eta,\tilde{\xi}_*)[\dot{\eta},\dot{\eta}]&=\dfrac{1}{2}\int_{\mathbb{R}}\tilde{\xi}_*\langle \T{D}^2\mathcal{A}(\eta)[\dot{\eta},\dot{\eta}] \tilde{\xi}_*\rangle \; dx-\int_{\mathbb{R}}g \jump{\rho}\dot\eta^2 \; dx +\int_{\mathbb{R}}\left(\sigma \dfrac{\dot{\eta}_x}{\langle\eta_x\rangle^3}\right) \; dx\\& \quad -\int_{\mathbb{R}}c\jump{\rho\omega}\dot{\eta}^2 \; dx - \int_{\mathbb{R}} \jump{\omega \xi_x \rho} \dot{\eta}^2 \; dx- \int_{\mathbb{R}} \jump{\rho \omega^2} \eta \dot{\eta}^2 \; dx \\& \quad -2 \int_{\mathbb{R}} \rho_- \rho_+ \jump{\omega}\partial_x\left(B^{-1}(\eta)\jump{\omega}\eta \eta_x\right) \dot{\eta}^2 \; dx\\& \quad -2 \int_{\mathbb{R}} \rho_- \rho_+ \jump{\omega} \eta \partial_x\left(B^{-1}(\eta) \jump{\omega} (\dot{\eta}\eta)_x \right) \dot{\eta} \; dx\\&\quad -4\int_{\mathbb{R}} \rho_- \rho_+ \jump{\omega} \eta \partial_x \langle \T{D}B^{-1}(\eta) \dot{\eta},\jump{\omega} \eta \eta_x \rangle \dot{\eta} \;dx\\&\quad + \int_{\mathbb{R}} \rho_- \rho_+ \jump{\omega} \eta \eta_x \langle \T{D}^2B^{-1}(\eta)[\dot{\eta},\dot{\eta}],\jump{\omega} \eta \eta_x \rangle \; dx.
\end{split}
\end{equation}
It is important to note that when we let $\omega_{\pm}=0$, we recover back the expression found in \cite{Ming--WalshOrbital2022}. To arrive at the expression stated in \eqref{Operator Q_c}, we first begin by looking at the term involving $\tilde{\xi_*}\langle \T{D}^2\mathcal{A}(\eta)[\dot{\eta},\dot{\eta}] \tilde{\xi}_*\rangle$ in \eqref{second derivative of hamiltonian}. 
Since we are going to use some formula in \cite{Ming--WalshOrbital2022}, let us define the following variable
\[\theta_{\pm}(u_*)=\mathcal{G}_{\pm}(\eta)^{-1}\mathcal{A}(\eta)\tilde{\xi}=\mp\xi_{\pm}\pm \rho_{\mp}\mathcal{B}(\eta)^{-1} \jump{\omega} \eta \eta_x=:\mp \xi_{\pm}\pm \Upsilon_{\mp}.\] 
This implies 
\[a_1^{\pm}(\eta,\theta_{\pm})=b_1^{\pm}+c+\omega_{\pm}\eta + a_1^{\pm}(\eta,\pm \Upsilon_{\mp}), \quad a_2^{\pm}(\eta,\theta_{\pm})=\pm b_2^{\pm}.\] We further define the following expressions for later use:
\[\mathcal{S}_{\pm}(\eta)\zeta:=\mathcal{G}_{\pm}(\eta)^{-1}(b_1^{\pm}\zeta)', \qquad \mathcal{T}_{\pm}(\eta)\zeta:= \pm b_2^{\pm} \zeta.\]

Upon using the formula in \cite{Ming--WalshOrbital2022}, we can infer that
\begin{equation}\label{formula second derivative A}
    \begin{split}
        \dfrac{1}{2}\int_{\mathbb{R}}\tilde{\xi}_*\langle \T{D}^2\mathcal{A}(\eta)[\dot{\eta},\dot{\eta}] \tilde{\xi}_*\rangle \; dx&= \sum_{\pm}\mp\rho_{\pm} \int_{\mathbb{R}}\Biggl[\left((b_1^{\pm})'b_2^{\pm}+\omega_{\pm}\eta'b_2^{\pm}+ (a_1^{\pm}(\eta,\pm\Upsilon_{\mp}))'b_2^{\pm}\right)\dot{\eta}^2 \Biggr.\\ \Biggl. 
        &\phantom{\sum_{\pm}\mp\rho_{\pm}\int_{\mathbb{R}}    \left((b_1^{\pm})'b_2^{\pm}+\omega_{\pm}\eta'b_2^{\pm}+ (a_1^{\pm})\right)}   
         +\mathcal{T}\dot{\eta}\mathcal{G}_{\pm}(\eta)\mathcal{T}\dot{\eta}\Biggr] \; dx \\& \qquad +\int_{\mathbb{R}} (-\dot{\eta} \mathcal{M}(u_*)\dot{\eta} +\dot{\eta} \mathcal{N}(u_*) \dot{\eta}) \; dx.
    \end{split}
\end{equation}
To compute the terms on the second row in \eqref{formula second derivative A}, we would need to define the following expressions:
\begin{equation}\label{operator L}
    \begin{split}
    &\mathcal{L}_{\pm}(u_*) \dot{\eta}=\mathcal{T}\dot{\eta}-\mathcal{S}\dot{\eta}-\mathcal{G}_{\pm}(\eta)^{-1} \partial_x(c\dot{\eta}+\omega_{\pm}\eta \dot{\eta}+ a_1^{\pm}(\eta,\pm\Upsilon_{\mp})\dot{\eta})\\&
    \mathcal{L}(u_*) \dot{\eta}=\mathcal{T}\dot{\eta}-\mathcal{S}\dot{\eta}-\mathcal{A}(\eta)^{-1}
\partial_x (c\dot{\eta}+\omega_{\pm}\eta \dot{\eta} + a_1^{\pm}(\eta,\pm\Upsilon_{\mp})\dot{\eta}).    \end{split}
\end{equation}
Furthermore, 
\begin{align*}
        \int_{\mathbb{R}} \dot{\eta} \mathcal{M}\dot{\eta} \; dx&= \sum_{\pm} \rho_{\pm} \int_{\mathbb{R}} \left((b_1^{\pm}+c+\omega_{\pm}\eta + a_1^{\pm}(\eta,\pm\Upsilon_{\mp}))(\mathcal{L}_{\pm} \dot{\eta})' \pm b_2^{\pm}\mathcal{G}_{\pm}(\eta)\mathcal{L}_{\pm} \dot{\eta} \right)\dot{\eta} \; dx\\&= \sum_{\pm} \rho_{\pm} \int_{\mathbb{R}} \mathcal{L}_{\pm}\dot{\eta}\mathcal{G}_{\pm}(\eta)\mathcal{L}_{\pm}\dot{\eta} \;dx.
\end{align*}
Via the definition of $\mathcal{L}_{\pm}$ in \eqref{operator L}, we can infer
\begin{align*}
        \int_{\mathbb{R}} \dot{\eta} \mathcal{M}\dot{\eta} \; dx&= \sum_{\pm} \rho_{\pm} \int_{\mathbb{R}} \left(\mathcal{S}_{\pm}\dot{\eta}\mathcal{G}_{\pm}(\eta)\mathcal{S}_{\pm}\dot{\eta}-2\mathcal{S}_{\pm}\dot{\eta}\mathcal{G}_{\pm}(\eta)\mathcal{T}_{\pm}\dot{\eta}+\mathcal{T}_{\pm}\dot{\eta}\mathcal{G}_{\pm}(\eta)\mathcal{T}_{\pm}\dot{\eta}\right) \;dx \\& + \int_{\mathbb{R}} \left(\partial_x (c\dot{\eta}+\omega_{\pm}\eta \dot{\eta}+ a_1^{\pm}(\eta,\pm\Upsilon_{\mp})\dot{\eta})\mathcal{A}(\eta)^{-1}\partial_x (c\dot{\eta}+\omega_{\pm}\eta \dot{\eta}+ a_1^{\pm}(\eta,\pm\Upsilon_{\mp})\dot{\eta})\right) \; dx \\&+\int_{\mathcal{R}} 2\partial_x (c\dot{\eta}+\omega_{\pm}\eta \dot{\eta}+ a_1^{\pm}(\eta,\pm\Upsilon_{\mp})\dot{\eta}) (\mathcal{S}-\mathcal{T})\dot{\eta} \;dx.
\end{align*}
For the importance of simplification later, we display the following formula
\begin{align*}
        \int_{\mathbb{R}} \mathcal{S}_{\pm}\dot{\eta}\mathcal{G}_{\pm}(\eta)\mathcal{T}_{\pm}\dot{\eta} \; dx&=\pm \int_{\mathbb{R}} \mathcal{G}_{\pm}(\eta)^{-1}(b_1^{\pm}\dot{\eta})'\mathcal{G}_{\pm}(\eta)(b_2^{\pm} \dot{\eta}) \; dx\\&=\pm \int_{\mathbb{R}} (b_1^{\pm}\dot{\eta})'(b_2^{\pm} \dot{\eta}) \; dx\\&=\pm \dfrac{1}{2} \int_{\mathbb{R}} \left((b_1^{\pm})'(b_2^{\pm})- (b_1^{\pm})(b_2^{\pm})' \right)\dot{\eta}^2 \; dx.
\end{align*}
At last, following the formula in Appendix~\ref{formulas appendix}, we obtain
\begin{align*}
        \int_{\mathbb{R}} \dot{\eta} \mathcal{N}\dot{\eta} \; dx&= \int_{\mathbb{R}} (\mathcal{T}\dot{\eta}-\mathcal{S}\dot{\eta} -\mathcal{A}(\eta)^{-1}\partial_x (c\dot{\eta}+\omega_{\pm}\eta \dot{\eta}+ a_1^{\pm}(\eta,\pm\Upsilon_{\mp})\dot{\eta}))\mathcal{A}(\eta)\\&\qquad(\mathcal{T}\dot{\eta}-\mathcal{S}\dot{\eta} -\mathcal{A}(\eta)^{-1}\partial_x (c\dot{\eta}+\omega_{\pm}\eta \dot{\eta}+ a_1^{\pm}(\eta,\pm\Upsilon_{\mp})\dot{\eta})) \; dx\\&=\int_{\mathbb{R}}(D\tilde{\xi}_* (\eta)\dot{\eta} \mathcal{A}(\eta)D\tilde{\xi}_* (\eta)\dot{\eta}+2\partial_x (c\dot{\eta}+\omega_{\pm}\eta \dot{\eta}+ a_1^{\pm}(\eta,\pm\Upsilon_{\mp})\dot{\eta}) (\mathcal{S}-\mathcal{T})\dot{\eta}\\& \quad + \left(\partial_x (c\dot{\eta}+\omega_{\pm}\eta \dot{\eta}\pm a_1^{\pm}(\eta,\Upsilon_{\mp})\dot{\eta})\mathcal{A}(\eta)^{-1}\partial_x (c\dot{\eta}+\omega_{\pm}\eta \dot{\eta}+ a_1^{\pm}(\eta,\pm\Upsilon_{\mp})\dot{\eta})\right)\; dx.
\end{align*}
Combining together all the computations above and the formula in \eqref{second derivative of augmented potential}, we get
\begin{equation}\label{Second derivative Vaug}
    \begin{split}
         &\T{D}^2\mathcal{V}_c^{\textnormal{aug}}(\eta)[\dot{\eta},\dot{\eta}]\\&=\T{D}^2_{\eta}E_c(\eta,\tilde{\xi}_*)[\dot{\eta},\dot{\eta}]-\int_{\mathbb{R}} (\mathcal{S}-\mathcal{T})\dot{\eta}\mathcal{A}(\eta)(\mathcal{S}-\mathcal{T})\dot{\eta}\; dx\\&=
        \int_{\mathbb{R}} \left(\sigma \dfrac{(\dot{\eta}_x)^2}{\langle\eta_x\rangle^3}-\left(g\jump{\rho}+\sum_{\pm}\rho_{\pm}\left(\pm b^{\pm}_1 (b^{\pm}_2)'\mp \omega_\pm \eta (b^{\pm}_2)'\mp a^{\pm}_1(\eta,\pm\Upsilon_{\mp}(b_2^{\pm})'\right)\right) \dot{\eta}^2\right.\\& \left. \qquad -\left(\jump{\rho \xi_x \omega}-\eta \jump{\rho \omega^2}-c\jump{\rho \omega}\right)\dot{\eta}^2-\sum_{\pm} \rho_{\pm}  \mathcal{S}_{\pm}\dot{\eta}\mathcal{G}_{\pm}(\eta)\mathcal{S}_{\pm}\dot{\eta}\right) \; dx\\& \qquad
        -2 \int_{\mathbb{R}} \rho_- \rho_+ \jump{\omega}\partial_x\left(B^{-1}(\eta)\jump{\omega}\eta \eta_x\right) \dot{\eta}^2 \; dx\\& \qquad -2 \int_{\mathbb{R}} \rho_- \rho_+ \jump{\omega} \eta \partial_x\left(B^{-1}(\eta) \jump{\omega} (\dot{\eta}\eta)_x \right) \dot{\eta} \; dx\\&\qquad -4\int_{\mathbb{R}} \rho_- \rho_+ \jump{\omega} \eta \partial_x \langle \T{D}B^{-1}(\eta) \dot{\eta},\jump{\omega} \eta \eta_x \rangle \dot{\eta} \;dx\\&\qquad + \int_{\mathbb{R}} \rho_- \rho_+ \jump{\omega} \eta \eta_x \langle \T{D}^2B^{-1}(\eta)[\dot{\eta},\dot{\eta}],\jump{\omega} \eta \eta_x \rangle \; dx.
    \end{split}
\end{equation}

To compute the last two integrals in \eqref{Second derivative Vaug}, we need to derive the formula for 
\[\langle \T{D}\mathcal{B}^{-1}(\eta)\dot{\eta}\jump{\omega}\eta \eta_x \rangle \; \text{and } \langle \T{D}^2B^{-1}(\eta)[\dot{\eta},\dot{\eta}],\jump{\omega} \eta \eta_x \rangle.\] 
We begin with the expansion of the formula below
\begin{equation} 
\begin{aligned}\label{derivative of B}
\langle \T{D}\mathcal{B}(\eta)\dot{\eta},\tilde{\xi}\rangle&=\rho_+ \langle \T{D}\mathcal{G}_-(\eta)\dot{\eta},\tilde{\xi} \rangle + \rho_- \langle \T{D}\mathcal{G}_+(\eta)\dot{\eta}, \tilde{\xi}\rangle\\&=\rho_+\left(-\partial_x(a_1^-(\eta,\tilde{\xi})\dot{\eta}) + \mathcal{G}_-(\eta) a_2^-(\eta,\tilde{\xi})\dot{\eta} \right)\\&\qquad + \rho_-\left(-\partial_x(a_1^+(\eta,\tilde{\xi})\dot{\eta}) + \mathcal{G}_+(\eta) a_2^+(\eta,\tilde{\xi}) \dot{\eta}\right).
\end{aligned}
\end{equation}
Using the equation
\[
\langle \T{D}\mathcal{B}^{-1}(\eta)\dot{\eta},\tilde{\xi}\rangle=-\mathcal{B}^{-1}\langle \T{D}\mathcal{B}(\eta)\dot{\eta},\mathcal{B}^{-1}\tilde{\xi}\rangle
\] 
in combination with \eqref{derivative of B}, we derive the representation formula for the Fr\'echet derivative of $\mathcal{B}^{-1}(\eta)$ (for any given $\zeta$):

\begin{equation}
\begin{aligned}\label{derivative formula for inverse of B}
\int_{\mathbb{R}}
\zeta \langle \T{D}\mathcal{B}^{-1}(\eta)\dot{\eta},\tilde{\xi}\rangle\; dx=&-\int_{\mathbb{R}}\mathcal{B}^{-1}\bigg(\rho_+\left(-\partial_x(a_1^-(\eta,\mathcal{B}^{-1}\tilde{\xi})\dot{\eta}) \zeta + a_2^-(\eta,\mathcal{B}^{-1}\tilde{\xi})\dot{\eta}\mathcal{G}_- \zeta \right)\bigg)\;dx\\& -\int_{\mathbb{R}}\mathcal{B}^{-1}\bigg(\rho_-\left(-\partial_x(a_1^+(\eta,\mathcal{B}^{-1}\tilde{\xi})\dot{\eta})\zeta +  a_2^+(\eta,\mathcal{B}^{-1}\tilde{\xi})\mathcal{G}_+(\eta)\zeta \right)\bigg)\;dx\\=& -\sum_{\pm}\rho_{\pm}\int_{\mathbb{R}}\left( a_1^{\mp}(\eta,\mathcal{B}^{-1}\tilde{\xi})(\mathcal{B}^{-1}\zeta)_x \right)\dot{\eta}\\&
\phantom{-\sum_{\pm}\rho_{\pm}\int_{\mathbb{R}}\left( a_1^{\mp}(\eta,\mathcal{B}^{-1}\tilde{\xi})(\mathcal{B}) \right)}
 + \left( a_2^{\mp}(\eta, \mathcal{B}^{-1}\tilde{\xi}) \mathcal{B}^{-1}\mathcal{G}_{\mp}(\eta)\zeta \right) \dot{\eta}\; dx.
\end{aligned}
\end{equation}
From \eqref{derivative formula for inverse of B}, we arrive the  expression in the second row from the bottom of equation \eqref{Operator Q_c}.

Finally, it remains to show that the last integral in \eqref{Second derivative Vaug} yields the last expression in \eqref{Operator Q_c}. First, recall that $\mathcal{B}(\eta):=\sum_{\pm}\rho_{\pm} \mathcal{G}_{\mp}(\eta)$. Exploiting the second derivative formula for $\mathcal{G}_{\pm}(\eta)$, we obtain
\begin{equation}\label{Second derivative of B}
    \begin{aligned}
        \int_{\mathbb{R}} \tilde{\xi} \langle \T{D}^2 \mathcal{B}(\eta)[\dot{\eta},\dot{\eta}], \tilde{\xi} \rangle\; dx&= \sum_{\pm}\rho_{\pm} \int_{\mathbb{R}} \tilde{\xi}\langle \T{D}^2 \mathcal{G}_{\mp} (\eta)[\dot{\eta},\dot{\eta}],\tilde{\xi} \rangle\; dx\\&=\sum_{\pm}\rho_{\pm} \int_{\mathbb{R}} a_4^{\mp}(\eta,\tilde{\xi}) \dot{\eta}^2 + 2 a_2^{\mp} (\eta,\tilde{\xi})\dot{\eta}\mathcal{G}_{\mp}(\eta)(a_2^{\mp}(\eta,\tilde{\xi})\dot{\eta}).
    \end{aligned}
\end{equation}
Additionally, we have the following identity
\[
\T{D}^2\mathcal{B}(\eta)[\dot{\eta},\dot{\eta}]=-\mathcal{B}(\eta)\T{D}^2\mathcal{B}^{-1}(\eta)[\dot{\eta},\dot{\eta}]\mathcal{B}(\eta) +2 \T{D}\mathcal{B}(\eta)[\dot{\eta}]\mathcal{B}^{-1}(\eta) \T{D}\mathcal{B}(\eta)[\dot{\eta}].
\]
After rearranging terms and applying \eqref{Second derivative of B}, we obtain
\begin{align*}
    &\int_{\mathbb{R}} \rho_- \rho_+ \jump{\omega} \eta \eta_x \langle \T{D}^2B^{-1}(\eta)[\dot{\eta},\dot{\eta}],\jump{\omega} \eta \eta_x \rangle \; dx\\&= \int_{\mathbb{R}}2 \rho_+ \rho_-\left(\sum_{\pm} a_1^{-\mp}(\eta,\Upsilon_{\pm})\dot{\eta}\right)'\mathcal{B}^{-1}(\eta)\left(\sum_{\pm} a_1^{-\mp}(\eta,\Upsilon_{\pm})\dot{\eta}\right)'.
\end{align*}
Combining this with the statements above, yields the claimed formula for $\mathcal{Q}_c(\eta)$ and completes the proof.
\end{proof}

Based on the formula derived above, we can now determine the continuous spectrum of the linearized operator.  

\begin{lemma}[Continuous spectrum]
Let $u=(\eta,\tilde{\xi}) \in \mathcal{O} \cap \mathbb{V}$ be given. Then the operator 
$\mathcal{Q}_c(\eta)$ is a self-adjoint operator on $L^2(\mathbb{R})$ with domain $H^2(\mathbb{R})$. Further, the spectrum of this operator is equal to the one of $\mathcal{Q}_c(0)$, that is $[\tau_*,+\infty]$, where
 \begin{equation*} 
  \tau_*= \begin{cases}
    -g \jump{\rho}\left(1-\dfrac{\alpha_0}{\alpha}\right) \quad \T{for } \beta \geq \beta_0,\\
    -g \jump{\rho}\Bigg[1-\dfrac{1}{\alpha}\max_{\xi \in \mathbb{R}}\left(\sum_{\pm} \dfrac{\rho_{\pm}}{\rho_-}d_+\xi \coth(d_{\pm}\xi)-\beta d_+^2\xi^2+\left(\dfrac{ \omega_+d_+\varrho}{c}-\dfrac{\omega_-d_+}{c}\right)\right)\Bigg]\\ \quad \T{for } \beta < \beta_0.
   \end{cases}  
   \end{equation*}
\end{lemma}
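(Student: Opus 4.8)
The plan is to split $\mathcal{Q}_c(\eta)=\mathcal{Q}_c(0)+\mathcal{R}(\eta)$, compute the constant‑coefficient operator $\mathcal{Q}_c(0)$ as a Fourier multiplier, read off its spectrum, and then show that the passage to $\mathcal{Q}_c(\eta)$ leaves the essential spectrum unchanged. First I would evaluate formula \eqref{Operator Q_c} at $\eta\equiv 0$: there $\xi_\pm=0$, hence $\xi_{\pm x}=0$, $b_2^\pm=(b_2^\pm)'=0$, $\Upsilon_\pm=0$, $a_1^\pm(0,\cdot)=0$ and $b_1^\pm=-c$, so every term carrying a factor of $\eta$, $\eta_x$, $\xi_\pm$, $\Upsilon_\pm$ or $(b_2^\pm)'$ vanishes and only
\[
\mathcal{Q}_c(0)\dot\eta=-\sigma\,\dot\eta_{xx}-\bigl(g\jump{\rho}+c\jump{\rho\omega}\bigr)\dot\eta+c^2\sum_\pm\rho_\pm\bigl(\mathcal{G}_\pm(0)^{-1}\dot\eta_x\bigr)_x
\]
remains. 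Since $\widehat{\mathcal{G}_\pm(0)v}(k)=|k|\tanh(d_\pm|k|)\,\widehat v(k)$, this is the Fourier multiplier with symbol
\[
q(k)=\sigma k^2-g\jump{\rho}-c\jump{\rho\omega}-c^2\sum_\pm\rho_\pm\,|k|\coth(d_\pm|k|).
\]
Because $q(k)+C_0\asymp 1+k^2$ for a suitable constant $C_0$, the operator $\mathcal{Q}_c(0)$ is self‑adjoint on $L^2(\mathbb{R})$ with domain $H^2(\mathbb{R})$, has purely (absolutely continuous) essential spectrum, and $\mathrm{spec}(\mathcal{Q}_c(0))=\overline{q(\mathbb{R})}$. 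For the full operator $\mathcal{Q}_c(\eta)$ the principal part $-\partial_x(\sigma\langle\eta_x\rangle^{-3}\partial_x\,\cdot\,)$ is uniformly elliptic with continuous coefficient and the remaining terms are symmetric and of lower order, so self‑adjointness on $H^2(\mathbb{R})$ follows from standard elliptic theory.

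Next I would identify $\overline{q(\mathbb{R})}=[\tau_*,+\infty)$. As $q$ is even, continuous and $q(k)\to+\infty$, one has $\overline{q(\mathbb{R})}=[\inf_k q(k),+\infty)$. Using $-g\jump{\rho}>0$ together with \eqref{Definition of beta and alpha} and \eqref{defintion of varrho and d}, rewrite
\[
q(k)=-g\jump{\rho}\Bigl[1-\tfrac1\alpha F(|k|)\Bigr],\qquad F(\xi):=\sum_\pm\tfrac{\rho_\pm}{\rho_-}d_+\xi\coth(d_\pm\xi)-\beta d_+^2\xi^2+\Bigl(\tfrac{\omega_+d_+\varrho}{c}-\tfrac{\omega_-d_+}{c}\Bigr),
\]
so that $\tau_*=\inf_k q(k)=-g\jump{\rho}\bigl[1-\tfrac1\alpha\sup_{\xi}F(\xi)\bigr]$. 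From the expansion $d_+\xi\coth(d_\pm\xi)=d_+/d_\pm+d_+d_\pm\xi^2/3+O(\xi^4)$ one gets $F(\xi)=\alpha_0+d_+^2(\beta_0-\beta)\xi^2+O(\xi^4)$ with $\alpha_0,\beta_0$ as in \eqref{definition of Alphanot and Betanot}; moreover $s\mapsto s\coth s$ is strictly convex on $(0,\infty)$ (equivalently $s\coth s>1$) and $F''$ is strictly decreasing there. Hence if $\beta\ge\beta_0$ then $F$ is strictly concave, $\xi=0$ is its global maximizer and $F(0)=\alpha_0$, giving $\tau_*=-g\jump{\rho}(1-\alpha_0/\alpha)$; if $\beta<\beta_0$ then $F$ is convex near $0$ and concave far out, $\xi=0$ is a local minimum and the global maximum is attained at some $\xi_*\neq0$, giving $\tau_*=-g\jump{\rho}\bigl[1-\tfrac1\alpha\max_\xi F(\xi)\bigr]$. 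These are exactly the two cases in the statement.

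Finally I would transfer this to $\mathcal{Q}_c(\eta)$. The remainder $\mathcal{R}(\eta)=\mathcal{Q}_c(\eta)-\mathcal{Q}_c(0)$ is symmetric, and every coefficient occurring in it — built from $\eta$, $\eta_x$, $\xi_\pm$, $\xi_{\pm x}$, $b_1^\pm+c+\omega_\pm\eta$, $b_2^\pm$, $\Upsilon_\pm$, $a_1^\pm(\eta,\cdot)$ and $\langle\eta_x\rangle^{-3}-1$ — tends to $0$ as $|x|\to\infty$, since $\eta\in\mathcal{O}\cap\mathbb{V}$ is continuous with $\eta,\eta_x\to0$ and the operators $\mathcal{G}_\pm(\eta)$, $\mathcal{G}_\pm(\eta)^{-1}$ and $\mathcal{B}(\eta)^{-1}$ preserve spatial decay. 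For $[\tau_*,+\infty)\subseteq\mathrm{spec}_{\mathrm{ess}}(\mathcal{Q}_c(\eta))$, for each $\lambda=q(k)$ I would take translating, spreading wave packets $u_n(x)=R_n^{-1/2}e^{ikx}\chi\bigl((x-x_n)/R_n\bigr)$ with $x_n\to\infty$ and $R_n\to\infty$ sufficiently slowly: then $u_n\rightharpoonup0$, $\|u_n\|_{L^2}\to\|\chi\|_{L^2}$, $(\mathcal{Q}_c(0)-q(k))u_n\to0$ and $\mathcal{R}(\eta)u_n\to0$ in $L^2$ (all the coefficients of $\mathcal{R}(\eta)$ are uniformly small on $\mathrm{supp}\,u_n$, while the relevant Sobolev norms of $u_n$ stay bounded), hence $(\mathcal{Q}_c(\eta)-\lambda)u_n\to0$ and $\lambda\in\mathrm{spec}_{\mathrm{ess}}(\mathcal{Q}_c(\eta))$. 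For the reverse inclusion, let $\lambda\in\mathrm{spec}_{\mathrm{ess}}(\mathcal{Q}_c(\eta))$ and pick a singular sequence $v_n\rightharpoonup0$, $\|v_n\|_{L^2}=1$, $(\mathcal{Q}_c(\eta)-\lambda)v_n\to0$; the elliptic a priori estimate (domain $H^2$) bounds $\{v_n\}$ in $H^2(\mathbb{R})$, so $v_n\to0$ in $H^s_{\loc}(\mathbb{R})$ for every $s<2$ by Rellich, and together with the decay of the coefficients of $\mathcal{R}(\eta)$ this yields $\langle\mathcal{R}(\eta)v_n,v_n\rangle\to0$; since $\langle\mathcal{Q}_c(\eta)v_n,v_n\rangle\to\lambda$ and $\langle\mathcal{Q}_c(0)v_n,v_n\rangle\ge\tau_*$, we conclude $\lambda\ge\tau_*$. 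Hence $\mathrm{spec}_{\mathrm{ess}}(\mathcal{Q}_c(\eta))=[\tau_*,+\infty)=\mathrm{spec}(\mathcal{Q}_c(0))$.

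The main obstacle is precisely this last reverse inclusion. All the genuinely lower‑order pieces of $\mathcal{R}(\eta)$ (order at most one, with the nonlocal factors $\mathcal{G}_\pm(\eta)^{-1}$ and $\mathcal{B}(\eta)^{-1}$ of order $-1$, and coefficients vanishing at infinity) are relatively compact with respect to $\mathcal{Q}_c(0)$ by a routine Rellich argument — approximate the vanishing coefficients in sup norm by compactly supported ones — so they could be disposed of by Weyl's perturbation theorem. But the difference of principal parts $-\partial_x\bigl((\sigma\langle\eta_x\rangle^{-3}-\sigma)\partial_x\,\cdot\,\bigr)$ has order two with a coefficient that only decays (rather than being small) at infinity, hence is \emph{not} relatively compact with respect to $\mathcal{Q}_c(0)$; the $H^2$ a priori bound on singular sequences, together with Rellich, is what replaces the perturbation‑theoretic argument in that case.
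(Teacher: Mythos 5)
Your proposal is correct and follows essentially the same route as the paper's (very terse) proof: compute the constant-coefficient operator $\mathcal{Q}_c(0)$, identify its Fourier symbol $\mathfrak{q}_c$, read off $[\tau_*,\infty)$ from the range of the symbol with the minimum at $\xi=0$ iff $\beta\ge\beta_0$, and argue that the decay of $\eta$ at infinity leaves the essential spectrum unchanged. You supply details the paper omits — the concavity argument locating the minimizer, the Weyl-sequence construction, and the observation that the second-order piece $-\partial_x\bigl((\sigma\langle\eta_x\rangle^{-3}-\sigma)\partial_x\,\cdot\,\bigr)$ is not relatively compact so the $H^2$ a priori bound is needed — and you correctly interpret ``spectrum'' in the statement as the essential (continuous) spectrum, which is what the paper actually proves.
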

\begin{proof}
The fact that $\mathcal{Q}_c(\eta)$ is self-adjoint on $L^2(\mathbb{R})$ with domain $H^2(\mathbb{R})$ follows directly from the regularity of $\eta$. Also, since $\eta(x)\to 0$ as $x\to\infty$, then the continuous spectrum of $\mathcal{Q}_c(\eta)$ coincides with that of $\mathcal{Q}_c(0)$. Further, due to the fact that $\mathcal{Q}_c(0)$ is translation invariant, the whole spectrum is therefore continuous. The symbol of $\mathcal{Q}_c(0)$ is
\begin{equation*}
     \mathfrak{q}_c(\xi)=-g \jump{\rho}\Bigg[1-\dfrac{1}{\alpha} \left(\sum_{\pm} \dfrac{\rho_{\pm}}{\rho_-}d_+\xi \coth(d_{\pm}\xi)-\beta d_+^2\xi^2+\left(\dfrac{ \omega_+d_+\varrho}{c}-\dfrac{\omega_-d_+}{c}\right)\right)\Bigg].
\end{equation*}
The continuous spectrum results from looking at the range of the mapping $\xi \mapsto \mathfrak{q}_c(\xi)$, and 
$\tau_*=\min\{\mathfrak{q}_c(\xi):\xi\in \mathbb{R}\}.$ For $\beta \geq \beta_0$, the minimum is attained at $\xi=0$. But for $\beta < \beta_0$, the minimum is attained at some value $\xi \neq 0$.
\end{proof}
\subsection{Rescaled operator} Now, we will make a use of a long-wave rescaling to obtain the information of the leading-order form of the operator $\mathcal{Q}_c(\eta)$. Assume $\beta>\beta_0$ 
and $\alpha=\alpha_0+\epsilon^2$, consider the following rescaling operator:
\[
S_\epsilon:=f\left(\dfrac{\epsilon \cdot}{d_+}\right).
\]
It is clear to see that $S_\epsilon$ is an isomorphism on $H^k(\mathbb{R})$. Note also that 
\[\partial_x S_\epsilon=\dfrac{\epsilon}{d_+}S_\epsilon \partial_x,\qquad \partial_x S_{\epsilon}^{-1}=\dfrac{d_+}{\epsilon}S_\epsilon^{-1}\partial_x.\]
This shows that $\partial_x S_\epsilon$ and $\partial_x S_\epsilon^{-1}$ are uniformly bounded in $\T{Lin}(H^{k+1},H^k).$ 

\begin{lemma}[Expansion of $\tilde{\mathcal{Q}}_\epsilon(\eta_\epsilon)$]\label{expansion of Q}
The operator $\tilde{\mathcal{Q}}_\epsilon(\eta_\epsilon)$ admits the following expansion

\[\tilde{\mathcal{Q}}_\epsilon(\eta_\epsilon)=\tilde{\mathcal{Q}}_\epsilon(0)+\tilde{\mathcal{R}}_\epsilon,\] where

\begin{equation}\label{R epsilon}
\tilde{\mathcal{R}}_\epsilon=-3\left(\varrho-\dfrac{1}{d^2}+\dfrac{\omega_+d_+\varrho}{c}+\dfrac{\omega_-d_+}{cd}+\dfrac{\omega_+^2d_+^2\varrho}{3c^2}-\dfrac{\omega_-^2d_+^2}{3c^2}\right)\tilde{\eta}+ O(\epsilon),
\end{equation} in $\T{Lin }(H^{k+2},H^k).$ 
\end{lemma}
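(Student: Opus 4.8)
The proof is a term-by-term asymptotic expansion of the explicit operator $\mathcal{Q}_c(\eta)$ from \eqref{Operator Q_c}, evaluated along the small-amplitude family $\eta_\epsilon$ of Theorem~\ref{existence theorem}, followed by conjugation with the long-wave rescaling $S_\epsilon$. The plan is first to record the $\epsilon$-scalings of every steady-state quantity entering \eqref{Operator Q_c}. From the profile asymptotics \eqref{intro eta asymptotics} we have $\eta_\epsilon=O(\epsilon^2)$ in $H^{k+1/2}$, with each $x$-derivative costing a further power of $\epsilon$; via the traveling-wave relations $\xi_{\pm*}=\pm\mathcal{G}_\pm(\eta)^{-1}(c\eta_x+\omega_\pm\eta\eta_x)$ and $\tilde\xi_*=-c\mathcal{A}^{-1}\eta_x-\rho_+\jump{\omega}\mathcal{A}^{-1}\mathcal{G}_-\mathcal{B}^{-1}\eta\eta_x-\omega_-\mathcal{A}^{-1}\eta\eta_x$, together with the definitions of $b_1^\pm,b_2^\pm$ and $b_2^\pm=b_1^\pm\eta_x$, one gets $b_1^\pm=-c-\omega_\pm\eta_\epsilon+O(\epsilon^2)$, $b_2^\pm=O(\epsilon^3)$ and $\Upsilon_\mp=\rho_\mp\mathcal{B}^{-1}\jump{\omega}\eta_\epsilon\eta_{\epsilon x}=O(\epsilon^3)$, all with the expected gain of $\epsilon$ per derivative. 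One also needs the small-frequency structure of the nonlocal operators: $\mathcal{G}_\pm(0)$ has Fourier symbol $|k|\tanh(d_\pm|k|)=d_\pm k^2-\tfrac{d_\pm^3}{3}k^4+\cdots$, and analogously for $\mathcal{B}(0)$, $\mathcal{A}(0)$ and the inverses, so that after conjugation by $S_\epsilon$ these operators expand in even powers of $\epsilon$. Finally, the smoothness of $\eta\mapsto\mathcal{G}_\pm(\eta)$ on $\mathcal{O}\cap\mathbb{V}$ recorded in the Hamiltonian-formulation section lets me write $\mathcal{G}_\pm(\eta_\epsilon)=\mathcal{G}_\pm(0)+\T{D}\mathcal{G}_\pm(0)\eta_\epsilon+O(\epsilon^4)$, and similarly for $\mathcal{G}_\pm^{-1}$, $\mathcal{B}^{-1}$ and $\mathcal{A}$.

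\textbf{Collecting terms.} Substituting these expansions into \eqref{Operator Q_c}, conjugating by $S_\epsilon$ and dividing by the appropriate normalization (chosen so that $\mathcal{Q}_c(0)$ has the nontrivial rescaled limit $\tilde{\mathcal{Q}}_\epsilon(0)$), one separates the output into three groups. The part evaluated at $\eta=0$ — the surface-tension term $-(\sigma\dot\eta_x/\langle\eta_x\rangle^3)'|_{\eta=0}$, the coefficients $g\jump{\rho}$ and $-c\jump{\rho\omega}$, and $\sum_\pm\rho_\pm b_1^\pm(\mathcal{G}_\pm(0)^{-1}(b_1^\pm\cdot)')'$ — assembles, via the small-$k$ expansion of $\mathcal{G}_\pm(0)^{-1}$, into the symbol $\mathfrak{q}_c$ appearing in the continuous-spectrum lemma, which by definition is $\tilde{\mathcal{Q}}_\epsilon(0)$ after rescaling. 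The $\eta_\epsilon$-linear contributions make up $\tilde{\mathcal{R}}_\epsilon$: the explicit $-\eta\jump{\rho\omega^2}$ term, the $O(\eta_\epsilon)$ piece of each $b_1^\pm$, the $\T{D}\mathcal{G}_\pm(0)\eta_\epsilon$ corrections, and the leading behaviour of the several $\rho_+\rho_-\jump{\omega}$ terms involving $\Upsilon_\pm$ and $\mathcal{B}^{-1}$. Since $\tilde\eta$ is slowly varying, every nonlocal operator acting on a product of $\tilde\eta$ with other slowly varying factors may be replaced by its low-frequency principal part up to $O(\epsilon)$, so each such contribution degenerates to a multiplication operator; summing the coefficients, and using the $D^2$ and $D^4$ parts of the DN symbols (the latter feeding the $\omega_\pm^2 d_+^2/3$ terms), yields exactly
\[
\tilde{\mathcal{R}}_\epsilon=-3\left(\varrho-\dfrac{1}{d^2}+\dfrac{\omega_+d_+\varrho}{c}+\dfrac{\omega_-d_+}{cd}+\dfrac{\omega_+^2d_+^2\varrho}{3c^2}-\dfrac{\omega_-^2d_+^2}{3c^2}\right)\tilde\eta+O(\epsilon).
\]
As a consistency check, this coefficient must agree — up to the $\beta_*$ normalization factors absorbed into $\tilde\eta$ — with $-3K$ in \eqref{rescalled hamilton's eq}, since $\tilde{\mathcal{R}}_\epsilon$ is precisely the linearization at the soliton of the quadratic nonlinearity $\tfrac{3K}{2}Q^2$ there, and indeed $K=-\beta_*^{3/2}$ times the same parenthesized quantity.

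\textbf{Remainder estimate and main obstacle.} It remains to prove the error is $O(\epsilon)$ in $\mathrm{Lin}(H^{k+2},H^k)$ uniformly. This uses the exponential localization of $\eta_\epsilon$ with the stated $\epsilon$-gains per derivative, the Sobolev product (algebra) estimates, and the mapping properties $\mathcal{G}_\pm(\eta):\dot H^k\to\dot H^{k-1}$, $\mathcal{B}(\eta)^{-1}:\dot H^{k-1}\to\dot H^k$ together with their smooth dependence on $\eta\in H^{k_0+1/2}$. The genuinely delicate point — and the main obstacle — is the nonlocal, vorticity-generated part of \eqref{Operator Q_c}: one must expand $\mathcal{G}_\pm(\eta)^{-1}$, $\mathcal{A}(\eta)$ and $\mathcal{B}(\eta)^{-1}$ \emph{simultaneously} in amplitude and in the long-wave parameter, retain the quartic-symbol contributions responsible for the $\omega_\pm^2$ coefficients, and verify that the remaining commutator/oscillatory pieces actually lose a full power of $\epsilon$ rather than merely staying bounded. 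Once this bookkeeping is organized as an expansion of $S_\epsilon^{-1}\mathcal{Q}_c(\eta_\epsilon)S_\epsilon$ in $\epsilon$ with operator-norm-controlled remainders, the claimed formula follows by comparing coefficients.
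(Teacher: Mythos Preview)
Your overall strategy is essentially that of the paper: evaluate $\mathcal{Q}_c$ at $\eta_\epsilon$, conjugate by $S_\epsilon$, expand each term, and separate the $\eta=0$ part from the $\eta_\epsilon$-linear part. The paper organizes this around the rescaled nonlocal operator $\tilde{\mathcal M}_\epsilon^\pm(\eta):=\tfrac{d_+}{\epsilon^2}S_\epsilon^{-1}\partial_x\mathcal G_\pm(\eta)^{-1}\partial_x S_\epsilon$, proves $\epsilon^2\tilde{\mathcal M}_\epsilon^\pm(0)\to -d_+/d_\pm$ in $\mathrm{Lin}(H^{k+2},H^k)$, expands $\tilde b_1^\pm$ explicitly, and reads off the coefficient from $\tilde b_1^\pm\tilde{\mathcal M}_\epsilon^\pm(\eta_\epsilon)\tilde b_1^\pm-\tilde{\mathcal M}_\epsilon^\pm(0)$ together with the two explicit multiplication terms.

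However, several of your attributions are wrong and would derail the bookkeeping if followed literally. First, the $\omega_\pm^2$ contribution does \emph{not} come from any $D^4$ or quartic part of the Dirichlet--Neumann symbol; it comes entirely from the explicit multiplication operator $-\eta_\epsilon\jump{\rho\omega^2}$ in \eqref{Operator Q_c}, which after rescaling gives $-\omega_+^2\varrho d_+^2/c^2+\omega_-^2 d_+^2/c^2$ times $\tilde\eta$. Second, your expansion $b_1^\pm=-c-\omega_\pm\eta_\epsilon+O(\epsilon^2)$ is incomplete at the relevant order: the correct expansion (equation \eqref{b1 expansion} in the paper) is $\tilde b_1^\pm=-1\mp\epsilon^2(d_+/d_\pm)\tilde\eta-\epsilon^2(\omega_\pm d_+/c)\tilde\eta+O(\epsilon^4)$, and the $\mp d_+/d_\pm$ piece is precisely what generates the $-3(\varrho-1/d^2)$ part of the coefficient. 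Third, the several $\rho_+\rho_-\jump{\omega}$ terms involving $\Upsilon_\pm$ and $\mathcal B^{-1}$ do \emph{not} contribute to the leading coefficient at all; they are lower order because $\Upsilon_\mp=\rho_\mp\mathcal B^{-1}\jump{\omega}\eta_\epsilon\eta_{\epsilon x}$ carries an extra derivative on $\eta_\epsilon$, and the paper simply discards them after checking the scaling. With these corrections your outline matches the paper's proof.
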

\begin{proof}
Let $\mathcal{Q}_\epsilon$ be the operator obtained by evaluating operator $\mathcal{Q}_c$ at $\eta_\epsilon$:

\begin{equation}\label{operator Q epsilon}
    \begin{split}
        \mathcal{Q}_\epsilon(\eta_\epsilon)&:=-\partial_x \left(\sigma \dfrac{\partial_x}{\langle\eta'_\epsilon \rangle^3}\right)-\left(g\jump{\rho} +\sum_{\pm}\left(\pm \rho_{\pm}b_{1\epsilon}^{\pm}(b_{2\epsilon}^{\pm})'\right)+\jump{\rho\xi_{\epsilon}'\omega}-\eta_\epsilon \jump{\rho\omega^2}+c \jump{\rho \omega}\right)\\& \qquad+\sum_{\pm} \rho_{\pm} b_{1\epsilon}^{\pm} \partial_x\mathcal{G}_{\pm}(\eta_\epsilon)^{-1}\partial_x b_{1\epsilon}^{\pm} \\&\qquad-\sum_{\pm}\rho_{\pm} \left(\mp \omega_{\pm}\eta (b_2^{\pm})'-a_1^{\pm}(\eta,\Upsilon_{\mp})(b_2^{\pm})'\right) \\&\qquad +2 \rho_+\rho_-\jump{\omega}\partial_x \mathcal{B}(\eta_\epsilon)^{-1} \jump{\omega} \eta_\epsilon \eta_\epsilon' +2 \rho_+ \rho_- \jump{\omega} \eta_\epsilon \partial_x\left(\mathcal{B}(\eta_\epsilon)^{-1} \jump{\omega} ( \eta_\epsilon)_x\right)\\&\qquad- 4 \rho_-\rho_+ \sum_{\pm} \left(a_1^{\mp}(\eta_\epsilon, \Upsilon_{\epsilon_{\pm}})\mathcal{B}^{-1}\jump{\omega}(\eta_\epsilon)_x\right) \\& \qquad - 2 \rho_+ \rho_-\sum_{\pm} a_1^{\mp}(\eta_\epsilon,\Upsilon_{\epsilon_{\pm}}) \partial_x\left(\mathcal{B}^{-1}(\eta_\epsilon) \left(\sum_{\pm} a_1^{\mp}(\eta_\epsilon,\Upsilon_{\epsilon_{\pm}})\dot{\eta}\right)'\right).
    \end{split}
\end{equation}
Following the same rescaling argument  in \cite{Ming--WalshOrbital2022}, the rescaled surface tension term becomes

\begin{equation*}
    \dfrac{-1}{\epsilon^2}\dfrac{d_+}{c^2\rho_-}S_\epsilon^{-1}\partial_x \left(\sigma \dfrac{\partial_x}{\langle\eta'_\epsilon \rangle^3}\right)S_\epsilon=-\partial_x\left(\dfrac{\beta}{\langle\epsilon^3(\tilde{\eta}'+\tilde{r}_\epsilon ')\rangle^3}\partial_x \right).
\end{equation*}
Further, let us define the non-dimensionalized and rescaled relative velocity field
\begin{equation}\label{rescalledb1b2}
    b_{1\epsilon}^{\pm}=:cS_\epsilon \tilde{b}_1^{\pm}, \quad b_{2\epsilon}^{\pm}=:cS_\epsilon \tilde{b}_2^{\pm}. 
\end{equation}
Since $b_{2\epsilon}^{\pm}=\eta_\epsilon'\left(b_{1\epsilon}^{\pm} \right)$, we have $\tilde{b}_2^{\pm}=\epsilon^3\tilde{\eta}'\tilde{b}_1^{\pm}$. Using this, we obtain the following rescaled expression of the second and the third terms:
 \begin{equation*}
 \begin{split}
     \dfrac{-1}{\epsilon^2}\dfrac{d_+}{c^2\rho_-}S_\epsilon^{-1}\left(g\jump{\rho}+c\jump{\rho \omega} +\sum_{\pm}\left(\pm \rho_{\pm}b_{1\epsilon}^{\pm}(b_{2\epsilon}^{\pm})'\right)\right)S_\epsilon&=\dfrac{\alpha}{\epsilon^2}-\dfrac{1}{\epsilon^2}\left(\dfrac{\omega_+d_+\varrho}{c}-\dfrac{\omega_-d_+}{c}\right)\\& \quad -\epsilon^2\sum_{\pm}\pm \dfrac{\rho_{\pm}}{\rho_-}\tilde{b}_1^{\pm}(\tilde{\eta}'\tilde{b}_1^{\pm})'
 \end{split}
 \end{equation*}
 
 For later use in dealing with the non-local terms in $\mathcal{Q}_\epsilon(\eta_\epsilon)$, we define the following two operators:
 \begin{equation}\label{definition of M epsilon}
     \tilde{\mathcal{M}}^{\pm}_{\epsilon}(\eta_\epsilon):=\dfrac{d_+}{\epsilon^2}S^{-1}_{\epsilon} \partial_x\mathcal{G}_{\pm}(\eta_\epsilon)^{-1}\partial_x S_{\epsilon}
 \end{equation}
 and
 \begin{equation}
     \tilde{\mathcal{Z}}_{\epsilon}(\eta_\epsilon):=\dfrac{d_+}{\epsilon^2}S^{-1}_{\epsilon} \partial_x\mathcal{B}(\eta_\epsilon)^{-1}\partial_x S_{\epsilon}
 \end{equation}
 For any $f\in H^{k+2}$, we have
 \begin{equation*}
      \mathcal{F}(\tilde{\mathcal{M}}^{\pm}_{\epsilon}(0)f)(\xi)=\dfrac{d_+}{\epsilon^2}\dfrac{\epsilon}{d_+}\mathcal{F}(\partial_x\mathcal{G}_{\pm}(\eta_\epsilon)^{-1}\partial_x S_{\epsilon}f)(\dfrac{\epsilon}{d_+}\xi)=\dfrac{d_+}{\epsilon^2}\mathfrak{m}_{\pm}(\dfrac{\epsilon}{d_+}\xi) \hat{f}(\xi),
 \end{equation*} where $\mathfrak{m}_{\pm}:=-\xi \coth(d_{\pm}\xi)$ is the symbol for $\partial_x\mathcal{G}_{\pm}(0)^{-1}\partial_x$. Thus, $\tilde{\mathcal{M}}^{\pm}_{\epsilon}(0)$ is a Fourier multiplier with a symbol given by
\begin{equation*}
    \tilde{\mathfrak{m}}^{\pm}_{\epsilon}(\xi):=\dfrac{-1}{\epsilon^2}\dfrac{\epsilon \xi}{\tanh(d_{\pm}\epsilon \xi/d_+)}.
\end{equation*}
Additionally,  for any $f\in H^{k+2}$, we have
 \begin{equation*}
      \mathcal{F}(\tilde{\mathcal{Z}}_{\epsilon}(0)f)(\xi)=\dfrac{d_+}{\epsilon^2}\dfrac{\epsilon}{d_+}\mathcal{F}(\partial_x\mathcal{B}(\eta_\epsilon)^{-1}\partial_x S_{\epsilon}f)(\dfrac{\epsilon}{d_+}\xi)=\dfrac{d_+}{\epsilon^2}\mathfrak{b}_{\pm}(\dfrac{\epsilon}{d_+}\xi) \hat{f}(\xi),
 \end{equation*}
where $\mathfrak{b}_{\pm}:=-\xi/(\rho_+ \tanh(d_-\xi)+\rho_-\tanh(d_+\xi))$ is the symbol for $\partial_x\mathcal{B}(0)^{-1}\partial_x$. Thus, $\tilde{\mathcal{Z}}_{\epsilon}(0)$ is a Fourier multiplier with a symbol given by
\begin{equation*}
    \tilde{\mathfrak{b}}^{\pm}_{\epsilon}(\xi):=\dfrac{-1}{\epsilon^2}\dfrac{\epsilon \xi}{\left(\rho_+\tanh(d\epsilon \xi)+\rho_-\tanh(\epsilon \xi)\right)}.
\end{equation*}
 As a result we get
 \begin{equation}\label{inequality for M epsilon}
     \lVert\epsilon^2 \tilde{\mathcal{M}}^{\pm}_{\epsilon}(0)+\dfrac{d_+}{d_{\pm}} \rVert_{\T{Lin}(H^{k+2},H^k)} \leq \lVert\dfrac{1}{\langle \cdot \rangle^2}\left(\epsilon^2\tilde{\mathfrak{m}}^{\pm}_{\epsilon} +\dfrac{d_+}{d_{\pm}}\right)\rVert\lesssim \epsilon^2,
 \end{equation}
  and
 \begin{equation}
     \lVert\epsilon^2 \tilde{\mathcal{Z}}_{\epsilon}(0)+\dfrac{d_+}{\left(\rho_-d_{+}+\rho_+d_{-}\right)} \rVert_{\T{Lin}(H^{k+2},H^k)} \leq \lVert\dfrac{1}{\langle \cdot \rangle^2}\left(\epsilon^2\tilde{\mathfrak{b}}^{\pm}_{\epsilon} +\dfrac{d_+}{\left(\rho_-d_{+}+\rho_+d_{-}\right)}\right)\rVert\lesssim \epsilon^2.
 \end{equation}
  In particular, we obtain
  \begin{equation*}
      \tilde{\mathcal{Q}}_\epsilon(0)=\dfrac{1}{\epsilon^2}\left(-\epsilon^2 \beta \partial_x^2 +\alpha -\left(\dfrac{\omega_+d_+\varrho}{c}-\dfrac{\omega_-d_+}{c}\right)  +\sum_{\pm}\dfrac{\rho_{\pm}}{\rho_-}\epsilon^2 \tilde{\mathcal{M}}^{\pm}_{\epsilon}(0)\right).
  \end{equation*}
We are now ready to carefully analyze the remainder operator:
\begin{equation}\label{remainder operator}
\begin{split}
     &\tilde{\mathcal{R}}_{\epsilon}:= \tilde{\mathcal{Q}}_{\epsilon}(\eta_{\epsilon})-\tilde{\mathcal{Q}}_\epsilon(0)\\&=-\beta\partial_x\left(\dfrac{\beta}{\langle\epsilon^3(\tilde{\eta}'+\tilde{r}_\epsilon')\rangle^3}-1\right)\partial_x- \epsilon^2 \sum_{\pm} \pm \dfrac{\rho_{\pm}}{\rho_-}\tilde{b}_1^{\pm}(\tilde{\eta}'\tilde{b}_1^{\pm})'\\&+\sum_{\pm}\dfrac{\rho_\pm}{\rho_-}\left(\tilde{b}_1^{\pm} \tilde{\mathcal{M}}_\epsilon^{\pm}(\eta_\epsilon) \tilde{b}_1^{\pm}-\tilde{\mathcal{M}}_\epsilon^{\pm}(0) \right)
     - \dfrac{d_+}{\epsilon^2 c^2 \rho_-}\jump{\rho \xi_x \omega}+\dfrac{\jump{\rho \omega^2}}{c^2 \rho_-} \tilde{\eta}.
\end{split}
\end{equation}
It is straight forward to see that 
\begin{equation*}
    -\beta\partial_x\left(\dfrac{\beta}{\langle\epsilon^3(\tilde{\eta}'+\tilde{r}_\epsilon')\rangle^3}-1\right)\partial_x=O(\epsilon^9) \qquad \T{in } \T{Lin }(H^{k+2},H^{k}).
\end{equation*}
In view of \eqref{ b_1 and b_2} and \eqref{rescalledb1b2}, we know that

\begin{equation*}
    \tilde{b}_1^{\pm}=\dfrac{1}{c}S_\epsilon^{-1}\left(\partial_x \phi_{\epsilon_{\pm}}|_{\mathscr{S}}-c-\omega_{\pm}\eta_\epsilon\right). 
\end{equation*}
Expanding the Dirichlet--Neumann operator in $H^k$, we obtain
\begin{equation} \label{expansion of xi' epsilon}
\begin{split}
     \xi'_{\epsilon_{\pm}}&=\pm \partial_x \mathcal{G}_{\pm}(\eta_\epsilon)^{-1}\left(c \eta_{\epsilon}' +\omega_{\pm}\eta_\epsilon \eta_\epsilon'\right)\\&
     =\pm \partial_x \left[\mathcal{G}_{\pm}(0)^{-1}(c \eta_{\epsilon}' +\omega_{\pm}\eta_\epsilon \eta_\epsilon')+\langle \T{D}\mathcal{G}_{\pm}(0)^{-1}\eta_\epsilon,(c \eta_{\epsilon}' +\omega_{\pm}\eta_\epsilon \eta_\epsilon') \rangle\right]+O(\epsilon^6)\\ \\ (\partial_x\phi_{\epsilon_{\pm}})|_{\mathscr{S}}&=\dfrac{1}{1+(\eta_\epsilon')^2}\left(\xi_{\pm}' \pm \eta_\epsilon' \mathcal{G}_{\pm}(\eta_\epsilon)\xi_{\pm}\right)\\&=\pm\partial_x \left[\mathcal{G}_{\pm}(0)^{-1}(c \eta_{\epsilon}' +\omega_{\pm}\eta_\epsilon \eta_\epsilon')+\langle \T{D}\mathcal{G}_{\pm}(0)^{-1}\eta_\epsilon,(c \eta_{\epsilon}' +\omega_{\pm}\eta_\epsilon \eta_\epsilon') \rangle\right]+O(\epsilon^6)
\end{split}
\end{equation}
Combining the formula
\[
\langle \T{D}\mathcal{G}_{\pm}(0)^{-1} \eta_\epsilon, f\rangle=-\mathcal{G}_{\pm}(0)^{-1}\langle \T{D} \mathcal{G}_{\pm}(0) \eta_\epsilon, \mathcal{G}_{\pm}(0)^{-1}f\rangle
\]
with the first derivative formula for $\mathcal{G}_{\pm}(\eta)$, we can infer that
\begin{equation*}
\begin{split}
    \langle \T{D}\mathcal{G}_{\pm}(0)\eta_\epsilon,\mathcal{G}_{\pm}(0)^{-1}\partial_x S_\epsilon f \rangle&=\pm \partial_xS_{\epsilon} \epsilon^{4}\left(\tilde{\mathcal{M}}^{\pm}_{\epsilon}(0)f\right)\tilde{\eta}\pm \epsilon^3\mathcal{G}_{\pm}(0)S_\epsilon(\tilde{\eta}\partial_x f).
\end{split}
\end{equation*}
Patterning  the computation done in \cite{Ming--WalshOrbital2022}, therefore we can say that
\begin{equation}\label{b1 expansion}
    \tilde{b}_1^{\pm}=-1 \mp \epsilon^2 \dfrac{d_+}{d_\pm}\tilde{\eta}-\epsilon^2\dfrac{\omega_{\pm}d_+}{c}\tilde{\eta}-\epsilon^{4}\dfrac{d_+^2}{d_{\pm}^2} \tilde{\eta}^2+O(\epsilon^6).
\end{equation}
Therefore, the second term in the remainder operator \eqref{remainder operator}
\begin{equation}
\begin{aligned}
    \epsilon^2 \sum_{\pm} \pm \dfrac{\rho_{\pm}}{\rho_-}\tilde{b}_1^{\pm}(\tilde{\eta}'\tilde{b}_1^{\pm})&=\epsilon^2(1-\varrho)\tilde{\eta}'' +\epsilon^4\left(\dfrac{1}{d}-\varrho+\dfrac{\omega_-d_+}{c}-\dfrac{\omega_+d_+}{c}\right)\left[2\tilde{\eta}\tilde{\eta}''+(\tilde{\eta}')^2\right]\\&\qquad+O(\epsilon^6).
\end{aligned}
\end{equation}
Utilizing the expansion in \eqref{b1 expansion} yields
\begin{equation}\label{b1Mb1}
\begin{split}
    \tilde{b}_1^{\pm} \tilde{\mathcal{M}}_\epsilon^{\pm}(\eta_\epsilon) \tilde{b}_1^{\pm}&=\tilde{\mathcal{M}}_\epsilon^{\pm}(\eta_\epsilon)\pm \epsilon^2 \dfrac{d_+}{d_{\pm}}\left(\tilde{\eta}\tilde{\mathcal{M}}_\epsilon^{\pm}(\eta_\epsilon) + \tilde{\mathcal{M}}_\epsilon^{\pm}(\eta_\epsilon)\tilde{\eta} \right)\\& \qquad
    +\epsilon^2 \dfrac{\omega_{\pm}d_+}{c}\left(\tilde{\eta}\tilde{\mathcal{M}}_\epsilon^{\pm}(\eta_\epsilon) + \tilde{\mathcal{M}}_\epsilon^{\pm}(\eta_\epsilon)\tilde{\eta} \right)\\& \qquad
    +\epsilon^4\dfrac{d_+^2}{d_{\pm}^2}\left(\tilde{\eta}\tilde{\mathcal{M}}_\epsilon^{\pm}(\eta_\epsilon)\tilde{\eta} + \tilde{\eta}^2\tilde{\mathcal{M}}_\epsilon^{\pm}(\eta_\epsilon)+\tilde{\mathcal{M}}_\epsilon^{\pm}(\eta_\epsilon) \tilde{\eta}^2 \right)\\& \qquad+ \epsilon^4 \dfrac{\omega_{\pm}d_+}{c}\left(\pm2 \dfrac{d_+}{d_{\pm}}+\dfrac{\omega_{\pm}d_+}{c}\right)\tilde{\eta}\tilde{\mathcal{M}}_\epsilon^{\pm}(\eta_\epsilon)\tilde{\eta}+O(\epsilon^6) \; \textup{ in }\T{Lin}(H^{k+2},H^k).
\end{split}
\end{equation}
Furthermore, for $f\in H^{k+2}$ with $\norm{f}_{H^{k+2}}=1$, we have
\begin{equation}
    \begin{split}
        \left(\tilde{\mathcal{M}}_\epsilon^{\pm}(\eta_\epsilon)-\tilde{\mathcal{M}}_\epsilon^{\pm}(0)\right)f&=\dfrac{d_+}{\epsilon^2}S_{\epsilon}^{-1} \partial_x \left(\mathcal{G}_{\pm}(\eta_\epsilon)-\mathcal{G}_{\pm}(0)^{-1}\right)\partial_x S_\epsilon f\\&
        =\dfrac{d_+}{\epsilon^2}S_{\epsilon}^{-1} \partial_x \langle \T{D}\mathcal{G}_{\pm}(0)^{-1}\eta_\epsilon, \partial_x S_\epsilon f \rangle \\&
        +\dfrac{d_+}{2 \epsilon^2} S_\epsilon^{-1} \partial_x \langle \T{D}^2 \mathcal{G}_{\pm}(0)^{-1}[\eta_\epsilon,\eta_\epsilon],  \partial_x S_\epsilon f \rangle + O(\epsilon^4)
    \end{split}
\end{equation}
in $H^k$.
In order to understand the third term in \eqref{R epsilon} better, further expansion needs to be done to the equation above. In \cite{Ming--WalshOrbital2022} 
such computation has been done carefully. From that we can infer
\begin{equation}
    \begin{split}
        \left(\tilde{\mathcal{M}}_\epsilon^{\pm}(\eta_\epsilon)-\tilde{\mathcal{M}}_\epsilon^{\pm}(0)\right)f&= \mp \dfrac{d_+^2}{d_\pm^2}\tilde{\eta}f \mp \epsilon^2 \partial_x(\tilde{\eta}\partial_x f)-\epsilon^2 \dfrac{d_+^3}{d_{\pm}^3}\tilde{\eta}^2 f+ O(\epsilon^3),
    \end{split}
\end{equation}
in $H^k$.
The expression in \eqref{b1Mb1} becomes
\begin{equation}
\begin{split}
    \tilde{b}_1^{\pm} \tilde{\mathcal{M}}_\epsilon^{\pm}(\eta_\epsilon) \tilde{b}_1^{\pm}f&=\tilde{\mathcal{M}}_\epsilon^{\pm}(0)f\pm \epsilon^2 \dfrac{d_+}{d_{\pm}}\left(\tilde{\eta}\tilde{\mathcal{M}}_\epsilon^{\pm}(0)f + \tilde{\mathcal{M}}_\epsilon^{\pm}(0)\tilde{\eta}f \right)\\& \qquad+\epsilon^2 \dfrac{\omega_{\pm}d_+}{c}\left(\tilde{\eta}\tilde{\mathcal{M}}_\epsilon^{\pm}(0)f + \tilde{\mathcal{M}}_\epsilon^{\pm}(0)\tilde{\eta}f \right)\mp \dfrac{d_+^2}{d_{\pm}^2}\tilde{\eta}f +O(\epsilon^2).
\end{split}
\end{equation}
Finally, replacing $\tilde{\mathcal{M}}_\epsilon^{\pm}(0)$ with $-d_+/(d_{\pm}\epsilon^2)$ as in \eqref{inequality for M epsilon}, we obtain
\begin{equation}
    \tilde{b}_1^{\pm} \tilde{\mathcal{M}}_\epsilon^{\pm}(\eta_\epsilon) \tilde{b}_1^{\pm}f= \tilde{\mathcal{M}}_\epsilon^{\pm}(0)f \mp 3 \dfrac{d_+^2}{d_{\pm}^2}\tilde{\eta}f-2\dfrac{\omega_\pm d_+^2}{d_{\pm}c}\tilde{\eta} f+O(\epsilon^2).
\end{equation}

Hence, the third term in \eqref{remainder operator} reads
\begin{equation}\label{b1Mepsilonb1 minus M0}
\begin{split}
    \sum_{\pm}&\dfrac{\rho_\pm}{\rho_-} \left(\tilde{b}_1^{\pm} \tilde{\mathcal{M}}_\epsilon^{\pm}(\eta_\epsilon) \tilde{b}_1^{\pm}-\tilde{\mathcal{M}}_\epsilon^{\pm}(0)\right)f\\&=3 \sum_{\pm} \mp\dfrac{\rho_{\pm}}{\rho_-}\dfrac{d_+^2}{d_{\pm}^2}\tilde{\eta}f -2\sum_{\pm}\dfrac{\rho_{\pm}}{\rho_-}\dfrac{\omega_\pm d_+^2}{d_{\pm}c}\tilde{\eta} f+O(\epsilon^2)\\&=-3\left(\varrho-\dfrac{1}{d^2}\right)\tilde{\eta}f-2\left(\dfrac{\varrho \omega_+ d_+}{c}+\dfrac{\omega_- d_+ }{cd}\right)\tilde{\eta}f +O(\epsilon^2).
\end{split}
\end{equation}
The next term to analyze is $\jump{\rho\xi_{\epsilon}'\omega}$. For that, we are going to exploit the expression in \eqref{expansion of xi' epsilon}. Via the same type expansion procedure, one can infer
\begin{equation*}
    \xi_{\epsilon_{\pm}}'=\pm \dfrac{d_+^2}{c\rho_-d_{\pm}}+O(\epsilon^2) \qquad \T{in } H^k.
\end{equation*}
Hence,
\begin{equation}\label{jump rho xi tilde prime omega}
   - \jump{\rho\xi_{\epsilon}'\omega}=-\left(\dfrac{\omega_+d_+\varrho}{c}+\dfrac{\omega_-d_+}{cd}\right).
\end{equation}

Another term in $\mathcal{Q}_\epsilon (\eta_\epsilon)$ that we still have yet to handle is  $-\eta_\epsilon \jump{\rho \omega^2}$. It is straightforward to see that

\begin{equation}\label{jump rho omega squared term}
    -\dfrac{1}{\epsilon^2}\dfrac{d_+}{c^2\rho_-}S_{\epsilon}^{-1}  \eta_\epsilon \jump{\rho \omega^2}S_\epsilon= -\dfrac{1}{\epsilon^2}\dfrac{d_+}{c^2\rho_-}  \epsilon^2 d_+ \jump{\rho \omega^2}+O(\epsilon)=-\dfrac{\omega_+^2\varrho d_+^2}{c^2}+\dfrac{\omega_-^2d_+^2}{c^2}+O(\epsilon).
\end{equation}

Finally, it remains to deal with all the expressions of the last four rows in \eqref{operator Q epsilon}. Again, going through the same analysis as before and using the fact that $\tilde{\mathcal{Z}}_\epsilon(0)=-d_+/(\epsilon^2 d_{\pm})$, one can show that those terms are lower order (sufficiently small). 

Combining \eqref{b1Mepsilonb1 minus M0}, \eqref{jump rho xi tilde prime omega}, and \eqref{jump rho omega squared term} we arrive at \eqref{R epsilon}. The proof is then complete.

\end{proof}

\begin{lemma}[Limiting rescaled operator]\label{limiting rescaled} Consider the rescaled operator $\tilde{\mathcal{Q}}_\epsilon(\eta_\epsilon)$. Assume that $\beta>\beta_0$ and $\alpha=\alpha_0+\epsilon^2$. Then for any $k>1/2$ and $\zeta \in H^{k+2}$, we have
\[\lVert\tilde{\mathcal{Q}}_\epsilon(\eta_\epsilon)\zeta-\tilde{\mathcal{Q}}_0 \zeta \rVert_{H^k}\rightarrow 0 \quad \T{as } \epsilon \searrow 0,\]  where the operator 
\begin{equation}\label{Q tilde 0}
    \tilde{\mathcal{Q}}_0=-\left(\beta-\beta_0\right)\partial_x^2 +1 -3\left(\varrho-\dfrac{1}{d^2}+\dfrac{\omega_+d_+\varrho}{c}+\dfrac{\omega_-d_+}{cd}+\dfrac{\omega_+^2d_+^2\varrho}{3c^2}-\dfrac{\omega_-^2d_+^2}{3c^2}\right)\tilde{\eta}.
\end{equation}
\end{lemma}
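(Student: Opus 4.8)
The plan is to combine the expansion from Lemma~\ref{expansion of Q} with the explicit form of $\tilde{\mathcal{Q}}_\epsilon(0)$ computed there, and then take the limit $\epsilon \searrow 0$ term by term. First I would recall from the proof of Lemma~\ref{expansion of Q} that
\[
\tilde{\mathcal{Q}}_\epsilon(0)=\dfrac{1}{\epsilon^2}\left(-\epsilon^2 \beta \partial_x^2 +\alpha -\left(\dfrac{\omega_+d_+\varrho}{c}-\dfrac{\omega_-d_+}{c}\right)  +\sum_{\pm}\dfrac{\rho_{\pm}}{\rho_-}\epsilon^2 \tilde{\mathcal{M}}^{\pm}_{\epsilon}(0)\right),
\]
so that $\tilde{\mathcal{Q}}_\epsilon(\eta_\epsilon)=\tilde{\mathcal{Q}}_\epsilon(0)+\tilde{\mathcal{R}}_\epsilon$ with $\tilde{\mathcal{R}}_\epsilon$ given by \eqref{R epsilon}. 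Since $\alpha = \alpha_0 + \epsilon^2$ along the curve, I would substitute the explicit value of $\alpha_0$ from \eqref{definition of Alphanot and Betanot}, namely $\alpha_0 = \varrho + 1/d + \tfrac{\omega_+ d_+ \varrho}{c} - \tfrac{\omega_- d_+}{c}$, and check that the constant terms in $\tfrac{1}{\epsilon^2}(\alpha - (\tfrac{\omega_+ d_+ \varrho}{c} - \tfrac{\omega_- d_+}{c}))$ combine with the $\epsilon \searrow 0$ limit of $\sum_\pm \tfrac{\rho_\pm}{\rho_-}\tilde{\mathcal{M}}^\pm_\epsilon(0)$ to produce exactly the symbol $(\beta - \beta_0)\xi^2 + 1$.

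\textbf{Key steps.} The main computation is the limit of the Fourier multiplier part. From \eqref{inequality for M epsilon} we have $\epsilon^2 \tilde{\mathcal{M}}^\pm_\epsilon(0) = -\tfrac{d_+}{d_\pm} + O(\epsilon^2)$ in $\T{Lin}(H^{k+2},H^k)$; I would push this expansion one order further, using $\tfrac{x}{\tanh x} = 1 + \tfrac{x^2}{3} + O(x^4)$, to obtain
\[
\epsilon^2 \tilde{\mathfrak{m}}^\pm_\epsilon(\xi) = -\dfrac{d_+}{d_\pm} - \dfrac{\epsilon^2 d_\pm}{3 d_+}\xi^2 + O(\epsilon^4).
\]
Then $\tfrac{1}{\epsilon^2}\big(\alpha - (\tfrac{\omega_+ d_+\varrho}{c} - \tfrac{\omega_- d_+}{c}) + \sum_\pm \tfrac{\rho_\pm}{\rho_-}\epsilon^2 \tilde{\mathcal{M}}^\pm_\epsilon(0)\big)$ has, after using $\alpha - \tfrac{\omega_+ d_+\varrho}{c} + \tfrac{\omega_- d_+}{c} = \varrho + \tfrac 1 d + \epsilon^2$ and $\sum_\pm \tfrac{\rho_\pm}{\rho_-}\tfrac{d_+}{d_\pm} = \varrho + \tfrac 1 d$, a vanishing $\tfrac{1}{\epsilon^2}$-singularity, leaving the finite limit $1 + \big(-\tfrac{1}{3}\sum_\pm \tfrac{\rho_\pm}{\rho_-}\tfrac{d_\pm}{d_+}\big)(-\partial_x^2)\,$-type term, i.e. $1 - \beta_0\,\partial_x^2$ after inserting $\beta_0 = \tfrac13(\varrho + d) $ from \eqref{definition of Alphanot and Betanot} (note $\tfrac13 \sum_\pm \tfrac{\rho_\pm}{\rho_-}\tfrac{d_\pm}{d_+} = \tfrac13(\varrho + d) = \beta_0$). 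Combining with the $-\beta \partial_x^2$ term gives $-(\beta - \beta_0)\partial_x^2 + 1$. For the remainder, Lemma~\ref{expansion of Q} already gives $\tilde{\mathcal{R}}_\epsilon = -3\big(\varrho - \tfrac{1}{d^2} + \tfrac{\omega_+ d_+\varrho}{c} + \tfrac{\omega_- d_+}{cd} + \tfrac{\omega_+^2 d_+^2 \varrho}{3c^2} - \tfrac{\omega_-^2 d_+^2}{3c^2}\big)\tilde{\eta} + O(\epsilon)$ in $\T{Lin}(H^{k+2}, H^k)$, so its limit is the multiplication operator appearing in \eqref{Q tilde 0}. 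Adding the two limits yields $\tilde{\mathcal{Q}}_0$ exactly, and since $\tilde{\eta}_\epsilon \to \tilde\eta = \sech^2(\cdot/2)/(\text{denominator})$ strongly (which follows from the $O(\epsilon^3)$ asymptotics in Theorem~\ref{existence theorem} together with the rescaling), the error in replacing $\tilde\eta_\epsilon$ by its limit in $\tilde{\mathcal{R}}_\epsilon$ also tends to $0$ in the relevant operator norm applied to a fixed $\zeta \in H^{k+2}$.

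\textbf{Wrapping up.} Finally, for a fixed $\zeta \in H^{k+2}$ I would write
\[
\|\tilde{\mathcal{Q}}_\epsilon(\eta_\epsilon)\zeta - \tilde{\mathcal{Q}}_0\zeta\|_{H^k} \le \|(\tilde{\mathcal{Q}}_\epsilon(0) - (-(\beta-\beta_0)\partial_x^2 + 1))\zeta\|_{H^k} + \|(\tilde{\mathcal{R}}_\epsilon - \tilde{\mathcal{R}}_0)\zeta\|_{H^k},
\]
where $\tilde{\mathcal{R}}_0$ denotes the multiplication operator in \eqref{Q tilde 0}, and bound each piece by $O(\epsilon^2)\|\zeta\|_{H^{k+2}}$ and $O(\epsilon)\|\zeta\|_{H^{k+2}}$ respectively using the multiplier estimates and Lemma~\ref{expansion of Q}. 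The main obstacle I anticipate is not any single estimate but the bookkeeping: one must carefully track that the a priori $\epsilon^{-2}$ blow-up in $\tilde{\mathcal{Q}}_\epsilon(0)$ genuinely cancels, which forces one to expand the multiplier symbols $\tilde{\mathfrak{m}}^\pm_\epsilon$ to \emph{second} order and to use both defining relations in \eqref{definition of Alphanot and Betanot} for $\alpha_0$ and $\beta_0$ simultaneously. Everything else is a routine Fourier-multiplier continuity argument combined with the already-established expansion.
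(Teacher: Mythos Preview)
Your proposal is correct and follows essentially the same approach as the paper: decompose $\tilde{\mathcal{Q}}_\epsilon(\eta_\epsilon)=\tilde{\mathcal{Q}}_\epsilon(0)+\tilde{\mathcal{R}}_\epsilon$ via Lemma~\ref{expansion of Q}, expand the Fourier symbol of $\tilde{\mathcal{Q}}_\epsilon(0)$ using the Taylor expansion of $\coth$ together with the defining relations \eqref{definition of Alphanot and Betanot} for $\alpha_0,\beta_0$ to see the $\epsilon^{-2}$ singularity cancel and the limit $(\beta-\beta_0)\xi^2+1$ emerge, and then invoke the already-established limit of $\tilde{\mathcal{R}}_\epsilon$. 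The only cosmetic difference is that the paper organizes the symbol computation by adding and subtracting $\beta_0(\epsilon\xi)^2$ and $\alpha_0$ at once and reading off the $O((\epsilon\xi)^4)$ remainder, whereas you expand each $\tilde{\mathfrak{m}}^\pm_\epsilon$ separately to second order; these are the same calculation.
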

\begin{proof}
Let $k>1/2$. Recall from the expansion of $\mathcal{Q}_{\epsilon}$ in Lemma \ref{expansion of Q}, we have $\tilde{\mathcal{Q}}_\epsilon (\eta_\epsilon)=\tilde{\mathcal{Q}}_\epsilon (0) + \tilde{\mathcal{R}}_\epsilon$. It is also important to note that $\tilde{\mathcal{R}}_\epsilon$ has a uniform limit at linear map from $H^{k+1}$ to $H^k$ as $\epsilon \searrow 0$. Moreover, the operator $\tilde{\mathcal{Q}}_\epsilon(0)$ is a Fourier multiplier in $H^{k+2}$ (as $\tilde{\mathcal{M}}_\epsilon (0)$ is). Precisely,
\begin{equation*}
\begin{aligned}
    \mathcal{F}(\tilde{\mathcal{Q}}_\epsilon(0)f) (\xi)&=\dfrac{1}{\epsilon^2}\left(\epsilon^2 \beta \xi^2 +\alpha-\dfrac{\jump{\rho \omega d_+}}{\rho_- c}- \sum_{\pm} \dfrac{\rho_{\pm}}{\rho_-}\epsilon \xi \coth\left(\dfrac{d_{\pm}}{d_+}\epsilon \xi\right) \right)\hat{f}(\xi)\\&=:\tilde{\mathfrak{q}}(\xi) \hat{f}(\xi).
\end{aligned}
\end{equation*}
Further the symbol $\tilde{\mathfrak{q}}$ can be re-written in the following way
\begin{align*}
     \tilde{\mathfrak{q}}_\epsilon(\xi)&= \epsilon^{2-n}(\beta-\beta_0)\xi^2 +\dfrac{\alpha-\alpha_0}{\epsilon^2}+\dfrac{1}{\epsilon^2}\left(\beta_0(\epsilon \xi)^2 + \alpha_0 -\dfrac{\jump{\rho \omega d_+}}{\rho_- c}-\sum_{\pm} \dfrac{\rho_{\pm}}{\rho_-}\epsilon \xi \coth\left(\dfrac{d_{\pm}}{d_+}\epsilon \xi\right)\right)\\&=\epsilon^{2-n}(\beta-\beta_0)\xi^2+\dfrac{\alpha-\alpha_0}{\epsilon^2}+O((\epsilon \xi)^4),
\end{align*}
as $\epsilon \xi \to 0$. Since $\alpha=\alpha_0+\epsilon^2$, hence for each fixed $\xi \in \mathbb{R}$, we obtain
\[\tilde{\mathfrak{q}}_\epsilon(\xi) \to (\beta-\beta_0)\xi^2 +1 \T{  as } \epsilon \searrow 0.\]
Combining this result with the expression for $\tilde{\mathcal{R}}_\epsilon$ in \eqref{R epsilon}, we obtain the formula of $\tilde{\mathcal{Q}}_0$.
\end{proof}
\subsection{Spectrum of the linearized augmented potential}
Having established the limiting behavior above, we will now analyze the spectrum of the operator $\mathcal{Q}_{\epsilon}(\eta_{\epsilon})$. Recall that, the operator $\mathcal{Q}_\epsilon (\eta_\epsilon)$ only converges point-wise to the operator $\mathcal{Q}_0(0)$ whose essential spectrum is $[0,\infty)$. This is certainly creates a challenge in deducing the spectrum of $\mathcal{Q}_\epsilon (\eta_\epsilon)$. Thanks to the rescaled operator $\tilde{\mathcal{Q}}_\epsilon(\eta_\epsilon)$ introduced earlier. It is known that such rescaled operator converges point-wise to $\tilde{\mathcal{Q}_0}$, which has a gap between 0 and the positive essential spectrum. 
\begin{lemma}[Spectrum of the Limiting Operator $\tilde{\mathcal{Q}}_0$]
Let the assumptions in Lemma \ref{limiting rescaled} hold. The limiting rescaled operator $\tilde{\mathcal{Q}}_0$ satisfies 
\begin{equation}
    \T{ess spec } \tilde{\mathcal{Q}}_0=[1,\infty),\qquad \T{spec } \tilde{\mathcal{Q}}_0 =\{-\tilde{\tau}^2,0\} \cup \tilde{\Lambda},
\end{equation}where the first two eigenvalues $-\tilde{\tau}^2<0$ and $0$ are both simple eigenvalues with the corresponding eigenfunctions $\tilde{g}_1$ and $\tilde{g}_2=\tilde{\eta}'$, respectively; and there exists $\tau_*>0$ such that $\tilde{\Lambda} \subset [\tau_*,\infty) $.
\end{lemma}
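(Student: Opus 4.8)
The plan is to reduce $\tilde{\mathcal{Q}}_0$, after a rescaling of the spatial variable, to a classical P\"oschl--Teller Schr\"odinger operator, whose spectrum is completely explicit. First I would observe that $\tilde\eta$ is the limiting rescaled surface profile furnished by Theorem~\ref{existence theorem}, hence a constant multiple of $\sech^2\!\big(x/(2\sqrt{\beta-\beta_0})\big)$ and in particular exponentially localized. Therefore the multiplication operator $\dot\zeta\mapsto\tilde\eta\,\dot\zeta$ is relatively compact with respect to $-(\beta-\beta_0)\partial_x^2$ on $L^2(\mathbb{R})$ with domain $H^2(\mathbb{R})$, so by Weyl's theorem on the stability of the essential spectrum, $\operatorname{ess\,spec}\tilde{\mathcal{Q}}_0=\operatorname{ess\,spec}\big(-(\beta-\beta_0)\partial_x^2+1\big)=[1,\infty)$. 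Consequently $\operatorname{spec}\tilde{\mathcal{Q}}_0\cap(-\infty,1)$ consists only of isolated eigenvalues of finite multiplicity, and since $\tilde{\mathcal{Q}}_0$ is a second-order Sturm--Liouville operator on the line, each such eigenvalue is automatically simple, as at most one $L^2(\mathbb{R})$ solution of the eigenvalue ODE exists for any given $\lambda$.

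Next I would carry out the change of variables $y=x/(2\sqrt{\beta-\beta_0})$, which turns $-(\beta-\beta_0)\partial_x^2$ into $-\tfrac14\partial_y^2$ and exhibits $\tilde{\mathcal{Q}}_0$ as unitarily equivalent to $-\tfrac14\partial_y^2+1-b\,\sech^2 y$ for some positive constant $b$ depending on the amplitude of $\tilde\eta$ and on the coefficient $3\big(\varrho-\tfrac1{d^2}+\tfrac{\omega_+d_+\varrho}{c}+\tfrac{\omega_-d_+}{cd}+\tfrac{\omega_+^2d_+^2\varrho}{3c^2}-\tfrac{\omega_-^2d_+^2}{3c^2}\big)$. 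Rather than track all the rescaling constants of Section~\ref{Existence}, I would pin down $b$ via the identity $\tilde{\mathcal{Q}}_0\tilde\eta'=0$. This holds because $\eta_c'$ lies in the kernel of $\mathcal{Q}_c(\eta_c)$ for every $\epsilon$ (translation invariance of the augmented potential $\mathcal{V}_c^{\mathrm{aug}}$ forces every translate of $\eta_c$ to be a critical point, and differentiating in the translation parameter yields the kernel relation), and this relation survives the limit in Lemma~\ref{limiting rescaled}: $\tilde\eta_\epsilon'\to\tilde\eta'$ in $H^{k+2}(\mathbb{R})$ while $\tilde{\mathcal{Q}}_\epsilon(\eta_\epsilon)$ is uniformly bounded in $\operatorname{Lin}(H^{k+2},H^k)$, so $\tilde{\mathcal{Q}}_0\tilde\eta'=\lim_{\epsilon\searrow0}\tilde{\mathcal{Q}}_\epsilon(\eta_\epsilon)\tilde\eta_\epsilon'=0$. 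Since $\tilde\eta'\propto\sech^2 y\tanh y$ and a short computation gives $\big(-\tfrac14\partial_y^2+1-b\,\sech^2 y\big)\big(\sech^2 y\tanh y\big)=(3-b)\sech^4 y\tanh y$, the kernel condition forces $b=3$, so $4\tilde{\mathcal{Q}}_0$ is unitarily equivalent to $-\partial_y^2+4-12\,\sech^2 y$.

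Finally I would invoke the standard spectral theory of the P\"oschl--Teller operator $-\partial_y^2-\ell(\ell+1)\sech^2 y$: here $\ell(\ell+1)=12$, that is $\ell=3$, the reflectionless case, which has exactly the three eigenvalues $-9,-4,-1$ below its essential spectrum $[0,\infty)$, with eigenfunctions proportional to $\sech^3 y$, $\sech^2 y\tanh y$ and $\sech y\,(5\sech^2 y-4)$, having $0$, $1$ and $2$ zeros respectively. Undoing the shift and the factor $4$, the operator $\tilde{\mathcal{Q}}_0$ has eigenvalues $-\tfrac54$, $0$, $\tfrac34$ and essential spectrum $[1,\infty)$; thus $-\tilde\tau^2=-\tfrac54$ is the nodeless ground state with eigenfunction $\tilde g_1\propto\sech^3 y$, the value $0$ is the second eigenvalue with eigenfunction proportional to $\sech^2 y\tanh y$, which is precisely $\tilde g_2=\tilde\eta'$, and $\tilde\Lambda=\{\tfrac34\}\cup[1,\infty)\subset[\tau_*,\infty)$ with any $\tau_*\in(0,\tfrac34]$; all three discrete eigenvalues are simple by the Sturm--Liouville remark of the first paragraph. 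The step I expect to be the main obstacle is the reduction in the second paragraph: getting $b=3$ cleanly without wading through the chain of rescalings in the existence proof. Leaning on $\tilde{\mathcal{Q}}_0\tilde\eta'=0$ circumvents this, but one must check with some care that the kernel relation is genuinely preserved under the merely pointwise operator convergence in Lemma~\ref{limiting rescaled}, which is exactly why the uniform $\operatorname{Lin}(H^{k+2},H^k)$ bound recorded there is essential.
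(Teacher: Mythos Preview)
Your proof is correct and considerably more detailed than the paper's own. The paper simply records that the spectral structure of $\tilde{\mathcal{Q}}_0$ is classical, cites Pava's monograph, and notes that simplicity of the discrete eigenvalues follows from the Wronskian argument for second-order ODEs on the line. You instead carry out the reduction explicitly: Weyl's theorem for the essential spectrum, the change of variables $y=x/(2\sqrt{\beta-\beta_0})$ to land on a shifted P\"oschl--Teller operator, and the full computation of the discrete spectrum $\{-5/4,\,0,\,3/4\}$ with eigenfunctions. The payoff is that your argument is self-contained and yields the exact value $-\tilde\tau^2=-5/4$ and an explicit $\tau_*=3/4$, information the paper does not record.

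One remark: the detour through $\tilde{\mathcal{Q}}_0\tilde\eta'=0$ to pin down $b=3$ is elegant but unnecessary. Since $\tilde\eta$ has amplitude exactly $1/K$ with $K$ the combination in the denominator of \eqref{intro eta asymptotics}, the coefficient $3K$ in \eqref{Q tilde 0} multiplies $\tilde\eta$ to give $3\sech^2 y$ directly, so $b=3$ falls out of the constants without any limiting argument. Your kernel-limit route is nonetheless valid---the uniform $\operatorname{Lin}(H^{k+2},H^k)$ bound on $\tilde{\mathcal{Q}}_\epsilon(\eta_\epsilon)$ that you need does hold, as one checks from the symbol estimate for $\tilde{\mathcal{Q}}_\epsilon(0)$ together with the $O(1)$ bound on $\tilde{\mathcal{R}}_\epsilon$ in Lemma~\ref{expansion of Q}---but it is more work than simply reading off the cancellation.
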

\begin{proof}
The spectra condition above is a classic result and can be found, for instance, in the surveys of Pava \cite{Pava2009NonlinearDE}. It is important to mention that the proof of that is not trivial by any means. The fact that $-\tilde{\tau}^2$ and 0 are simple follows from the result which says that: the Wronskian of any two solutions in $L^2$ of the eigenvalue problem $\tilde{\mathcal{Q}}_0f=\tilde{\tau}$ must be 0.
\end{proof}

\begin{theorem}
Suppose that the assumptions of Lemma \ref{limiting rescaled} hold. For each $a\in (0,\tau_*)$, there exists some $\epsilon_0>0$ such that for any $\epsilon \in (0,\epsilon_0)$ the operator $\mathcal{Q}_\epsilon(\eta_\epsilon)$ satisfies
\[\T{ess spec } \mathcal{Q}_{\epsilon}(\eta_{\epsilon}) \subset [\epsilon^2 c^2 \rho_-/d_+, \infty), \quad \T{spec } \mathcal{Q}_{\epsilon}(\eta_{\epsilon})=\{-\tau^2,0\} \cup \Lambda, \]
where $\Lambda \subset [a\epsilon^2c^2\rho_-/d_+,\infty)$, and 
\[\tau^2=\dfrac{\epsilon^2c^2\rho_-}{d_+}\tilde{\tau}^2+o(\epsilon^2)\quad \T{as } \epsilon \searrow 0.\]
The first two eigenvalues $\tau_1:=-\tilde{\tau}^2<0$ and $\tau_2:=0$ are both simple with the corresponding eigenfunctions given by $g_i:=S_\epsilon \tilde{g}_i +o(1)$ in $H^k$ as $\epsilon \searrow 0$. 
\end{theorem}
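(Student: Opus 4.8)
The plan is to deduce the theorem from the conjugation identity
\[
\mathcal{Q}_\epsilon(\eta_\epsilon)=\frac{\epsilon^2 c^2\rho_-}{d_+}\,S_\epsilon\,\tilde{\mathcal{Q}}_\epsilon(\eta_\epsilon)\,S_\epsilon^{-1},
\]
which shows that $\T{spec}\,\mathcal{Q}_\epsilon(\eta_\epsilon)=\tfrac{\epsilon^2c^2\rho_-}{d_+}\,\T{spec}\,\tilde{\mathcal{Q}}_\epsilon(\eta_\epsilon)$ and carries eigenfunctions to their $S_\epsilon$–images. With this in hand the essential–spectrum assertion needs no new argument: applying the Continuous–spectrum Lemma at $\eta=\eta_\epsilon$ (which decays at infinity) gives $\T{ess\,spec}\,\mathcal{Q}_\epsilon(\eta_\epsilon)=[\tau_*,\infty)$ with $\tau_*=-g\jump{\rho}(1-\alpha_0/\alpha)$, and substituting $\alpha=\alpha_0+\epsilon^2$ together with the definition of $\alpha$ yields exactly $\tau_*=\epsilon^2c^2\rho_-/d_+$, i.e.\ $\T{ess\,spec}\,\tilde{\mathcal{Q}}_\epsilon(\eta_\epsilon)=[1,\infty)$. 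Everything then reduces to showing that, for every $a\in(0,\tau_*)$ and all $\epsilon$ small, $\tilde{\mathcal{Q}}_\epsilon(\eta_\epsilon)$ has precisely two eigenvalues in $(-\infty,a)$, both simple, converging together with their eigenfunctions to the data $\{-\tilde\tau^2,0\}$, $\{\tilde g_1,\tilde g_2\}$ produced by the Spectrum–of–$\tilde{\mathcal{Q}}_0$ Lemma.

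One of the two eigenvalues is present for a structural reason: a traveling wave is a critical point of $\mathcal{V}_c^{\T{aug}}$, and since this functional is invariant under $\eta\mapsto\eta(\cdot-s)$, differentiating in $s$ at $\eta_\epsilon$ gives $\mathcal{Q}_c(\eta_\epsilon)\eta_\epsilon'=\T{D}^2\mathcal{V}_c^{\T{aug}}(\eta_\epsilon)[\eta_\epsilon',\cdot]=0$. Hence $0\in\T{spec}\,\tilde{\mathcal{Q}}_\epsilon(\eta_\epsilon)$ with eigenfunction $S_\epsilon^{-1}\eta_\epsilon'$, which by the explicit $\sech^2$ profile of Theorem~\ref{existence theorem} equals, after normalization, $\tilde\eta'+O(\epsilon)=\tilde g_2+O(\epsilon)$ in $H^k$. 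The remaining task is to produce exactly one more eigenvalue below $a$, show it is negative and near $-\tilde\tau^2$, and rule out any further spectrum there.

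For this I would run a Riesz–projection argument. Choose a small circle $\Gamma_1$ around $-\tilde\tau^2$, a small circle $\Gamma_2$ around $0$, and a contour $\Gamma$ enclosing the segment $[-C,a]$ (with $-C$ below the uniform lower bound for $\tilde{\mathcal{Q}}_\epsilon(\eta_\epsilon)$, which follows from the coercivity $\langle\tilde{\mathcal{Q}}_\epsilon(0)\cdot,\cdot\rangle\gtrsim\norm{\cdot}_{H^1}^2$ and the relative smallness of $\tilde{\mathcal{R}}_\epsilon$), all lying in $\rho(\tilde{\mathcal{Q}}_0)$ and at positive distance from $[1,\infty)\cup\tilde\Lambda$. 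Writing $\tilde{\mathcal{Q}}_\epsilon(\eta_\epsilon)-z=(\tilde{\mathcal{Q}}_\epsilon(0)-z)\bigl(I+(\tilde{\mathcal{Q}}_\epsilon(0)-z)^{-1}\tilde{\mathcal{R}}_\epsilon\bigr)$, I would use: (i) $\T{spec}\,\tilde{\mathcal{Q}}_\epsilon(0)=[1,\infty)$ for all small $\epsilon$, so $(\tilde{\mathcal{Q}}_\epsilon(0)-z)^{-1}$ is uniformly bounded on $L^2$ for $z$ on these contours; (ii) the uniform symbol bound $\tilde{\mathfrak{q}}_\epsilon(\xi)\geq c(1+\xi^2)$, obtained from the $x\coth x$ expansion, so $(\tilde{\mathcal{Q}}_\epsilon(0)-z)^{-1}$ is uniformly bounded $L^2\to H^2$; (iii) Lemma~\ref{expansion of Q}, which gives $\tilde{\mathcal{R}}_\epsilon\to-3(\cdots)\tilde\eta$ in operator norm with $\tilde{\mathcal{Q}}_0$–compact limit; (iv) Lemma~\ref{limiting rescaled}, which gives strong resolvent convergence $(\tilde{\mathcal{Q}}_\epsilon(0)-z)^{-1}\to(\tilde{\mathcal{Q}}_0^{(1)}-z)^{-1}$, where $\tilde{\mathcal{Q}}_0^{(1)}=-(\beta-\beta_0)\partial_x^2+1$ is the multiplier part of $\tilde{\mathcal{Q}}_0$. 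Then $(\tilde{\mathcal{Q}}_\epsilon(0)-z)^{-1}\tilde{\mathcal{R}}_\epsilon$ is compact and converges in norm uniformly on the contours, so $I+(\tilde{\mathcal{Q}}_\epsilon(0)-z)^{-1}\tilde{\mathcal{R}}_\epsilon$ is uniformly invertible there; moreover $\oint_\Gamma(\tilde{\mathcal{Q}}_\epsilon(0)-z)^{-1}\,dz=0$ (its spectrum lies outside the contour), which cancels the only piece that failed to converge in norm and yields $\Pi_\epsilon\to\Pi_0$ in operator norm for $\Gamma,\Gamma_1,\Gamma_2$. Since $\T{rank}\,\Pi_0(\Gamma_j)=1$ and $\T{rank}\,\Pi_0(\Gamma)=2$, the same holds for $\Pi_\epsilon$ once $\epsilon$ is small; hence $\tilde{\mathcal{Q}}_\epsilon(\eta_\epsilon)$ has exactly one eigenvalue (necessarily simple, by self–adjointness) inside each $\Gamma_j$ and exactly two, with multiplicity, in $(-\infty,a)$. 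The one in $\Gamma_2$ is the known $0$; the one in $\Gamma_1$, call it $-\tilde\tau_\epsilon^2$, satisfies $\tilde\tau_\epsilon^2=\T{tr}\bigl(\tilde{\mathcal{Q}}_\epsilon(\eta_\epsilon)\Pi_\epsilon(\Gamma_1)\bigr)\to\T{tr}\bigl(\tilde{\mathcal{Q}}_0\Pi_0(\Gamma_1)\bigr)=\tilde\tau^2>0$, so it is negative; and as $0$ and $-\tilde\tau_\epsilon^2$ are distinct and carry the full multiplicity $2$, both are simple and nothing else lies in $(-\infty,a)$. The eigenfunctions $\tilde g_i^\epsilon:=\Pi_\epsilon(\Gamma_i)\tilde g_i/\norm{\cdot}$ converge to $\tilde g_i$ in $L^2$ by norm convergence of $\Pi_\epsilon$, and this is upgraded to $H^k$ by bootstrapping the eigenvalue equation with the uniform order–two ellipticity of $\tilde{\mathcal{Q}}_\epsilon(0)$ and the lower–order remainder $\tilde{\mathcal{R}}_\epsilon$. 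Undoing the $S_\epsilon$–conjugation and multiplying by $\epsilon^2c^2\rho_-/d_+$ produces $\tau^2=\tfrac{\epsilon^2c^2\rho_-}{d_+}\tilde\tau^2+o(\epsilon^2)$, $g_i=S_\epsilon\tilde g_i+o(1)$ in $H^k$, $\T{ess\,spec}\,\mathcal{Q}_\epsilon(\eta_\epsilon)\subset[\epsilon^2c^2\rho_-/d_+,\infty)$, and $\Lambda\subset[a\epsilon^2c^2\rho_-/d_+,\infty)$.

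The hard part is the norm convergence of the spectral projections. Because the Fourier–multiplier part $\tilde{\mathcal{Q}}_\epsilon(0)$ converges only strongly — its symbol fails to converge uniformly at high frequency, behaving like $\beta\xi^2$ rather than $(\beta-\beta_0)\xi^2$ as $|\xi|\to\infty$ — one cannot simply quote norm–resolvent convergence and standard perturbation theory; spectral pollution near the essential spectrum must be excluded by hand. The argument hinges on the fact that $\T{spec}\,\tilde{\mathcal{Q}}_\epsilon(0)=[1,\infty)$ \emph{exactly} (a consequence of the Continuous–spectrum Lemma, not merely of the limit), so the high–frequency defect is uniformly separated from the eigenvalues under consideration, together with the $\tilde{\mathcal{Q}}_0$–compactness of $\tilde{\mathcal{R}}_\epsilon$, which is precisely what turns strong resolvent convergence into norm convergence of the relevant operator products. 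When $\omega_\pm=0$ this reduces to the spectral analysis of Chen and Walsh \cite{Ming--WalshOrbital2022}, and the same scheme — carried through the modified remainder $\tilde{\mathcal{R}}_\epsilon$ and limiting operator $\tilde{\mathcal{Q}}_0$ established above — applies in the present setting.
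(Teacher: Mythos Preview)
Your argument is correct and is precisely the approach the paper has in mind: the paper's own proof consists entirely of the sentence ``The proof of this theorem can be done in a similar manner as the one in \cite[Theorem~3.2]{Ming--WalshOrbital2022} which is mainly inspired by the proof of \cite[Theorem~4.3]{Mielke2002},'' and what you have written is a careful unpacking of exactly that Mielke-style Riesz--projection argument (conjugation by $S_\epsilon$, exact computation of the essential spectrum via the Continuous--spectrum Lemma, the structural zero eigenvalue from translation invariance, and norm convergence of the spectral projections obtained by subtracting the vanishing $\oint(\tilde{\mathcal{Q}}_\epsilon(0)-z)^{-1}\,dz$ and exploiting compactness of $\tilde{\mathcal{R}}_\epsilon$ against only strong resolvent convergence of the multiplier part). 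There is no alternative route here to compare.
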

\begin{proof}
The proof of this theorem can be done in a similar manner as the one in \cite[Theorem 3.2]{Ming--WalshOrbital2022} which is mainly inspired by the proof of \cite[Theorem 4.3]{Mielke2002}. Therefore, we opt to avoid rewriting it here.
\end{proof}

\begin{lemma}[Extension of $\T{D}^2 E_c$]\label{Second derivative of E_c extension}
Let $\{U_c\}$ be a family of bound states, then the operator $\T{D}^2 E_c(U_c)$ can be extended uniquely to a bounded linear operator $H_c:\mathbb{X}\to\mathbb{X}^*$ such that
\begin{equation}
    \T{D}^2E_c(U_c)[\dot{u},\dot{v}]=\langle H_c \dot{u}, \dot{v} \rangle, \qquad \T{ for all } \dot{u},\dot{v}\in \mathbb{V},
\end{equation}
and $I^{-1}H_c$ is self-adjoint on $\mathbb{X}$.
\end{lemma}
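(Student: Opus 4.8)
The plan is to reduce the statement to the Quadratic form lemma (Lemma~\ref{quadratic form}) by completing the square in the $\tilde\xi$ variable. Inspecting the energy \eqref{Hamiltonian} and the momentum \eqref{Total Momentum}, one sees that $E_c=E-cP$ depends on $\tilde\xi$ only through the quadratic term $\tfrac12\int_{\mathbb R}\tilde\xi\,\mathcal A(\eta)\tilde\xi\,dx$ together with terms that are affine in $\tilde\xi$; in particular the pure second-order part of $E_c$ in $\tilde\xi$ is $\tfrac12\mathcal A(\eta)$ and is independent of the base point. Since, by definition of $\mathcal V_c^{\mathrm{aug}}$, the map $\tilde\xi\mapsto E_c(\eta,\tilde\xi)$ has $\tilde\xi_*(\eta)$ as its unique critical point, the elementary identity for affine--quadratic functionals yields
\[
E_c(\eta,\tilde\xi)=\mathcal V_c^{\mathrm{aug}}(\eta)+\tfrac12\int_{\mathbb R}\big(\tilde\xi-\tilde\xi_*(\eta)\big)\mathcal A(\eta)\big(\tilde\xi-\tilde\xi_*(\eta)\big)\,dx,\qquad (\eta,\tilde\xi)\in\mathcal O\cap\mathbb V.
\]

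I would then differentiate this identity twice at the bound state $U_c=(\eta_c,\tilde\xi_*(\eta_c))$. Writing $w:=\tilde\xi-\tilde\xi_*(\eta)$, which vanishes at $U_c$, every contribution of the quadratic remainder carrying a factor of $w$ drops out, and only the pure $\dot w$-term survives; here $\dot w=\dot{\tilde\xi}-\T{D}\tilde\xi_*(\eta_c)\dot\eta=\dot{\tilde\xi}-(\mathcal S-\mathcal T)\dot\eta$ by \eqref{S and T}. This gives, for $\dot u=(\dot\eta,\dot{\tilde\xi})\in\mathbb V$,
\[
\T{D}^2E_c(U_c)[\dot u,\dot u]=\T{D}^2\mathcal V_c^{\mathrm{aug}}(\eta_c)[\dot\eta,\dot\eta]+\int_{\mathbb R}\big(\dot{\tilde\xi}-(\mathcal S-\mathcal T)\dot\eta\big)\mathcal A(\eta_c)\big(\dot{\tilde\xi}-(\mathcal S-\mathcal T)\dot\eta\big)\,dx.
\]
By Lemma~\ref{quadratic form} the first term on the right equals $\langle\mathcal Q_c(\eta_c)\dot\eta,\dot\eta\rangle_{\mathbb X_1^*\times\mathbb X_1}$ with $\mathcal Q_c(\eta_c)\in\T{Lin}(\mathbb X_1;\mathbb X_1^*)$. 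For the second term, since $\eta_c$ is smooth and exponentially localized the coefficients $b_1^\pm,b_2^\pm$ are bounded and smooth, so $\mathcal S-\mathcal T$ extends to a bounded operator $\mathbb X_1=H^1(\mathbb R)\to\dot H^{1/2}(\mathbb R)$, while $\mathcal A(\eta_c)=\mathcal G_+(\eta_c)\mathcal B(\eta_c)^{-1}\mathcal G_-(\eta_c)$ maps $\dot H^{1/2}(\mathbb R)$ boundedly to $\dot H^{-1/2}(\mathbb R)$ by the mapping properties recorded in Section~\ref{hamiltonian formulation }. Hence the right-hand side is a bounded symmetric bilinear form on $\mathbb X=H^1(\mathbb R)\times\dot H^{1/2}(\mathbb R)$ and, by polarization, defines a bounded operator $H_c\colon\mathbb X\to\mathbb X^*$ with $\langle H_c\dot u,\dot v\rangle=\T{D}^2E_c(U_c)[\dot u,\dot v]$ for all $\dot u,\dot v\in\mathbb V$; uniqueness of $H_c$ then follows from the density of $\mathbb V$ in $\mathbb X$ (Remark~\ref{Dense space: intersectiom Hp amd dotHq}). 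Finally, since $\T{D}^2E_c(U_c)$ is the Hessian of a scalar functional the form $\langle H_c\cdot,\cdot\rangle$ is symmetric, so $I^{-1}H_c$ is a bounded, everywhere-defined, symmetric operator on the Hilbert space $\mathbb X$ and therefore self-adjoint.

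The routine part is the bookkeeping of mapping properties in the penultimate step, but the genuinely delicate estimates — that each of the many nonlocal contributions to $\mathcal Q_c(\eta)$ lands in $\mathbb X_1^*$ — have already been absorbed into Lemma~\ref{quadratic form}. What remains specific here is the clean completing-the-square identity and the observation that the cross/diagonal term involving $\mathcal A(\eta_c)$ and $\mathcal S-\mathcal T$ is controlled using only the smoothness and exponential decay of the bound-state profile; I expect no real obstruction, the main care being to track the homogeneous versus inhomogeneous Sobolev scales so that the bilinear form genuinely closes on $\mathbb X$ rather than merely on $\mathbb V$.
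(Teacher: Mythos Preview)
Your argument is correct and takes a genuinely different route from the paper. The paper proceeds by brute-force expansion: it writes out $\T{D}^2E_c(U_c)[\dot u,\dot u]$ as $\T{D}^2_\eta E_c + 2\T{D}_{\tilde\xi}\T{D}_\eta E_c + \T{D}^2_{\tilde\xi}E_c$, invokes Lemma~\ref{second derivative of Aug Potential} to replace $\T{D}^2_\eta E_c$ by $\langle\mathcal Q_c\dot\eta,\dot\eta\rangle+\int(\mathcal S-\mathcal T)\dot\eta\,\mathcal A\,(\mathcal S-\mathcal T)\dot\eta$, then expands the cross term using the derivative formulas for $\mathcal A$, $\mathcal G_-$, and $\mathcal B^{-1}$ and checks that a string of nonlocal contributions cancel (using $\mathcal A^{-1}=\rho_+\mathcal G_+^{-1}+\rho_-\mathcal G_-^{-1}$) to leave the completed square $\int(\dot{\tilde\xi}+(\mathcal S-\mathcal T)\dot\eta)\,\mathcal A\,(\dot{\tilde\xi}+(\mathcal S-\mathcal T)\dot\eta)$. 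Your completing-the-square identity $E_c(\eta,\tilde\xi)=\mathcal V_c^{\mathrm{aug}}(\eta)+\tfrac12\int(\tilde\xi-\tilde\xi_*(\eta))\mathcal A(\eta)(\tilde\xi-\tilde\xi_*(\eta))$ bypasses all of that computation in one stroke, because the affine--quadratic structure in $\tilde\xi$ is exactly what makes the cancellations in the paper inevitable. What the paper's approach buys is an explicit audit of every term, which may be reassuring given how many nonlocal operators are in play; what your approach buys is a proof that is essentially immune to sign slips and that makes transparent why the Hessian decouples into $\mathcal Q_c$ plus a manifestly nonnegative piece. Note, incidentally, that your completed square carries $\dot{\tilde\xi}-(\mathcal S-\mathcal T)\dot\eta$ while the paper's display \eqref{Extension Hc} has the opposite sign on the cross term; by your abstract argument the minus sign is the correct one, but either sign yields the same bounded symmetric form and hence the same $H_c$.
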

\begin{proof}
Let $U_c=(\eta_c, \tilde{\xi}_c)$ be a bound state and $\dot{u}=(\dot{\eta},\dot{\tilde{\xi}})\in \mathbb{V}$ be given. From Lemma \ref{second derivative of Aug Potential} and Lemma \ref{quadratic form}, we know that
\begin{equation*}
\begin{split}
    \T{D}^2E_c(U_c)[\dot{u},\dot{u}]&= \T{D}^2V^{\T{aug}}_c(\eta_c)[\dot{\eta},\dot{\eta}]+\int_{\mathbb{R}} (\mathcal{S}_c-\mathcal{T}_c)\dot{\eta}\mathcal{A}(\eta_c)(\mathcal{S}_c-\mathcal{T}_c)\dot{\eta}\; dx\\& \qquad +2 \T{D}_{\tilde{\xi}}\T{D}_\eta E_c(U_c)[\dot{\eta},\dot{\tilde{\xi}}]+\T{D}^2_{\tilde{\xi}}E_c(U_c)[\dot{\tilde{\xi}},\dot{\tilde{\xi}}]\\&=\langle\mathcal{Q}_C(\eta_c)\dot{\eta},\dot{\eta}\rangle + \int_{\mathbb{R}} (\mathcal{S}_c-\mathcal{T}_c)\dot{\eta}\mathcal{A}(\eta_c)(\mathcal{S}_c-\mathcal{T}_c)\dot{\eta}\; dx+2\int_{\mathbb{R}}\dot{\tilde{\xi}}\langle \T{D}\mathcal{A}(\eta_c)\dot{\eta},\tilde{\xi}_c\rangle\; dx\\&\qquad+2\int_{\mathbb{R}}\dot{\tilde{\xi}} \mathcal{G}_-(\eta_c)\mathcal{B}^{-1}(\eta_c)\jump{\omega}(\eta_c \dot{\eta})_x \;dx \\&\qquad+2\int_{\mathbb{R}}\dot{\tilde{\xi}}\langle \T{D}\mathcal{G}_-(\eta_c)\dot{\eta},\mathcal{B}^{-1}(\eta_c) \jump{\omega}\eta_c \eta_c' \rangle\;dx\\& \qquad +2\int_{\mathbb{R}}\mathcal{G}_-(\eta_c)\dot{\tilde{\xi}}\langle \T{D}\mathcal{B}_-(\eta_c)\dot{\eta}, \jump{\omega}\eta_c \eta_c' \rangle \;dx\\& \qquad
    +2\int_{\mathbb{R}}\dot{\tilde{\xi}} \omega_-(\dot{\eta}\eta_c)_x\;dx +2c \int_{\mathbb{R}}\dot{\eta}' \dot{\tilde{\xi}}\; dx + \int_{\mathbb{R}} \dot{\tilde{\xi}} \mathcal{A}(\eta_c) \dot{\tilde{\xi}} \;dx.
\end{split}
\end{equation*}
Expanding some of the terms using the first derivative formula for $\mathcal{G}_-(\eta)$ and $\mathcal{B}^{-1}(\eta)$ yields
\begin{equation*}
    \begin{split}
        \T{D}^2E_c(U_c)[\dot{u},\dot{u}]&=\langle\mathcal{Q}_C(\eta_c)\dot{\eta},\dot{\eta}\rangle + \int_{\mathbb{R}} (\mathcal{S}_c-\mathcal{T}_c)\dot{\eta}\mathcal{A}(\eta_c)(\mathcal{S}_c-\mathcal{T}_c)\dot{\eta}\; dx +2c \int_{\mathbb{R}}\dot{\eta}' \dot{\tilde{\xi}}\; dx + \int_{\mathbb{R}} \dot{\tilde{\xi}} \mathcal{A}(\eta_c) \dot{\tilde{\xi}} \;dx\\&\qquad +2\sum_{\pm} \rho_{\pm} \int_{\mathbb{R}} \left(b_{1c}^{\pm}+c+\omega_{\pm}\eta_c \pm a_1^{\pm}(\eta_c, \Upsilon_{\mp})\right)\left(\mathcal{G}_{\pm}(\eta_c)^{-1}\mathcal{A}(\eta_c) \dot{\tilde{\xi}}\right)'\dot{\eta} \; dx \\& \qquad + 2 \sum_{\pm} \rho_{\pm} \int_{\mathbb{R}} \pm b_2^{\pm} \dot{\eta} \mathcal{A}(\eta_c) \dot{\tilde{\xi}} \; dx\\&\qquad+2\int_{\mathbb{R}}\dot{\tilde{\xi}} \mathcal{G}_-(\eta_c)\mathcal{B}^{-1}(\eta_c)\jump{\omega}(\eta_c \dot{\eta})_x \;dx+2\int_{\mathbb{R}}\dot{\tilde{\xi}}\langle \T{D}\mathcal{G}_-(\eta_c)\dot{\eta},\mathcal{B}^{-1}(\eta_c) \jump{\omega}\eta_c \eta_c' \rangle\;dx\\& \qquad +2\int_{\mathbb{R}}\mathcal{G}_-(\eta_c)\dot{\tilde{\xi}}\langle \T{D}\mathcal{B}_-(\eta_c)\dot{\eta}, \jump{\omega}\eta_c \eta_c' \rangle\;dx.
    \end{split}
\end{equation*}
Upon cancellation and regrouping, we get
\begin{equation}\label{Final form of Second derivative of Ec}
\begin{split}
    &\T{D}^2E_c(U_c)[\dot{u},\dot{u}]\\&=\langle\mathcal{Q}_C(\eta_c)\dot{\eta},\dot{\eta}\rangle + \int_{\mathbb{R}} \left((\mathcal{S}_c-\mathcal{T}_c)\dot{\eta}+\dot{\tilde{\xi}}\right)\mathcal{A}(\eta_c)\left((\mathcal{S}_c-\mathcal{T}_c)\dot{\eta}+\dot{\tilde{\xi}}\right)\; dx\\&\quad +2\sum_{\pm}\rho_{\pm}\int_{\mathbb{R}}\mathcal{G}_{\pm}(\eta_c)^{-1}\left(a_1^{-}(\eta_c, \Upsilon_{-})\dot{\eta}\right)'\mathcal{A}(\eta_c) \dot{\tilde{\xi}}\;dx-\int_{\mathbb{R}}2(a_1^{-1}(\eta_c,\Upsilon_+)\dot{\eta})' \dot{\tilde{\xi}}\;dx.
\end{split}
\end{equation}
Using the fact that 
\[
\mathcal{A}(\eta)^{-1}=\rho_+\mathcal{G}_+(\eta)^{-1} +\rho_-\mathcal{G}_-(\eta)^{-1},
\]
the expression on the second row \eqref{Final form of Second derivative of Ec} vanishes. Therefore, we obtain
\begin{equation}\label{Extension Hc}
\begin{split}
    \T{D}^2E_c(U_c)[\dot{u},\dot{u}]&=\langle\mathcal{Q}_C(\eta_c)\dot{\eta},\dot{\eta}\rangle + \int_{\mathbb{R}} \left((\mathcal{S}_c-\mathcal{T}_c)\dot{\eta}+\dot{\tilde{\xi}}\right)\mathcal{A}(\eta_c)\left((\mathcal{S}_c-\mathcal{T}_c)\dot{\eta}+\dot{\tilde{\xi}}\right)\; dx.
\end{split}
\end{equation}
From the expression above, $\T{D}^2E_c(U_c)$ extends to an element in $\mathbb{X}^*$.
\end{proof}

\begin{theorem}[Spectrum] Let $\{U_c\}$ be one family of bound states given in the Corollary \eqref{Bound States Corollary}. Then
\[
\T{spec } I^{-1}H_c = \{-\mu_c^2,0\} \cup \Sigma_c,
\]
where $-\mu_c^2<0$ is a simple eigenvalue that corresponds to a unique eigenvector $\chi_c$; 0 is simple eigenvalue generated by $T$; and $\Sigma_c \subset (0,\infty)$ is uniformly bounded away from 0.
\end{theorem}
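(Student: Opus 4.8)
The starting point is the identity \eqref{Extension Hc} from Lemma~\ref{Second derivative of E_c extension}, which for a bound state $U_c=(\eta_c,\tilde{\xi}_c)$ and $\dot u=(\dot\eta,\dot{\tilde\xi})\in\mathbb{V}$ reads
\begin{equation*}
\T{D}^2E_c(U_c)[\dot u,\dot u]=\langle\mathcal{Q}_c(\eta_c)\dot\eta,\dot\eta\rangle+\int_{\mathbb{R}}\big((\mathcal{S}_c-\mathcal{T}_c)\dot\eta+\dot{\tilde\xi}\big)\mathcal{A}(\eta_c)\big((\mathcal{S}_c-\mathcal{T}_c)\dot\eta+\dot{\tilde\xi}\big)\,dx.
\end{equation*}
The plan is to read this as a congruence that block-diagonalizes $H_c$. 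First I would check that $\mathcal{S}_c-\mathcal{T}_c\in\T{Lin}(H^1,\dot H^{1/2})$ — this follows from the mapping properties of $\mathcal{G}_\pm(\eta_c)^{-1}$ together with the smoothness and exponential localization of the profile $\eta_c$, which make $b_1^\pm,b_2^\pm$ bounded with bounded derivatives — so that the linear map $\mathcal{J}_c\colon(\dot\eta,\dot\zeta)\mapsto(\dot\eta,\dot\zeta-(\mathcal{S}_c-\mathcal{T}_c)\dot\eta)$ is a bounded isomorphism of $\mathbb{X}$ (and of $\mathbb{V}$), with bounded inverse given by a sign flip. Pulling the form back by $\mathcal{J}_c$ removes the coupling:
\begin{equation*}
\T{D}^2E_c(U_c)[\mathcal{J}_c\dot v,\mathcal{J}_c\dot v]=\langle\mathcal{Q}_c(\eta_c)\dot\eta,\dot\eta\rangle+\int_{\mathbb{R}}\dot\zeta\,\mathcal{A}(\eta_c)\,\dot\zeta\,dx,\qquad \dot v=(\dot\eta,\dot\zeta),
\end{equation*}
so that the bounded symmetric form on $\mathbb{X}$ associated with $H_c$ is congruent to the block-diagonal form $\T{diag}\,(\mathcal{Q}_c(\eta_c),\mathcal{A}(\eta_c))$.

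Next I would analyze the two blocks. The operator $\mathcal{A}(\eta_c)=\mathcal{G}_+(\eta_c)\mathcal{B}(\eta_c)^{-1}\mathcal{G}_-(\eta_c)$ is self-adjoint and, by the positivity and isomorphism properties of the Dirichlet--Neumann operators, is a strictly positive isomorphism $\dot H^{1/2}(\mathbb{R})\to\dot H^{-1/2}(\mathbb{R})$; hence there is $\delta_c>0$, uniform for $c$ near $c_*$, with $\int\dot\zeta\,\mathcal{A}(\eta_c)\,\dot\zeta\ge\delta_c\norm{\dot\zeta}_{\dot H^{1/2}}^2$, so relative to the $\mathbb{X}$ inner product this block contributes only spectrum in $[\delta_c,\infty)$ and has trivial kernel. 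The $\dot\eta$-block is exactly the operator $\mathcal{Q}_c(\eta_c)=\mathcal{Q}_\epsilon(\eta_\epsilon)$ whose spectrum was computed in the preceding theorem: for $\epsilon$ small it equals $\{-\tau^2,0\}\cup\Lambda$ with $-\tau^2<0$ and $0$ both simple and $\Lambda\subset[a\epsilon^2c^2\rho_-/d_+,\infty)$ bounded away from $0$. Moreover, differentiating the traveling-wave equation $\T{D}\mathcal{V}_c^{\textnormal{aug}}(\eta_c)=0$ in $x$ gives $\mathcal{Q}_c(\eta_c)\eta_c'=0$, so $\ker\mathcal{Q}_c(\eta_c)=\T{span}\{\eta_c'\}$, consistent with the simplicity of the zero eigenvalue.

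Finally I would assemble the conclusion. Since $\mathcal{J}_c$ is a bounded isomorphism of $\mathbb{X}$, the transformation preserves the inertia (the number of negative eigenvalues with multiplicity and the nullity), and — the coupling having been removed and the transverse block being uniformly positive — also preserves the property that the rest of the spectrum is bounded away from $0$. Hence $\T{spec}\,(I^{-1}H_c)=\{-\mu_c^2\}\cup\{0\}\cup\Sigma_c$, where $-\mu_c^2<0$ is the unique negative eigenvalue, simple, with eigenvector $\chi_c$ the $\mathcal{J}_c$-image of the ground state of $\mathcal{Q}_c(\eta_c)$; $0$ is simple, with eigenvector the $\mathcal{J}_c$-image of $(\eta_c',0)$, which one checks equals $T'(0)U_c=-\partial_x U_c$ up to a scalar, so the kernel is generated by the translation symmetry; and $\Sigma_c=\Lambda\cup(\text{positive part of the }\mathcal{A}\text{-block})\subset[\min(\delta_c,\,a\epsilon^2c^2\rho_-/d_+),\infty)$ is uniformly bounded away from $0$ for $c$ near $c_*$.

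The main obstacle will be making the congruence argument spectrally rigorous: congruence by a bounded isomorphism preserves the inertia but not the spectrum as a set, so one must argue separately that the transverse $\dot\zeta$-direction contributes neither eigenvalues nor essential spectrum accumulating at $0$ in the $\mathbb{X}$-topology, despite the homogeneity mismatch between $\mathcal{A}\colon\dot H^{1/2}\to\dot H^{-1/2}$ and the $\dot H^{1/2}$ inner product that defines $I$. This is precisely where the uniform lower bound on $\mathcal{A}(\eta_c)$ is needed, together with a Weyl-sequence/relative-compactness argument to localize the essential spectrum, in the same spirit as the analysis of $\mathcal{Q}_\epsilon(\eta_\epsilon)$ and $\tilde{\mathcal{Q}}_0$. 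One must also keep the discussion at the level of the bounded symmetric quadratic form on $\mathbb{X}$, invoking Lemma~\ref{Second derivative of E_c extension} to retain self-adjointness of $I^{-1}H_c$ after the change of variables, and verify the uniformity in $c$ of all the spectral gaps.
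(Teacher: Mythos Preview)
Your approach is essentially the same as the paper's: both rest on the identity \eqref{Extension Hc}, the strict positivity of $\mathcal{A}(\eta_c)$ on $\dot H^{1/2}$, and the spectral decomposition of $\mathcal{Q}_c(\eta_c)$ already established. The difference is packaging. You pass to a block-diagonal form via the congruence $\mathcal{J}_c$ and then invoke Sylvester's law for the inertia, flagging (correctly) that a separate argument is needed for the spectral gap since congruence does not preserve the spectrum. The paper instead writes the same block structure directly as a coercivity inequality: adding the rank-two correction $(\lambda-\tau_c^2)\langle\,\cdot\,,g_{1c}\rangle^2+\lambda\langle\,\cdot\,,\eta_c'\rangle^2$ to $\mathcal{Q}_c(\eta_c)$ makes the whole form satisfy $\langle H_c u,u\rangle+(\text{rank two})\gtrsim_c\|u\|_{\mathbb{X}}^2$, which gives positivity on a codimension-two subspace \emph{and} the quantitative gap in one stroke. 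This is precisely the step that absorbs your ``main obstacle'', so the paper's route is slightly shorter; yours is more transparently structural.

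One point to fix: your claim that $\mathcal{J}_c(\eta_c',0)$ equals $T'(0)U_c=-\partial_xU_c$ up to a scalar does not follow from the formula as you have written it. With your $\mathcal{J}_c$, one computes $\mathcal{J}_c(\eta_c',0)=(\eta_c',\,-(\mathcal{S}_c-\mathcal{T}_c)\eta_c')$, while $\tilde\xi_c'=D\tilde\xi_*(\eta_c)\eta_c'=(\mathcal{S}_c-\mathcal{T}_c)\eta_c'$ gives $T'(0)U_c=-(\eta_c',\,(\mathcal{S}_c-\mathcal{T}_c)\eta_c')$; these are not parallel unless $(\mathcal{S}_c-\mathcal{T}_c)\eta_c'=0$. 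The resolution is a sign: the second block in \eqref{Extension Hc} should carry $(\mathcal{S}_c-\mathcal{T}_c)\dot\eta-\dot{\tilde\xi}$ rather than $+\dot{\tilde\xi}$ (this is what one gets from $D_{\tilde\xi}D_\eta E_c=-D^2_{\tilde\xi}E_c\circ D\tilde\xi_*$), and with that sign your kernel identification goes through. The paper sidesteps this by simply citing $T'(0)U_c\in\ker H_c$ from translation invariance and exhibiting the negative direction directly, rather than reading either off from the congruence.
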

\begin{proof}
The following proof relies on the nice structure of $H_c$ in Lemma \ref{Second derivative of E_c extension}, therefore $I^{-1}H_c$ and the idea presented in \cite[Proposition 5.3]{Mielke2002} and \cite[Theorem 4.1.3]{Ming--WalshOrbital2022}. Consider the following operator
\[
\mathcal{Q}_c(\eta_c)+(\lambda-\tau_c^2)\langle\cdot,g_{1c}\rangle g_{1c} +\lambda \langle \cdot,\eta_c' \rangle \eta_c'.
\] One can easily check that for $\lambda>0$, $-\tau_c^2$ is the negative eigenvalue of $\mathcal{Q}_c(\eta_c)$and $g_{1c}$ is the corresponding eigenfunction. Recall that $\mathcal{A}(\eta_c)$ is a positive definite operator. Therefore, using the expression for $H_c$ from Lemma \ref{Second derivative of E_c extension}, we can conclude that 
\[
\langle H_c u,u\rangle_{\mathbb{X}^*\times \mathbb{X}}+(\alpha-\tau_c^2)\langle I^{-1}(g_{1c},0),u\rangle_{\mathbb{X}^*\times \mathbb{X}}^2+\lambda \langle I^{-1}(\eta_c',0) ,u\rangle_{\mathbb{X}^*\times \mathbb{X}}^2 \gtrsim_{c} \norm{u}_{\mathbb{X}}^2, 
\]
for any $u\in \mathbb{X}$. Hence $I^{-1}H_c$ is positive definite on a codimension 2 space. Further, we know that $T'(0)U_c \in \T{Ker }H_c$. Further,  from \eqref{Extension Hc}, for $u=(g_{1c},(\mathcal{S}_C-\mathcal{T}_c)g_{1c})$, we have
\[
\langle H_c u,u\rangle_{\mathbb{X}^*\times \mathbb{X}}=\langle \mathcal{Q}_c(\eta_c)g_{1c},g_{1c} \rangle_{\mathbb{X}^*\times \mathbb{X}}=-\tau_c^2<0.
\]
Hence, we can conclude that $I^{-1}H_c$ has a one-dimensional kernel generated by $T'(0)U_c$, a one-dimensional negative definite subspace. Moreover, It is positive definite on the orthogonal complement. This then proves Assumption \ref{assumption on spectrum}.
\end{proof}

\section{Proof of theorem}\label{proof of theorem}
In this section, we will prove the (conditional) orbital stability of the bound states presented in Corollary \eqref{Bound States Corollary}. As mentioned previously, it requires us to check the sign of the second derivative of the scalar valued function $d$.

\begin{theorem}[Stability/instability for strong surface tension]
Fix $c_*$ such that $0<\alpha_{c_*}-\alpha_0\ll 1$, the conditional orbital stability/instability of the bound states $U_{c_*}$ in Corollary~\ref{Bound States Corollary} can be determined by looking at the sign of the derivative of a scalar-valued function $m:=m(c)$ stated in \eqref{d'}.
\end{theorem}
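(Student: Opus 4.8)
The plan is to read the result off the abstract stability/instability criterion, Theorem~\ref{stability theorem}. All of its hypotheses have been checked in Sections~\ref{hamiltonian formulation }--\ref{Spectral Analysis}: Assumption~\ref{assumption on spaces} by the interpolation lemma, Assumption~\ref{assumption of poisson map} by the Poisson map lemma, Assumptions~\ref{assumption on Derivative extension}--\ref{assumption on symmetry group} by Lemma~\ref{Energy Extension} and Lemma~\ref{conserved quantity}, Assumption~\ref{assumption on bound states} by Corollary~\ref{Bound States Corollary}, and the spectral Assumption~\ref{assumption on spectrum} by the computation $\operatorname{spec} I^{-1}H_c=\{-\mu_c^2\}\cup\{0\}\cup\Sigma_c$ with Morse index $1$. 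Therefore the conditional orbital stability of $U_c$ is governed by the sign of $d''(c)$, where $d(c):=E_c(U_c)=E(U_c)-cP(U_c)$ is the moment of instability, and the whole task is to determine that sign for $0<\alpha_c-\alpha_0\ll 1$.

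First I would record the classical identity $d'(c)=-P(U_c)$: differentiating $d$ in $c$, using the $C^1$ dependence $c\mapsto U_c\in\mathcal{O}\cap\mathbb{W}$ from Corollary~\ref{Bound States Corollary} together with the fact that $U_c$ is a critical point of $E_c$ (so the pairing $\langle\T{D}E_c(U_c),\partial_c U_c\rangle$ vanishes), all the $\partial_c U_c$ contributions cancel and only $-P(U_c)$ survives. Hence $d''(c)=-\tfrac{d}{dc}P(U_c)$, and the problem reduces to evaluating $P(U_c)$ as an explicit function of $c$, to leading order in the bifurcation parameter $\epsilon_c=\sqrt{\alpha_c-\alpha_0}$.

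Next I would compute $P(U_c)$ from the momentum formula \eqref{Total Momentum}, substituting for $\tilde{\xi}_c$ its traveling-wave expression $\tilde{\xi}_c=-c\,\mathcal{A}^{-1}(\eta_c)\eta_{c,x}-\rho_+\jump{\omega}\mathcal{A}^{-1}(\eta_c)\mathcal{G}_-(\eta_c)\mathcal{B}^{-1}(\eta_c)\eta_c\eta_{c,x}-\omega_-\mathcal{A}^{-1}(\eta_c)\eta_c\eta_{c,x}$ from Section~\ref{Spectral Analysis}, so that $P(U_c)$ becomes a functional of $\eta_c$ alone. Inserting the asymptotics \eqref{intro eta asymptotics}, $\eta_c(x)=\tfrac{d_+\epsilon_c^2}{D(c)}\sech^2\!\big(\tfrac{\epsilon_c x}{2 d_+\sqrt{\beta_c-\beta_0}}\big)+O(\epsilon_c^3)$ in $H^{k+1/2}$ with $D(c)$ the denominator there, and performing the long-wave rescaling of Lemma~\ref{expansion of Q} — under which $\mathcal{A}(\eta_c)^{-1}=\rho_+\mathcal{G}_+(\eta_c)^{-1}+\rho_-\mathcal{G}_-(\eta_c)^{-1}$, $\mathcal{B}(\eta_c)^{-1}$ and $\mathcal{G}_-(\eta_c)$ reduce at leading order to multiplication by the constants read off from the low-frequency limits of their symbols, exactly as in the Dirichlet--Neumann expansions \eqref{expansion of xi' epsilon}--\eqref{b1 expansion} — and using $\int_{\mathbb{R}}\sech^4 y\,dy=\tfrac{4}{3}$, one obtains $P(U_c)=P_0(c)\,\epsilon_c^3+o(\epsilon_c^3)$ for an explicit function $P_0$ of $\varrho,d,\omega_\pm,c$ and the fixed parameters. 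Differentiating, and using that $\epsilon_c$ is a smooth (generically monotone) function of $c$ with $\epsilon_c'=(\alpha_c-\alpha_0)'/(2\epsilon_c)$, gives $d''(c)=-\tfrac{3}{2}P_0(c)(\alpha_c-\alpha_0)'\,\epsilon_c+O(\epsilon_c^2)$. The coefficient here defines, up to a positive factor, the explicit scalar function $m=m(c)$ recorded in \eqref{d'} (the leading-order part of $d'(c)=-P(U_c)$), and since $P_0(c)(\alpha_c-\alpha_0)'$ is generically nonzero one has $\operatorname{sign} m'(c)=\operatorname{sign} d''(c)$ whenever $0<\alpha_c-\alpha_0\ll 1$; Theorem~\ref{stability theorem} then yields conditional orbital stability when $m'(c)>0$ and orbital instability when $m'(c)<0$.

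I expect the main obstacle to be the third step: pushing the $\epsilon_c$-expansion of $P(U_c)$ to order $\epsilon_c^3$ requires the first- and second-order corrections to $\mathcal{A}^{-1}(\eta_c)$, $\mathcal{G}_-(\eta_c)$ and $\mathcal{B}^{-1}(\eta_c)$ — the momentum analogue of the operator expansions carried out in Lemma~\ref{expansion of Q} — and then combining, without arithmetic errors, all the vorticity-dependent contributions, in particular the $\tfrac{1}{2}\jump{\rho\omega}\eta_c^2$ term in \eqref{Total Momentum} and the pieces of $\tilde{\xi}_c$ proportional to $\rho_+\jump{\omega}$ and to $\omega_-$. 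A subsidiary point is to verify that the $o(\epsilon_c^3)$ remainder in $P(U_c)$ may be differentiated in $c$ and that $P_0(c)(\alpha_c-\alpha_0)'$ does not vanish identically, so that $m'$ genuinely detects the sign of $d''$; the first follows from the joint smoothness of $\eta_c$ in $(x,c)$ produced by the center-manifold reduction of Section~\ref{Existence}.
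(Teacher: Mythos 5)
Your proposal follows essentially the same route as the paper: invoke the abstract criterion of Theorem~\ref{stability theorem} (whose hypotheses were verified in Sections~\ref{hamiltonian formulation }--\ref{Spectral Analysis}), use $d'(c)=-P(U_c)$, substitute the traveling-wave expression for $\tilde{\xi}_c$ into \eqref{Total Momentum}, and extract the leading $\epsilon_c^3$ coefficient via the long-wave operator expansions to obtain $m(c)$ as in \eqref{d'}, with the dominant contribution to $m'(c)$ coming from the $\epsilon_c^2\epsilon_c'$ term exactly as in \eqref{derivative m expression}. The argument and the identification of which terms dominate match the paper's proof, so no further comparison is needed.
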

\begin{proof}
Having confirmed all the assumptions, The conclusion on (conditional) orbital stability is drawn by showing $d''(c_*)>0$. Recall that since $U_{c_*}$ is a critical point of $E_c$, we therefore have
\begin{equation}
    d'(c_*)=-P(U_{c_*}).
\end{equation}
We need to show that $d'(c)$ is strictly increasing at $c=c_*$. We start by defining a rescaling operator:
\[S_c f:=f\left(\dfrac{\epsilon_c \cdot}{d_+ \sqrt{\beta-\beta_0}}\right).\] Using the asymptotics for the free surface, we define
\[\eta_c=:\epsilon^2_c d_+S_c(\tilde{\eta}_c +\tilde{r}_c), \qquad \T{ for } \tilde{r}_c= O(\epsilon_c).\]

Via the definition of $P$ in \eqref{Total Momentum}, we can express $d'(c)$ as follows
\begin{equation*}
\begin{split}
    d'(c)&=c\int_{\mathbb{R}} \eta_c \partial_x \left(\mathcal{A}(\eta_c)^{-1} \eta_c'\right)\; dx+ \int_{\mathbb{R}} \eta_c \partial_x\left(\mathcal{A}(\eta_c)^{-1}\mathcal{G}_-(\eta_c)\mathcal{B}(\eta_c)^{-1} \rho_+\jump{\omega}\eta_c \eta_c'\right)\; dx \\& \qquad +\int_{\mathbb{R}} \eta_c \omega_- \partial_x\left(\mathcal{A}(\eta_c)^{-1} \eta_c \eta_c'\right) \; dx- \int_{\mathbb{R}}\dfrac{1}{2} \jump{\rho \omega} \eta_c^2\; dx\\&=c\epsilon_c^4 d_+^2\int_{\mathbb{R}} S_c(\tilde{\eta}_c +\tilde{r}_c) \partial_x \left(\mathcal{A}(\eta_c)^{-1} \partial_x S_c(\tilde{\eta}_c+\tilde{r}_c)\right)\; dx\\&\qquad+ \epsilon_c^6 d_+^3 \int_{\mathbb{R}} S_c(\tilde{\eta}_c+\tilde{r}_c) \partial_x\left(\mathcal{G}_+(\eta_c)^{-1}\rho_+\jump{\omega} S_c(\tilde{\eta}_c +\tilde{r}_c) \partial_x S_c(\tilde{\eta}_c +\tilde{r}_c) )\right)\; dx\\&\qquad
    +\epsilon_c^6 d_+^3 \int_{\mathbb{R}}S_c(\tilde{\eta}_c +\tilde{r}_c) \omega_- \partial_x\left(\mathcal{A}(\eta_c)^{-1} S_c(\tilde{\eta}_c +\tilde{r}_c) \partial_xS_c(\tilde{\eta}_c +\tilde{r}_c)\right) \; dx
    \\&\qquad -\epsilon_c^4 d_+^2 \int_{\mathbb{R}}\dfrac{1}{2} \jump{\rho \omega} (S_c(\tilde{\eta}_c+\tilde{r}_c))^2\; dx.
\end{split}
\end{equation*}
Undoing the scaling,
\begin{equation*}
    \begin{split}
    &=c\epsilon_c^3 d_+^3 \sqrt{\beta_c-\beta_0}\int_{\mathbb{R}} (\tilde{\eta}_c +\tilde{r}_c) S_c^{-1}\partial_x \left(\mathcal{A}(\eta_c)^{-1} \partial_x S_c(\tilde{\eta}_c+\tilde{r}_c)\right)\; dx\\&\qquad+ \dfrac{\epsilon_c^5 d_+^4\sqrt{\beta_c-\beta_0}}{2} \int_{\mathbb{R}} (\tilde{\eta}_c+\tilde{r}_c) S_c^{-1}\partial_x\left(\mathcal{G}_+(\eta_c)^{-1}\rho_+\jump{\omega} \partial_x (S_c(\tilde{\eta}_c +\tilde{r}_c))^2\right)\; dx\\&\qquad
    + \dfrac{\epsilon_c^5 d_+^4\sqrt{\beta_c-\beta_0}}{2}\omega_-\int_{\mathbb{R}}(\tilde{\eta}_c +\tilde{r}_c)  S_c^{-1}\partial_x\left(\mathcal{A}(\eta_c)^{-1} \partial_x (S_c(\tilde{\eta}_c +\tilde{r}_c))^2\right) \; dx
    \\&\qquad -\epsilon_c^3 d_+^3\sqrt{\beta_c-\beta_0} \int_{\mathbb{R}}\dfrac{1}{2} (\tilde{\eta}_c +\tilde{r}_c) \jump{\rho \omega} (\tilde{\eta}_c+\tilde{r}_c)\; dx.
\end{split}
\end{equation*}
Using the idea in \cite{Ming--WalshOrbital2022}, we define
$\tilde{\mathcal{M}}_c^{\pm}(\eta_c):= d_+ S_\epsilon^{-1}\partial_x\mathcal{G}_{\pm}(\eta_c)^{-1}\partial_x S_{\epsilon}$.
Following similar line of argument as in Lemma \ref{expansion of Q}, we find that 
 \begin{equation*}
     \lVert \tilde{\mathcal{M}}^{\pm}_{\epsilon}(0)+\dfrac{d_+}{d_{\pm}} \rVert_{\T{Lin}(H^{k+2},H^k)}\lesssim \epsilon_c^2, \qquad \lVert \tilde{\mathcal{M}}^{\pm}_{\epsilon}(\eta_c)-\tilde{\mathcal{M}}^{\pm}_{\epsilon}(0) \rVert_{\T{Lin}(H^{2},L^2)}\lesssim \epsilon_c^2,
 \end{equation*}
 which yields
\begin{equation}\label{expression of d'}
    \begin{split}
&=-c\epsilon_c^3 d_+^2\sqrt{\beta_c-\beta_0}\sum_{\pm}\rho_{\pm}\dfrac{d_+}{d_{\pm}}\int_{\mathbb{R}} (\tilde{\eta}_c)^2 \; dx -\dfrac{\epsilon_c^3 d_+^3\sqrt{\beta_c-\beta_0}}{2}\jump{\rho \omega} \int_{\mathbb{R}} (\tilde{\eta}_c)^2  \; dx+O(\epsilon_c^4)\\&
    =-c\left(\varrho+\dfrac{1}{d}+\dfrac{\omega_+\varrho d_+}{2c}-\dfrac{\omega_-d_+}{2c}\right)\bigg[\epsilon_c^3\rho_-d_+^2\sqrt{\beta_c-\beta_0}\int_{\mathbb{R}}\tilde{\eta}_c^2 \;dx\bigg]+O(\epsilon_c^4).
\end{split}
\end{equation}
Recall that
\[
\tilde{\eta}_c=\dfrac{\sech^2{(x/2)}}{\varrho-\dfrac{1}{d^2}
+\dfrac{\omega_+d_+\varrho}{c}+\dfrac{\omega_-d_+}{cd}+\dfrac{\omega_+^2d_+^2\varrho}{3c^2}-\dfrac{\omega_-^2d_+^2}{3c^2}}.
\]
Thus,
\begin{equation}\label{d'}
\begin{split}
    d'(c) &=-4\dfrac{\left(\varrho+\dfrac{1}{d}+\dfrac{\omega_+\varrho d_+}{2c}-\dfrac{\omega_-d_+}{2c}\right)c\epsilon_c^3\rho_-d_+^2\sqrt{\beta_c-\beta_0}}{\left(\varrho-\dfrac{1}{d^2}
+\dfrac{\omega_+d_+\varrho}{c}+\dfrac{\omega_-d_+}{cd}+\dfrac{\omega_+^2d_+^2\varrho}{3c^2}-\dfrac{\omega_-^2d_+^2}{3c^2}\right)^2}
+O(\epsilon_c^4) \\
& =: m(c) + O(\epsilon_c^4).
\end{split}
\end{equation}
For notation simplicity, let us define the following new variables:
\begin{align*}
	\mathfrak{A} & :=\varrho+\dfrac{1}{d}+\dfrac{\omega_+\varrho d_+}{2c}-\dfrac{\omega_-d_+}{2c}, \\
	\mathfrak{B} & :=\varrho-\dfrac{1}{d^2}
+\dfrac{\omega_+d_+\varrho}{c}+\dfrac{\omega_-d_+}{cd}+\dfrac{\omega_+^2d_+^2\varrho}{3c^2}-\dfrac{\omega_-^2d_+^2}{3c^2}.
\end{align*}
The derivative of $m$ with respect to $c$ in terms of $\mathfrak{A}$ and $\mathfrak{B}$ reads
\begin{equation}
\begin{aligned}
\label{derivative m expression}
&m'(c)=\\& \dfrac{-4}{\mathfrak{B}^4\sqrt{\beta_c -\beta_0}}\Bigl [ \mathfrak{B}^2 \left(\mathfrak{A}' \epsilon_c^3 c (\beta_c-\beta_0) +\mathfrak{A}\epsilon_c^3 (\beta_c-\beta_0)+3\mathfrak{A} c \epsilon_c^2 \epsilon_c' (\beta_c-\beta_0) +\dfrac{\mathfrak{A} c \epsilon_c^3\beta_c'}{2}  \right) \Bigr. \\& \Bigl.
	\phantom{ \mathfrak{B}^2\mathfrak{A}' \epsilon_c^3 c (\beta_c-\beta_0) +\mathfrak{A}\epsilon_c^3 (\beta_c-\beta_0)}
-2\mathfrak{A} \mathfrak{B} \mathfrak{B}' c \epsilon_c^3 (\beta_c -\beta_0)
 \Bigr].
\end{aligned}
\end{equation}
From Theorem~\ref{stability theorem}, we see that for $0 < \epsilon_c \ll 1$, the wave is stable provided $m^\prime(c) > 0$ and unstable if $m^\prime(c) < 0$.  Notice that the sign is determined by the quantity in square brackets on the right-hand side of \eqref{derivative m expression}.  

Next, let us consider the special cases described in Figures~\ref{table1} and \ref{table2}.  From the expression of $m'$ (essentially $d''$ without the lower order term) together with definitions of $\mathfrak{A}, \mathfrak{B}, \epsilon_c$ and their derivatives, we can conclude that for any $c\in \mathcal{I}$, orbital stability ($d''(c)>0$) holds in each of the three cases below.
\begin{itemize}
\item  Sufficiently small-amplitude and near critical Bond number waves: $\beta_c-\beta_0 \ll 1$.
\item Waves of elevation with $\{c<0, \omega_-\geq 0,\frac{2g(\varrho-1)}{c}>\omega_-$ and $\omega_+=0\}$ \textbf{OR}  $\{c>0, \omega_-\leq 0, \frac{2g(\varrho-1)}{c}<\omega_-$ and $\omega_+=0\}$.
\item Waves of depression with  $\{c<0, \omega_+ \leq 0, \frac{2g \jump{\rho}}{c}>-\omega_+\rho_+$ and $\omega_-=0\}$ \textbf{OR}  $\{c>0, \omega_+\geq 0,\frac{2g \jump{\rho}}{c}<-\omega_+\rho_+$ and $\omega_-=0\}$.
\end{itemize}
Moreover, we find some cases where orbital instability ($d''(c)<0$) occur:
\begin{itemize}
\item Waves of depression with $\{c<0, \omega_-\geq 0,\frac{2g(\varrho-1)}{c}<\omega_-$ and $\omega_+=0\}$ \textbf{OR}  $\{c>0, \omega_-\leq 0, \frac{2g(\varrho-1)}{c}>\omega_-$ and $\omega_+=0\}$.
\item Waves of elevation with $\{c<0, \omega_+ \leq 0, \frac{2g \jump{\rho}}{c}<-\omega_+\rho_+$ and $\omega_-=0\}$ \textbf{OR}  $\{c>0, \omega_+\geq 0,\frac{2g \jump{\rho}}{c}>-\omega_+\rho_+$ and $\omega_-=0\}$.
\end{itemize}
This concludes the proof.
\end{proof}

\appendix
\section{Center Manifold} \label{quoted results appendixa}

The first part of appendix records the center manifold reduction theorem introduced in \cite{Mielke1995} that was implemented in, for example, \cite{Nilsson2017}. It is used in proving the existence of small-amplitude internal water waves.

\begin{theorem}[Center manifold]
Consider the differential equation of the form 
\begin{equation}\label{differential equation}
\dot{u}=Lu+F(u,\mu),
\end{equation}
where the unknown $u \in E$ for some Hilbert space $E$, $\mu \in \mathbb{R}^n$ is a parameter and $L: \mathcal{D}(L) \subset E \to E$ is a closed linear operator. Assume that the differential equation \eqref{differential equation} is Hamilton's equations that correspond to the Hamiltonian system $(E, \Omega, H)$ with $0$ being its fix point. Moreover, assume also the following:
\begin{itemize}
\item[\T{H1}] The space $E$ has two closed and $L$-invariant subspaces, namely $E_1$ and $E_2$ such that 
\begin{equation}
\begin{aligned}
E&= E_1 \oplus E_2,\\
\dot{u_1}&=L_1u_1+F_1(u_1+u_2,\mu),\\
\dot{u_2}&=L_2u_2+F_2(u_1+u_2,\mu),
\end{aligned}
\end{equation}
where $L_i=L|_{\mathcal{D}L_i\cap E_i}:\mathcal{D}L_i\cap E_i \to E_i,$ for $i=1,2$ and $F_1=PF$, $F_2=(I-P)F,$ where the operator $P$ is a projection of $E$ onto $E_1$.
\item[\T{H2}] $E_1$ is a finite dimensional Hilbert space and the spectrum of $L_1$ is purely imaginary.
\item[\T{H3}] The imaginary axis lies in the resolvent of $L_2$ and 
\begin{equation}
\norm{(L_2-iaI)^{-1}}\leq \dfrac{C}{1+|a|}, \quad \T{for } a\in \mathbb{R}.
\end{equation}
\item[\T{H4}] There exists $k\in \mathbb{N}$ and neighborhoods $\Lambda \subset \mathbb{R}^n$ and $U\subset \mathcal{D}(L)$ of 0 such that $F$ is $k+1$ continuously differentiable on $U \times \Lambda$ and the derivatives of $F$ are all bounded and uniformly continuous on $U \times \Lambda$ with 
\[
F(0,\mu_0)=0, \qquad dF[0,\mu_0]=0.
\]
\end{itemize}
Under the hypothesis $\T{H1-H4}$ there exist neighborhoods $\tilde{\Lambda}\subset \Lambda$ and $\tilde{U}_1 \subset U \cap E_1,$ $\tilde{U}_2 \subset U \cap E_2$ of zero and a reduction function $r:\tilde{U}_1 \times \tilde{\Lambda} \to \tilde{U}_2$ with the following properties. The reduction  function $r$ is $k$ times continuously  differentiable on $\tilde{U}_1 \times \tilde{\Lambda}$ and the derivatives  of $r$ are bounded and uniformly continuous on $\tilde{U}_1 \times \tilde{\Lambda}$ with
\[
r(0,\mu_0)=0,\qquad dr[0,\mu_0]=0.
\]
The graph 
\[
X^{\mu}_C=\{u_1+r(u_1,\mu)\in \tilde{U}_1\times\tilde{U}_2:u_1\in \tilde{U_1}\},
\]
is a Hamiltonian center manifold for \eqref{differential equation} with the following properties:

\begin{itemize}
\item Through every point in $X^{\mu}_C$ there passes a unique solution of \eqref{differential equation} that stays on $X^{\mu}_C$ as long as it remains in $\tilde{U}_1 \times \tilde{U}_2$. We say that $X^{\mu}_C$ is a locally invariant manifold of \eqref{differential equation}.
\item  Every small bounded solution $u(x),$ $x\in \mathbb{R}$ of \eqref{differential equation} that satisfies $u_1(x) \in \tilde{U}_1$ and $u_2(x) \in \tilde{U}_2$ lies completely in $X^{\mu}_C$.
\item Every solution $u_1$ of the reduced equation 
\begin{equation}\label{reduced differential equation}
\dot{u_1}=L_1u_1+F_1(u_1+r(u_1,\mu),\mu),
\end{equation}
generates a solution 
\[
u(x)=u_1(x)+r(u_1(x),\mu)
\] of \eqref{differential equation}.
\item $X^{\mu}_C$ is a symplectic submanifold $E$, and the flow determined by the Hamiltonian system $X^{\mu}_C, \tilde{\Omega}, \tilde{H}$, where the tilde denotes the  restriction to $X^{\mu}_C$, coincides with the flow on $X^{\mu}_C$ determined by $(E,\Omega, H)$. The reduced equation \eqref{reduced differential equation} represents Hamilton's equations for $(X^{\mu}_C,\tilde{\Omega},\tilde{H})$.
\item If \eqref{differential equation} is reversible, that is if there exists a linear symmetry $S$ that anti-commutes with the right hand side of \eqref{differential equation}, then the reduction function $r$ can be chosen so that it commutes with $S$.
\end{itemize}
\end{theorem}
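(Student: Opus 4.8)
The plan is to carry out the Lyapunov--Perron construction adapted to the Hamiltonian and reversible setting, following Mielke \cite{Mielke1995} and the exposition in \cite{mielke1988}. First I would globalize the nonlinearity: using \T{H4}, pick a smooth radial cutoff $\chi$ supported in $U$ with $\chi\equiv 1$ near $0$, and set $F_\delta(u,\mu):=\chi(u/\delta)\,F(u,\mu)$. Then $F_\delta$ is globally defined and, since $F(0,\mu_0)=0$ and $dF[0,\mu_0]=0$, its Lipschitz constant and those of its first $k$ derivatives are $o(1)$ as $\delta\searrow 0$. Any solution of $\dot u = Lu + F_\delta(u,\mu)$ that stays in the $\delta$-ball solves \eqref{differential equation}, so it suffices to treat the modified system. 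Using the spectral projection $P$ of \T{H1} I split $u=u_1+u_2$, $u_i\in E_i$, obtaining the coupled system $\dot u_1 = L_1 u_1 + F_{1,\delta}$, $\dot u_2 = L_2 u_2 + F_{2,\delta}$.

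The heart of the argument is to express $u_2$ as a graph over $u_1$. By \T{H2} the flow $e^{tL_1}$ on the finite-dimensional space $E_1$ grows at most polynomially, and by \T{H3} the resolvent bound $\norm{(L_2-iaI)^{-1}}\le C/(1+|a|)$ on the imaginary axis yields, via the Dunford/inverse-Laplace calculus, an exponential dichotomy for $L_2$: stable and unstable projections $P_2^s, P_2^u$ and a rate $\nu>0$ with $\norm{e^{tL_2}P_2^s}\le C e^{-\nu t}$ for $t>0$ and $\norm{e^{tL_2}P_2^u}\le C e^{\nu t}$ for $t<0$, with at worst integrable singularities at $t=0$. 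Fix $\varepsilon\in(0,\nu)$ and let $\mathcal C_\varepsilon$ be the Banach space of continuous $v:\mathbb R\to E$ with $\norm{v}_\varepsilon:=\sup_t e^{-\varepsilon|t|}\norm{v(t)}<\infty$. For $u_1^0\in E_1$ near $0$ and $\mu$ near $\mu_0$, I seek a fixed point of
\begin{equation*}
\mathcal K[v](t) = e^{tL_1}u_1^0 + \int_0^t e^{(t-s)L_1}F_{1,\delta}(v(s),\mu)\,ds + \int_{-\infty}^{t} e^{(t-s)L_2}P_2^s F_{2,\delta}(v(s),\mu)\,ds - \int_{t}^{+\infty} e^{(t-s)L_2}P_2^u F_{2,\delta}(v(s),\mu)\,ds .
\end{equation*}
The dichotomy estimates show $\mathcal K:\mathcal C_\varepsilon\to\mathcal C_\varepsilon$, and the smallness of the Lipschitz constant of $F_\delta$ makes $\mathcal K$ a contraction, giving a unique fixed point $v(\cdot\,;u_1^0,\mu)$. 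Define $r(u_1^0,\mu):=P_2^{\mathrm{tot}}v(0;u_1^0,\mu)\in E_2$; since $Pv(0)=u_1^0$, the graph $X^\mu_C=\{u_1^0+r(u_1^0,\mu)\}$ is the set of initial data of these global orbits. Local invariance, and the statement that every small bounded solution of \eqref{differential equation} lies on $X^\mu_C$, follow because such an orbit must itself satisfy the integral identity above and hence coincide with the unique fixed point.

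The delicate point is the $C^k$ regularity of $r$. Differentiating the fixed-point relation in $(u_1^0,\mu)$ produces a fixed-point equation of the same contractive type for $Dv$, but each differentiation forces one to enlarge the exponential weight; working on the finite scale $\mathcal C_{j\varepsilon}$, $j=0,\dots,k$, and invoking the fiber-contraction theorem gives $v\in C^k$ in the parameters, hence $r\in C^k$ with bounded, uniformly continuous derivatives and $r(0,\mu_0)=0$, $dr[0,\mu_0]=0$. This is exactly where the one-derivative gap between the $C^{k+1}$ nonlinearity and the $C^k$ reduction function is spent, and it is the main obstacle: the only quantitative input on $L_2$ is the resolvent decay in \T{H3}, so the integrability of the dichotomy kernel at $t=0$ must be tracked carefully through every level of the scale. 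Restricting the reduced equation $\dot u_1 = L_1 u_1 + F_{1,\delta}(u_1+r(u_1,\mu),\mu)$ to a small neighborhood $\tilde U_1$ then gives a genuine vector field on a finite-dimensional manifold whose solutions lift, via $u=u_1+r(u_1,\mu)$, to solutions of \eqref{differential equation}.

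Finally I would upgrade to the Hamiltonian and reversible conclusions. The cutoff should be applied at the level of the Hamiltonian, $\tilde H := \chi(\cdot/\delta)H$, so the modified system remains Hamiltonian for $(E,\Omega)$; then $E_1$ is a symplectic subspace, since the kernel of $\Omega|_{E_1}$ would be an $L_1$-invariant isotropic subspace, which the purely imaginary spectrum of $L_1$ together with skew-symmetry of $L$ relative to $\Omega$ forces to be trivial. A locally invariant graph over a symplectic subspace of a Hamiltonian flow is itself a symplectic submanifold, so $X^\mu_C$ carries the restricted form $\tilde\Omega$, and the reduced equation \eqref{reduced differential equation} is automatically Hamilton's equations for $(X^\mu_C,\tilde\Omega,\tilde H)$ (after a Darboux normalization as used in the main text). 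For reversibility, choosing $\chi$ to be $S$-invariant keeps $F_\delta$ reversible; then $\mathcal K$ intertwines the action $v(\cdot)\mapsto S v(-\,\cdot)$ on $\mathcal C_\varepsilon$, and uniqueness of the fixed point forces $v$, hence $r$, to commute with $S$. Assembling these statements yields all the bulleted properties of the theorem.
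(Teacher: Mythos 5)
First, a point of comparison: the paper does not actually prove this theorem. Appendix A merely records it as a quoted result from Mielke \cite{Mielke1995}, \cite{mielke1988} (as implemented in \cite{Nilsson2017}), so there is no in-paper argument to measure your proposal against. Your Lyapunov--Perron outline --- cutoff of the nonlinearity, splitting along $P$, fixed point in exponentially weighted spaces, fiber contraction for the $C^k$ regularity of $r$, and the a posteriori symplectic and reversibility upgrades --- is the standard architecture for this family of results, and most of it is sound.

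There is, however, one step that fails in the generality of the stated hypotheses, and it is precisely the difficulty this version of the theorem is designed to circumvent. You claim that the resolvent bound in H3 yields, ``via the Dunford/inverse-Laplace calculus, an exponential dichotomy for $L_2$'' with bounded projections $P_2^s,P_2^u$ and semigroup estimates $\norm{e^{tL_2}P_2^s}\le Ce^{-\nu t}$. In the spatial-dynamics setting for which this theorem is intended (and used in Section 2 of the paper), $L_2$ has unbounded spectrum on both sides of the imaginary axis and generates a semigroup in neither time direction; the Dunford integrals defining $P_2^s$ and $P_2^u$ run over unbounded contours and need not converge, and a bound $\norm{(L_2-iaI)^{-1}}\le C/(1+|a|)$ on a single vertical line is far weaker than the Hille--Yosida-type half-plane estimates one would need for $e^{tL_2}P_2^s$ to exist. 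The correct replacement --- and the actual content of Mielke's proof --- is to solve the affine equation $\dot u_2=L_2u_2+f$ globally in the time-like variable directly in the exponentially weighted space: take the Fourier transform, write $\hat u_2(\tau)=(i\tau-L_2)^{-1}\hat f(\tau)$, and use H3 (extended by a Neumann series to a strip $|\mathrm{Re}\,\lambda|\le\varepsilon$ about the imaginary axis) to show that this defines a bounded right inverse $K_2$ on the weighted spaces. Your map $\mathcal K$ should be assembled from $K_2$ rather than from variation-of-constants kernels. With that substitution the remainder of your argument --- contraction, fiber contraction over the scale of weights, the uniqueness characterization of small bounded orbits, and the observation that the Hamiltonian and reversibility conclusions only concern orbits confined to the region where the cutoff is inactive --- goes through and matches the cited proof.
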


\section{Formulae}
\label{formulas appendix}
Many of the computations in the present work make use of the first and second derivative formulas of the non-local operators $\mathcal{G}_{\pm}(\eta)$ and $\mathcal{A}(\eta)$. We record them in a series of lemmas below. A derivation can be found in \cite{Ming--WalshOrbital2022}.
\begin{lemma}[First Derivatives]\label{derivatives of G and A} Let $\eta,\tilde{\xi}\in \mathcal{O}\cap \mathbb{V}, \dot{\eta} \in \mathbb{V}_1$ and  be given.
\begin{enumerate}[(a)]
\item The Fr\'echet derivative of $\mathcal{G}_{\pm}(\eta)$ admits the representation formula 
\begin{equation}
    \int_{\mathbb{R}} \zeta \langle \T{D}\mathcal{G}_{\pm}(\eta)\dot{\eta}, \tilde{\xi} \rangle \; dx = \int_{\mathbb{R}} \left( a_1^{\pm} (\eta,\tilde{\xi}) \zeta' +a_2^{\pm}(\eta,\tilde{\xi})\mathcal{G}_{\pm} (\eta) \zeta\right) \dot{\eta}\; dx,
\end{equation}
with 
\begin{equation}\label{equation of a_1 and a_2}
    \begin{split}
        &a_1^{\pm}(\eta,\tilde{\xi}):=\dfrac{1}{1+(\eta')^2} \left(\mp\tilde{\xi}' - \eta' \mathcal{G}_{\pm}(\eta)\tilde{\xi} \right)\\&
        a_2^{\pm}(\eta,\tilde{\xi}):=\dfrac{1}{1+(\eta')^2} \left(\pm\mathcal{G}_{\pm}(\eta)\tilde{\xi} -\eta' \tilde{\xi}' \right).
    \end{split}
\end{equation}
\item The Fr\'echet derivative of $\mathcal{A}(\eta)$ admits the representation formula
\begin{equation}
    \begin{split}
        &\int_{\mathbb{R}} \zeta \langle \T{D} \mathcal{A}(\eta)\dot{\eta}, \tilde{\xi}\rangle \; dx\\&
        =\sum_{\pm} \rho_{\pm} \left( a_1^{\pm} (\eta, \mathcal{A}(\eta)\mathcal{G}_{\pm}(\eta)^{-1}\tilde{\xi}) \left(\mathcal{A}(\eta)\mathcal{G}_{\pm}(\eta)^{-1} \zeta \right)' \right) \dot{\eta}\;dx\\&\quad
        +\sum_{\pm} \rho_{\pm} \int_{\mathbb{R}} \left( a_2^{\pm} (\eta, \mathcal{A}(\eta) \mathcal{G}_{\pm}(\eta)^{-1}\tilde{\xi})\mathcal{A}(\eta)\zeta\right) \dot{\eta}\; dx.
    \end{split}
\end{equation}
\end{enumerate}
\end{lemma}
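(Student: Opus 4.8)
The plan is to obtain part~(a) from the classical Craig--Sulem shape-derivative identity for Dirichlet--Neumann operators, and then to deduce part~(b) from part~(a) by differentiating the operator identity $\mathcal{A}(\eta)^{-1}=\rho_+\mathcal{G}_+(\eta)^{-1}+\rho_-\mathcal{G}_-(\eta)^{-1}$. Throughout I would invoke the standard fact that $\eta\mapsto\mathcal{G}_\pm(\eta)$ is a smooth (indeed analytic) map from $\mathcal{O}\cap\mathbb{V}$ into $\T{Lin}(\dot{H}^{k},\dot{H}^{k-1})$, so that the Gâteaux derivatives computed below are genuine Fréchet derivatives and all the integrations by parts are legitimate at the regularity $\eta\in\mathcal{O}\cap\mathbb{V}$, $\dot\eta\in\mathbb{V}_1$.

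For part~(a), fix $\eta,\tilde\xi$ and a direction $\dot\eta$, write $u_\pm:=\mathcal{H}_\pm(\eta)\tilde\xi$, $v_\pm:=\mathcal{H}_\pm(\eta)\zeta$, and put $u_\pm^t:=\mathcal{H}_\pm(\eta+t\dot\eta)\tilde\xi$. Differentiating the interface identity $u_\pm^t(x,\eta(x)+t\dot\eta(x))=\tilde\xi(x)$ at $t=0$ shows that the Eulerian shape derivative $\dot u_\pm:=\partial_t u_\pm^t|_{t=0}$ is harmonic in $\Omega_\pm(\eta)$, has vanishing Neumann data on the rigid walls, and has interface trace $\dot u_\pm|_{\mathscr S}=-\dot\eta\,\partial_y u_\pm|_{\mathscr S}=\dot\eta\,a_2^\pm(\eta,\tilde\xi)$; by \eqref{harmonic extension} this means $\dot u_\pm=\mathcal{H}_\pm(\eta)\big(\dot\eta\,a_2^\pm(\eta,\tilde\xi)\big)$. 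The cleanest way to differentiate $\mathcal{G}_\pm$ is through its bilinear form: by Green's identity, the Neumann condition on the walls, and the definition of $\mathcal{G}_\pm$ via the outward normal $N_\pm$,
\[
\int_{\mathbb R}\zeta\,\mathcal{G}_\pm(\eta+t\dot\eta)\tilde\xi\,dx=\int_{\Omega_\pm(\eta+t\dot\eta)}\nabla u_\pm^t\cdot\nabla v_\pm^t\,dA .
\]
Differentiating the right-hand side at $t=0$ with the Reynolds transport theorem (the interface has signed normal velocity $\mp\dot\eta/\langle\eta'\rangle$ relative to $\Omega_\pm$, since the upper layer shrinks and the lower one grows), integrating the bulk terms by parts using harmonicity of $u_\pm,v_\pm$, and expressing every trace through $a_1^\pm,a_2^\pm$, one obtains a symmetric expression which — after the elementary relations expressing $\tilde\xi'$ and $\mathcal{G}_\pm(\eta)\tilde\xi$ in terms of $a_1^\pm(\eta,\tilde\xi)$, $a_2^\pm(\eta,\tilde\xi)$ and $\eta'$ as in \eqref{equation of a_1 and a_2} — collapses exactly to $\int_{\mathbb R}\big(a_1^\pm(\eta,\tilde\xi)\zeta'+a_2^\pm(\eta,\tilde\xi)\mathcal{G}_\pm(\eta)\zeta\big)\dot\eta\,dx$, which is the claimed formula. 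Equivalently one may establish the pointwise shape-derivative formula $\T{D}\mathcal{G}_\pm(\eta)\dot\eta\,\tilde\xi=-\partial_x\big(\dot\eta\,a_1^\pm(\eta,\tilde\xi)\big)+\mathcal{G}_\pm(\eta)\big(\dot\eta\,a_2^\pm(\eta,\tilde\xi)\big)$ and then integrate by parts against $\zeta$ and use self-adjointness of $\mathcal{G}_\pm(\eta)$.

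For part~(b), differentiate $\mathcal{A}(\eta)^{-1}=\rho_+\mathcal{G}_+(\eta)^{-1}+\rho_-\mathcal{G}_-(\eta)^{-1}$ using $\T{D}(\mathcal{G}_\pm^{-1})=-\mathcal{G}_\pm^{-1}(\T{D}\mathcal{G}_\pm)\mathcal{G}_\pm^{-1}$ and $\T{D}(\mathcal{A}^{-1})=-\mathcal{A}^{-1}(\T{D}\mathcal{A})\mathcal{A}^{-1}$, and solve for
\[
\T{D}\mathcal{A}(\eta)\dot\eta=\mathcal{A}(\eta)\Big(\sum_\pm\rho_\pm\,\mathcal{G}_\pm(\eta)^{-1}\big(\T{D}\mathcal{G}_\pm(\eta)\dot\eta\big)\mathcal{G}_\pm(\eta)^{-1}\Big)\mathcal{A}(\eta).
\]
Pairing this against $\zeta$ and $\tilde\xi$ and using the self-adjointness of $\mathcal{A}(\eta)$ and of $\mathcal{G}_\pm(\eta)^{-1}$ to transfer those operators onto the test functions reduces the pairing to $\sum_\pm\rho_\pm\big\langle\big(\T{D}\mathcal{G}_\pm(\eta)\dot\eta\big)\,\mathcal{A}(\eta)\mathcal{G}_\pm(\eta)^{-1}\tilde\xi,\ \mathcal{A}(\eta)\mathcal{G}_\pm(\eta)^{-1}\zeta\big\rangle$; applying the representation formula of part~(a) with $\mathcal{A}(\eta)\mathcal{G}_\pm(\eta)^{-1}\tilde\xi$ and $\mathcal{A}(\eta)\mathcal{G}_\pm(\eta)^{-1}\zeta$ in place of $\tilde\xi$ and $\zeta$, and observing that $\mathcal{G}_\pm(\eta)$ applied to the latter is $\mathcal{A}(\eta)\zeta$, yields precisely the stated formula.

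The main obstacle is the bookkeeping in the collapse step of part~(a): the $t$-derivative of $\langle(\eta+t\dot\eta)'\rangle\,N_\pm\cdot\nabla u_\pm^t|_{y=\eta+t\dot\eta}$ simultaneously sees the variation of the conformal factor, of the normal, of the harmonic extension, and of the evaluation point, and only after repeatedly trading $\partial_{yy}u_\pm$ for $-\partial_{xx}u_\pm$ and carefully distinguishing partial derivatives in $\Omega_\pm$ from total derivatives along $\mathscr S$ does everything reduce to $a_1^\pm,a_2^\pm$ — which is exactly why it is preferable to run the argument through the bilinear form and the transport theorem. A secondary point of care in part~(b) is the ordering of the non-commuting operators $\mathcal{A}(\eta)$ and $\mathcal{G}_\pm(\eta)^{-1}$, whose placement inside the arguments of $a_1^\pm,a_2^\pm$ is dictated by the self-adjointness manipulations; everything else is routine.
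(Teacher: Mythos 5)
The paper gives no proof of this lemma at all --- it simply defers to \cite{Ming--WalshOrbital2022} --- so there is nothing internal to compare against; your derivation is correct and is the standard one (Green's identity for the Dirichlet form, Reynolds transport for the moving interface, harmonicity of the Eulerian shape derivative with trace $\dot\eta\,a_2^{\pm}$), and your reduction of (b) to (a) via $\mathcal{A}(\eta)^{-1}=\rho_+\mathcal{G}_+(\eta)^{-1}+\rho_-\mathcal{G}_-(\eta)^{-1}$ is exactly the right mechanism; I checked the collapse identity $a_2^{\pm}(\eta,\zeta)\,\mathcal{G}_\pm(\eta)\tilde\xi\mp(\nabla u_\pm\cdot\nabla v_\pm)|_{y=\eta}=a_1^{\pm}(\eta,\tilde\xi)\,\zeta'$ and it holds with the paper's sign conventions for $N_\pm$ and $a_1^\pm,a_2^\pm$.

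One point worth recording: your argument in (b) naturally produces the arguments $\mathcal{G}_\pm(\eta)^{-1}\mathcal{A}(\eta)\tilde\xi$ and $\mathcal{G}_\pm(\eta)^{-1}\mathcal{A}(\eta)\zeta$, whereas the lemma as printed writes $\mathcal{A}(\eta)\mathcal{G}_\pm(\eta)^{-1}$. These two orderings agree only modulo the commutator of $\mathcal{G}_+(\eta)$ and $\mathcal{G}_-(\eta)$ (they coincide at $\eta=0$, where everything is a Fourier multiplier, but not for general $\eta$). The version your proof yields, $\mathcal{G}_\pm(\eta)^{-1}\mathcal{A}(\eta)\tilde\xi=\mathcal{B}(\eta)^{-1}\mathcal{G}_\mp(\eta)\tilde\xi$, is the one consistent with the quantity $\theta_\pm=\mathcal{G}_\pm(\eta)^{-1}\mathcal{A}(\eta)\tilde\xi$ actually used in Section~\ref{Spectral Analysis} and with the second-derivative lemma in this appendix, so this is a typographical inconsistency in the statement rather than a gap in your argument; it would be worth noting the correct ordering explicitly when writing the proof out.
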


\begin{lemma}[Second derivative of $\mathcal{G}_{\pm}$]

For all $u=(\eta,\tilde{\xi})\in \mathcal{O}\cap \mathbb{V}$ and $\dot{\eta} \in \mathbb{V}_1,$ it holds that
\begin{equation}
    \begin{split}
        &\int_{\mathbb{R}} \tilde{\xi} \langle \T{D}^2\mathcal{G}_{\pm} (\eta)[\dot{\eta},\dot{\eta}], \tilde{\xi}\rangle\; dx\\& \quad
        =\int_{\mathbb{R}}\left(a_4^{\pm}(u)\dot{\eta}^2 +2 a_2^{\pm}(u)\dot{\eta} \mathcal{G}_{\pm}(\eta) \left(a_2^{\pm}(u)\dot{\eta}\right)\right)\; dx,
    \end{split}
\end{equation}
where 
\begin{equation}
    a_4^{\pm}(u):=-2a_1{\pm}(u)'a_2{\pm}(u),
\end{equation}
and $a_1^{\pm},a_2^{\pm}$ are given by \eqref{equation of a_1 and a_2}.
\end{lemma}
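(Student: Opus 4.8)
The identity will follow by differentiating the first–variation formula for $\mathcal{G}_\pm$ once more, with $\tilde\xi$ treated as a passive parameter. The starting observation is that Lemma~\ref{derivatives of G and A}(a), after an integration by parts and using that $\mathcal{G}_\pm(\eta)$ is self-adjoint, is equivalent to the operator identity
\begin{equation}\label{prop:DG-operator}
    \T{D}\mathcal{G}_\pm(\eta)[\dot\eta]\,\tilde\xi=-\partial_x\!\left(a_1^\pm(\eta,\tilde\xi)\,\dot\eta\right)+\mathcal{G}_\pm(\eta)\!\left(a_2^\pm(\eta,\tilde\xi)\,\dot\eta\right),
\end{equation}
which is the shape-derivative formula already used in \eqref{derivative of B}. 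Since $\mathcal{G}_\pm(\eta)$ is self-adjoint, so is $\T{D}\mathcal{G}_\pm(\eta)[\dot\eta]$; the formulas \eqref{equation of a_1 and a_2} for $a_1^\pm,a_2^\pm$ are smooth in $\eta$ on the relevant Sobolev scale (using $\dot\eta\in\mathbb{V}_1$ and $\eta,\tilde\xi\in\mathcal{O}\cap\mathbb{V}$); and the spatial decay of all quantities as $|x|\to\infty$ kills every boundary term produced by the integrations by parts below.

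The plan is: first, set $\zeta=\tilde\xi$ in Lemma~\ref{derivatives of G and A}(a) to obtain $\int_{\mathbb{R}}\tilde\xi\,\langle\T{D}\mathcal{G}_\pm(\eta)\dot\eta,\tilde\xi\rangle\,dx=\int_{\mathbb{R}}\big(a_1^\pm\tilde\xi'+a_2^\pm\mathcal{G}_\pm(\eta)\tilde\xi\big)\dot\eta\,dx$. Second, differentiate both sides in $\eta$ along $\dot\eta$, holding $\tilde\xi$ and the explicit test factor $\dot\eta$ fixed; the left side then becomes exactly $\int_{\mathbb{R}}\tilde\xi\,\langle\T{D}^2\mathcal{G}_\pm(\eta)[\dot\eta,\dot\eta],\tilde\xi\rangle\,dx$, so it remains to process the right side. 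Third, differentiate $a_1^\pm$, $a_2^\pm$ and the factor $\mathcal{G}_\pm(\eta)\tilde\xi$: compute $\T{D}_\eta a_j^\pm(\eta,\tilde\xi)[\dot\eta]$ directly from \eqref{equation of a_1 and a_2} (producing terms with $\dot\eta'$, with $(1+(\eta')^2)^{-1}$, and with $\T{D}\mathcal{G}_\pm(\eta)[\dot\eta]\tilde\xi$), and replace \emph{every} occurrence of $\T{D}\mathcal{G}_\pm(\eta)[\dot\eta]\tilde\xi$ by the right side of \eqref{prop:DG-operator}. Fourth, move $\mathcal{G}_\pm(\eta)$ off $\dot\eta$–dependent factors using self-adjointness, integrate by parts whenever a $\partial_x$ lands on $a_1^\pm\dot\eta$, and eliminate the bare derivatives by the two algebraic identities $\mp\tilde\xi'=(1+(\eta')^2)a_1^\pm+\eta'\mathcal{G}_\pm(\eta)\tilde\xi$ and $\pm\mathcal{G}_\pm(\eta)\tilde\xi=(1+(\eta')^2)a_2^\pm+\eta'\tilde\xi'$, which trade $\tilde\xi'$ and $\mathcal{G}_\pm(\eta)\tilde\xi$ for $a_1^\pm$ and $a_2^\pm$.

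Tracking the nonlocal terms shows how the stated formula emerges. The contribution of \eqref{prop:DG-operator} coming from the middle factor $a_2^\pm\mathcal{G}_\pm(\eta)\tilde\xi$ produces directly one copy of $\int_{\mathbb{R}}(a_2^\pm\dot\eta)\,\mathcal{G}_\pm(\eta)(a_2^\pm\dot\eta)\,dx$; a second copy is generated by the $\mathcal{G}_\pm(\eta)$–part of $\T{D}_\eta a_2^\pm[\dot\eta]$ once $\pm\mathcal{G}_\pm(\eta)\tilde\xi$ is rewritten via the identity above, which is where the coefficient $2$ in front of the nonlocal term appears. The remaining genuinely nonlocal contributions are all of the form $\int_{\mathbb{R}}(1+(\eta')^2)^{-1}\eta'\tilde\xi'\dot\eta\,\mathcal{G}_\pm(\eta)(a_2^\pm\dot\eta)\,dx$, arising once with each sign from $\T{D}_\eta a_1^\pm$ and from $\T{D}_\eta a_2^\pm$, and they cancel. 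What is left is a collection of purely local quadratic expressions in $\dot\eta$ whose coefficients, rewritten back in terms of $a_1^\pm,a_2^\pm$ and combined via one final integration by parts moving $\partial_x$ from $a_1^\pm\dot\eta$ onto $a_2^\pm\dot\eta$, collapse to $-2\int_{\mathbb{R}}(a_1^\pm)'\,a_2^\pm\,\dot\eta^2\,dx=\int_{\mathbb{R}} a_4^\pm(u)\dot\eta^2\,dx$.

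The only real difficulty is bookkeeping: the intermediate expression has on the order of a dozen terms carrying factors $(1+(\eta')^2)^{-1}$, $\eta'$, $\dot\eta'$, products of two copies of $\mathcal{G}_\pm(\eta)$, and $\partial_x$'s, and one must verify the almost-complete cancellation. The mechanism is entirely structural — self-adjointness of $\mathcal{G}_\pm(\eta)$ and of $\T{D}\mathcal{G}_\pm(\eta)[\dot\eta]$, the Leibniz rule, and the two substitution identities — and is precisely the computation performed in \cite{Ming--WalshOrbital2022}; since $\tilde\xi$ enters only as a fixed parameter, vorticity changes nothing in this particular lemma. An equivalent, possibly cleaner route is to obtain the left side as the Hessian of the quadratic functional $\eta\mapsto\tfrac12\int_{\mathbb{R}}\tilde\xi\,\mathcal{G}_\pm(\eta)\tilde\xi\,dx$, differentiating twice along the straight path $t\mapsto\eta+t\dot\eta$ and using \eqref{prop:DG-operator} for the first derivative; I would adopt whichever organization makes the cancellations most transparent.
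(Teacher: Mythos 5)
Your proposal is correct and follows the same route as the paper, which in fact offers no proof of this lemma at all: Appendix~\ref{formulas appendix} simply quotes the formula and refers to \cite{Ming--WalshOrbital2022} for the derivation, and your plan — differentiate the tested first-variation identity of Lemma~\ref{derivatives of G and A}(a) once more, substitute the operator form $\T{D}\mathcal{G}_\pm(\eta)[\dot\eta]\tilde\xi=-\partial_x(a_1^\pm\dot\eta)+\mathcal{G}_\pm(\eta)(a_2^\pm\dot\eta)$ already used in \eqref{derivative of B}, and reduce via the algebraic relations between $\tilde\xi'$, $\mathcal{G}_\pm(\eta)\tilde\xi$ and $a_1^\pm,a_2^\pm$ — is exactly the computation carried out in that reference. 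Your identification of where the factor $2$ on the nonlocal term arises and of the cancellation of the mixed $\eta'\tilde\xi'$ terms is the right mechanism; the only thing not shown is the term-by-term bookkeeping, which is asserted rather than displayed, but that matches the level of detail the paper itself provides.
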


\begin{lemma}[Second derivative formula of $\mathcal{A}$]For all $u=(\eta,\tilde{\xi})\in \mathcal{O}\cap \mathbb{V}$ and $\dot{\eta} \in \mathbb{V}_1,$ it holds that
\begin{equation}
    \begin{split}
        &\int_{\mathbb{R}} \tilde{\xi} \langle \T{D}^2\mathcal{A} (\eta)[\dot{\eta},\dot{\eta}], \tilde{\xi}\rangle\; dx\\& \quad
        =\int_{\mathbb{R}}\left(a_4(u)\dot{\eta} +2 \sum_{\pm} \rho_{\pm} a_2^{\pm}(\eta,\theta_{\pm})\mathcal{G}_{\pm}(\eta)(a_2^{\pm}(\eta,\theta_{\pm})\dot{\eta})-2\mathcal{M}(u)\dot{\eta}+2\mathcal{N}(u)\dot{\eta} \right)\dot{\eta}\;dx,
    \end{split}
\end{equation}
where 
\begin{equation}
    \theta_{\pm}(u):=\mathcal{G}_{\pm}(\eta)^{-1}\mathcal{A}(\eta)\tilde{\xi}, \qquad a_4(u):=\sum_{\pm} \rho_{\pm} a_4^{\pm}(\eta,\theta_{\pm})
\end{equation}
\begin{equation}
\begin{split}
    &\mathcal{L}_{\pm}(u)\dot{\eta}:=-\mathcal{G}_{\pm}(\eta)^{-1}\left( a_1^{\pm} (\eta,\theta_{\pm})\dot{\eta} \right)' + a_2^{\pm}(\eta,\theta_{\pm})\dot{\eta}, \quad \mathcal{L}(u):=\sum_{\pm}\rho_{\pm}\mathcal{L}_{\pm}(u)\\
    &\mathcal{M}(u)\dot{\eta}:=\sum_{\pm} \rho_{\pm} \left(a_1^{\pm}(\eta,\theta_{\pm})(\mathcal{L}_{\pm}(u)\dot{\eta})'+a_2^{\pm}(\eta,\theta_{\pm})\mathcal{G}_{\pm}(\eta) \mathcal{L}_{\pm}(u)\dot{\eta}\right)\\
    &\mathcal{N}(u)\dot{\eta}:=\sum_{\pm}\rho_{\pm}\left(a_1^{\pm}(\eta,\theta_{\pm})\left(\mathcal{A}(\eta)\mathcal{G}_{\pm}(\eta)^{-1}\mathcal{L}(u)\dot{\eta}\right)'+a_2^{\pm}(\eta,\theta_{\pm})\mathcal{A}(\eta)\mathcal{L}(u)\dot{\eta}\right).
\end{split}
\end{equation}
\end{lemma}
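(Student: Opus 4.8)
Since the operator $\mathcal{A}(\eta)=\mathcal{G}_\pm(\eta)\mathcal{B}(\eta)^{-1}\mathcal{G}_\mp(\eta)$ does not involve the vorticities, this is exactly the identity needed in the irrotational two-layer problem, and the plan is to reproduce the computation of \cite{Ming--WalshOrbital2022}. The two ingredients are the inverse formula $\mathcal{A}(\eta)^{-1}=\rho_+\mathcal{G}_+(\eta)^{-1}+\rho_-\mathcal{G}_-(\eta)^{-1}$ recorded in Section~\ref{hamiltonian formulation }, and the elementary calculus of operator inverses: for a smooth invertible family $S(\eta)$ one has $\T{D}(S^{-1})=-S^{-1}(\T{D}S)S^{-1}$ and $\T{D}^2(S^{-1})[\dot\eta,\dot\eta]=2S^{-1}(\T{D}S)S^{-1}(\T{D}S)S^{-1}-S^{-1}(\T{D}^2S)S^{-1}$ (directional arguments $[\dot\eta]$ suppressed). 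Applying the second identity with $S=\mathcal{A}^{-1}$, using $\T{D}S=\sum_\pm\rho_\pm\T{D}\mathcal{G}_\pm^{-1}$ and $\T{D}^2S=\sum_\pm\rho_\pm\T{D}^2\mathcal{G}_\pm^{-1}$, and then re-expressing $\T{D}\mathcal{G}_\pm^{-1}$, $\T{D}^2\mathcal{G}_\pm^{-1}$ through $\T{D}\mathcal{G}_\pm$, $\T{D}^2\mathcal{G}_\pm$ by the same rules, a short rearrangement yields
\[
\begin{aligned}
\T{D}^2\mathcal{A}(\eta)[\dot\eta,\dot\eta]={}&2\,(\T{D}\mathcal{A})\,\mathcal{A}^{-1}\,(\T{D}\mathcal{A})+\sum_{\pm}\rho_\pm\,\mathcal{A}\mathcal{G}_\pm^{-1}\,(\T{D}^2\mathcal{G}_\pm)\,\mathcal{G}_\pm^{-1}\mathcal{A}\\
&-2\sum_{\pm}\rho_\pm\,\mathcal{A}\mathcal{G}_\pm^{-1}\,(\T{D}\mathcal{G}_\pm)\,\mathcal{G}_\pm^{-1}\,(\T{D}\mathcal{G}_\pm)\,\mathcal{G}_\pm^{-1}\mathcal{A}.
\end{aligned}
\]

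Next I would pair this against $\tilde\xi$ on each side, integrate over $\mathbb{R}$, and simplify using self-adjointness of $\mathcal{G}_\pm(\eta)$, $\mathcal{B}(\eta)$, $\mathcal{A}(\eta)$ together with the abbreviation $\theta_\pm:=\mathcal{G}_\pm(\eta)^{-1}\mathcal{A}(\eta)\tilde\xi$. The middle group becomes $\sum_\pm\rho_\pm\int\theta_\pm\langle\T{D}^2\mathcal{G}_\pm(\eta)[\dot\eta,\dot\eta],\theta_\pm\rangle\,dx$, and the second-derivative formula for $\mathcal{G}_\pm$ recorded just above turns it into $\int\big(a_4(u)\dot\eta+2\sum_\pm\rho_\pm a_2^\pm(\eta,\theta_\pm)\mathcal{G}_\pm(\eta)(a_2^\pm(\eta,\theta_\pm)\dot\eta)\big)\dot\eta\,dx$ with $a_4(u)=\sum_\pm\rho_\pm a_4^\pm(\eta,\theta_\pm)$, producing the first two terms of the claim. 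For the remaining two groups I would introduce $\mathcal{L}_\pm(u)\dot\eta:=\mathcal{G}_\pm(\eta)^{-1}(\T{D}\mathcal{G}_\pm(\eta)[\dot\eta])\theta_\pm$, which by Lemma~\ref{derivatives of G and A}(a) equals $-\mathcal{G}_\pm(\eta)^{-1}(a_1^\pm(\eta,\theta_\pm)\dot\eta)'+a_2^\pm(\eta,\theta_\pm)\dot\eta$, together with $\mathcal{L}(u):=\sum_\pm\rho_\pm\mathcal{L}_\pm(u)$; differentiating the inverse formula once also gives $\mathcal{A}(\eta)^{-1}(\T{D}\mathcal{A}(\eta)[\dot\eta])\tilde\xi=\mathcal{L}(u)\dot\eta$. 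With these, the cross group becomes $-2\sum_\pm\rho_\pm\int\mathcal{L}_\pm(u)\dot\eta\,\mathcal{G}_\pm(\eta)\mathcal{L}_\pm(u)\dot\eta\,dx$, which one further application of Lemma~\ref{derivatives of G and A}(a) with test function $\mathcal{L}_\pm(u)\dot\eta$ rewrites as $-2\int\dot\eta\,\mathcal{M}(u)\dot\eta\,dx$; and the $(\T{D}\mathcal{A})\mathcal{A}^{-1}(\T{D}\mathcal{A})$ group becomes $2\int\mathcal{A}(\eta)\mathcal{L}(u)\dot\eta\,\mathcal{L}(u)\dot\eta\,dx$, which by Lemma~\ref{derivatives of G and A}(b) (the first-derivative formula for $\mathcal{A}$, with test function $\mathcal{L}(u)\dot\eta$) is $2\int\dot\eta\,\mathcal{N}(u)\dot\eta\,dx$. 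Adding the three contributions gives the stated formula.

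There is no conceptual obstacle here; the difficulty is purely organizational. The two places demanding care are: keeping every use of self-adjointness and every transfer of $\mathcal{G}_\pm^{-1}$, $\mathcal{A}^{-1}$, or $\mathcal{B}^{-1}$ across an inner product legitimate at the regularity $u\in\mathcal{O}\cap\mathbb{V}$ (guaranteed by the mapping properties quoted in Section~\ref{hamiltonian formulation }, for instance $\theta_\pm\in H^1$ and $a_1^\pm(\eta,\theta_\pm),a_2^\pm(\eta,\theta_\pm)\in L^2$); and tracking carefully which of the adjoint-related quantities $\mathcal{G}_\pm^{-1}\mathcal{A}\tilde\xi=\mathcal{B}^{-1}\mathcal{G}_\mp\tilde\xi$ (rather than $\mathcal{A}\mathcal{G}_\pm^{-1}\tilde\xi=\mathcal{G}_\mp\mathcal{B}^{-1}\tilde\xi$) is the one feeding the arguments of $a_1^\pm$ and $a_2^\pm$, since it is precisely this consistent choice that reproduces the operators $\mathcal{M}(u)$ and $\mathcal{N}(u)$ exactly as written in the statement.
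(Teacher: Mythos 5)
Your derivation is correct: the identity $\mathcal{A}(\eta)^{-1}=\rho_+\mathcal{G}_+(\eta)^{-1}+\rho_-\mathcal{G}_-(\eta)^{-1}$ plus the operator-inverse calculus yields exactly the three groups you describe, and the pairings with $\theta_\pm=\mathcal{G}_\pm(\eta)^{-1}\mathcal{A}(\eta)\tilde\xi$ via Lemma~\ref{derivatives of G and A} reproduce $a_4$, $\mathcal{M}$, and $\mathcal{N}$ as stated. The paper itself gives no proof of this lemma, deferring to \cite{Ming--WalshOrbital2022}; your argument is the standard derivation carried out there, so there is nothing to add.
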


\section*{Acknowledgments}

The work of DS was supported in part by the NSF through the award NSF DMS-1812436.

\begin{center}
\bibliographystyle{alpha}
\bibliography{Bibjournal.bib}
\end{center}

\end{document}